\newtheorem{theorem}{Theorem}[section]
\newtheorem{conj}[theorem]{Conjecture}
\newtheorem{defn}[theorem]{Definition}
\newtheorem{definition}[theorem]{Definition}
\newtheorem{remark}[theorem]{Remark}
\newtheorem{lemma}[theorem]{Lemma}
\newtheorem{proposition}[theorem]{Proposition}
\newtheorem{prop}[theorem]{Proposition}
\newtheorem{cor}[theorem]{Corollary}
\newtheorem{corollary}[theorem]{Corollary}
\newcommand{\R}{\mathbb{R}}\newcommand{\C}{\mathbb{C}}
\newcommand{\T}{\mathbb{T}}\newcommand{\D}{\mathbb{D}}
\newcommand{\E}{\mathbb{E}}
\newcommand{\les}{\lessapprox}
\newcommand{\ges}{\gtrapprox}
\newcommand{\cS}{\mathcal{S}}
\newcommand{\cP}{\mathcal{P}}
\newcommand{\cT}{\mathcal{T}}
\newcommand{\cA}{\mathcal{A}}
\newcommand{\cD}{\mathcal{D}}
\newcommand{\cL}{\mathcal{L}}
\newcommand{\cN}{\mathcal{N}}
\newcommand{\N}{\mathbb{N}}
\newcommand{\cF}{\mathcal{F}}
\newcommand{\cU}{\mathcal{U}}
\newcommand{\cG}{\mathcal{G}}
\newcommand{\cB}{\mathcal{B}}
\newcommand{\PP}{\mathbb{P}}
\newcommand{\cH}{\mathcal{H}}
\newcommand{\cI}{\mathcal{I}}
\newcommand{\cQ}{\mathcal{Q}}
\newcommand{\dist}{\mathrm{dist}}
\newcommand{\SSS}{\mathbb{S}}
\newcommand{\dem}{\de^{-O(\e)}}
\newcommand{\dep}{\de^{O(\e)}}
\newcommand{\cR}{\mathcal{R}}
\newcommand{\cV}{\mathcal{V}}
\newcommand{\lbox}{\underline\dim_{\mathrm{B}}}
\newcommand{\ubox}{\overline\dim_{\mathrm{B}}}
\newcommand{\cont}[1]{\mathcal{H}^{#1}_{[\delta,\infty)}}
\newcommand{\de}{\delta}
\newcommand{\e}{\epsilon}
\newcommand{\diam}{\operatorname{diam}}
\newcommand{\dimh}{\dim_\mathrm{H}}
\newcommand{\cns}[2]{N_{{#1}}({#2})}
\newcommand{\cov}[1]{N_\de({#1})}
\newcommand{\cn}[1]{N_\de({#1})}
\newtheorem*{ack*}{Acknowledgment}
\author{Ciprian Demeter}\address{Department of Mathematics, Indiana University, 831 East 3rd St., Bloomington IN 47405}
\email{demeterc@iu.edu}
\author{William O'Regan}\address{Department of Mathematics, The University of British Columbia, Room 121, 1984 Mathematics Road, Vancouver  BC Canada V6T 1Z2}
\email{woregan@math.ubc.ca}
\begin{document}
	\title{Incidence estimates for quasi-product sets and applications}

	\keywords{tube incidences, packing conditions, discretised sum-product, Fourier decay of measures}
	\thanks{CD is partially supported by the NSF grant  DMS-2349828, WOR is supported in part by an NSERC Alliance grant administered by Pablo Shmerkin and Joshua Zahl}
	\begin{abstract}
		We use recent advances in the theory of Furstenberg sets to prove new incidence results of Szemer\'edi--Trotter strength for $\delta$-discretized structures with Cartesian product flavor. We use these results to make progress on a number of problems that include energy estimates and Fourier decay of fractal measures supported on curves,  as well as various sum-product-like results governed by fractal dimension.  
	\end{abstract}

	\maketitle
	
	\section{Introduction}
	
	\subsection{Notation} We introduce the collection of  $\delta$-squares in $[0,1]^d$  $$\cD_\delta=\{[n_1\delta,(n_1+1)\delta]\times \cdots \times [n_d\delta,(n_d+1)\delta]:\;0\le n_1,\ldots n_d\le \delta^{-1}-1\}.$$

	We use $|\;\;|$ to denote Lebesgue measure and $\#$ for the counting measure. For $\de > 0$ we use $\cn{S}$ to denote the $\de$-covering number of $S$, and $\cN_\de(S)$ for the $\de$-neighborhood  of $S$. 
	
	If quantities $A,B$ depend on $\delta$, we write $A \lesssim B$ to mean there exists $C > 0$ so that $A(\delta) \leq CB(\delta)$ for all $\delta > 0.$  We use $\lesssim_\e C$ if we wish to emphasize that the suppressed constant depends on the parameter $\e.$ We write $A\sim B$ if $A\lesssim B$ and $B\lesssim A$. 
	
	We write $A\les B$ to mean that $A\lesssim_\upsilon \delta^{-\upsilon} B$ for each $\upsilon>0$. Also, $A\approx B$ means that both $A\les B$ and $B\les A$ hold.
	
	The function $\log$ will be to base 2. For a set $A \subset \R^d$ and $\delta >0$ we let  $A_\de$ denote a $\de$-separated subset of $A$ with $\#A_\de \sim \cn A.$ Write the Hausdorff content of $A$ at dimension $0 \leq s \leq d$ at scale $\delta > 0$ by
	\begin{equation}
		\cH_{[\delta,\infty)}^s(A) := \inf\bigg\{\sum_{U \in \cU} \diam(U)^s : A \subset \bigcup_{U \in \cU}U \text{ and } \diam(U) \in \{\de,2\de,\ldots\} \text{ for all } U \in \cU\bigg\}.
	\end{equation}
	All sets $A$ that we consider will have diameter $O(1)$, so the sets $U$ in each cover will have diameter $\diam(U)\lesssim 1$.
	
	\begin{definition}
		Let $0 \leq s \leq d, C, \delta >0.$ We say that $A \subset \R^d$ is a $(\delta,s,C)$-\textit{set} if 
		\begin{equation}
			\cn{A \cap B(x,r)} \leq Cr^s\cn{A}\text{ for all } x \in \R^d,\; r \geq \delta.
		\end{equation}
		In some literature these sets are called Frostman sets.
		
		We say that $A \subset \R^d$ is a $(\delta,s,C)$-\textit{KT set} if 
		\begin{equation}
			\cn{A \cap B(x,r)} \leq C(r/\delta)^s \text{ for all } x \in \R^d,\; r \geq \delta.
		\end{equation}
		In some literature these sets are called Katz-Tao sets.

		If $C \sim 1$ then we will often drop the $C$ from the notation and simply refer to $(\delta,s)$-sets and $(\de,s)$-KT sets.
	\end{definition}
	\subsection{Incidence estimates}
	\begin{defn}\label{KK}
		Given $K_1,K_2\ge 1$ and $s,d\in[0,1]$ we call a  collection $\T$ of $\delta$-tubes a $(\delta,s,d,K_1,K_2)$-quasi-product set if its direction set $\Lambda$ is a $(\delta,s,K_1)$-KT set and for each $\theta\in\Lambda$, $\cT_\theta$ is a $(\delta,d,K_2)$-KT set.
		
		Likewise, we call a collection $\cP \subset \R^2$ a $(\de,s,d,K_1,K_2)$-quasi-product set if the projection of $\cP$ to the abscissa is a $(\de,s,K_1)$-KT set, and each vertical fiber is a $(\de,d,K_2)$-set.
	\end{defn}
	A $\delta$-square $p$ is said to be incident to a $\delta$-tube $T$ if $p\cap T\not=\emptyset$.
	A shading  of $\T$ is a map $Y$ on $\T$ such that for each $T\in\T$,  $Y(T)$ is a union of pairwise disjoint  $\delta$-squares in $\cD_\delta$ incident to $T$.
	We let
	$$Y(\T)=\cup_{T\in\T}Y(T),$$
	and write $\#Y(T)$, $\#Y(\T)$ for the total number of (distinct) $\delta$-squares in the collections $Y(T)$ and $Y(\T)$, respectively.
	We caution that $p\in Y(\T)$ and $p\cap T\not=\emptyset$ does not necessarily imply that $p\in Y(T)$.
	
	For $0<\epsilon_2 < \epsilon_1\ll 1$ we say that a shading is $(\epsilon_1,\epsilon_2)$-\textit{two-ends} if 
	\begin{equation}
		\#(Y(T) \cap B(x,\delta^{\epsilon_1})) \leq \delta^{\epsilon_2}\#Y(T) \text{ for all } x \in \R^2, \;T \in \T.
	\end{equation}
	
	For $0<s,d<1$ we let $$\sigma=\min\{s+d,2-s-d\}.$$
	Here is the first incidence result we are proving.
	\begin{theorem}
		\label{maininct}\label{thm.product}
		Let $0<s,d<1$.
		Define
		$$\alpha=\min\{3,2+\frac{d}{s}\}.$$
		Assume $\T$ is a $(\delta,s,d,K_1,K_2)$-quasi-product set. Consider a shading $Y$ of $\T$ such that $Y(T)$ is a $(\delta,\sigma,K_3)$-KT set for each $T\in\T$.
		
		Then  we have
		$$\sum_{T\in\T}\#Y(T)\les K_3^{\frac13}(K_1K_2)^{1-\frac1\alpha} (\delta^{-s-d}\#\T)^{1/{\alpha}}\# Y(\T)^{1-\frac1\alpha}.$$	
	\end{theorem}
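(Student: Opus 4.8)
The strategy is to reduce, via dyadic pigeonholing, to a single Furstenberg-type estimate, and then invoke the sharp Furstenberg set bounds in their discretized form. Write $I=\sum_{T\in\T}\#Y(T)=\#\{(p,T):p\in Y(T)\}$ and, for a $\delta$-square $p$, set $m(p)=\#\{T\in\T:p\in Y(T)\}$, so $I=\sum_p m(p)$. A few rounds of dyadic pigeonholing---which cost only $\les$-admissible factors, and which preserve the quasi-product and KT-shading hypotheses because subsets of KT sets are KT with the same constant---reduce matters to a uniform situation: $\#Y(T)\sim\mu$ for every $T\in\T$, and $m(p)\sim m$ on a set $P\subset Y(\T)$ carrying essentially all incidences, so that
\[
I\;\approx\;\mu\,\#\T\;\approx\;m\,\#P,\qquad \#P\le\#Y(\T).
\]
Raising the claimed inequality to the $\alpha$-th power, substituting $I\approx m\#P$, noting that it suffices to prove the stronger statement with $\#Y(\T)^{\alpha-1}$ replaced by $\#P^{\alpha-1}$ (which is smaller), cancelling, and substituting $\#P\approx\mu\#\T/m$, one checks that the theorem is equivalent to the single bound
\[
m^{\,\alpha-1}\mu\;\les\;K_3^{\alpha/3}(K_1K_2)^{\alpha-1}\,\delta^{-s-d}.
\]

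To prove this, observe that the refined data is exactly of Furstenberg type: each square of $P$ meets $\sim m$ tubes, hence---since a $\delta$-square meets only $O(1)$ tubes of any fixed direction---sees $\sim m$ distinct directions forming (by the hereditary property of KT sets) a $(\delta,s,K_1)$-KT set of directions inside $\Lambda$, while each tube carries $\sim\mu$ squares of $P$ arranged in a $(\delta,\sigma,K_3)$-KT pattern. The $(\delta,\sigma)$-KT shading here plays the role of the two-ends hypothesis without which Szemer\'edi--Trotter strength is impossible (concurrent tubes being the obstruction), and $\sigma=\min\{s+d,\,2-s-d\}$ is precisely the dimension at which the Furstenberg set estimate is sharp and returns the incidence exponent $3$: feeding this configuration into it yields, after the substitutions above, the case $\alpha=3$, that is $m^2\mu\les K_3(K_1K_2)^2\delta^{-s-d}$. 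When $d<s$ the Furstenberg estimate is no longer the binding constraint---within a single direction the $\le K_2\delta^{-d}$ parallel tubes may each be essentially full, so a pure direction-counting argument is lossy---and one must instead exploit the $(\delta,d,K_2)$-KT fiber condition, resolving directions only down to the intermediate scale at which the fiber budget $\delta^{-d}$ balances the direction budget $\delta^{-s}$; this produces the truncation $\alpha=2+d/s$.

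The exponent $\alpha/3$ on $K_3$, and the overall passage from the clean Furstenberg output to the stated bound, come from running the Furstenberg input together with a second-moment estimate for $\sum_p m(p)^2=\sum_{T,T'}\#(Y(T)\cap Y(T'))$: the diagonal and the parallel pairs contribute $\lesssim I+\#\T$, while for a transversal pair $T,T'$ the intersection lies in a ball of radius $\sim\delta/|\theta_T-\theta_{T'}|$, so the $(\delta,\sigma,K_3)$-KT shading gives $\#(Y(T)\cap Y(T'))\lesssim K_3\,|\theta_T-\theta_{T'}|^{-\sigma}$, after which the $(\delta,s,K_1)$-KT direction set and the $(\delta,d,K_2)$-KT fibers allow the resulting geometric sum in $|\theta_T-\theta_{T'}|$ to be carried out; interpolating this $L^2$ bound against the Furstenberg bound produces both the exponent $\alpha$ and the power of $K_3$. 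I expect the main obstacle to be this central step: the classical Furstenberg set theorem is phrased for line families carrying one dimension parameter, whereas ours has the two-layer quasi-product structure, so one must either prove (or cite) the quasi-product strengthening of the Furstenberg estimate, or reduce to the standard statement through a two-scale decomposition---in either case keeping track of $K_1,K_2,K_3$ with the correct polynomial dependence, and treating the case split $\sigma=s+d$ versus $\sigma=2-s-d$ alongside $d\ge s$ versus $d<s$.
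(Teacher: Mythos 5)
There is a genuine gap, and it sits exactly at the crux of the theorem. Your proposal rests on the claim that the $(\delta,\sigma,K_3)$-KT condition on $Y(T)$ ``plays the role of the two-ends hypothesis.'' It does not. The Furstenberg-type input (Theorem 1.5 of Wang--Wu, stated in the paper as Theorem \ref{tWW25}) requires \emph{both} that each $Y(T)$ be a $(\delta,\sigma,K_3)$-KT set \emph{and} that it be $(\epsilon_1,\epsilon_2)$-two-ends, and the KT condition only implies two-ends when $\#Y(T)$ is essentially maximal, i.e.\ $\approx\delta^{-\sigma}$. The whole point of Theorem \ref{maininct} — as the paper stresses — is that $\#Y(T)$ is \emph{not} assumed maximal; a $(\delta,\sigma,K_3)$-KT shading of small cardinality can sit entirely inside a ball of radius $\delta^{\epsilon_1}$, and then your Furstenberg step simply does not apply. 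The paper's proof is built around precisely this dichotomy: in the two-ends case it invokes Theorem \ref{tWW25} directly (and this is where the $K_3^{1/3}$ comes from — no second-moment interpolation is needed), while in the non-two-ends case it locates, for each $T$, a $\Delta$-segment ($\Delta=\delta^{\epsilon_1}$) carrying most of $Y(T)$, rescales, uses the hereditary property of quasi-product sets (Proposition \ref{fkjgu8rtu9i6iy-}) together with the renormalization Proposition \ref{lkogjthuyuh09yu}, and closes an induction on scales. Your uniformization reduction to $m^{\alpha-1}\mu\les K_3^{\alpha/3}(K_1K_2)^{\alpha-1}\delta^{-s-d}$ is harmless but does not touch this difficulty.

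Two further points. First, the mechanism you propose for the truncation $\alpha=2+d/s$ when $d<s$ (``resolving directions down to the scale where the fiber budget balances the direction budget'') is not where that exponent comes from: in the paper it arises because the rescaling step in the non-two-ends case produces a factor $\Delta^{\frac{2s+d}{\alpha}-s}$, whose exponent is nonnegative exactly when $\alpha\le 2+\frac{d}{s}$; the two-ends branch always yields exponent $3$. Second, the second-moment bound $\#(Y(T)\cap Y(T'))\lesssim K_3|\theta_T-\theta_{T'}|^{-\sigma}$ you describe is correct as stated, but the Cauchy--Schwarz/$L^2$ incidence bound it yields is of C\'ordoba type and interpolating it against a Furstenberg bound is not known to reproduce the stated exponents; in any case it is not needed, since the $K_3^{1/3}$ dependence is already supplied by the cited Furstenberg theorem. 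To repair the argument you would need to add the two-ends/non-two-ends dichotomy and the rescaling machinery for quasi-product sets, at which point you would essentially be reproducing the paper's proof.
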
 
	The case $s+d=1$, $s\le d$ was solved in \cite{demwangszem}. Since $\sigma=1$ in that case, the requirement about $Y(T)$ being a $(\delta,\sigma,K_3)$-KT set is automatically satisfied with any $K_3\ge 1$.
	
	To better digest the strength of our estimate when $K_1=K_2=K_3\sim 1$, let us compare it with the Furstenberg set estimate in \cite{renwang}. Let us assume each $Y(T)$ is the union of $\sim M$ many $\delta$-squares. Let us also call $P$ the number of $\delta$-squares in $Y(\T)$. 
	
	Under the additional assumption that $\#\T\sim \delta^{-s-d}$ our estimate is equivalent to 
	\begin{equation}
		\label{ejifufu8gu8t}    
		P\ges M^{\frac\alpha{\alpha-1}}\#\T^{\frac{\alpha-2}{\alpha-1}}.
	\end{equation}
	If the additional assumption 
	$M\sim \delta^{-\sigma}$ is also added, but the product structure of tubes is no longer enforced,  then the main result in \cite{renwang} implies that
	\begin{equation}
		\label{ejifufu8gu8t2} 
		P\ges M^{\frac32}\#\T^{\frac12}.
	\end{equation}
	The two estimates can be seen to be the same if either $\alpha=3$ or $d+s\le 1$. When 
	$\alpha<3$ and  $d+s> 1$, \eqref{ejifufu8gu8t2} is stronger than \eqref{ejifufu8gu8t}.  
	
	The value of our estimate in Theorem \ref{maininct} lies in the fact that we do not assume that $M$ (or $\#\T$, but this is less relevant, due to Proposition \ref{lkogjthuyuh09yu}) is maximal. This is the distinction between $(\delta,\sigma)$-KT sets and $(\delta,\sigma)$-sets. 	
	\medskip
	
	Theorem \ref{maininct} should be also be compared with the sharp incidence bounds obtained in \cite{furen}. For example, there they obtained that for $0 < t < 1,$ if $\cP$ is a $(\delta,t)$-KT set of $\delta$-balls and $\cT$ is a $(\delta,t)$-KT set of $\delta$-tubes, then the number of incidences satisfies
	\begin{equation}\label{eq.furen}
		I(\cP,\cT) \lessapprox \delta^{-3t/2}.
	\end{equation}
	However, if the tubes form a $(\delta,t/2,t/2)$-quasi-product set,  $Y(T)$ is a $(\delta,\sigma)$-KT set and $\cP=Y(\cT)$ is only assumed to have size $\lesssim \delta^{-t}$ then Theorem \ref{maininct} gives the stronger bound
	\begin{equation}
		\sum_{T\in\cT}\#Y(T) \lessapprox \delta^{-4t/3}.
	\end{equation}
	In \cite{furen}, the sharp examples to \eqref{eq.furen} are when the balls and tubes form `cliques', where there are many balls inside a rectangular strip, and many tubes passing through this strip - see \cite[Construction 1]{furen}. The work of \cite{orpyi} shows that if $I(\cP,\cT) \approx \delta^{-3t/2}$ then $\cP$ and $\cT$ essentially forms such a configuration.

	\medskip
	
	We now introduce a type of structure that is critical  for the second incidence theorem we are proving. It has a different type of product flavor.
	\begin{definition}\label{def.rect}
		We call a set $S\subset [0,1]^2$ a rectangular $(\delta,u, C)$-KT set if for each $\delta\le r',r$ and each rectangle $B$ with dimensions $(r',r)$ and arbitrary orientation we have
		$$\#(S\cap B)\le C(\frac{\sqrt{rr'}}{\delta})^{u}.$$
	\end{definition}	
	Here is our second result. 
	\begin{theorem}
		\label{thm.rectangles1}
		Assume $0\le s\le 1$.	
		Consider a collection $\T$ of $\delta$-tubes and a collection $\PP$ of $\delta$-squares.
		
		Assume that  $\PP$ is a rectangular $(\delta,2s)$-KT set for each $p\in\PP$. Assume that $Y'(p)=\{T\in\T:\;p\cap T\not=\emptyset\}$ is a  $(\delta,\sigma)$-KT set, $\sigma=\min(2s,2-2s)$.
		Write
		$$I(\T,\PP)=\sum_{p\in\PP}\#Y'(p).$$
		Then 
		\begin{equation}
			I(\T,\PP)\les \delta^{-\frac{2s}{3}}(\#\T)^{2/3}(\#\PP)^{1/3}.
		\end{equation}
	\end{theorem}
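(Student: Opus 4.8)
The natural approach is to pass to the dual picture, where $\PP$ turns into a family of $\delta$-tubes carrying a quasi-product structure, and then to invoke Theorem~\ref{maininct} with the symmetric choice of exponents $s'=d'=s$, for which $\sigma=\min\{s'+d',2-s'-d'\}=\min\{2s,2-2s\}$ matches the hypothesis and $\alpha=\min\{3,2+d'/s'\}=3$, so the exponents $\tfrac13,\tfrac23$ and the factor $\delta^{-2s/3}$ come out exactly right.

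\emph{Step 1 (dualization).} Use the standard planar point--tube duality $(x_0,y_0)\longleftrightarrow p^\ast:=\{(a,b):\ |b+x_0a-y_0|\le\delta\}$, which sends $\delta$-squares to $\delta$-tubes, $\delta$-tubes $T$ to $\delta$-squares $T^\ast$, preserves incidences up to absolute constants, sends $r$-tubes to $r$-balls and $r$-balls to $r$-tubes for all $r\in[\delta,1]$, and sends rectangles of dimensions $(r',r)$ to rectangles of dimensions $\approx(r',r)$. Put $\PP^\ast=\{p^\ast:p\in\PP\}$, $\T^\ast=\{T^\ast:T\in\T\}$ and define a shading $Y$ of $\PP^\ast$ by $Y(p^\ast):=\{T^\ast:\ T\in Y'(p)\}$. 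Then $Y(\PP^\ast)\subseteq\T^\ast$, hence $\#Y(\PP^\ast)\le\#\T$, while $\#\PP^\ast=\#\PP$ and
$$\sum_{p^\ast\in\PP^\ast}\#Y(p^\ast)=\sum_{p\in\PP}\#Y'(p)=I(\T,\PP).$$
Since $Y'(p)$ is a $(\delta,\sigma)$-KT set of $\delta$-tubes and duality carries $r$-tubes to $r$-balls, each $Y(p^\ast)$ is a $(\delta,\sigma)$-KT set of $\delta$-squares, i.e.\ a $(\delta,\sigma,K_3)$-KT set with $K_3\lesssim 1$.

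\emph{Step 2 (quasi-product structure of $\PP^\ast$).} This is the heart of the matter. Because duality carries rectangles to rectangles of comparable dimensions, the assumption that $\PP$ is a rectangular $(\delta,2s)$-KT set (Definition~\ref{def.rect}) dualizes to the statement that $\PP^\ast$ is a ``rectangular $(\delta,2s)$-KT family of tubes'': for all $\delta\le r',r$, the family of tubes of $\PP^\ast$ lying in a given $\approx(r',r)$ slab has cardinality $\lesssim(\sqrt{r'r}/\delta)^{2s}$. Specialising to $\delta\times r$ slabs shows that each single-direction fiber $\cT_\theta$ of $\PP^\ast$ is a $(\delta,s)$-KT set, so the fiber condition in Definition~\ref{KK} holds with $d=s$ and $K_2\lesssim 1$. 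For the direction set one first performs a dyadic pigeonholing making all non-empty fibers $\cT_\theta$ of comparable size $\sim m$ (discarding the rest costs only a $\les$-factor and increases none of $\#\T,\#\PP,I$); then, after an anisotropic rescaling that renders the fibers ``full'' — under which the rectangular KT conditions, the incidence count, and (with some care) the $(\delta,\sigma)$-KT property of the sets $Y(p^\ast)$ are all preserved up to $\les$-factors — the rectangular $(\delta,2s)$-KT condition upgrades to the assertion that the direction set $\Lambda$ of $\PP^\ast$ is a $(\delta,s,K_1)$-KT set with $K_1\lesssim 1$. Thus $\PP^\ast$ becomes a $(\delta,s,s,K_1,K_2)$-quasi-product set with $K_1,K_2\lesssim1$.

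\emph{Step 3 (apply Theorem~\ref{maininct}).} With $s'=d'=s$, $\alpha=3$, and $\sigma=\min\{2s,2-2s\}$ as above, Theorem~\ref{maininct} applied to the $(\delta,s,s,K_1,K_2)$-quasi-product family $\PP^\ast$ with the shading $Y$ (whose $Y(p^\ast)$ are $(\delta,\sigma,K_3)$-KT sets) gives
$$I(\T,\PP)=\sum_{p^\ast\in\PP^\ast}\#Y(p^\ast)\ \les\ K_3^{1/3}(K_1K_2)^{2/3}\bigl(\delta^{-2s}\#\PP^\ast\bigr)^{1/3}\#Y(\PP^\ast)^{2/3}\ \lesssim\ \delta^{-2s/3}(\#\PP)^{1/3}(\#\T)^{2/3},$$
using $K_1,K_2,K_3\lesssim1$, $\#\PP^\ast=\#\PP$, and $\#Y(\PP^\ast)\le\#\T$. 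The endpoint values $s\in\{0,1\}$, not covered by Theorem~\ref{maininct}, are handled separately: $s=0$ by the trivial incidence bound, and $s=1$ by a limiting argument $s'\uparrow1$ (or directly, since then $\sigma=0$).

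\emph{Main obstacle.} Step~2 is the delicate one. The rectangular $(\delta,2s)$-KT hypothesis by itself does \emph{not} force the direction set of $\PP^\ast$ (equivalently, the $x$-projection of $\PP$) to be a $(\delta,s)$-KT set — a thin strip of full-width squares is a rectangular KT set with one-dimensional $x$-projection — so one genuinely needs the uniformisation of fiber sizes followed by the rescaling to ``full fibers'' to recover Definition~\ref{KK} with bounded constants, and one must verify that this rescaling (being anisotropic) does not degrade the $(\delta,\sigma)$-KT property of the shadings $Y(p^\ast)$ on which Theorem~\ref{maininct} relies.
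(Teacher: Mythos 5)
Your Steps 1 and 3 are fine in isolation (the paper itself applies its incidence theorems in dual form elsewhere), but Step 2 contains a genuine gap, and it is exactly the point you flag as the "main obstacle": the rectangular $(\delta,2s)$-KT hypothesis does not yield a $(\delta,s,s,K_1,K_2)$-quasi-product structure with $K_1K_2\lesssim 1$, even after pigeonholing the fiber sizes. What the rectangular condition gives for free is that each vertical fiber is $(\delta,s,O(1))$-KT (take $\delta\times r$ rectangles), so $K_2\lesssim 1$; but for the direction set, a width-$r$ vertical strip only yields $\#(\PP\cap\text{strip})\le (\sqrt{r}/\delta)^{2s}=r^s\delta^{-2s}$, so if the fibers have common size $m$ the projection to the abscissa is only a $(\delta,s,K_1)$-KT set with $K_1\sim\delta^{-s}/m$. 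Feeding $K_1K_2\sim\delta^{-s}/m$ into Theorem~\ref{maininct} with $\alpha=3$ produces an extra factor $(\delta^{-s}/m)^{2/3}$, which for small $m$ (e.g.\ a set $\PP$ that is a "graph" with singleton columns over a $(\delta,2s)$-set of abscissae -- a perfectly admissible rectangular $(\delta,2s)$-KT set) costs up to $\delta^{-2s/3}$ and destroys the bound. The proposed repair, an "anisotropic rescaling that renders the fibers full," is not a well-defined operation here: rescaling one coordinate turns $\delta$-tubes of different slopes into slabs of different widths and $\delta$-squares into rectangles, so neither the incidence relation nor the KT hypotheses on $Y'(p)$ survive; and when the fibers are singletons there is nothing to rescale in the first place. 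The quasi-product structure only sees the axis-parallel (Cartesian) information in $\PP$, whereas the hypothesis of the theorem is a statement about rectangles of arbitrary orientation, and that extra information cannot be recovered by duality plus pigeonholing.

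For comparison, the paper's proof does not go through Theorem~\ref{maininct} at all. It first runs a two-ends reduction (Lemma~\ref{2endsredu}) on each incidence fiber $Y'(p)$ to isolate an arc $\theta_p$ of directions of length $L$ on which the fiber is two-ends after rescaling; it then localizes tubes and squares to $(L,1)$-rectangles $R$ of \emph{arbitrary orientation}, contracts the squares inside each $(\delta,\delta/L)$-segment, and rescales each $R$ by $(1/L,1)$. It is precisely at this step that the full rectangular $(\delta,2s)$-KT condition is used: a $\bar\delta$-ball in the rescaled picture corresponds to an $(rL,r)$-rectangle aligned with $R$, and the rectangular condition converts this into a $(\bar\delta,2s,\frac{1}{ML^s})$-KT bound whose constant exactly offsets the losses from the rescaling. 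The endgame is an application of the two-ends Furstenberg estimate of Wang--Wu (Theorem~\ref{completeWW}) in dual form at scale $\bar\delta=\delta/L$, not of the quasi-product incidence theorem. If you want to salvage your strategy you would need to supply, in place of the undefined rescaling, an argument of this type that exploits tilted rectangles; as written, Step 2 does not close.
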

	Theorem \ref{thm.rectangles1} will be both proved and used in Section \ref{sec:4} to estimate the Fourier decay of fractal measures supported on the parabola. 
	\medskip

	At the heart of our arguments for both Theorem \ref{maininct} and \ref{thm.rectangles1} lies the following result in \cite{wangwufurst}. The proof of Theorem \ref{tWW25} partially dwells on some of the key ingredients that were developed in \cite{demwangszem} for the purpose of proving the main result therein (the case $s+d=1$, $s\le d$ of Theorem \ref{maininct}). However, Theorem \ref{tWW25} covers more ground, and in particular will be shown to imply the one in  \cite{demwangszem}. See Section  \ref{sect.inc} for the details of this argument.
	\begin{theorem}
		\label{tWW25}
		Let $\T$ be a $(\delta,t,1)$-KT set for some $t\in(0,2)$, and let $0<\epsilon_2<\epsilon_1\ll 1$. Write $\sigma=\min(t,2-t)$.
		Let $Y$ be a shading of $\T$.
		Assume that each $Y(T)$ is a $(\delta,\sigma,K_3)$-KT set and also $(\epsilon_1,\epsilon_2)$-two-ends.
		
		Then for each $\epsilon>0$
		$$\sum_{T\in\T}\#Y(T)\le C(\epsilon, \epsilon_1,\epsilon_2)K_3^{1/3}\delta^{-O(\epsilon_1)-\epsilon}(\delta^{-t}\#Y(\T))^{2/3}.$$
	\end{theorem}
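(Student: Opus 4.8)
The plan, following \cite{wangwufurst}, is to combine the sharp planar Furstenberg estimate of \cite{renwang} with a high-low induction on scales adapted to the Katz--Tao setting; the ideas of \cite{demwangszem} supply the threshold case $t=1$ (where $\sigma=1$).

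\textbf{Reductions.} I would begin with routine dyadic pigeonholing. Passing to a refined shading and a subfamily of tubes, we may assume that $\#Y(T)\sim M$ for every $T\in\T$ and that the multiplicity $\mu(p):=\#\{T\in\T:\,p\in Y(T)\}$ satisfies $\mu(p)\sim\mu$ for every $p$ in a set carrying all but a $\delta^{o(1)}$-fraction of the incidences; this costs a factor $\delta^{-o(1)}$, a harmless worsening of $\e_1,\e_2$, and the replacement of $K_3$ by $K_3\delta^{-o(1)}$, all of which are absorbed in the conclusion. After this, $\sum_{T\in\T}\#Y(T)\sim NM\sim\mu P$, where $N=\#\T$ and $P=\#Y(\T)$, and the $(\delta,t,1)$-KT hypothesis gives the a priori bound $N\le\delta^{-t}$. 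It is worth noting that the target bound is \emph{stronger} than the naive Szemer\'edi--Trotter count coming from $N$ (which in any case fails for general $\delta$-tubes), so the argument must use the spread-out Katz--Tao geometry of $\T$ and not merely $\#\T$; after covering the circle of directions by $O(1)$ arcs of length $\sim1$ and passing to point--line duality, the statement is a planar Furstenberg-set estimate in which \emph{neither} the tubes \emph{nor} the shadings are assumed maximal, the two-ends hypothesis playing the role of the missing maximality.

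\textbf{The main engine: high-low and induction on scales.} The core is a self-improving argument in the spirit of \cite{demwangszem}, driven by the broad--narrow and high-low machinery behind the sharp Furstenberg estimate of \cite{renwang}. One inducts on the scale $\delta$: assuming the bound at all scales larger than $\delta$, fix an intermediate scale $\Delta\in(\delta,1)$, coarsen each $\delta$-tube to the $\Delta$-tube $\overline{T}$ containing it --- so the family $\oT$ of these $\Delta$-tubes is again a $(\Delta,t,1)$-KT set --- and for each $\overline{T}$ rescale the $\delta$-tubes $T\in\T$ contained in $\overline{T}$ by the anisotropic dilation of factor $\Delta^{-1}$ transverse to $\overline{T}$, turning them into $(\delta/\Delta)$-tubes in a unit square that still form a KT set carrying a Katz--Tao, two-ends shading. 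The $(\e_1,\e_2)$-two-ends hypothesis, which lives at scale $\delta^{\e_1}$, is exactly what prevents the rescaled shadings from collapsing onto few cells, so the recursion loses nothing. Combining the inductive bound at scale $\Delta$ for $\oT$ with the inductive bounds at scale $\delta/\Delta$ inside the $\overline{T}$'s, and summing the products over $\overline{T}$, reassembles the bound at scale $\delta$: the exponents are arranged so that the factor $\delta^{-t/3}$ and the error $\delta^{-O(\e_1)-\e}$ recombine correctly across the scales, while the power of $K_3$ can be made to appear only once --- through a single H\"older step at the coarsest scale, which is where the $(\delta,\sigma,K_3)$-KT hypothesis on $Y(T)$ enters the $L^2$/high-low estimate --- producing the exponent $\tfrac13$. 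Since $\sigma=\min(t,2-t)$ is precisely the shading dimension that puts the Furstenberg configuration at the Szemer\'edi--Trotter threshold, the regimes $t\le1$ ($\sigma=t$) and $t>1$ ($\sigma=2-t$) each invoke the matching regime of \cite{renwang} as the base input.

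\textbf{The main obstacle.} The real difficulty --- and what makes this harder than either the $t=1$ case of \cite{demwangszem} or the Frostman, maximal estimate of \cite{renwang} --- is reconciling the Katz--Tao hypotheses, on $\T$ and on each $Y(T)$, with the Frostman-type inputs that the high-low method naturally consumes. A Katz--Tao set need not be Frostman: it may look lower-dimensional at some scales and concentrated at others, and a crude recursion bleeds a power of $\delta$ at every such scale. The remedy is a multiscale uniformization --- pigeonholing the branching numbers over all of the $\sim\e^{-1}$ intermediate scales --- isolating, at each scale, a genuinely-Frostman-at-that-scale piece whose exponents average to $t$ and to $\sigma$ respectively; but the two-ends condition lives at the single scale $\delta^{\e_1}$, so its spreading effect must be propagated from the finest scale upward through the recursion. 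Carrying this out while keeping the accumulated loss at $\delta^{-O(\e_1)-\e}$ and the dependence on $K_3$ at exactly $K_3^{1/3}$ is the crux. A secondary point: the overlap $\mu$ can be as large as $\delta^{-1}$ (very dense configurations), so one must check that the $L^2$/high-low steps still close there, and not only in the incidence-sparse regime.
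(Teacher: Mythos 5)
The paper's proof of Theorem~\ref{tWW25} is not a from-scratch argument at all: it is a ten-line deduction from Theorem~1.5 of \cite{wangwufurst}, which is cited as a black box. One pigeonholes so that $\#Y(T)\sim\lambda\delta^{-1}$ for a fixed $\lambda\lesssim1$, quotes the estimate
$(\lambda/\delta)^{1/2}\delta^{t/2}\sum_T\#Y(T)\le C K_3^{1/2}\delta^{-O(\e_1)-\e}\#Y(\T)$
from \cite{wangwufurst}, rearranges it as an upper bound on $\lambda/\delta$, and then plays it against the elementary lower bound $\lambda/\delta\gtrsim\delta^{t}\sum_T\#Y(T)$ (which comes from $\sum_T\#Y(T)\sim(\lambda/\delta)\#\T$ and the KT bound $\#\T\lesssim\delta^{-t}$). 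Eliminating $\lambda/\delta$ yields a cubic inequality for $\sum_T\#Y(T)$ whose solution is exactly the claimed bound with the $K_3^{1/3}$ and $(\delta^{-t}\#Y(\T))^{2/3}$ factors. That algebraic bootstrapping is the whole content of the proof; no high-low, no new induction on scales.

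Your proposal misses this reduction entirely and instead sketches a re-derivation of the underlying Furstenberg machinery --- broad--narrow, high-low, anisotropic rescaling, multiscale uniformization to reconcile Katz--Tao with Frostman. Even if that program could be executed, it would amount to independently re-proving Theorem~1.5 of \cite{wangwufurst}, a substantial result which the paper explicitly takes as given; and as written you leave the hardest step ("reconciling the Katz--Tao hypotheses with the Frostman-type inputs," "carrying this out while keeping the accumulated loss at $\delta^{-O(\e_1)-\e}$ and the dependence on $K_3$ at exactly $K_3^{1/3}$ is the crux") unresolved, so the proposal does not constitute a proof. Concretely, the idea you are lacking is the short arithmetic trick above: fix $\lambda$ with $\#Y(T)\sim\lambda\delta^{-1}$, feed the $(\delta,t,1)$-KT cardinality bound $\#\T\lesssim\delta^{-t}$ and the cited \cite{wangwufurst} inequality into one another, and solve for the incidence count. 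Everything else --- induction on scales, the conversion of KT to Frostman, the use of \cite{renwang} --- lives inside the cited theorem and does not need to be reconstructed here.
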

	\begin{proof}
		We include a short argument to show how this follows from Theorem 1.5 in \cite{wangwufurst}. It suffices to prove the inequality under the extra assumption that $\#Y(T)\sim \lambda\delta^{-1}$, for some $\lambda\lesssim 1$.  
		
		Theorem 1.5 in \cite{wangwufurst} proves that
		\begin{equation}
			\label{jefhureyg7yg78yg78y}
			(\frac\lambda\delta)^{1/2}\delta^{t/2}\sum_{T\in\T}\#Y(T)\le C(\epsilon, \epsilon_1,\epsilon_2)K_3^{1/2}\delta^{-O(\epsilon_1)-\epsilon}\#Y(\T),
		\end{equation}
		or
		$$\frac\lambda\delta\le C(\epsilon, \epsilon_1,\epsilon_2)K_3\delta^{-O(\epsilon_1)-\epsilon}(\frac{\#Y(\T)}{\sum_{T\in\T}\#Y(T)})^2\delta^{-t}.$$
		On the other hand, 
		$$\sum_{T\in\T}\#Y(T)\sim \frac{\lambda}{\delta}\#\T\lesssim \frac\lambda\delta\delta^{-t}.$$
		Combining the two estimates leads to 
		$$\sum_{T\in\T}\#Y(T)\le C(\epsilon, \epsilon_1,\epsilon_2)K_3\delta^{-O(\epsilon_1)-\epsilon}(\frac{\#Y(\T)}{\sum_{T\in\T}\#Y(T)})^2\delta^{-2t}.$$
		This proves our claim.

	\end{proof}
	The proof of Theorem \ref{maininct} will use induction on the scale parameter $\delta$. At each such scale, there is a dichotomy between the two-ends and the non-two-ends scenarios. In the first case, we get the desired estimate (in fact an even stronger one if $s\le d$) by invoking Theorem \ref{tWW25}. The key in dealing with the second case is that the product structure of tubes (similar to  the well-spaced condition, see Remark \ref{rwells}) forces them to behave well under rescaling.

	\subsection{Fourier decay of fractal measures}
	Let $\Gamma$ be the graph of a $C^3$ function $\gamma:[-1,1]\to\R$ satisfying the nonzero curvature condition $\min_{-1\le x\le 1}|\gamma''(x)|>0$. When $\gamma(\xi)=\xi^2$, we get the parabola $\Gamma=\PP^1$. The following was conjectured in \cite{O1} for $\Gamma=\PP^1$. There is growing supporting evidence for the case of arbitrary $\Gamma$ as above.
	\begin{conj}
		\label{hfrhvouuigrtiogupioythu09}
		Let $s\in [0,1]$. Let $\mu$ be a Borel measure supported on $\Gamma$
		satisfying 
		\begin{equation}
			\label{e4 giobjytibuiuiu}
			\mu(B(y,r))\lesssim r^s
		\end{equation}
		for each $y\in \R^2$ and each $r>0$. 
		Then  for each ball $B_R\subset\R^2$ of radius $R$  we have
		\begin{equation}
			\label{f[poi 0h0oi0-j9i9]}
			\|\widehat{\mu}\|^6_{L^6(B_R)}\les \begin{cases}R^{2-3s},\;s\le \frac12\\R^{1-s},\;s>\frac12\end{cases}.
		\end{equation}
	\end{conj}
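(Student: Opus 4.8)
The plan is to establish the conjecture for the parabola $\Gamma=\PP^1$; the general $C^3$ curve then follows by affine rescaling near each point of $\Gamma$ together with a perturbation argument, since the structural hypotheses entering Theorem~\ref{thm.rectangles1} are stable under mild perturbations. Fix $R$, put $\delta=R^{-1}$, and begin with the standard reductions. Convolving $\mu$ with an $L^1$-normalised bump at scale $\delta$ changes $|\widehat\mu|$ on $B_R$ by at most a bounded factor, so we may assume $\mu$ is a discretised measure supported on a $\delta$-separated $(\delta,s,O(1))$-KT subset of $\PP^1$; dyadically pigeonholing the $\mu$-masses of the $\delta$-squares lets us further assume $\mu$ is essentially uniform, the resulting $\log(1/\delta)$-losses being absorbed into $\les$. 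By Plancherel localised to $B_R$,
\begin{equation*}
\|\widehat\mu\|_{L^6(B_R)}^6\;\approx\;R^2\sum_{Q\in\cD_\delta}(\mu*\mu*\mu)(Q)^2,
\end{equation*}
so the task becomes the bound $\sum_Q(\mu*\mu*\mu)(Q)^2\les\delta^{3s}$ for $s\le\tfrac12$ and $\les\delta^{1+s}$ for $s>\tfrac12$ on the $\delta$-discretised six-fold additive energy of $\mu$ on the parabola.

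The heart of the proof is to convert this energy into an incidence count of the form appearing in Theorem~\ref{thm.rectangles1}. The tangent-line duality of $\PP^1$ — the tangent lines at parameters $a,b$ meeting at $\big(\tfrac{a+b}{2},ab\big)$, an affine image of $(a+b,a^2+b^2)$ — lets one encode the energy as a weighted version of $I(\T,\PP)=\sum_{p\in\PP}\#Y'(p)$ for a suitable family $\T$ of $\delta$-tubes and a suitable family $\PP$ of $\delta$-squares built from the discretised support of $\mu$. The curvature of $\PP^1$ is exactly what furnishes the hypotheses of Theorem~\ref{thm.rectangles1}: an $s$-Frostman arc of $\PP^1$ meets a rectangle of dimensions $(r',r)$ and arbitrary orientation in $\lesssim(\sqrt{rr'}/\delta)^{2s}$ $\delta$-squares — the binding case being a width-$r'$ tube aligned with the arc, inside which curvature confines the arc to length $\sim\sqrt{r'}$ — so $\PP$ is a rectangular $(\delta,2s)$-KT set in the sense of Definition~\ref{def.rect}; and the incidence shading $Y'$ is a $(\delta,\sigma)$-KT set with $\sigma=\min(2s,2-2s)$, the competing bound $2-2s$ arising because the tangent tubes meeting a $\delta$-square lying at distance $\rho$ from $\PP^1$ are confined to two parameter-intervals of length $\sim\delta/\sqrt\rho$. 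It is this split $\sigma=\min(2s,2-2s)$, together with the case analysis of which additive configuration — transversal, or near the caustic $\{(2t,2t^2)\}$ — dominates the energy, that accounts for the bifurcation of the conjectured exponent at $s=\tfrac12$. Feeding these structures into Theorem~\ref{thm.rectangles1}, with $\#\T$ and $\#\PP$ controlled by the Frostman property (so that $I(\T,\PP)\les\delta^{-2s}$ in the generic regime), and unwinding the reduction produces $\|\widehat\mu\|_{L^6(B_R)}^6\les R^{2-3s}$ for $s\le\tfrac12$ and $\les R^{1-s}$ for $s>\tfrac12$.

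I expect the main obstacle to be the reduction of the previous paragraph. The Szemer\'edi--Trotter strength of Theorem~\ref{thm.rectangles1} is essential: a crude Cauchy--Schwarz passage from the six-fold energy down to the four-fold energy (or directly to $I(\T,\PP)$) loses a polynomial factor and yields only a bound of the shape $R^{2-2s}$, so one must arrange the bilinear count — and in particular identify the correct pair $(\T,\PP)$ — so that Theorem~\ref{thm.rectangles1} is invoked without any such loss. A second difficulty is the behaviour near the caustic, where tangent tubes cluster and both the $(\delta,\sigma)$-KT hypothesis and the energy bound degenerate; since the caustic is again an affine copy of the parabola, this is to be handled by an induction-on-scales/rescaling argument in the spirit of the passage from Theorem~\ref{tWW25} to Theorem~\ref{thm.product}. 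Finally one must verify that the dyadic pigeonholing used to reduce to a uniform $\mu$, and the passage between the discretised and the original measure-theoretic statements, cost only $\les$-admissible factors.
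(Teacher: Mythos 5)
There is a fundamental gap here: the statement you are proving is stated in the paper as a \emph{conjecture}, and it remains open for $s<\tfrac23$; the paper's own results (Theorem \ref{io jfpurei90 h9=hhytj uy uythytj  } and Theorem \ref{jki regu rthu8 yuh 60-u4j}) only give partial progress. The quantitatively decisive flaw in your proposal is the final step, where you claim that feeding the tube/square families into Theorem \ref{thm.rectangles1} and ``unwinding the reduction'' produces $R^{2-3s}$ for $s\le\tfrac12$. The quantity controlled by the reduction is not $I(\T,\PP)$ but the six-fold energy, i.e.\ the \emph{second moment} $\sum_{T}(\#\{p\in\PP:p\cap T\neq\emptyset\})^2$. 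Running Theorem \ref{thm.rectangles1} over dyadic multiplicity levels $r$ (as in Corollary \ref{cjfhughurtghuirtgui}) and interpolating with the trivial bound $r\lesssim\max_p\#Y'(p)\lesssim\delta^{-s}$ yields $\E_{3,\delta}(S)\les\delta^{-7s/2}$, hence only $\|\widehat\mu\|_{L^6(B_R)}^6\les R^{2-5s/2}$ --- which is strictly weaker than the conjectured $R^{2-3s}$ for every $s<\tfrac23$ and coincides with it only at $s=\tfrac23$. This $7/2$ exponent is not an artifact of a clumsy Cauchy--Schwarz that a cleverer arrangement could avoid: it matches the discrete bound $\E_3(S)\lesssim(\#S)^{7/2}$ obtained from the full Szemer\'edi--Trotter theorem for arbitrary $S\subset\PP^1$, so no incidence estimate of Szemer\'edi--Trotter strength can close the gap. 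The conjectured exponent for $s\le\tfrac12$ corresponds to the energy exponent $3$, and --- as the paper notes --- requires genuine square-root cancellation in $\widehat\mu$, which is discarded the moment one passes to counting approximate solutions of $|s_1+s_2+s_3-t_1-t_2-t_3|\lesssim\delta$ in absolute value.

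Two further points. First, your opening reduction of a general $C^3$ curve to the parabola ``by affine rescaling and perturbation'' is not available: the parabola argument rests on exact algebraic identities (the change of variables $(x_1,x_2)\mapsto(3(x_1+x_2),\sqrt3(x_1-x_2))$ and the conformal map sending the sum-circles $C_{t_1,t_2,t_3}$ to lines), which is precisely why the paper treats general curves by a completely different (decoupling-based) method with a worse exponent. Second, your heuristic for the bifurcation at $s=\tfrac12$ via the caustic is not how the exponent split arises; in the conjecture it comes from comparing the trivial bound $\|\widehat\mu\|_\infty\le\|\mu\|$ against the expected square-root cancellation, and in the incidence framework $\sigma=\min(2s,2-2s)$ enters only as a non-concentration hypothesis, not as the source of the two regimes.
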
The paper \cite{O1} presented $R^{\epsilon}$ improvements (for some $\epsilon>0$) over the easier upper bound that follows using $L^4$ methods. That was achieved for each $s\in [0,1]$, using estimates for the Furstenberg set problem that were available at that time. This was an exciting connection that has since stimulated further research into the conjecture. In \cite{DD}, the $R^\epsilon$ improvement was recovered by combining a different Furstenberg set estimate with the Fourier analytic method of decoupling.

	The sharp estimate  was proved in the range $s\ge \frac23$, first in \cite{O2} for the parabola, then in \cite{Yi} for arbitrary curves. The conjecture appears to be significantly more difficult in the range $s\le \frac12$, as it entails certain square root cancellation.
	The bound for $s\le \frac12$
	\begin{equation}
		\label{kjfjrjhgturgurtupiotuhi}
		\|\widehat{\mu}\|^6_{L^6(B_R)}\les R^{2-3s+\beta},\;\;\beta=\frac{3s}{4}
	\end{equation}
	was proved in \cite{demwangszem} for arbitrary curves. 
	
	We prove two results for $s\le \frac23$. The first one is for the parabola. We write $\|\mu\|=\mu(\R^2)$.
	\begin{theorem}
		\label{io jfpurei90 h9=hhytj uy uythytj  }
		Let $s\le \frac23$. Assume $\mu$ is supported on $\PP^1$ and satisfies \eqref{e4 giobjytibuiuiu}. Then we have
		$$
		\int_{B_R}|\widehat{\mu}|^6\les R^{2-\frac{5s}{2}}\|\mu\|.
		$$
	\end{theorem}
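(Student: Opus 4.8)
The plan is to transfer the estimate, by means of a wave-packet decomposition and duality, into an instance of the tube--point incidence bound of Theorem \ref{thm.rectangles1}, and then to optimise over the parameters that the pigeonholing produces. Normalise so that $\|\mu\|\sim 1$ and set $\delta=R^{-1/2}$. As is standard for $L^6$ estimates of this kind (compare \cite{O1,demwangszem}), a logarithmic decomposition reduces matters to bounding $\lambda^6|U_\lambda|$ for a single dyadic height $\lambda\gtrsim R^{-C}$, where $U_\lambda=\{\xi\in B_R:\ |\widehat\mu(\xi)|\sim\lambda\}$; the contribution of $\lambda\le R^{-C}$ is trivially negligible. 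Decompose $\PP^1$ into $\delta$-arcs $\theta$, write $\mu=\sum_\theta\mu_\theta$, and use that each $\widehat{\mu_\theta}$ is essentially constant, with height at most $\mu(\theta)\lesssim\delta^{s}$, on the translates of the dual $R^{1/2}\times R$ slab. Rescaling $B_R$ to the unit square turns these slabs into $\delta$-tubes; pigeonholing a common height $\nu$ for the relevant wave packets then produces a family $\T$ of $\delta$-tubes, a family $\PP$ of $\delta$-squares covering the rescaled copy of $U_\lambda$, the lower bound $\#\{T\in\T:\ T\cap p\neq\emptyset\}\gtrsim\lambda/\nu$ for each $p\in\PP$, and the dictionary $|U_\lambda|\sim R\,\#\PP$.

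Next I would verify that $\PP$ and this tube--point configuration meet the hypotheses of Theorem \ref{thm.rectangles1}. For the $\sigma$-KT property of $Y'(p)=\{T\in\T:\ T\cap p\neq\emptyset\}$ with $\sigma=\min(2s,2-2s)$: on the parabola the direction of a wave-packet tube is an affine function of the basepoint of its arc, so a ball of radius $r$ in the space of tubes through a fixed square corresponds to a $\mu$-arc of comparable length, and the $s$-Frostman condition on $\mu$ --- together with the pigeonholed height $\nu$ --- forces a bound of the form $(r/\delta)^{\sigma}$; the regime $\sigma=2-2s$ for $s>\tfrac12$ enters through the complementary fact that at most $\delta^{-1}$ tubes pass through a point. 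The more delicate point is that $\PP$ is a rectangular $(\delta,2s)$-KT set: for a rectangle $B$ of dimensions $(r',r)$ in arbitrary orientation, a C\'ordoba-type $L^4$ (reverse square function) estimate for the parabola, applied after the affine map that sends $B$ to a square, controls $|U_\lambda\cap B|$, hence $\#(\PP\cap B)$, by $(\sqrt{rr'}/\delta)^{2s}$; the $s$-Frostman hypothesis on $\mu$ enters here through $\mu(Q)\lesssim(\diam Q)^{s}$ for the rescaled dual rectangles $Q$, and it is the parabolic scaling symmetry of $\PP^1$ that produces the geometric factor $\sqrt{rr'}$.

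Granting these hypotheses --- possibly after refining to a subcollection at a cost of $\delta^{-o(1)}$, which is absorbed by $\les$ --- Theorem \ref{thm.rectangles1} yields
$$\#\PP\cdot\frac\lambda\nu\ \lesssim\ I(\T,\PP)\ \les\ \delta^{-\frac{2s}{3}}\,(\#\T)^{2/3}\,(\#\PP)^{1/3}.$$
Feeding this into $\int_{B_R}|\widehat\mu|^6\les\lambda^6|U_\lambda|\sim\lambda^6 R\,\#\PP$, and using the upper bounds on $\#\T$ that are available --- an $\ell^2$ (Plancherel) bound $\#\T\lesssim\nu^{-2}\delta^{2s-1}$, an $\ell^1$ bound coming from $\sum_\theta\mu(\theta)\sim 1$, and the KT constraint $\lambda/\nu\lesssim\delta^{-\sigma}$ --- leaves a finite optimisation in $(\lambda,\nu)$ whose outcome is the claimed bound $R^{2-\frac{5s}{2}}\|\mu\|$; note that the endpoint $s=\tfrac23$ reproduces the sharp bound $R^{1-s}$ known from \cite{O2,Yi}, consistent with the restriction $s\le\tfrac23$.

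The main obstacle is the second step: establishing the rectangular $(\delta,2s)$-KT property of the superlevel set with the correct exponent $2s$ and a constant uniform over all eccentricities $r'/r$ and all orientations of $B$. This is precisely where the curvature and affine self-similarity of the parabola are used in an essential way, and it is the reason the theorem is stated for $\PP^1$ rather than a general curve. The remaining ingredients --- the two layers of pigeonholing, the passage between $|U_\lambda|$ and $\#\PP$, and the final parameter optimisation --- are routine, if somewhat lengthy.
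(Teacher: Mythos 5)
You take the opposite route from the paper: you work on the spatial side of $\widehat\mu$ (wave packets, superlevel sets $U_\lambda$), whereas the paper works entirely on the side of $\mu$. It first reduces the $L^6$ bound to a trilinear energy estimate $\E_{3,\delta}(S_1,S_2,S_3)\les\delta^{-s}(\#S_1\#S_2\#S_3)^{5/6}$ for $(\delta,s)$-KT subsets of $\PP^1$ (Proposition \ref{p o4i9i59ui=0g6-3} plus a broad--narrow reduction), then recasts that energy as tube--square incidences via the parabola-specific circle-to-line transformations of \cite{O1}, and only then invokes Theorem \ref{thm.rectangles1}. The difference is not cosmetic: in the paper's reduction the square family $\PP$ is a bi-Lipschitz image of the Cartesian product $S_1\times S_2$ of two $(\delta,s)$-KT sets, and the rectangular $(\delta,2s)$-KT hypothesis is then an elementary Fubini count (Proposition \ref{k jg  tgu hu-uh49-t09359056 h9876} and Corollary \ref{idcj uhuireyygiytigy}). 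Your $\PP$ is a discretised superlevel set of $|\widehat\mu|$ and has no such product structure.

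That is exactly where your argument has a genuine gap. The justification you offer for the rectangular $(\delta,2s)$-KT property --- a C\'ordoba-type $L^4$ estimate ``applied after the affine map that sends $B$ to a square'' --- does not work: the only affine maps compatible with the parabola's wave-packet geometry are parabolic rescalings composed with shears and translations, and these do not send a rectangle of arbitrary orientation and eccentricity to a square, so there is no square-function estimate available after the change of variables. The condition is also genuinely delicate: a single $\delta$-arc carrying mass $\nu\gtrsim\lambda$ makes $|\widehat\mu|\gtrsim\lambda$ on an entire dual slab, which after rescaling is a $\delta\times1$ rectangle containing $\sim\delta^{-1}$ squares, far exceeding the allowed $(\sqrt{\delta}/\delta)^{2s}=\delta^{-s}$; so the property could at best hold after a nontrivial pruning tied to $\lambda$ and $\nu$ that you do not describe. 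A secondary but real issue is the ending: you leave the conclusion to ``a finite optimisation in $(\lambda,\nu)$,'' but in the paper the exponent $2-\tfrac{5s}{2}$ comes from the $5/6$ power in the trilinear energy bound, which is obtained by averaging the incidence estimate over the three permutations of $(S_1,S_2,S_3)$; your linear setup has no analogue of that symmetrisation, so it is not clear the numerology closes even if the KT hypotheses were granted.
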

	When $s=\frac23$, this recovers the sharp estimate in 
	\cite{O2}. When $s\le \frac12$, our result improves significantly \eqref{kjfjrjhgturgurtupiotuhi}
	in the case $\Gamma=\PP^1$. However, the two methods use Furstenberg set estimates in rather different ways. The argument in \cite{demwangszem} uses wave packet decomposition for $\widehat{\mu}$, in line with the earlier work \cite{DD}.  Wave packets are spatially localized on tubes that have special quasi-product structure. The $L^6$ norm is processed using $l^2$ decoupling, that reduces matters to counting heavy squares where many tubes intersect. Due to the product structure, this counting is done with the help of (case $s\le d$ of) Theorem \ref{maininct}.
	While this argument unfolds mostly on the spatial side (the domain of $\widehat{\mu}$), our proof of Theorem \ref{io jfpurei90 h9=hhytj uy uythytj  } takes place entirely on the spectral side (the domain of $\mu$). The inspiration for this argument comes from \cite{O1}. 
	
	To prove Theorem \ref{io jfpurei90 h9=hhytj uy uythytj  } we use virtually no Fourier analysis (e.g. decoupling). Instead, we take advantage of certain transformations that seem specific to the parabola, to recast the $L^6$ estimate into one that is purely of incidence geometric nature. In the  discrete case, this perspective goes back to \cite{BD}.
	In this new framework, we can no longer enforce quasi-product structure for tubes, but will instead gain some stronger non-concentration property for  squares. This allows us to take advantage of Theorem \ref{tWW25} to prove an incidence estimate of independent interest, Theorem \ref{huufwf[gmiu9i96=y]} (referred to in the introduction as Theorem \ref{thm.rectangles1}). We perform most of the analysis in a trilinear setting, that
	helps us enforce the much needed  non-concentration. The reduction to the linear setting follows via a broad-narrow analysis inspired by \cite{BoGu}. As a consequence of Theorem \ref{io jfpurei90 h9=hhytj uy uythytj  } we find the following upper bound for the $\delta$-discretized six-fold energy (sometimes referred to as 3-energy)
	$$\E_{3,\delta}(S)=\#\{(s_1,\ldots,s_6)\in S^6:\;|s_1+s_2+s_3-s_4-s_5-s_6|\lesssim \delta\}\les (\delta^{-s})^{7/2}$$
	for each $(\delta,s)$-KT set $S\subset \PP^1$, $s\le 2/3$. The exponent $7/2$ is only sharp for $s=2/3$. It is expected to be $3$ when $s\le 1/2$. The exponent $7/2$ matches the one in the discrete estimate 
	$$\E_3(S)=\#\{(s_1,\ldots,s_6)\in S^6:\;s_1+s_2+s_3-s_4-s_5-s_6=0\}\les (\#S)^{7/2},$$
	proved in \cite{BD} for arbitrary sets $S\subset\PP^1$.
	The proof in the discrete case involved a simple application of the Szemer\'edi--Trotter incidence bound for lines and points. In the world of tubes and squares, a bound of analogous strength only holds for families with special properties (e.g. such as those in Theorems \ref{maininct} and \ref{tWW25}). Our arguments invest significant effort into identifying a new such family that fit the context of our application. See Theorem \ref{huufwf[gmiu9i96=y]}. 
	
	\smallskip
	
	Here is our second result, for arbitrary curves. We use the method in \cite{demwangszem}, in fact our new result for $s\ge \frac12$ complements \eqref{kjfjrjhgturgurtupiotuhi}. More precisely, we combine decoupling with our new incidence estimate for quasi-product sets of tubes, Theorem \ref{maininct} for $s>d$, in the form of Corollary \ref{pout98u8huy8ht-8h}.  
	
	\begin{theorem}
		\label{jki regu rthu8 yuh 60-u4j}
		Let $\frac12\le s\le 1$. Assume $\mu$ is supported on a curve $\Gamma$ with nonzero curvature, and satisfies \eqref{e4 giobjytibuiuiu}.
		Then
		$$
		\|\widehat{\mu}\|_{L^6(B_R)}^6\les R^{1-s+\beta},\;\;\;\beta=\frac{1-s^2}{2s+1}.
		$$
		If $\mu$ is AD-regular, that is  
		$$C_\mu^{-1}r^s\le \mu(B(y,r))\le C_\mu r^s,\;\;\forall y\in\operatorname{supp}(\mu),$$
		then
		$$\|\widehat{\mu}\|_{L^6(B_R)}^6\les R^{1-s+\beta-c_\mu},\;\;\;\beta=\frac{1-s}{2},$$
		when $c_\mu>0$ depends on $C_\mu$.
	\end{theorem}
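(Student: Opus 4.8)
The plan is to reduce the Fourier $L^6$ estimate to an incidence-counting problem on the spectral side and then feed it into the quasi-product incidence bound of Theorem~\ref{maininct}, following the architecture of \cite{demwangszem} but replacing the key incidence input by our Corollary~\ref{pout98u8huy8ht-8h}. First I would pigeonhole: writing $\|\widehat\mu\|_{L^6(B_R)}^6 = \int_{B_R}|\widehat\mu|^6$, I decompose $\mu$ according to the scale $\delta=R^{-1/2}$ into arcs $\theta$ of length $\delta^{1/2}$ (the canonical decoupling caps for the curve $\Gamma$ at frequency scale $R$), obtaining $\widehat\mu = \sum_\theta \widehat{\mu_\theta}$, where each $\mu_\theta$ has Fourier transform essentially constant on dual tubes of dimensions $\delta^{-1}\times\delta^{-1/2}$ — after an affine rescaling these become $\delta$-tubes. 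Applying $\ell^2$ decoupling for $\Gamma$ (Bourgain--Demeter), I bound $\int_{B_R}|\widehat\mu|^6 \lesssim R^\e \big(\sum_\theta \|\widehat{\mu_\theta}\|_{L^6(w_{B_R})}^2\big)^3$ up to lower-order terms, and then the heart of the matter is controlling the contribution of the ``heavy'' spatial $\delta$-balls where many wave packets overlap. Standard dyadic pigeonholing reduces this to: given a collection of $\delta$-tubes with multiplicity structure, bound $\sum_T \#Y(T)$ where $Y(T)$ is the set of heavy squares met by $T$.

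The second step is to verify the quasi-product hypotheses. Because $\mu$ satisfies the Frostman condition \eqref{e4 giobjytibuiuiu} with exponent $s$, the direction set $\Lambda$ of the relevant tubes (parametrized by the arcs $\theta$, i.e.\ by points of the curve) inherits, after the rescaling, a $(\delta, s, K_1)$-KT property with $K_1 \approx R^{0} = \delta^{O(\e)}$; here I use that a Frostman measure of dimension $s$ restricted to any ball of radius $r$ has mass $\lesssim r^s$, translating into the KT-counting bound for a $\delta$-net of the support. Similarly, for fixed direction $\theta$, the tubes $\cT_\theta$ come from translating in the dual variable, and the spacing of significant translates is again governed by the Frostman condition within the arc $\theta$, giving a $(\delta, d, K_2)$-KT property; working out the exponent $d$ in terms of $s$ and the geometry is where the curvature of $\Gamma$ enters (the $L^6$/trilinear structure forces a relation like $s + d$ being essentially $1$ up to the deficit we are trying to beat). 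With $\sigma = \min(s+d, 2-s-d)$ and the shading $Y(T)$ automatically a $(\delta,\sigma,K_3)$-KT set for the heavy squares (heaviness plus Frostman giving the non-concentration), Theorem~\ref{maininct}, or rather its packaged form Corollary~\ref{pout98u8huy8ht-8h} valid for $s > d$, yields $\sum_T \#Y(T) \les (K_1K_2)^{1-1/\alpha}(\delta^{-s-d}\#\T)^{1/\alpha}\#Y(\T)^{1-1/\alpha}$ with $\alpha = \min\{3, 2 + d/s\}$. Unwinding the pigeonholing, summing the dyadic contributions, and converting back through $\delta = R^{-1/2}$ produces the claimed exponent $\beta = \frac{1-s^2}{2s+1}$ after optimizing the parameter $d$ (the optimization is a routine one-variable calculus computation and I would not carry it out in detail here).

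For the AD-regular improvement I would exploit that two-sided regularity removes the loss coming from tubes/squares being far from maximal: in the AD-regular case every relevant $Y(T)$ is essentially a $(\delta,\sigma)$-\emph{set} of full size $\sim \delta^{-\sigma}$, and the direction and fiber sets are genuine $(\delta,s)$- and $(\delta,d)$-sets, so one can instead invoke the stronger Furstenberg-type bound \eqref{ejifufu8gu8t2} from \cite{renwang} (the exponent-$3/2$ estimate) which beats the KT estimate precisely when $\alpha < 3$ and $s+d>1$. This replaces $\alpha$ by the effective value $3$ in the relevant regime and improves the deficit to $\beta = \frac{1-s}{2}$; the extra gain $c_\mu > 0$ comes from a slackness in the Frostman inequalities — at almost every scale the ball-mass is strictly below $r^s$ by a fixed factor depending on $C_\mu$, which can be iterated across $\log R$ scales to gain a power $R^{-c_\mu}$, a now-standard ``epsilon of room'' argument as in the AD-regular improvements in the Furstenberg literature.

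The main obstacle I expect is the bookkeeping in the second step: correctly identifying the fiber dimension $d$ so that the quasi-product structure is genuinely available (the tubes arising from wave packets of a Frostman measure on a curve are not automatically a quasi-product set — one must use the curvature of $\Gamma$ together with the single Frostman hypothesis to manufacture \emph{both} a KT-type bound on directions \emph{and} a KT-type bound on each fiber, and these compete), and then checking that the two-ends / $(\delta,\sigma)$-KT hypothesis on the shading survives the passage to heavy squares. A secondary technical point is that Corollary~\ref{pout98u8huy8ht-8h} is stated for $s>d$, so one must confirm that the optimal choice of $d$ in the $s \ge \frac12$ range indeed lands in the regime $d < s$ where that corollary applies; at the boundary $s=\frac12$ one should double-check continuity with the estimate \eqref{kjfjrjhgturgurtupiotuhi} from \cite{demwangszem}, as the paper claims the new bound complements it there.
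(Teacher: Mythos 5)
Your high-level architecture (wave packets at scale $R^{-1/2}$, decoupling, reduction to counting heavy squares, and feeding the resulting tube family into the quasi-product incidence bound via Corollary~\ref{pout98u8huy8ht-8h}) is the same as the paper's, which follows the framework of \cite{demwangszem}. However, there are two genuine gaps that would prevent you from reaching $\beta=\frac{1-s^2}{2s+1}$.

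First, the incidence input alone does not close the argument. The paper derives \emph{two} separate estimates and interpolates. The ``second estimate'' \eqref{lkporkforigoitpgoitopgi[tpo]}, which is the one using the heavy-square count from Corollary~\ref{pout98u8huy8ht-8h}, has the form $\les R^{-9}M^{5-\frac2s}D^{3-\frac1s}P^2$ in the uniformization parameters $M$ (mass per cap), $D$ (number of active caps $\theta$), and $P$ (number of active $\theta$ per intermediate cap $\tau$); the Frostman condition only controls the products $DM\lesssim R^s$ and $MP\lesssim R^{3s/4}$, not the factors individually, so when $M\ll R^{s/2}$ this estimate by itself gives nothing. One must take a weighted geometric mean with the ``first estimate'' \eqref{oirpofiigtigihoyt0ho0}, obtained by decoupling into the caps $\theta$ directly, choosing the weight $\alpha=\frac{2s}{2s+1}$ precisely so that the exponent of $M$ dominates the sum of the exponents of $D$ and $P$. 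This interpolation, not an optimization over the fiber exponent $d$, is where $\beta=\frac{1-s^2}{2s+1}$ comes from; in the paper $d$ is not a free parameter but is fixed at $d=1-s$ by \eqref{e12}, so that $s+d=1$, $\sigma=1$, and the KT hypothesis on the shading is vacuous (this also disposes of your worry about verifying non-concentration of $Y(T)$). Second, you omit the mechanism that converts a heavy-square count into an $L^6$ bound: the paper needs the bush estimate (Proposition~\ref{p5}), i.e.\ a local decoupling into the intermediate caps $\tau$ on each heavy $\sqrt R$-square, and this is where the parameter $P$ enters at all. Without it, knowing $\#\cQ_r$ does not bound $\|F\|_{L^6}^6$.

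On the AD-regular case: the paper does not invoke the Ren--Wang bound; it simply observes that AD-regularity forces $M,D\sim R^{s/2}$ and $P\sim R^{s/4}$, so the second estimate alone already gives $\beta=\frac{1-s}{2}-c_\mu$. Your proposed source of the gain $c_\mu$ (strict slack in the Frostman inequality iterated over scales) is not right for AD-regular measures, where the Frostman bounds are tight by hypothesis; the $c_\mu$ in the paper is traced to Remark 9.3 of \cite{demwangszem} and arises from the explicit exponent computation with the maximal parameter values, not from an epsilon-of-room iteration.
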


	\subsection{Sum-product problems}
	The discretised sum-product problem asks given $0 < s \leq t < 1, C >0$  what is the largest value of $c  = c(C,s,t) >0$ so that for all $(\delta,s,C)$-sets with $\# A= \de^{-t}$ do we have
	\begin{equation}\label{eq.dissumprodfsdf}
		\cn{A+A} + \cn{AA} > \delta^{-c}\#A,
	\end{equation}
	provided that $\de > 0$ is small enough. This problem is a `fractal' analogue of the Erd\H os--Szemer\'edi conjecture \cite{erdsz}, and was asked in \cite{kat}, in relation to the Erd\H os--Volkmann ring problem \cite{erd}. This problem has been explored in many works, with non-quantitative estimates on $c > 0$ given in \cite{bou03}, \cite{bougam}, \cite{bou}, \cite{sumprodent}. When $s = t,$ many quantitative estimates for $c$ have been discovered, see \cite{gut}, \cite{che}, \cite{furen}, \cite{orpshabc}, \cite{mator}, \cite{renwang} to name a few, with an improvement when $A$ has additional regularity assumptions given in \cite{orreg}.
	
	A direct consequence of \eqref{eq.dissumprodfsdf}is for $A \subset \R$ with $\dimh A = s$ we have
	\begin{equation}
		\lbox ((A+A) \times AA)) \geq 2s + c.
	\end{equation}
	This strictly implies, for example, using product formulae, that
	\begin{equation}\label{eq.lopsided}
		\max\{\ubox (A+A), \lbox( AA)\} \geq s + c/2.    \end{equation}
	One may then reasonably ask what are the best bounds that one can obtain for \eqref{eq.lopsided}. Using Theorem \ref{thm.product} we obtain an improvement a discretised version of \eqref{eq.lopsided}. Due to the geometric meaning of $A/A,$ our arguments find it convenient to replace the product-set $AA$ with the ratio-set $A/A$.
	\begin{theorem}\label{thm.dis}
		Let $0 < s \leq 136/265.$ There exists $C,\delta_0, \epsilon> 0$ so that the following holds for all $0 < \delta < \delta_0.$
		
		\indent Let $A \subset [1/2,1]$ be a $\de$-separated $(\delta, s, \de^{-\e})$-set. Then one of the following must hold.
		\begin{equation}
			\cns{\rho}{A+A} > \rho^{-43s/34} \text{ for some } \delta < \rho < \delta^{C\e},
		\end{equation}
		\begin{equation}
			\cn{A/A} > \delta^{-43s/34+C\e}.
		\end{equation}
	\end{theorem}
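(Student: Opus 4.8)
The plan is to negate the conclusion, run an Elekes-type incidence argument producing a quasi-product family of $\delta$-tubes shaded by $\delta$-arcs that lie inside $(A+A)\times(A/A)$, feed this into Theorem~\ref{thm.product} to obtain a bound of Szemer\'edi--Trotter strength, and then bootstrap across the intermediate scales $\delta<\rho<\delta^{C\epsilon}$ to gain the extra amount that raises the exponent from $5/4$ to $43/34$.

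Suppose both displayed conclusions fail, so $\cns{\rho}{A+A}\le\rho^{-43s/34}$ for all $\delta<\rho<\delta^{C\epsilon}$ and $\cn{A/A}\le\delta^{-43s/34+C\epsilon}$. After dyadic pigeonholing I may assume $\#A=\delta^{-t}$ for a fixed $t$ (the Frostman condition at scale $\delta$ forces $t\ge s-\epsilon$), and after pigeonholing the multiplicity functions of $A/A$ and of $A+A$ at each relevant scale and passing to a $\delta^{O(\epsilon)}$-large refinement of $A$, I may treat $A$, $A^{-1}=\{1/a:a\in A\}$ and $A/A$ as genuine KT sets of the appropriate dimensions (up to $\delta^{-O(\epsilon)}$ constants); here $x\mapsto 1/x$ is bi-Lipschitz on $[1/2,1]$, so $A^{-1}$ is again a $(\delta,s,\delta^{-O(\epsilon)})$-set. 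For $a,c\in A$ let $T_{a,c}$ be the $\delta$-tube around $\{(x,y):y=(x-a)/c\}$ and set $\T=\{T_{a,c}:a,c\in A\}$: the slope of $T_{a,c}$ is $1/c\in A^{-1}$ and its $x$-intercept is $a\in A$, so the tubes are distinct, $\#\T\sim(\#A)^2$, and $\T$ is a $(\delta,s,s,\delta^{O(\epsilon)},\delta^{O(\epsilon)})$-quasi-product set. Shade $T_{a,c}$ by the $\delta$-squares meeting the arc $\{(a+b,b/c):b\in A\}$; since this arc has slope in $[1,2]$ and $b\mapsto(a+b,b/c)$ is bi-Lipschitz, $\#Y(T_{a,c})\sim\#A$, and the shaded squares lie in $(A+A)\times(A/A)$, whence $\#Y(\T)\lesssim\cn{A+A}\cdot\cn{A/A}$.

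The shading hypotheses of Theorem~\ref{thm.product} are met: $Y(T_{a,c})$ has only $\sim\delta^{-s}$ squares, and the intersection of the arc with a ball $B(x,r)$ corresponds to $b\in A$ in an interval of length $\sim r$, so this intersection has at most $\delta^{-O(\epsilon)}(r/\delta)^{s}\le\delta^{-O(\epsilon)}(r/\delta)^{\sigma}$ squares, $\sigma=\min(2s,2-2s)$; thus each $Y(T_{a,c})$ is a $(\delta,\sigma,\delta^{-O(\epsilon)})$-KT set. Applying Theorem~\ref{thm.product} with $s=d$, so $\alpha=3$, and using $\sum_{T\in\T}\#Y(T)\gtrsim(\#A)^{3}$ together with $\#\T\sim(\#A)^2$, a short computation gives $\cn{A+A}\cdot\cn{A/A}\ges\delta^{-(7t-2s)/2}$; in the extremal case $t=s$ this is exactly the Szemer\'edi--Trotter bound $\cn{A+A}\cdot\cn{A/A}\ges\delta^{-5s/2}$, i.e. a lower bound $\delta^{-5s/4}$ for the larger of the two quantities.

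It remains to upgrade $5s/4$ to $43s/34$, and here is where the scale parameter $\rho$ enters. The reason the step above saturates at Szemer\'edi--Trotter strength is the gap between the KT and Frostman hypotheses: the shading $Y(T)$ is of sub-maximal size $\delta^{-\sigma/2}$, but with $\alpha=3$ Theorem~\ref{thm.product} cannot exploit this. To extract more, I would use that $A+A$ stays $\rho^{-43s/34}$-small at \emph{every} scale $\delta<\rho<\delta^{C\epsilon}$: pigeonholing a scale $\rho$ and a $\rho$-ball on which $A$ is as concentrated as its Frostman bound permits, rescaling to $[0,1]$ produces a $(\delta/\rho,s,\delta^{-O(\epsilon)})$-set whose cardinality-to-dimension profile is improved, and re-running the incidence estimate at this finer scale — and feeding the resulting bounds on the covering numbers of $A+A$ back into the configuration — yields a recursion over a geometric sequence of scales. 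Iterating this recursion to its fixed point, while carefully balancing the per-step gain against the cumulative $\delta^{O(\epsilon)}$ Frostman-to-KT losses and tracking how $\sigma=\min(2s,2-2s)$ and $\alpha$ enter the bound, is exactly what pins down the exponent $43/34$ together with the admissible range $s\le 136/265$. The main obstacle is this bookkeeping itself: because the entire improvement is only a $\delta^{O(\epsilon)}$ gain, every pigeonholing, refinement, and rescaling must be performed with uniform control of the underlying $(\delta,s,\delta^{-\epsilon})$-constants, and one must check after each rescaling that the shaded arcs remain distinct and of bounded multiplicity so that the incidence lower bound $(\#A)^3$ persists.
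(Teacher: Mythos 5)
There is a genuine gap. Your first two paragraphs correctly reproduce the Elekes-type configuration (tubes $y=(x-a)/c$ shaded by points of $(A+A)\times(A/A)$) and its consequence $\max(\cn{A+A},\cn{A/A})\gtrapprox\delta^{-5s/4}$; this is essentially the content of the paper's ``second estimate'' (Theorem \ref{chfuv hu f irtg9irt9h9yh} and Corollary \ref{ huhv yug8ytg7yg7yg7yt8gy7gy}), and the paper explicitly notes that $5/4$ is the previously known exponent. The entire improvement to $43/34$ is then deferred to a ``recursion over a geometric sequence of scales'' whose fixed point is claimed to pin down the exponent. No such recursion appears in the paper, and there is no reason the rescaling you describe would break the Szemer\'edi--Trotter barrier: rescaling a $(\delta,s)$-set into a $\rho$-ball and re-running the same Elekes argument again saturates at exponent $5/4$ at the finer scale, so iterating gains nothing. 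The multi-scale hypothesis on $A+A$ enters the actual proof in a completely different way.

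What is missing is the Solymosi-inspired half of the argument, which is where Theorem \ref{thm.product} is used with $s\neq d$ and where the threshold $s\le 136/265$ comes from. Concretely: one sets $t$ to be the largest exponent (capped at $43s/34$) at which a uniformized version $Q_0$ of $A/A$ has Hausdorff content $\ge 1$ at all scales down to $\delta$. The discretized projection theorem (Theorem \ref{thm.proj}, itself proved by applying Theorem \ref{thm.product} in dual form across the scales $2^j\delta$) then shows that for most $m\in Q\subset Q_0$ the set $A-mA$ has content $\gtrsim\delta^{O(\e)}$ at dimension $s+t/2$. Extracting $(\delta,s+t/2)$-sets $\Gamma_m\subset A-mA$ yields a $(\delta,t,s+t/2,\cdot,\cdot)$-quasi-product family of tubes $\{y=mx+c\}$, each shaded by $\sim r_0$ points of $(A+A)\times(A+A)$ (not of $(A+A)\times(A/A)$), and Theorem \ref{thm.product} gives the ``first estimate'' $\cn{A+A}^8\,\#Q^6\gtrsim\delta^{O(\e)}\delta^{-14s-3t}$. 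The Elekes-type second estimate is then used not to bound a product of covering numbers but to bound $t$ from below: if $\cns{\rho}{A+A}<\rho^{-u}$ for all $\delta\le\rho\le\delta^{O(\e)}$ (this is where the intermediate scales are consumed), then $\cont{5s/2-u}(Q)\gtrsim\delta^{O(\e)}$, forcing $t\ge 5s/2-u$ by maximality of $t$. Feeding $u=43s/34$ and $t\ge 5s/2-43s/34$ into the first estimate produces $\#Q\gtrsim\delta^{O(\e)}\delta^{-43s/34}$, and $43/34$ is exactly the exponent balancing the two estimates via \eqref{hyreyfruf8ruef8utg98urt8g}. Without the projection-theorem construction and the interplay between the two estimates through the parameter $t$, the exponent $43/34$ is not reachable by the configuration you set up.
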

	The method allows the sum-set to be replaced by the difference-set, but to our knowledge, not the ratio-set with the product-set.
	
	The previous best bound for this problem replaced $43/34$ with the smaller exponent $5/4,$ and is obtained by applying the argument of Elekes \cite{elekes1997number} with the full resolution of Furstenberg problem, due to Ren--Wang \cite{renwang}. The details of this argument may be found at \cite[Corollary 6.6]{orpshabc}. However their conclusion is stronger, in the sense that they obtain
	\begin{equation}
		\cn{A+A}\cn{A/A} > \delta^{-5s/2},
	\end{equation}
	which, as discussed, implies, but is not implied by, the form of our result. 
	
	The idea of our proof is combine the aforementioned argument of Elekes, with the beautiful argument of Solymosi \cite{soly14/11} where for a finite subset $A \subset \mathbb{C}$ it is shown that 
	\begin{equation}
		\#(A+A)^8\#(A/A)^3 \gtrsim \#A^{14}/\log^3\#A.
	\end{equation}
	In particular, this gives
	\begin{equation}
		\#(A+A) + \#(A/A) \gtrsim \#A^{14/11}/ \log^{3/11}\#A.
	\end{equation}
	
	Unfortunately, we were not able to fully adapt Solymosi's proof, and the reason  we obtain $5/4 < 43/34 < 14/11$ is due to the fact that we essentially have to take some form of average of the two methods.

	A fairly straightforward corollary of Theorem \ref{thm.dis} is the following in terms of fractal dimensions.
	\begin{theorem}\label{thm.sumprod}
		Let $0 < s \leq 136/265.$  For all $A \subset \R$ with $\dimh A = s$ we have
		\begin{equation}
			\max\{\ubox(A+A),\lbox(A/A)\} \geq 43s/34.
		\end{equation}
	\end{theorem}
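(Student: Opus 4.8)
The plan is to derive Theorem \ref{thm.sumprod} from the discretised version in Theorem \ref{thm.dis} by a fairly standard passage from Hausdorff dimension to box dimension at a fixed scale, combined with a pigeonholing argument to extract the required Frostman set. First I would recall that if $\dimh A = s$ then for every $\e > 0$ one can find, at arbitrarily small scales $\delta$, a $\delta$-separated subset $A_\delta \subset A$ which is a $(\delta, s - \e, \delta^{-\e})$-set with $\#A_\delta \gtrsim \delta^{-(s-\e)}$; this is the usual consequence of Frostman's lemma applied to a measure on a compact subset of $A$ of positive $\mathcal{H}^{s-\e}$-content, together with a dyadic pigeonhole to regularise the mass on $\delta$-squares. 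Since $s \leq 136/265$ we may take $\e$ small enough that $s - \e$ still lies in the admissible range of Theorem \ref{thm.dis}; by rescaling and translating we may also assume $A_\delta \subset [1/2,1]$ at the cost of constants, so Theorem \ref{thm.dis} applies to $A_\delta$ with exponent $s' := s - \e$.

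Next I would run the dichotomy. Applying Theorem \ref{thm.dis} to $A_\delta$, either $\cns{\rho}{A_\delta + A_\delta} > \rho^{-43 s'/34}$ for some $\delta < \rho < \delta^{C\e}$, or $\cn{A_\delta/A_\delta} > \delta^{-43 s'/34 + C\e}$. In the first case, since $A_\delta + A_\delta \subset A + A$ (after undoing the normalisation, up to an affine map that does not change box dimension), we get $\cns{\rho}{A+A} \gtrsim \rho^{-43 s'/34}$ at the scale $\rho$; letting $\delta \to 0$ along a suitable sequence forces $\rho \to 0$ as well, and taking $\e \to 0$ yields $\ubox(A+A) \geq 43 s/34$. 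In the second case, $\cn{A_\delta / A_\delta} \gtrsim \delta^{-43 s'/34}$ at scale $\delta$ itself; since this holds for a sequence of $\delta \to 0$, we get $\lbox(A/A) \geq 43 s'/34$, and again $\e \to 0$ gives $\lbox(A/A) \geq 43s/34$. Either way $\max\{\ubox(A+A), \lbox(A/A)\} \geq 43s/34$, which is the claim.

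The one point that needs a little care — and which I expect to be the main obstacle, though it is more bookkeeping than genuine difficulty — is the interplay between the two different covering numbers appearing in the conclusion of Theorem \ref{thm.dis}: the sum-set estimate is stated at a scale $\rho$ possibly much larger than $\delta$, which is exactly what is needed to feed into an \emph{upper} box dimension bound (one only needs the estimate along \emph{some} sequence of scales tending to $0$), whereas the ratio-set estimate is at scale $\delta$ and, because it holds for \emph{every} small $\delta$, feeds into a \emph{lower} box dimension bound. Keeping straight which of $\ubox$ and $\lbox$ is controlled in which branch, and checking that the quantifier structure ("for all small $\delta$" in Theorem \ref{thm.dis}) really does deliver "along a sequence $\rho \to 0$" in the first branch and "for all small $\delta$" in the second, is the crux. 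A secondary routine point is verifying that the affine normalisation $A \to [1/2,1]$ and the loss $s \to s - \e$ are harmless: box dimensions are affine-invariant, and $43(s-\e)/34 \to 43 s/34$ as $\e \to 0$, so both conclusions survive the limit. No new ideas beyond Theorem \ref{thm.dis} are required.
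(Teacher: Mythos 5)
Your approach is the same as the paper's: extract a discretised Frostman subset at each dyadic scale via Lemma \ref{lem.disfrost}, feed it into Theorem \ref{thm.dis}, and convert the two branches into lower bounds for $\ubox(A+A)$ and $\lbox(A/A)$, letting $\e\to0$ at the end.

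One sentence in your second paragraph is wrong as written and forces a reorganisation: ``since this holds for a sequence of $\delta\to0$, we get $\lbox(A/A)\ge 43s'/34$.'' A covering-number lower bound along a \emph{sequence} of scales only controls the \emph{upper} box dimension; the lower box dimension requires the bound at \emph{every} sufficiently small scale. Consequently the dichotomy cannot be resolved one scale at a time; it must be taken over the whole family of scales $\delta=2^{-n}$ at once: either the ratio-set branch of Theorem \ref{thm.dis} holds for all but finitely many $n$, in which case $\cns{2^{-n}}{A/A}\ge 2^{n(43s/34-O(\e))}$ for all large $n$ and hence $\lbox(A/A)\ge 43s/34-O(\e)$; or the sum-set branch holds for infinitely many $n$, producing scales $\rho_n\le 2^{-O(\e)n}\to0$ with $\cns{\rho_n}{A+A}\ge\rho_n^{-43s/34+O(\e)}$, which gives the $\ubox$ bound. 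This is exactly the quantifier point you flag as ``the crux'' in your final paragraph, and the repair you describe there is the one the paper carries out; the remaining ingredients (Frostman extraction at exponent $s-\e$, the affine normalisation to $[1/2,1]$, and the limit $\e\to0$) are handled correctly.
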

	
	\hfill

	A weaker version of the discretised sum-product problem would be to assume good additive (for example) properties of $A,$ and ask just about the growth of $AA$ or $A/A.$ 
	
	Applying Theorem \ref{thm.product} in a different way, we also obtain the following result which gives a sharp result when the sum-set is small.
	
	\begin{theorem}\label{thm.fewsums}
		Let $0 \leq s \leq 2/3.$ There exists $C,\delta_0, \epsilon> 0$ so that the following holds for all $0 < \delta < \delta_0.$ 
		
		Let $A \subset [1/2,1]$ be a $\delta$-separated 
		$(\delta,s,\de^{-\e})$-set with $\#A < \de^{-s -\e}. $ If $s \leq 1/2,$ then
		\begin{equation}
			N_\delta(A+A)^6N_\delta(A/A)> \de^{C\e}\# A^8.
		\end{equation}
		If $s>1/2,$ then
		\begin{equation}
			N_\delta(A+A)^6N_\delta(A/A) > \de^{C\e}\de^{-2}\# A^4.
		\end{equation}
		In particular, for $0 < c_0 < s,$ when $\cn{A+A} \leq \de^{-c_0}\#A,$ we have
		\begin{equation}\label{eq.fewsumssection1}
			\cn{A/A} > \de^{6c_0 + C\e}\min\{\#A^2, \de^{-2}\#A^{-2}\}.
		\end{equation}
	\end{theorem}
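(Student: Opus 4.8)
The plan is to follow the Elekes-style incidence argument, but feed it into Theorem \ref{thm.product} rather than into Szemerédi--Trotter. First I would set up the tube-square configuration. Fix the $\delta$-separated $(\delta,s,\delta^{-\e})$-set $A\subset[1/2,1]$ with $\#A\le \delta^{-s-\e}$, and write $\cn{A+A}=:\delta^{-a}$, $\cn{A/A}=:\delta^{-b}$. For each pair $(u,v)\in (A/A)_\delta\times (A+A)_\delta$ consider the line $\ell_{u,v}=\{(x,y):\,y=u(x+v)\}$; thicken it to a $\delta$-tube $T_{u,v}$. The direction set of this family is essentially $A/A$, a subset of a bounded interval, hence a $(\delta,b,\delta^{O(\e)})$-KT set with roughly $\delta^{-b}$ directions, and for each fixed direction the translates are indexed by $A+A$, giving the vertical fiber structure. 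So $\{T_{u,v}\}$ is a $(\delta,s',d',K_1,K_2)$-quasi-product set with $K_1,K_2=\delta^{O(\e)}$ and appropriate $s',d'$ read off from $b$ and $a$. On the point side, take $\cP = A\times A$ (the $\delta$-squares through points $(a_1,a_2)$, $a_1,a_2\in A$); by the $(\delta,s)$-set hypothesis this is a $(\delta,s,\delta^{-\e})\times(\delta,s,\delta^{-\e})$ configuration. The key incidence: if $a_1/a_2 = u$ and $a_1 = u a_2$... more precisely for $a_1,a_2,a_3\in A$ the point $(a_2,a_1)$ lies within $O(\delta)$ of $\ell_{a_1/a_3,\,a_3-a_2}$ — wait, one checks $a_1 = (a_1/a_3)(a_3) $, so choosing $u=a_1/a_3\in A/A$ and $v = a_3 \cdot(\text{something})$; the standard Elekes bookkeeping shows each triple $(a_1,a_2,a_3)$ produces an incidence between a square of $\cP$ and a tube of $\T$, so $I(\cP,\T)\gtrsim \#A^3 / (\text{overcount})$, where the overcount is $\lesssim \delta^{O(\e)}$ by $\delta$-separation and the Frostman bound.

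Next I would apply Theorem \ref{thm.product}. One must first check the shading hypothesis: define $Y(T)=\{p\in\cP: p\cap T\neq\emptyset\}$ and verify $Y(T)$ is a $(\delta,\sigma,K_3)$-KT set with $K_3=\delta^{O(\e)}$, where $\sigma=\min\{s'+d',2-s'-d'\}$ for the relevant exponents. Since each $Y(T)$ sits inside a $\delta$-tube and its projection to the tube's long axis is contained in a $\delta$-quantized image of $A$ (a $(\delta,s)$-set), the natural bound is that $Y(T)$ is a $(\delta,s,\delta^{-O(\e)})$-KT set; one then takes the parameters so that $\sigma \le s$, using $s\le 2/3$. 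Plugging into Theorem \ref{thm.product} gives
$$\#A^3 \lesssim I(\cP,\T) \les (K_1K_2)^{1-1/\alpha}K_3^{1/3}\,(\delta^{-s'-d'}\#\T)^{1/\alpha}\,(\#\cP)^{1-1/\alpha},$$
with $\#\T \sim \delta^{-a-b}$, $\#\cP\sim \#A^2$, $\alpha=\min\{3,2+d'/s'\}$, and all $K_i=\delta^{O(\e)}$. Rearranging, one solves for the product $\delta^{-a}\cdot\delta^{-b}$, i.e. $\cn{A+A}^?\cn{A/A}^?$, against a power of $\#A$. The exponent $6$ on $N_\delta(A+A)$ should emerge from the specific value of $\alpha$ in the regime where the tube family is "lopsided" — when $A+A$ is small, the vertical fibers are short, so $d'$ is small, $\alpha\to 2$, and the incidence bound degrades in a way that is compensated exactly by the sixth power. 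The split at $s=1/2$ reflects the switch in $\sigma=\min\{2s,2-2s\}$ (equivalently the $\min$ in the curve problem), which changes the constraint on $Y(T)$ and hence the usable $\alpha$; for $s>1/2$ one gets the weaker $\delta^{-2}\#A^4$ form because the two-ends/non-concentration gain saturates.

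The "in particular" statement is then bookkeeping: substitute $\cn{A+A}\le \delta^{-c_0}\#A$ into the displayed product inequality and solve for $\cn{A/A}$, obtaining $\cn{A/A}> \delta^{6c_0+C\e}\min\{\#A^2,\delta^{-2}\#A^{-2}\}$, where the two terms in the $\min$ come respectively from the $s\le 1/2$ and $s>1/2$ inequalities (using $\#A\le \delta^{-s-\e}\le \delta^{-1}$).

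The main obstacle I anticipate is verifying the KT-set hypotheses on the nose with only $\delta^{O(\e)}$ losses — in particular that the shading $Y(T)$ is a genuine $(\delta,\sigma,\delta^{-O(\e)})$-KT set, not merely a $(\delta,\sigma)$-set, since Theorem \ref{thm.product} crucially needs the KT (non-maximality-robust) version. This requires that $A$, viewed inside each tube, does not concentrate on subtubes, which should follow from the $(\delta,s)$-set property of $A$ together with the product structure of the line family, but needs care when $a_1/a_3$ ranges over a small piece of $A/A$. A secondary technical point is controlling the Elekes overcounting (ensuring the map $(a_1,a_2,a_3)\mapsto (\text{square},\text{tube})$ is $\delta^{O(\e)}$-to-one), which uses that $A\subset[1/2,1]$ is bounded away from $0$ so that division is bi-Lipschitz on the relevant range.
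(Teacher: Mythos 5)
Your overall instinct---an Elekes-type point/tube configuration fed into the quasi-product incidence theorem---is in the right family, but the plan as written has a genuine gap, one the paper itself flags: a single application of Theorem \ref{thm.product} at scale $\delta$ cannot work, and ``the consideration of the problem at an intermediate scale is necessary.'' The concrete failure point is your claim that the direction set $A/A$ is ``a subset of a bounded interval, hence a $(\delta,b,\delta^{O(\e)})$-KT set.'' This is false: a $\delta$-separated set of cardinality $\delta^{-b}$ may be contained in an interval of length $\delta^{1-b}$, forcing a KT constant of size $\delta^{-b(1-b)}$, a polynomial rather than $\delta^{O(\e)}$ loss; and what is actually needed (for Corollary \ref{nononecase} in dual form) is KT structure of the tube directions at the exponent $\sigma=\min\{2s,2-2s\}$ dictated by the point set, which $A/A$ need not have at scale $\delta$ at all. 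The paper isolates this as an explicit hypothesis (Theorem \ref{thm.fewsumweak} \emph{assumes} $A/A$ is a $(\delta,\sigma)$-KT set) and removes it by a multiscale decomposition: Lemma \ref{lem.indscale} finds a scale $\rho\ge\delta$ at which $A/A$ is a genuine $(\rho,\sigma)$-KT set with large branching below $\rho$, the incidence argument is run at scale $\rho$, and Lemma \ref{lem.sumsetcontrol} transfers the sumset bound back down to $\delta$. Your proposal has no mechanism to supply this hypothesis.

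The configuration and the counting are also different from what you describe, and the exponent $6$ does not ``emerge from the specific value of $\alpha$.'' The paper's point set is $\cP=(A\cup S)\times(A\cup S)$, where $S\subset A+A$ is a pigeonholed piece of the sumset shown to be a $(\delta,s,L)$-KT set with $L\approx CK$ and $K=\cn{A+A}/\#A$ the doubling constant; the tubes have slopes in $A/A$. One then counts approximately collinear \emph{triples} of points of $\cP$: the upper bound, from Corollary \ref{nononecase} in dual form summed over richness, carries the factor $L^4\#S^2\approx K^6\#A^2\delta^{-2s}$ (this is where the sixth power of $K$ comes from), while the lower bound $\gtrapprox\#A^6/\cn{A/A}$ comes from Cauchy--Schwarz over the direction set applied to the fibers $G_{e,f}$. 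Your direct incidence count $I(\cP,\T)\gtrsim\#A^3$ between $A\times A$ and tubes indexed by $(A/A)\times(A+A)$ would---even granting all KT hypotheses---only reproduce an Elekes-type lower bound on the product $\cn{A+A}\cn{A/A}$ in terms of $\#A$ (the route the paper takes for Theorem \ref{thm.dis}, which moreover needs the projection theorem to control the fibers), not the lopsided ``few sums, many ratios'' inequality with a sixth power on the sumset alone.
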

	The method allows the sum-set to be replaced by the difference set, but to our knowledge, not the ratio-set with the product-set.
	
	When $0 < s \leq 1/2,$ this improves the result of \cite{renwang} when $0 \leq c_0 \leq s/14,$ which implies for $0 < s \leq 2/3$
	\begin{equation}\label{eq.renwangfewsums}
		\cn{A/A} \gtrsim \de^{c_0 + O(\e)}\# A^{3/2}.
	\end{equation}
	When $1/2 \leq s \leq  2/3$ \eqref{eq.fewsumssection1} becomes poorer as $s$ gets larger, in particular, we see no growth at $s = 2/3.$ However, when $1/2 \leq s \leq 4/7,$ and $0 \leq c_0 \leq \frac{4-7s}{10},$ we are able to improve \eqref{eq.renwangfewsums}.
	
	We do not obtain any results when $s > 2/3.$ However, when $2/3 < s < 1,$ it was obtained in \cite{furen} that
	\begin{align}
		\cn{A+A}\cn{A/A}  &\gtrsim \de^{-1 + O(\e)} \# A \text{ if } 2/3 \leq s < 1 
	\end{align}
	which is sharp -  see \cite[Proposition 4.2]{mator}.

	We follow the idea of Elekes--Rusza \cite{elekesruzsafewsums}, who use Szemer\'edi--Trotter in a clever way to obtain for a discrete $A \subset \R$ that 
	\begin{equation}
		\#(A+A)^4\# (A/A) \gtrsim \# A^6 / \log \#A.
	\end{equation}
	We were only able to replace $4$ by $6$ in the exponent of the sum-set. Nor were we able to exactly mimic their proof with Theorem \ref{maininct} in hand, and the consideration of the problem at an intermediate scale is necessary. See Section \ref{sect.fewsums} for the proof.
	
	\subsection*{Acknowledgements}
	The authors met at the Reading Groups in Analysis  program that took place in August 2025 at the University of Pennsylvania. They are grateful to the organizers for putting together a wonderful two-week event.
	
	W.O.R.~would like to thank Pablo Shmerkin and Joshua Zahl for all of their advice and support. Further thanks is extended to Hong Wang, for hosting his visit to NYU and providing ever-insightful and motivational discussions.
	
	The authors extend further thanks to Tuomas Orponen, Hong Wang, Shukun Wu and Joshua Zahl  for many interesting conversations on the topics of the current paper. 
	\section{Auxiliary results}
	Fix $\alpha>1$.	
	Let $I(\delta)=I_{s,d,\alpha}(\delta)$ be the smallest constant such that
	\begin{equation}
		\label{jrjugitgih9iy9hih}
		\sum_{T\in\T}\#Y(T)\le I(\delta)K_3^{\frac13}{\delta^{\frac{-2s-2d}\alpha}}\#Y(\T)^{1-\frac1\alpha}
	\end{equation}
	for each $K_3\ge 1$, for each $(\delta,s,d,1,1)$-quasi-product set $\T$ and  each shading $Y$ of $\T$ such that $Y(T)$ is a $(\delta,\sigma,K_3)$-KT set for each $T\in\T$. The reader should keep in mind that $\#\T\lesssim \delta^{-s-d}$.
	
	The next result explores the dependence on constants if the triple $(1,1,K_3)$ is replaced with $(K_1,K_2,K_3)$. 
	\begin{prop}
		\label{lkogjthuyuh09yu}\label{prop.product}
		Assume $\T$ is a $(\delta,s,d,K_1,K_2)$-quasi-product set. Consider a shading $Y$ of $\T$ such that $Y(T)$ is a $(\delta,\sigma,K_3)$-KT set for each $T\in\T$.
		
		Then  we have
		$$\sum_{T\in\T}\#Y(T)\les I(\delta) K_3^{\frac13}(K_1K_2)^{1-\frac1\alpha} (\delta^{-s-d}\#\T)^{1/{\alpha}}\#Y(\T)^{1-\frac1\alpha}.$$	
	\end{prop}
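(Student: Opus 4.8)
The plan is to reduce the $(K_1,K_2,K_3)$-quasi-product set $\T$ to a bounded-constant quasi-product set at a coarser scale, so that the definition of $I(\delta)$ applies. The key observation is that the Katz--Tao conditions with constants $K_1,K_2$ become bounded-constant Katz--Tao conditions once we zoom out by the right amounts: a $(\delta,s,K_1)$-KT set, viewed at scale $\rho = K_1^{1/s}\delta$, satisfies $\#(A\cap B(x,r)) \le K_1(r/\delta)^s = (r/\rho)^s$ for $r\ge\rho$, hence is (up to the usual $\delta^{-\upsilon}$ slack and passing to a $\rho$-separated subset) a $(\rho,s,1)$-KT set. Similarly a $(\delta,d,K_2)$-KT fiber becomes a $(\rho',d,1)$-KT set at scale $\rho' = K_2^{1/d}\delta$. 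I would set a common coarse scale, say $\rho$ comparable to $\max\{K_1^{1/s},K_2^{1/d}\}\delta$ (or handle the two separately and combine), and decompose $\T$ accordingly.

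\emph{First step:} pigeonhole/dyadic pigeonholing to reduce to the situation where $\#Y(T)$ is comparable for all $T\in\T$, and where the relevant multiplicities are constant up to factors $\les 1$; this is harmless since we are proving a $\les$ estimate. \emph{Second step:} partition the direction set $\Lambda$ into $\app K_1$ many pieces according to a $\rho$-net, and within each, the tube family looks like a $(\rho,s,1)$-KT direction set after rescaling; likewise partition each fiber $\cT_\theta$ into $\app K_2$ arcs so that each piece is a $(\rho',d,1)$-KT set. This produces $\app K_1 K_2$ many subfamilies $\T_j$, each of which (after the affine rescaling sending the relevant $\rho\times\rho$ block to the unit square, which maps $\delta$-tubes to $(\delta/\rho)$-tubes) is a $(\delta/\rho, s, d, 1, 1)$-quasi-product set. \emph{Third step:} apply the defining inequality \eqref{jrjugitgih9iy9hih} for $I(\delta/\rho)$ — which is $\le I(\delta)$ since $I$ is nondecreasing, or at worst $\les I(\delta)$ after absorbing $\delta^{-\upsilon}$ — to each $\T_j$ with its induced shading (the $(\delta,\sigma,K_3)$-KT property of $Y(T)$ is inherited, possibly with an improved constant under rescaling, but keeping $K_3$ is safe). \emph{Fourth step:} sum over $j$, using $\sum_j \#Y(\T_j) \le (\text{overlap}) \cdot \#Y(\T)$ together with Hölder in the exponent $1-\frac1\alpha$, and track how the rescaling converts $\delta^{-s-d}$ factors and $\#\T_j$ into the claimed $(\delta^{-s-d}\#\T)^{1/\alpha}$; the bookkeeping should produce exactly the factor $(K_1K_2)^{1-1/\alpha}$.

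The main obstacle I anticipate is the bookkeeping in the \emph{fourth step}: one must check that summing the $\app K_1K_2$ contributions, each of the shape $I(\delta)K_3^{1/3}(\delta/\rho)^{-(2s+2d)/\alpha}\#Y(\T_j)^{1-1/\alpha}$, and converting back through the rescaling factor $\rho$, yields the asserted bound with the correct power $(K_1K_2)^{1-1/\alpha}$ rather than, say, $(K_1K_2)^1$ or $(K_1K_2)^{1/\alpha}$. The gain from $1$ down to $1-1/\alpha$ should come precisely from the concavity of $t\mapsto t^{1-1/\alpha}$ applied to the distribution of $\#Y(\T_j)$ across the $K_1K_2$ pieces — i.e., the worst case is when $Y(\T)$ is spread evenly — combined with the $(\delta^{-s-d}\#\T)^{1/\alpha}$ slack which is exactly what absorbs the residual $(K_1K_2)^{1/\alpha}$. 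A secondary subtlety is ensuring the two distinct coarse scales $K_1^{1/s}\delta$ and $K_2^{1/d}\delta$ for the direction decomposition and the fiber decomposition can be decoupled cleanly; treating the direction KT condition and the fiber KT condition in two successive independent decomposition steps, rather than simultaneously, should avoid any interaction. Everything else — the pigeonholing, the inheritance of KT and two-ends-type properties under affine rescaling, and the monotonicity of $I$ — is routine.
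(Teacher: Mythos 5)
Your proposal has a genuine gap, and it is located exactly where you flagged your ``main obstacle.'' The defining inequality \eqref{jrjugitgih9iy9hih} for $I(\delta)$ only supplies the factor $\delta^{-(2s+2d)/\alpha}$, whereas the proposition asserts the strictly stronger factor $(\delta^{-s-d}\#\T)^{1/\alpha}$, which beats $\delta^{-(2s+2d)/\alpha}$ whenever $\#\T\ll\delta^{-s-d}$. Nothing in your argument manufactures the $\#\T$-dependence: applying \eqref{jrjugitgih9iy9hih} to each of your $\approx K_1K_2$ pieces and summing (with or without H\"older, since $\sum_j\#Y(\T_j)\le K_1K_2\,\#Y(\T)$) yields at best $(K_1K_2)^{1}\,\delta^{-(2s+2d)/\alpha}\,\#Y(\T)^{1-1/\alpha}$ --- both the power of $K_1K_2$ and the $\delta$-factor are worse than claimed. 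The $(\delta^{-s-d}\#\T)^{1/\alpha}$ term is not ``slack'' available to absorb a residual $(K_1K_2)^{1/\alpha}$; it is part of the conclusion and must be proved. The paper's proof supplies the missing mechanism in its Step 1: a densification argument (Corollary 3.3 of \cite{demwangszem}) produces $N\sim\delta^{-s-d}/\#\T$ rigid motions $\cA_i$ so that $\T_{new}=\cup_i\cA_i(\T)$ is a quasi-product set with $\#\T_{new}\approx\delta^{-s-d}$ and $\les 1$ overlap; applying the definition of $I(\delta)$ to $\T_{new}$ and using $\#Y(\T_{new})\le N\#Y(\T)$, then dividing by $N$, converts $\delta^{-(2s+2d)/\alpha}$ into exactly $(\delta^{-s-d}\#\T)^{1/\alpha}$. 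Once each unit-constant piece $\T^j$ enjoys this improved bound, H\"older in $j$ pairs $\sum_j\#\T^j=\#\T$ (free) against $\sum_j\#Y(\T^j)\le M\#Y(\T)$, and only then does the exponent $(K_1K_2)^{1-1/\alpha}$ emerge.

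A secondary problem is your reduction from $(K_1,K_2)$ to $(1,1)$. Viewing a $(\delta,s,K_1)$-KT direction set at the coarser scale $\rho=K_1^{1/s}\delta$ does make the family of $\rho$-arcs a $(\rho,s,1)$-KT set, but it does not partition $\Lambda$ into $\approx K_1$ many $(\delta,s,1)$-KT sets at scale $\delta$ (each $\rho$-arc may contain up to $K_1^2$ directions, and ``rescaling the direction set'' induces no affine map of the plane that would carry the shading along). The correct tool is the same-scale partition of a $(\delta,s,C)$-KT set into $\les C$ many $(\delta,s,1)$-KT sets (Lemmas 2.8--2.9 of \cite{demwangszem}), which is what the paper invokes. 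Finally, the monotonicity $I(\delta/\rho)\les I(\delta)$ that you lean on is not established and is not needed if one works at the single scale $\delta$ throughout.
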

	\begin{proof}
		Step 1.	
		\\
		\\	
		We first prove the inequality
		\begin{equation}
			\label{hhufrufruguguitgyy986}
			\sum_{T\in\T}\#Y(T)\les I(\delta)K_3^{\frac13}(\delta^{-s-d}\#\T)^{1/{\alpha}}\#Y(\T)^{1-\frac1\alpha}
		\end{equation}
		for $(\delta,s,d,1,1)$-quasi-product sets $\T$ such that $Y(T)$ is a $(\delta,\sigma,K_3)$-KT set for each $T\in\T$.
		
		We apply the reasoning in Step 2 of the proof of Proposition 4.3 in \cite{demwangszem}. At the expense of $\delta^{-\epsilon}$	losses, we may assume $\T$ satisfies all the uniformity assumptions needed for the Corollary 3.3 in \cite{demwangszem} to be applicable. This provides $N\sim \frac{\delta^{-s-d}}{\#\T}$ many rigid motions $\cA_i$ of the plane such that the collection
		$$\T_{new}=\bigcup_{i=1}^N\T_i,\;\;\T_i=\cA_i(\T)$$
		has the following properties:
		
		(P1) $\T_{new}$ is a $(\delta,s,d,K_1,K_2)$-quasi-product set for some $K_1,K_2\les 1$.
		
		(P2)  each tube belongs to at most $\les 1$ many $\T_i$. In particular, $\#\T_{new}\approx \delta^{-s-d}$. 
		By incurring $\delta^{-\epsilon}$-losses, we may assume $K_1,K_2=1$, $\# \T_{new}\lesssim \delta^{-1}$. 
		
		For each $T_i\in\T_i$ with $T_i=\cA_i(T)$ ($T\in\T$) we define $Y(T_i)=\cA_i(Y(T))$.
		
		Given $\PP$, let $\PP_i=\cA_i(\PP)$ and note that for each $i$
		$$\sum_{T\in\T}\#Y(T)=\sum_{T_i\in\T_i}\#Y(T_i).$$
		Using (P2) it is immediate that
		$$\sum_{i=1}^N\sum_{T_i\in\T_i}\#Y(T_i)\les \sum_{T\in \T_{new}}\#Y(T).$$ 
		Using (P1) and the definition of $I(\delta)$ we have 
		$$\sum_{T\in \T_{new}}\#Y(T)\le I(\delta)K_3^{\frac13}\delta^{\frac{-2s-2d}\alpha}\#Y(\T_{new})^{1-\frac1\alpha}.$$
		Combining the last three estimates and the fact that $\#Y(\T_{new})\le N\#Y(\T)$ we conclude that
		$$\sum_{T\in\T}\#Y(T)\les\frac1{N} I(\delta)K_3^{\frac13}\delta^{\frac{-2s-2d}\alpha}\#Y(\T)^{1-\frac1\alpha}N^{1-\frac1\alpha}=I(\delta)K_3^{\frac13}(\delta^{-s-d}\#\T)^{1/{\alpha}}\#Y(\T)^{1-\frac1\alpha}.$$
		
		\smallskip
		Step 2.
		\\
		\\
		Let now $\T$ be an arbitrary $(\delta,s,d,K_1,K_2)$-quasi-product set.	
		We  combine Lemma 2.8 and Lemma 2.9 in \cite{demwangszem} to partition $\T$ into $M\les K_1K_2$
		many $(\delta,s,d,1,1)$-quasi-product sets $\T^j$. 
		
		Noting that
		$$\sum_{T\in\T}\#Y(T)=\sum_{j=1}^M\sum_{T\in\T^j}\#Y(T)$$
		and using \eqref{hhufrufruguguitgyy986} for each term followed by H\"older's inequality in $j$,  we finish the proof
		\begin{align*}
			\sum_{T\in\T}\#Y(T)&\les \sum_{j=1}^MI(\delta)K_3^{\frac13}(\delta^{-s-d}\#\T^j)^{1/{\alpha}}\#Y^i(\T^j)^{1-\frac1\alpha}\\&\le  I(\delta)K_3^{\frac13}M^{1-\frac1\alpha}(\delta^{-s-d}\#\T)^{1/{\alpha}}\#Y(\T)^{1-\frac1\alpha}\\&\les I(\delta) K_3^{\frac13}(K_1K_2)^{1-\frac1\alpha}(\delta^{-s-d}\#\T)^{1/{\alpha}}\#Y(\T)^{1-\frac1\alpha}.
		\end{align*}

	\end{proof}

	The next result shows the hereditary nature of quasi-product sets. This property is essentially preserved at smaller scales, with quantifiable losses that fit well into the induction on scales that will be used in the proof of  Theorem \ref{maininct}. Well-spaced families of tubes also display a similar behavior, see Remark \ref{rwells}. However, arbitrary $(\delta,t,1)$-KT sets typically fall outside of this paradigm. This explains why Theorem \ref{maininct} is false at this level of generality. See e.g. the train-track example in \cite{demwangszem} for $t=1$, and \cite[Section 2]{furen}.
	\begin{prop}
		\label{fkjgu8rtu9i6iy-}
		Let $\T$ be a $(\delta,s,d,1,1)$-quasi-product set.
		
		Fix $\delta\le \Delta\le 1$, and a $\Delta$-square $Q$ inside $[0,1]^2$. Fix $M\le \Delta^{-s}$.
		
		Let $\cT_Q$ be the collection of $\delta\times \Delta$-segments $\tau$ inside $Q$ that lie at the intersection of $\sim M$ many tubes $T\in\T$. Then the $\times \Delta^{-1}$ rescaling of $\cT_Q$ is a union of $\les 1$ many $(\delta/\Delta,s,d,K_1,K_2)$-quasi-product sets of $\delta/\Delta$-tubes in $[0,1]^2$, with parameters $K_1,K_2$ satisfying $K_1K_2\lesssim \frac1{M\Delta^s}$.
	\end{prop}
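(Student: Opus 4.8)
The plan is to realise the rescaled segments as genuine $\delta/\Delta$-tubes, record how their direction and intercept parameters transform, and then recover the quasi-product structure by pigeonholing on a single quantity: the number of original tube-directions that cluster around each rescaled direction. Write $\delta'=\delta/\Delta$. The isotropic $\times\Delta^{-1}$ dilation carrying $Q$ to $[0,1]^2$ fixes directions and multiplies intercepts by $\Delta^{-1}$, so each $\delta\times\Delta$ segment $\tau\subset Q$ becomes a $\delta'$-tube whose direction is a $\delta'$-approximation of the direction $\theta_\tau$ of $\tau$ and whose intercept is $\Delta^{-1}$ times that of $\tau$. Fix a $\delta'$-net $\Theta$ of the direction space; for $\vartheta\in\Theta$ set $m_\vartheta=N_\delta(\Lambda\cap B(\vartheta,C_0\delta'))$ with $C_0$ a large absolute constant, so that $m_\vartheta\lesssim(\delta'/\delta)^s=\Delta^{-s}$ for every $\vartheta$ because $\Lambda$ is a $(\delta,s,1)$-KT set. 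Assign to $\tau\in\cT_Q$ the nearest net point $\vartheta(\tau)$. Since $\tau$ is a $\delta\times\Delta$ rectangle, any $T\in\T$ with $\tau\subset T$ has direction within $\lesssim\delta'$ of $\theta_\tau$, hence inside $B(\vartheta(\tau),C_0\delta')$ when $C_0$ is large; and for each fixed direction only $O(1)$ tubes of $\T$ can contain $\tau$, because the fibres $\cT_\theta$ are $\delta$-separated. Hence a segment lying in $\sim M$ tubes has $M\lesssim m_{\vartheta(\tau)}$, so every direction occurring in $\cT_Q$ satisfies $M\lesssim m_\vartheta\lesssim\Delta^{-s}$.

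Next I would pigeonhole on $m_\vartheta$. Decompose $\cT_Q=\bigsqcup_m\cT_Q^{(m)}$ over the $\lesssim\log(1/\delta)$ dyadic values $m$ in $[cM,C\Delta^{-s}]$, where $\cT_Q^{(m)}=\{\tau\in\cT_Q:\,m_{\vartheta(\tau)}\sim m\}$; as $\log(1/\delta)\les1$, it suffices to show that the rescaling of each $\cT_Q^{(m)}$ is a $(\delta',s,d,K_1,K_2)$-quasi-product set of $\delta'$-tubes with $K_1\lesssim\Delta^{-s}/m$ and $K_2\lesssim m/M$, since then $K_1K_2\lesssim\Delta^{-s}/M=1/(M\Delta^s)$ (and both parameters are $\gtrsim1$). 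For the direction set $\Theta^{(m)}=\{\vartheta:m_\vartheta\sim m\}$: for $r\ge\delta'$ the balls $B(\vartheta,C_0\delta')$ with $\vartheta\in\Theta^{(m)}\cap B(x,r)$ have $O(1)$-bounded overlap, so $m\,N_{\delta'}(\Theta^{(m)}\cap B(x,r))\lesssim N_\delta(\Lambda\cap B(x,2r))\le(2r/\delta)^s$, i.e. $\Theta^{(m)}$ is a $(\delta',s,O(\Delta^{-s}/m))$-KT set. For a fibre, fix $\vartheta$ and bound, at each scale $r\ge\delta'$, the number of distinct rescaled intercepts in $B(x,r)$ of segments $\tau$ with $\vartheta(\tau)=\vartheta$: each such $\tau$ lies in $\gtrsim M$ tubes, every one of which has direction in $B(\vartheta,C_0\delta')$ and rescaled intercept within $O(r)$ of that of $\tau$ and contains $O(1)$ of the relevant $\tau$; counting tube–segment incidences and using that each $\cT_\theta$ is a $(\delta,d,1)$-KT set gives $M\cdot(\#\text{intercepts})\lesssim m_\vartheta(\Delta r/\delta)^d\sim m\,(r/\delta')^d$, so the fibre is a $(\delta',d,O(m/M))$-KT set. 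This yields the claimed estimate.

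The only genuinely load-bearing point is the choice of pigeonholing parameter. The obstruction to the rescaled direction set being a clean $(\delta',s,1)$-KT set is measured precisely by the multiplicities $m_\vartheta$ — a $(\delta,s,1)$-KT set need not remain KT with constant $1$ after coarsening to scale $\delta'$ — and the identical multiplicity is exactly what inflates the corresponding fibre; so however the mass is distributed among the $m_\vartheta$, the product of the two KT constants is pinned at size $\sim\Delta^{-s}/M$. Everything else is routine double counting against the Katz–Tao axioms, the only technical care being to choose the dilation constant $C_0$ large enough that all tubes through a common segment fall into the same cluster $B(\vartheta,C_0\delta')$.
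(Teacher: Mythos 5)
Your proof is correct and follows essentially the same route as the paper: your multiplicity $m_\vartheta$ is exactly the paper's pigeonholing parameter $N=\#(\theta_\tau\cap\Lambda)$, your direction-set bound via bounded overlap of the clusters $B(\vartheta,C_0\delta')$ is the paper's Step 2, and your incidence count for the fibres against the $(\delta,d,1)$-KT condition on each $\cT_\theta$ is the paper's Step 3, yielding the same $K_1\sim\frac{1}{N\Delta^s}$, $K_2\sim\frac{N}{M}$ split.
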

	\begin{proof}
		Step 1. (pigeonholing the parameter $N$)
		Let $\Lambda$ be the direction set of $\T$. For each $\tau\in\cT_Q$, the directions of the $\sim M$ tubes $T\in\T$ containing $\tau$ lie inside an arc $\theta_\tau$ of length $\sim \delta/\Delta$. By pigeonholing we may assume 
		$$\#(\theta_\tau\cap \Lambda)\sim N$$
		for each $\tau$, for some dyadic value $N\ge M$. There are $O(\log\delta^{-1})$ many such values, and there will be one collection of tubes for each $N$. We continue to call $\cT_Q$ the subcollection corresponding to the fixed $N$.
		\\
		\\
		Step 2. (spacing properties of the direction set for $\cT_Q$)
		For each arc $\gamma$ with length $|\gamma|\gtrsim \delta/\Delta$ we have
		$$N\#\{\text{distinct }\theta_\tau\subset \gamma:\;\tau\in\cT_Q\}\lesssim \#(\Lambda\cap \gamma)\lesssim (\frac{|\gamma|}\delta)^s.$$
		This shows that the direction set $\Lambda_Q$ for $\cT_Q$ is a $(\frac{\delta}{\Delta},s,K_1)$-KT set, with $K_1\sim \frac1{N\Delta^s}$.
		\\
		\\
		Step 3. (spacing properties in a fixed direction)
		
		Fix a direction in $\Lambda_Q$, corresponding to an arc $\theta\subset \cS^1$ of length $\sim \delta/\Delta$. Given  $\frac{\delta}{\Delta}\le r\le 1$,  fix a rectangle $R\subset Q$ with dimensions $r\Delta\times \Delta$, and long side in the  direction $\theta^\perp$. We count the number $L$ of $\tau\in\cT_Q$ in this direction, lying inside $R$. There are $\sim LM$ tubes $T\in\T$ that point in one of the $\sim N$ directions in $\theta\cap \Lambda$. For each of these directions, by virtue of intersecting $R$, the tubes $T$ in that direction will lie inside a fixed $100r\Delta\times 1$ box that contains  $R$. Using that $\T$ is a  $(\delta,s,d,1,1)$-set, it follows that
		$$LM\lesssim N(\frac{r\Delta}{\delta})^{d},$$
		or 
		$$L\lesssim (\frac{r}{\delta\Delta^{-1}})^{d}\frac{N}{M}.$$   
		This shows that the rescaled version of $\cT_Q$ has the following property: tubes in a given direction $\theta$ form a $(\frac{\delta}{\Delta},d,K_2)$-KT set, with $K_2\sim \frac{N}{M}$.
		
	\end{proof}
	The argument in Proposition \ref{lkogjthuyuh09yu} gives the following general version of \eqref{jefhureyg7yg78yg78y}, which will be used later.  
	\begin{theorem}
		\label{completeWW}
		Let $\T$ be a $(\delta,t,K_1)$-KT set for some $t\in(0,2)$, and let $0<\epsilon_2<\epsilon_1$.
		Let $Y$ be a shading of $\T$.
		Assume that each $Y(T)$ is a $(\delta,\sigma,K_2)$-KT set with cardinality $\sim N$,  and assume that it is also $(\epsilon_1,\epsilon_2)$-two-ends.
		
		Then for each $\epsilon>0$
		$$   N^{1/2}\delta^{t/2}\sum_{T\in\T}\#Y(T)\le C(\epsilon, \epsilon_1,\epsilon_2)K_1K_2^{1/2}\delta^{-O(\epsilon_1)-\epsilon}\#Y(\T).
		$$
	\end{theorem}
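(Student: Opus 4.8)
The plan is to bootstrap from the $K_1=1$ case, which is precisely the inequality \eqref{jefhureyg7yg78yg78y} extracted from the proof of Theorem \ref{tWW25} (that is, Theorem 1.5 of \cite{wangwufurst}), by running the decomposition argument of Step~2 in the proof of Proposition \ref{prop.product}. Note that, in contrast to the full Proposition \ref{prop.product}, no rigid-motion ``filling up'' step is needed here: \eqref{jefhureyg7yg78yg78y} carries no maximality normalization in $\#\T$ (the only $\delta$-power on the left, $\delta^{t/2}$, is already in final form), so one only has to reduce the Katz--Tao constant of the direction set from $K_1$ to $1$.

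First I would decompose $\T=\bigcup_{j=1}^{M}\T^j$ into $M\les K_1$ many $(\delta,t,1)$-KT sets of tubes, exactly as in Step~2 of the proof of Proposition \ref{prop.product}; this is what Lemma~2.8 and Lemma~2.9 of \cite{demwangszem} provide, and those lemmas only use that the KT condition is a metric-entropy condition, so they apply verbatim to a $(\delta,t,K_1)$-KT family of tubes. The hypotheses on the shading are hereditary under passing to a subfamily, so for every $T\in\T^j$ the set $Y(T)$ is still a $(\delta,\sigma,K_2)$-KT set of cardinality $\sim N$ and still $(\epsilon_1,\epsilon_2)$-two-ends; and since each $Y(T)$ is a disjoint union of $\delta$-squares incident to a $\delta$-tube of length $O(1)$ we have $N\lesssim\delta^{-1}$, so with $\lambda:=N\delta\lesssim 1$ the uniformity hypothesis under which \eqref{jefhureyg7yg78yg78y} is established is in force.

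Then I would apply \eqref{jefhureyg7yg78yg78y} to each $\T^j$ (with $K_3=K_2$ and $\lambda=N\delta$, so that $(\lambda/\delta)^{1/2}=N^{1/2}$) to get
$$N^{1/2}\delta^{t/2}\sum_{T\in\T^j}\#Y(T)\le C(\epsilon,\epsilon_1,\epsilon_2)K_2^{1/2}\delta^{-O(\epsilon_1)-\epsilon}\#Y(\T^j),$$
and sum over $j=1,\dots,M$. Since $Y(\T^j)\subset Y(\T)$ and $M\les K_1$, and since the left-hand side is additive over the partition, this yields
$$N^{1/2}\delta^{t/2}\sum_{T\in\T}\#Y(T)\le C(\epsilon,\epsilon_1,\epsilon_2)\,M\,K_2^{1/2}\delta^{-O(\epsilon_1)-\epsilon}\#Y(\T)\les K_1K_2^{1/2}\delta^{-O(\epsilon_1)-\epsilon}\#Y(\T),$$
and absorbing the final $\les$ into the $\delta^{-\epsilon}$ factor (after relabelling $\epsilon$) completes the argument.

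I do not expect any serious obstacle. The only points that need (routine) verification are that the partitioning lemmas of \cite{demwangszem} genuinely produce $(\delta,t,1)$-KT families of tubes with $M\les K_1$ pieces, and that the crude bound $\#Y(\T^j)\le\#Y(\T)$ is harmless: the shadings of different subfamilies $\T^j$ may overlap, but this only costs the factor $M\les K_1$, which is exactly the loss that the right-hand side of the claimed inequality already allows.
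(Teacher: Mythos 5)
Your proof is correct and matches the paper's intended argument: the paper justifies this theorem with the single remark that ``the argument in Proposition \ref{lkogjthuyuh09yu} gives the following general version of \eqref{jefhureyg7yg78yg78y}'', and the operative part of that argument is precisely your Step-2 partition of $\T$ into $\les K_1$ unit-constant $(\delta,t,1)$-KT subfamilies, the application of \eqref{jefhureyg7yg78yg78y} (with $\lambda=N\delta$) to each piece, and the crude bound $\#Y(\T^j)\le\#Y(\T)$ costing the factor $M\les K_1$. Your observation that the Step-1 rigid-motion normalization is unnecessary here, since \eqref{jefhureyg7yg78yg78y} carries no maximality assumption on $\#\T$, is also correct.
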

	
	\subsection{Properties of discretised sets} In this subsection we record some basic properties. Often, we will refer to the results recorded here without reference.

	The below definition is standard, and can be found at \cite[Definition 2.3]{demwangszem}, for example.
	\begin{definition}\label{def.uniform}
		Let $\epsilon >0.$ Let $T_\epsilon$ satisfy $T_\epsilon^{-1}\log (2T_\epsilon) = \epsilon.$ Given $0 < \delta \leq 2^{-T_\epsilon},$ let $m$ be the largest integer such that $mT_\epsilon \leq \log (1/\delta).$ Set $T := \log(1/\delta)/m.$ Note that $\delta = 2^{-mT}.$ We say that a $\delta$-separated $A \subset \R^d$ is \textit{$\e$-uniform} if for each $\rho = 2^{-jT}, 0 \leq j \leq m$ and each $P,Q \in \cD_\rho(A),$ we have
		\begin{equation}
			\#(A \cap P) \sim \#(A \cap Q).
		\end{equation}
		A similar definition may be given if $A$ is a collection of $\delta$-balls.
	\end{definition}
	Below is \cite[Lemma 2.4]{demwangszem}.
	\begin{lemma}
		Let $\epsilon > 0.$ Let $A \subset \R^d$ be a $\delta$-separated $\e$-uniform set. For each $\delta \leq \rho \leq 1$ and each $P,Q \in \cD_\rho(A)$ we have 
		\begin{equation}
			C_\epsilon^{-1}\#(A \cap P) \leq \#(A \cap Q) \leq C_\e \#(A \cap P).
		\end{equation}
	\end{lemma}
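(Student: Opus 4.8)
The statement is the routine upgrade of $\epsilon$-uniformity — which by hypothesis is only imposed at the special scales $\rho=2^{-jT}$, $0\le j\le m$ — to two-sided comparability of cube counts at \emph{all} scales $\rho\in[\delta,1]$, at the cost of a constant $C_\epsilon$ depending only on $\epsilon$ and $d$. The plan is to sandwich an arbitrary $\rho$ between two consecutive special scales and transfer the information by elementary covering arguments.

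First I would dispose of the endpoints: $\rho$ comparable to $1$ is handled by the top special scale, and if $\rho<2^{-(m-1)T}$ then $\rho$ lies between $\delta=2^{-mT}$ and $2^{-(m-1)T}$, where the bottom scale is harmless because $A$ is $\delta$-separated and so every cube of $\cD_\delta(A)$ contains between $1$ and $O_d(1)$ points. In general, given $\rho\in[\delta,1]$, let $\rho_1=2^{-jT}$ be the largest special scale with $\rho_1\le\rho$ and $\rho_2=2^T\rho_1$ the next one, so that $\rho_1\le\rho<\rho_2$. The key quantitative input is that $\rho_2/\rho_1=2^T\le 2^{2T_\epsilon}$ by the definition of $T$: consecutive special scales differ only by a factor depending on $\epsilon$.

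Next I would compare the two special scales. By $\epsilon$-uniformity all cubes of $\cD_{\rho_1}(A)$ have comparable counts; write $M_1$ for this common value up to an absolute factor, and similarly $M_2$ at scale $\rho_2$. Since each $\rho_2$-cube is covered by at most $(2^T+1)^d$ cubes of $\cD_{\rho_1}$ and meets at least one cube of $\cD_{\rho_1}(A)$, summing and using uniformity at scale $\rho_1$ gives $M_1\lesssim M_2\lesssim (2^T+1)^d M_1$, so $M_1$ and $M_2$ agree up to a factor $2^{O(dT_\epsilon)}$. Then I would fix $P\in\cD_\rho(A)$ and bound $\#(A\cap P)$ on both sides. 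For the upper bound, $P$ has side $\rho<\rho_2$, hence is covered by at most $2^d$ cubes of $\cD_{\rho_2}$, each contributing at most a bounded multiple of $M_2$, so $\#(A\cap P)\lesssim M_2$. For the lower bound, $P$ contains a point $a\in A$; the cube $R^a\in\cD_{\rho_1}(A)$ containing $a$ has $\#(A\cap R^a)\gtrsim M_1$, and after enlarging $P$ by $O(\rho_1)=O(\rho)$ to absorb the grid offset — which costs only a further dimensional factor in every covering count — one obtains $\#(A\cap P)\gtrsim M_1$. Combining, $\#(A\cap P)$ equals $M_1$ up to $C_\epsilon:=2^{O(dT_\epsilon)}$; applying this to two cubes $P,Q\in\cD_\rho(A)$ yields $C_\epsilon^{-1}\#(A\cap P)\le\#(A\cap Q)\le C_\epsilon\#(A\cap P)$.

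I do not expect a serious obstacle; the only delicate point is the transfer across scales, and within it the lower bound — one has to make sure that a general $\rho$-cube meeting $A$ genuinely captures a whole special-scale cube's worth of points of $A$ rather than merely clipping its corner. That is exactly why one should run the argument at the two neighbouring special scales simultaneously and permit the harmless fattening above; the size of the final constant $C_\epsilon$ is then entirely governed by the bounded ratio $2^T\le 2^{2T_\epsilon}$ between consecutive special scales.
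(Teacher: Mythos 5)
The paper offers no proof of this lemma at all: it is imported verbatim from \cite[Lemma 2.4]{demwangszem}, so the only thing to assess is your argument on its own terms. The scale bookkeeping is right ($T\le 2T_\epsilon$, so consecutive special scales differ by a factor $2^T\le 2^{2T_\epsilon}$ depending only on $\epsilon$), and the upper bound is sound: covering $P$ by the $\lesssim 2^{dT}$ cubes of the neighbouring special-scale grid $\cD_{\rho_1}$ that it meets and invoking uniformity at $\rho_1$ gives $\#(A\cap P)\le C_\epsilon M_1$. The comparison $M_1\lesssim M_2\lesssim 2^{O(dT)}M_1$ is also true, but your justification of the lower half ("meets at least one cube of $\cD_{\rho_1}(A)$") already uses the reasoning that fails later; the correct route is $M_j\sim \#A/\#\cD_{\rho_j}(A)$ together with $\#\cD_{\rho_2}(A)\le 2^d\,\#\cD_{\rho_1}(A)$.

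The genuine gap is the lower bound $\#(A\cap P)\gtrsim_\epsilon M_1$, exactly the point you flag as delicate and then wave away. Enlarging $P$ by $O(\rho_1)$ bounds $\#(A\cap P^{\mathrm{enl}})$ from below, not $\#(A\cap P)$: the $\sim M_1$ points of the special-scale cube $R^a$ containing $a$ may lie almost entirely outside $P$, and "a further dimensional factor in every covering count" only helps upper bounds. This is not a removable technicality. Take $d=1$ and a Cantor-type $\epsilon$-uniform $A$ whose occupied $\rho_1$-intervals are far apart, each carrying $M_1\gg 1$ points; for a suitable non-special $\rho\in(\rho_1,\rho_2)$ one can arrange a grid interval of $\cD_\rho$ whose left endpoint falls just below the rightmost point of one cluster, so that it captures a single point of $A$, while another interval of the same grid swallows an entire cluster and hence $\sim M_1$ points. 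Both intervals belong to $\cD_\rho(A)$, so no covering argument can produce the claimed two-sided comparability for a literal arbitrary $\rho$-grid. The statement has to be read with $\cD_\rho$ rounded to the nearest special-scale grid $\cD_{2^{-jT}}$ --- in which case it follows at once from the definition of $\epsilon$-uniformity plus your scale-comparison step --- or else only the one-sided bound $\#(A\cap P)\le C_\epsilon\,\#A/N_\rho(A)$ should be asserted, which is what is actually used downstream.
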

	Below is \cite[Lemma 2.15]{orpshabc}.
	\begin{lemma}\label{lem.unifsubset}
		Let $\epsilon > 0$ and suppose that $\delta >0 $ is small enough in terms of $\e.$ Let $A \subset \R^d$ be a $\delta$-separated set. Then there is an $\e$-uniform $A_0 \subset A$ with $\#A_0 \gtrsim \delta^{\epsilon}\#A.$ 
	\end{lemma}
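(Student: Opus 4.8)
The plan is to run the standard multiscale dyadic pigeonholing on the tree of dyadic cubes meeting $A$, working from the finest scale $\delta$ upward to scale $1$. Recall from Definition~\ref{def.uniform} the scales $\rho_j=2^{-jT}$ for $0\le j\le m$, with $\rho_0=1$, $\rho_m=\delta$, and $T_\e\le T\le 2T_\e$; here ``uniform'' for a finite set $B$ means that for every such $j$ all cubes in $\cD_{\rho_j}(B)$ carry comparably many points of $B$. Since $A$ is $\delta$-separated, every $\delta$-cube meeting $A$ carries $\Theta(1)$ points, so $A$ is trivially uniform at the single scale $\rho_m$. I would then build a decreasing chain $A=A_m\supseteq A_{m-1}\supseteq\cdots\supseteq A_0$ in which $A_j$ is uniform, with an \emph{absolute} implied constant, at each of the scales $\rho_j,\rho_{j+1},\dots,\rho_m$, while $\#A_j\gtrsim (Td)^{-1}\#A_{j+1}$. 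Then $A_0$ is $\e$-uniform, and everything reduces to controlling the product of the $m$ per-step losses.

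For the passage from $A_{j+1}$ to $A_j$: a cube $Q\in\cD_{\rho_j}(A_{j+1})$ contains at most $(\rho_j/\rho_{j+1})^d=2^{Td}$ cubes of $\cD_{\rho_{j+1}}$, and by the inductive hypothesis each such child meeting $A_{j+1}$ carries $\Theta(N_{j+1})$ points for one common value $N_{j+1}$; hence $\#(A_{j+1}\cap Q)$ lies in a window $[\,cN_{j+1},\,C\,2^{Td}N_{j+1}\,]$, which meets only $O(Td)$ dyadic ranges. Pigeonholing over these ranges, I pass to the subfamily $\cG_j\subseteq\cD_{\rho_j}(A_{j+1})$ on which $\#(A_{j+1}\cap Q)$ is constant up to a factor $2$, keeping a fraction $\gtrsim (Td)^{-1}$ of the points, and set $A_j=A_{j+1}\cap\bigcup_{Q\in\cG_j}Q$. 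Deleting entire $\rho_j$-cubes removes whole subtrees, so the point counts of all surviving cubes at the finer scales $\rho_{j+1},\dots,\rho_m$ are unchanged and uniformity there persists with the \emph{same} absolute constants, while uniformity at scale $\rho_j$ holds by construction. The point that keeps the uniformity constant absolute rather than letting it blow up like $2^m$---and that distinguishes this from the naive argument which pigeonholes the raw counts level by level, at a cost of a factor $1/\log(1/\delta)$ per scale---is that the totals $\#(A_{j+1}\cap Q)$ themselves may be pigeonholed cheaply, precisely because the already-established uniformity one scale finer confines each of them to a multiplicative window of width $2^{O(dT)}$.

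Finally I would multiply the $m$ per-step losses to get $\#A_0\gtrsim (CTd)^{-m}\#A$ for an absolute $C$. Feeding in $T\le 2T_\e$, $m\le \log(1/\delta)/T_\e$ and the defining relation $\e T_\e=\log(2T_\e)$ gives
\[
(CTd)^{-m}\ \ge\ \delta^{\,\e+O_d(1/T_\e)},
\]
hence $\#A_0\gtrsim \delta^{O_d(\e)}\#A$. The only genuinely delicate point in the whole argument is this bookkeeping---ensuring the per-scale loss is $O_d(T^{-1})$ rather than $O(1/\log(1/\delta))$, and that the uniformity constant does not degrade with $m$---and both are secured by the bottom-up, total-count pigeonholing above; to reach the stated exponent $\e$ exactly one absorbs the dimensional constant into the choice of $T_\e$, as is done in \cite{orpshabc,demwangszem}.
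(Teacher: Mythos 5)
The paper does not prove this lemma itself — it is quoted verbatim from \cite{orpshabc} (Lemma 2.15) — so there is no in-paper argument to compare against; your bottom-up pigeonholing is a correct, self-contained rendition of the standard uniformization proof, and you correctly identify the one point that matters (using the already-established uniformity at scale $\rho_{j+1}$ to confine the counts $\#(A_{j+1}\cap Q)$ at scale $\rho_j$ to a window of $O(dT)$ dyadic classes, so the per-scale loss is $(dT)^{-1}$ rather than $(\log\frac{1}{\delta})^{-1}$). The only cosmetic discrepancy is the exponent $O_d(\e)$ versus the stated $\e$, which you flag and which is harmless for every use of the lemma in the paper.
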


	The proposition below records the relation between $(\delta,s)$-sets and $(\delta,s)$-KT sets, and explores how properties at fine scales influence properties at courser scales, as well as how properties change when subject to a rescaling. 
	\begin{proposition}\label{prop.properties}
		Let $0 \leq s \leq d, C, \delta,\epsilon  >0.$ Let $\delta \leq \rho.$ Let $A \subset \R^d$ be an $\epsilon$-uniform collection of $\delta$-balls. 
		
		\begin{enumerate}
			\item  If $A$ is a $(\delta,s,C)$-set then $A$ is a $(\delta,s,C\#A \delta^s)$-KT set.
			\item If $A$ is a $(\delta,s,C)$-KT set then $A$ is a $(\delta,s, C\delta^{-s}/\#A)$-set.
			\item If $A$ is a $(\delta,s,C)$-set then $A_\rho$ is a $(\rho,s,C_\e C)$-set. 
			\item If $A$ is a $(\delta,s,C)$-KT set then $A_\rho$ is a $(\rho, s, C_\epsilon C (\rho/\delta)^s \cns\rho A/\#A)$-KT set.
			\item If $A$ is a $(\delta,s,C)$-KT set then the $\times \rho^{-1}$ dilate of $A \cap B(\rho)$ is a 
			$(\delta/\rho,s,C)$-KT set, where $B(\rho)$ is any ball of radius $\rho.$
		\end{enumerate}
		\begin{proof}
			Let $x \in \R^d, \delta \leq r, \rho \leq R.$
			\begin{enumerate}
				\item This follows from the observation that
				\begin{equation}
					\#(A \cap B(x,r)) \leq Cr^s\#A = C\#A\delta^s(r/\delta)^s.
				\end{equation}
				\item This follows from the observation that
				\begin{equation}
					\#(A \cap B(x,r)) \leq (C(r/\delta)^s = C\delta^{-s}/\#A) r^s \#A.
				\end{equation}
				\item The first statement is \cite[Lemma 2.15]{orpshabc}.
				\item This follows from (1), (2), and (3).
				\item This follows from the observation that if $\delta/\rho \leq r \leq 1$ then
				\begin{equation}
					\#(A \cap B(\rho) \cap B(x,r\rho)) \leq C(r/(\delta/\rho))^s,
				\end{equation}
				and this quantity is invariant under rescaling. 
			\end{enumerate}
		\end{proof}
		
	\end{proposition}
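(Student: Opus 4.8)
The plan is to prove the five items more or less separately, with (4) following by chaining (1)--(3) and with (5) being a change of variables. Items (1) and (2) require no $\epsilon$-uniformity and amount to rewriting one defining inequality as the other: for (1), starting from $\#(A\cap B(x,r))\le Cr^s\#A$ one writes $Cr^s\#A=(C\#A\de^s)(r/\de)^s$, which is exactly the $(\de,s,C\#A\de^s)$-KT bound; for (2), from $\#(A\cap B(x,r))\le C(r/\de)^s$ one writes $C(r/\de)^s=(C\de^{-s}/\#A)r^s\#A$.

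For (3) I would simply invoke \cite[Lemma 2.15]{orpshabc}. The underlying mechanism, which is worth recalling since it is where $\epsilon$-uniformity enters, is that uniformity forces every nonempty $\rho$-cube meeting $A$ to contain $\sim\#A/\cns{\rho}{A}$ of the $\de$-balls of $A$, up to a constant $C_\epsilon$; hence for any $r\ge\rho$ one has $\#(A_\rho\cap B(x,r))\sim \#(A\cap B(x,r))\cdot\cns{\rho}{A}/\#A\le C_\epsilon C r^s\cns{\rho}{A}\sim C_\epsilon C r^s\#A_\rho$, which is the desired $(\rho,s,C_\epsilon C)$-set bound.

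For (4), I would chain the previous parts: $A$ a $(\de,s,C)$-KT set gives, by (2), a $(\de,s,C\de^{-s}/\#A)$-set, hence by (3) $A_\rho$ is a $(\rho,s,C_\epsilon C\de^{-s}/\#A)$-set, and then (1) at scale $\rho$ upgrades this to a $(\rho,s,C_\epsilon C\de^{-s}\rho^s\#A_\rho/\#A)$-KT set; substituting $\#A_\rho\sim\cns{\rho}{A}$ and $\rho^s\de^{-s}=(\rho/\de)^s$ gives the stated constant. For (5), the $\times\rho^{-1}$-dilation sends a radius-$r$ ball (with $\de/\rho\le r\le 1$) to a radius-$r\rho$ ball in the original coordinates, so the count is bounded by $C(r\rho/\de)^s=C(r/(\de/\rho))^s$, which is precisely the $(\de/\rho,s,C)$-KT bound. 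The only step with any real content is (3), and even that is dispatched by the cited lemma; the remaining work is elementary algebra together with the scaling in (5), so I do not expect a genuine obstacle here.
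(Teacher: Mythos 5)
Your proposal is correct and follows essentially the same route as the paper: (1) and (2) are the same algebraic rewritings, (3) invokes the same cited lemma, (4) chains (1)--(3) exactly as the paper indicates, and (5) is the same rescaling observation. No gaps.
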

	Below is \cite[Lemma 3.13]{fassorp}.
	\begin{lemma}\label{lem.disfrost}
		Let $0 \leq s \leq d, \delta,\kappa  >0.$  Let $A \subset \R^d$ and suppose that $\cont{s}(A) > \kappa.$ Then $A$ contains a $(\delta,s)$-set with cardinality  $\gtrsim \kappa\delta^{-s}.$
	\end{lemma}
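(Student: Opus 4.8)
The plan is to realise the content hypothesis as a Frostman measure and to read off the $(\delta,s)$-set from its support. As a first reduction I would pass to a maximal $\delta$-separated subset $A_\delta\subseteq A\cap[0,1]^d$: doubling the diameters of the sets in a cover of $A_\delta$ produces a cover of $A$, so $\cont{s}(A_\delta)\gtrsim\cont{s}(A)>\kappa$, and I would also replace $\cont{s}$ by the comparable \emph{dyadic} $s$-content at scale $\delta$ (covers by dyadic cubes of sidelength in $\{\delta,2\delta,\dots\}$). The trivial bound $\cn{A_\delta}\,\delta^s\ge\cont{s}(A_\delta)\gtrsim\kappa$ already produces a $\delta$-separated subset of the right cardinality, but with no distributional control; the work is to produce one that is additionally a $(\delta,s)$-set.

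For this I would run the usual ``mass transport down the dyadic tree''. Assign to each dyadic cube $Q$ with $\delta\le\ell(Q)\le 1$ the dyadic $s$-content $\tau(Q)$ of $A_\delta\cap Q$ at scale $\delta$; these obey $\tau(Q)\le\ell(Q)^s$, the recursion $\tau(Q)=\min\{\ell(Q)^s,\sum_{Q'}\tau(Q')\}$ over children $Q'$, and $\tau(Q)=\delta^s$ for a $\delta$-cube meeting $A_\delta$, with $\tau(\text{root})\gtrsim\kappa$. Declaring $Q$ \emph{saturated} when $\tau(Q)\ge\tfrac12\ell(Q)^s$, push the root mass $\tau(\text{root})$ downward, each cube splitting its mass among its children in proportion to their $\tau$-values, and stop the flow the first time a saturated cube is reached. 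Since no proper ancestor of a stopping cube is saturated, the flow is never capped before stopping, so the stopping cubes $Q\in\cQ$ are pairwise disjoint, cover $A_\delta$, each carry mass exactly $\tau(Q)\sim\ell(Q)^s$ with $\sum_{Q\in\cQ}\ell(Q)^s\sim\tau(\text{root})\gtrsim\kappa$, and — again because $\tau$ is genuinely additive above $\cQ$ — they satisfy the packing bound $\sum_{Q\in\cQ,\;Q\subseteq B}\ell(Q)^s\lesssim r^s$ for every ball $B=B(x,r)$ with $r\ge\delta$.

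Inside each stopping cube $Q$, the lemma applied to $A_\delta\cap Q$ rescaled to the unit cube (content now $\gtrsim 1$, and strictly fewer dyadic generations between $\delta/\ell(Q)$ and $1$) furnishes, by induction on the number of generations, a $(\delta,s)$-subset $A_Q\subseteq A_\delta\cap Q$ with $\cn{A_Q}\sim(\ell(Q)/\delta)^s$. Put $A':=\bigcup_{Q\in\cQ}A_Q$, so that $\cn{A'}\sim\delta^{-s}\sum_{Q\in\cQ}\ell(Q)^s\gtrsim\kappa\delta^{-s}$. For $B=B(x,r)$ with $r\ge\delta$, the stopping cubes meeting $B$ with $\ell(Q)\ge r$ number $O_d(1)$ and each contributes $\cn{A_Q\cap B}\lesssim(r/\ell(Q))^s\cn{A_Q}\sim(r/\delta)^s$ (as $A_Q$ is a $(\delta,s)$-set inside $Q$), while those with $\ell(Q)<r$ lie in $2B$ and contribute $\sum\cn{A_Q}\lesssim\delta^{-s}\sum_{Q\subseteq 2B}\ell(Q)^s\lesssim(r/\delta)^s$ by the packing bound; hence $\cn{A'\cap B(x,r)}\lesssim(r/\delta)^s$. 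Combined with $\cn{A'}\gtrsim\kappa\delta^{-s}$ this exhibits $A'$ as a $(\delta,s,O(\kappa^{-1}))$-set of cardinality $\gtrsim\kappa\delta^{-s}$, which in the regime $\kappa\gtrsim1$ relevant to the applications is the claimed $(\delta,s)$-set.

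I expect the bookkeeping around the stopping time to be the only real obstacle: one must choose the saturation threshold so that the packing bound survives (this uses that $\tau$ is additive, not merely subadditive, above $\cQ$), ensure the induction on scales never invokes itself at the top scale (e.g.\ handling a saturated root separately or with an adjusted threshold), and verify that the gluing does not degrade the implicit constant — all routine, but requiring care; the details are in \cite[Lemma 3.13]{fassorp}.
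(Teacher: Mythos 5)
The paper offers no proof of this lemma — it is quoted verbatim from \cite[Lemma 3.13]{fassorp} — so there is nothing internal to compare against; your dyadic stopping-time/mass-transport argument is exactly the standard proof of that result, and it is correct. The two points you flag yourself are the only delicate ones and both are routine: the saturated-root case is handled by recursing into the children (whose $\tau$-values still sum to $\gtrsim\kappa$ by additivity, at the cost of a factor $2^d$ in the KT constant), and the packing bound does follow from additivity of $\tau$ strictly above the stopping cubes. Your closing observation about normalization is also the right reading of the statement: the construction yields a $(\delta,s,O(1))$-KT set of cardinality $\gtrsim\kappa\delta^{-s}$, i.e.\ a $(\delta,s,O(\kappa^{-1}))$-set in this paper's Frostman convention, and that is precisely how the lemma is invoked in the paper (with $C\sim 1/\mathcal{H}^s_\infty(A)$ in the proof of Theorem \ref{thm.sumprod}, and $C\lesssim\delta^{-O(\e)}$ in the proof of Theorem \ref{thm.dis}).
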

	\begin{lemma}\label{lem.productset}
		Let $0 \leq s \leq 1, C, \de >0.$ Let $A \subset \R$ be a $(\delta,s,C)$-KT set. Let $T$ be a $\delta$-tube. Then $(A \times A) \cap T$ is a $(\delta,s,O(C))$-KT set.
		\begin{proof}
			Let $r > \de$ and let $B$ be a ball of radius $r.$ Without loss of generality suppose the tube makes an angle more than 45 degrees with the ordinate. By Fubini, and writing $\pi$ for the projection to the abscissa, we have
			\begin{align}
				\#((A \times A) \cap T\cap B) &= \sum_{x \in A \cap \pi(B)}  \#(\pi^{-1}(x)\cap (A \times A \cap T)) \\
				&\lesssim  \#(A \cap \pi(B))\\
				&\leq C(r/\delta)^s,
			\end{align}
			as required.
		\end{proof}
	\end{lemma}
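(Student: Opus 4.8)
The statement to prove is Lemma~\ref{lem.productset}: if $A\subset\R$ is a $(\delta,s,C)$-KT set and $T$ is a $\delta$-tube, then $(A\times A)\cap T$ is a $(\delta,s,O(C))$-KT set. This is a short Fubini-type argument.

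\medskip

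The plan is to bound, for an arbitrary ball $B=B(x,r)$ with $r\ge\delta$, the $\delta$-covering number $\#\big((A\times A)\cap T\cap B\big)$ by $O(C)(r/\delta)^s$. First I would normalize the orientation of $T$: up to swapping the roles of the two coordinates, we may assume the long axis of $T$ makes an angle of at least $45^\circ$ with the abscissa, i.e.\ $T$ is not close to horizontal. Then the projection $\pi$ to the abscissa restricted to $T$ is essentially injective at scale $\delta$: each vertical line $\pi^{-1}(x_0)$ meets $T$ in a segment of length $O(\delta)$ (since the width of $T$ is $\delta$ and its slope is bounded below in absolute value), hence $\#\big(\pi^{-1}(x_0)\cap(A\times A\cap T)\big)=O(1)$ for each $x_0$.

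\medskip

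With this in hand, I would slice $(A\times A)\cap T\cap B$ by vertical lines through points of $A$. Since $\pi(B)$ is an interval of length $\le 2r$ (a ball of radius $r$ in the plane projects to an interval of radius $r$), we get
\begin{align*}
\#\big((A\times A)\cap T\cap B\big)
&= \sum_{x_0\in A\cap \pi(B)} \#\big(\pi^{-1}(x_0)\cap (A\times A\cap T)\big)\\
&\lesssim \#\big(A\cap \pi(B)\big)\\
&\le C(2r/\delta)^s \ \lesssim\ C(r/\delta)^s,
\end{align*}
where the last line uses that $A$ is a $(\delta,s,C)$-KT set applied to the interval $\pi(B)$, which is contained in a ball of radius $2r$. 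This gives the claimed $(\delta,s,O(C))$-KT property. (Strictly, the sum should range over the $\delta$-separated representatives $A_\delta$ of $A$, so that $\sum_{x_0}\#(\pi^{-1}(x_0)\cap\cdots)$ genuinely counts a $\delta$-cover of the slice; this is a harmless bookkeeping point.)

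\medskip

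I do not expect any serious obstacle here; the only thing requiring a moment's care is the geometric claim that each vertical fiber of $T$ has $O(1)$ many $\delta$-squares, which is exactly where the $45^\circ$ normalization is used. Everything else is Fubini plus the definition of a KT set.
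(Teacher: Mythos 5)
Your argument is the same Fubini-plus-projection argument as in the paper, but your WLOG normalization is stated backwards, and as written the key geometric step fails. You assume the long axis of $T$ makes an angle of at least $45^\circ$ with the abscissa, i.e.\ $T$ is close to \emph{vertical}, so its slope $m$ satisfies $|m|\ge 1$ (``bounded below in absolute value''). But then a vertical line meets $T$ in a segment of length $\approx \delta\sqrt{1+m^2}$, which is $\gg\delta$ for steep tubes; in the extreme case of a vertical tube over some $x_0\in A$, a single fiber $\pi^{-1}(x_0)$ contains all of $\{x_0\}\times A$, so the claim $\#\big(\pi^{-1}(x_0)\cap(A\times A\cap T)\big)=O(1)$ is false and the displayed chain of inequalities breaks at the second line. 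What you need is the opposite normalization (the one the paper uses): the tube makes an angle of more than $45^\circ$ with the \emph{ordinate}, equivalently $|m|\le 1$, so that each vertical fiber of $T$ is a segment of length $O(\delta)$ and hence contains $O(1)$ points of the $\delta$-separated set $\{x_0\}\times A$. Since you already invoke the symmetry of $A\times A$ under swapping coordinates, the fix is immediate — either flip the inequality in the normalization, or keep your normalization and project to the ordinate instead — but the proof as written has the projection mismatched with the orientation of the tube. The remaining steps (restricting the sum to $A\cap\pi(B)$, using that $\pi(B)$ has length $\le 2r$, and applying the KT property of $A$) are correct and identical to the paper's.
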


	\section{Estimates for the number of $r$-heavy squares}\label{sect.inc}
	We first prove Theorem \ref{maininct} and then record some sharp consequences for the number of $r$-heavy squares. 
	\begin{proof}[Proof of Theorem \ref{maininct}]
		Recall the definition of $I(\delta)$ in   \eqref{jrjugitgih9iy9hih}. In light of Proposition \ref{lkogjthuyuh09yu}, our goal is to verify that $I(\delta)\les 1$. This will follow immediately by iterating the inequality
		\begin{equation}
			\label{figjiortugre-o0y0}
			I(\delta)\les I(\delta^{1-\epsilon_1})\delta^{-\epsilon_2}+C(\epsilon_1,\epsilon_2)\delta^{-O(\epsilon_1)},
		\end{equation}
		with the choice, say $\epsilon_1=\epsilon$, $\epsilon_2=\epsilon^3$.

		It remains to prove \eqref{figjiortugre-o0y0}.
		Fix $\epsilon_1,\epsilon_2$.  We apply results from the previous section to $\Delta=\delta^{\epsilon_1}$. Fix a $(\delta,s,d,1,1)$-quasi-product set $\T$ and a shading $Y$ such that each $Y(T)$ is a $(\delta,\sigma,K_3)$-KT set.
		
		Split $[0,1]^2$ into $\Delta$-squares $Q$. Let $\cT_{Q,all}$ be the collection of all $\Delta\times \delta$ tubes $\tau$ inside $Q$. There are roughly $(\Delta/\delta)^2$ many 
		such tubes. 
		Given $T\in\T$ and $Q$ we call $\tau\in\cT_Q$ a $\Delta$-segment of $T$ if the angle between the directions of $T$ and $\tau$ is $\lesssim \frac{\delta}{\Delta}$, and if $T\cap \tau\not=\emptyset$. Each $T$ has $\sim 1$ many $\Delta$-segments inside each $Q$ that $T$ intersects.
		
		By considering two categories separately, we may assume one of the two (mutually exclusive) scenarios holds.
		\\
		\\
		(TE) $Y(T)$ is $(\epsilon_1,\epsilon_2)$-two-ends for each $T,$
		\\
		\\
		(NTE) each $T$ has a $\Delta$-segment $\tau_T$ that contains more than a $\delta^{\epsilon_2}$-fraction of $Y(T)$. If there are multiple such segments, we pick an arbitrary one. 
		
		In the (TE) case we get the bound 
		$$I(\delta)\le C(\epsilon, \epsilon_1,\epsilon_2)\delta^{-O(\epsilon_1)-\epsilon}$$
		by Theorem  \ref{tWW25}. We just need to verify that
		$$ \delta^{\frac{-2s-2d}{3}}\#Y(\T)^{1-\frac13}\lesssim  \delta^{\frac{-2s-2d}{\alpha}}\#Y(\T)^{1-\frac1\alpha},$$
		when $\alpha<3$. This is equivalent with proving the bound
		$$\#Y(\T)\lesssim \delta^{-2s-2d}.$$ The only nontrivial case is when $s+d<1$. Note that $\sigma=s+d$, and thus $\#Y(T)\lesssim \delta^{-s-d}$ for each $T\in\T$. Since $\#\T\lesssim \delta^{-s-d}$, the result follows.
		\medskip
		
		In the (NTE) case, let $\cT$ be the collection of all distinct segments $\tau_T\in\cup_Q\cT_{Q,all}$. Pigeonholing we may assume each $\tau\in\cT$ coincides with $\tau_T$ for $\sim M$ many $T\in\T$.
		We may also rename $\T$ to consist  only of those $T$ such that $\tau_T=\tau$ for some $\tau\in\cT$.
		Double counting shows that
		$$\#\cT\sim \frac{\#\T}{M}.$$
		
		Let $\cT_Q$ be those $\tau\in\cT$ lying inside $Q$. We define the shading $Y_Q(\tau)$
		as follows. Pick any $T\in \T$ such that $\tau=\tau_T$, and let $Y_Q(\tau)=Y(T)\cap \tau$. Note that the $\times \Delta^{-1}$ rescaling of this shading is a $(\delta/\Delta,\sigma,K_3)$-KT set.
		Propositions \ref{lkogjthuyuh09yu} and \ref{fkjgu8rtu9i6iy-} combine to show that  
		$$\sum_{\tau\in\cT_Q}\#Y_Q(\tau)\lesssim  I(\frac\delta\Delta)K_3^{\frac13}(\frac1{M\Delta^s})^{1-\frac1{\alpha}}[\;(\frac\delta\Delta)^{-s-d}\#\cT_Q\;]^{\frac{1}{\alpha}}\#Y_Q(\cT_Q)^{1-\frac1{\alpha}}.$$

		We use that
		$$\sum_{T\in\T}\#Y(T)\lesssim \delta^{-\epsilon_2}M\sum_{Q}\sum_{\tau\in\cT_Q}\#Y_Q(\tau)$$
		and
		$$\sum_{Q}\#Y_Q(\cT_Q)\le \#Y(\T),\;\;\text{since the cubes }Q\text{ are pairwise disjoint}.$$
		Combining all these with H\"older's inequality we find
		\begin{align*}
			\sum_{T\in\T}\#Y(T)&\lesssim \delta^{-\epsilon_2}I(\delta/\Delta)K_3^{\frac13}M(\frac{\delta}{\Delta})^{-\frac{s+d}{\alpha}}(\frac1{M\Delta^s})^{1-\frac1{\alpha}}(\sum_Q\#\cT_Q)^{\frac{1}{\alpha}}(\sum_{Q}\#Y_Q(\cT_Q))^{1-\frac1{\alpha}}\\&\lesssim \delta^{-\epsilon_2}I(\delta/\Delta)K_3^{\frac13}M(\frac{\delta}{\Delta})^{-\frac{s+d}{\alpha}}(\frac1{M\Delta^s})^{1-\frac1{\alpha}}(\frac1{M})^{\frac1\alpha}\#\T^{\frac{1}{\alpha}}\#Y(\T)^{1-\frac1{\alpha}}
			\\&\lesssim \delta^{-\epsilon_2}I(\delta/\Delta)K_3^{\frac13}\Delta^{\frac{2s+d}\alpha-s}\delta^{-\frac{2s+2d}{\alpha}}\#Y(\T)^{1-\frac1\alpha}.
		\end{align*}
		The exponent of $\Delta$ is positive if $s<d$, and is zero when $s\ge d$.
		We thus have 
		$$I(\delta)\les \delta^{-\epsilon_2}I(\delta/\Delta).$$	
	\end{proof}	
	We now present some immediate consequences for the number of $r$-heavy squares. 
	Let $$\cP_r(\T)=\{p:\;\#\{T\in\T:\;p\cap T\not=\emptyset\}\sim r\}.$$
	\begin{cor}\label{pout98u8huy8ht-8h} 
		Assume $s+d=1$. Then for each $(\delta,s,d,K_1,K_2)$-quasi-product set $\T$ and each $r\ge 1$ we have 
		\begin{equation}
			\label{fjiugutgut hu8uh8-8}
			\#\cP_r(\T)\les (K_1K_2)^{1/s}\frac{\delta^{-1}\#\T}{r^{\frac{s+1}{s}}}
		\end{equation}
		if $s>d$ and 
		\begin{equation}
			\label{fjiugutgut hu8uh8-88}
			\#\cP_r(\T)\les (K_1K_2)^2\frac{\delta^{-1}\#\T}{r^{3}}
		\end{equation}
		if $s\le d$.	
	\end{cor}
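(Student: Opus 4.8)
The plan is to deduce the two displayed bounds from Theorem \ref{maininct} (equivalently, from Proposition \ref{lkogjthuyuh09yu} together with the fact that $I(\delta)\les 1$) by choosing the shading appropriately. Since $s+d=1$ we have $\sigma=\min\{s+d,2-s-d\}=1$, so every collection of $\delta$-squares is trivially a $(\delta,\sigma,K_3)$-KT set with $K_3\sim 1$; hence the $K_3^{1/3}$ factor disappears. First I would fix $r\ge 1$ and set $\PP=\cP_r(\T)$. I then define the shading $Y$ by letting $Y(T)$ be the set of squares $p\in\PP$ that are incident to $T$. With this choice, each $p\in\PP$ lies in $Y(T)$ for $\sim r$ tubes $T$ (by the very definition of $\cP_r$), so by double counting
\begin{equation}
\sum_{T\in\T}\#Y(T)\;\sim\; r\,\#\PP\;=\;r\,\#\cP_r(\T),
\end{equation}
while $Y(\T)=\PP$ gives $\#Y(\T)=\#\cP_r(\T)$.

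Next I would invoke Theorem \ref{maininct}. For $s>d$ the exponent is $\alpha=\min\{3,2+d/s\}=2+d/s=2+\tfrac{1-s}{s}=\tfrac{1+s}{s}$ (using $d=1-s$), and Theorem \ref{maininct} yields
\begin{equation}
r\,\#\cP_r(\T)\;\les\;(K_1K_2)^{1-\frac1\alpha}\,(\delta^{-1}\#\T)^{1/\alpha}\,\#\cP_r(\T)^{1-\frac1\alpha},
\end{equation}
since $s+d=1$. Solving for $\#\cP_r(\T)$, i.e. dividing by $\#\cP_r(\T)^{1-1/\alpha}$ and raising to the power $\alpha$, gives
\begin{equation}
\#\cP_r(\T)\;\les\;(K_1K_2)^{\alpha-1}\,(\delta^{-1}\#\T)\,r^{-\alpha}.
\end{equation}
With $\alpha=\tfrac{1+s}{s}$ we have $\alpha-1=\tfrac1s$ and $\alpha=\tfrac{s+1}{s}$, which is exactly \eqref{fjiugutgut hu8uh8-8}. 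For $s\le d$ the exponent is $\alpha=3$ (since $2+d/s\ge 3$ when $d\ge s$), and the same manipulation produces $\#\cP_r(\T)\les (K_1K_2)^{2}(\delta^{-1}\#\T)r^{-3}$, which is \eqref{fjiugutgut hu8uh8-88}.

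The only point requiring a small amount of care — and the mild obstacle here — is the verification that the chosen shading $Y$ genuinely satisfies the hypotheses of Theorem \ref{maininct}: namely that each $Y(T)$ is a union of pairwise disjoint $\delta$-squares incident to $T$ (immediate, since the squares of $\cP_r(\T)$ are the standard squares of $\cD_\delta$), and that each $Y(T)$ is a $(\delta,\sigma,K_3)$-KT set, which as noted is automatic because $\sigma=1$. One should also note that the dyadic pigeonholing implicit in the notation $\#\{T:\,p\cap T\neq\emptyset\}\sim r$ costs only a $\log$ factor, absorbed by $\les$. With these remarks the corollary follows immediately from Theorem \ref{maininct}.
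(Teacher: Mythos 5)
Your proposal is correct and follows exactly the paper's own argument: take the shading $Y(T)=\{p\in\cP_r(\T):\,p\cap T\neq\emptyset\}$, observe that $\sigma=1$ makes the KT hypothesis on $Y(T)$ vacuous, double-count to get $\sum_T\#Y(T)\sim r\,\#\cP_r(\T)$, and solve the inequality from Theorem \ref{maininct} with $\alpha=\min\{3,\tfrac{s+1}{s}\}$. The computation of $\alpha$ in the two regimes and the final rearrangement match the paper's proof verbatim.
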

	\begin{proof}We apply Theorem \ref{maininct}  to $\T$ and the shading $Y(T)=\{p\in\cP_r(\T):\;p\cap T\not=\emptyset\}$.
		We first note that since $\sigma=1$, the non-concentration assumption on $Y(T)$ is trivially satisfied with $K_3=1$. Since $Y(\T)=\cP_r(\T)$ and
		$$\#\cP_r(\T)r\sim \sum_{T\in\T}\#Y(T),$$Theorem \ref{maininct}  gives
		$$\#\cP_r(\T)r\les (K_1K_2)^{1-\frac1\alpha}(\delta^{-1}\#\T)^{1/\alpha} \#\cP_r(\T)^{1-\frac1\alpha},$$
		or
		$$\#\cP_r(\T)\les (K_1K_2)^{\alpha-1}\frac{\delta^{-1}\#\T}{r^\alpha}.$$	
		Since $\alpha=\min\{3,\frac{s+1}{s}\}$, the result follows.

	\end{proof}
	
	\begin{remark}
		We note that \eqref{fjiugutgut hu8uh8-88} was proved in \cite{demwangszem}. The argument in \cite{demwangszem} gives no clue on what the correct analogue of \eqref{fjiugutgut hu8uh8-88} should be when $s>d$. The new perspective in the current paper reveals that this analogue is \eqref{fjiugutgut hu8uh8-8}. 
		
		Assume $K_1,K_2\sim 1$.
		As pointed out in \cite{demwangszem}, \eqref{fjiugutgut hu8uh8-88} is sharp for all $r$.
		The following example shows the sharpness of \eqref{fjiugutgut hu8uh8-8}, even in the AD-regular case. 	
		Start with a $(\delta,1-s)$-set $\cP\subset \cD_\delta$ of squares intersecting $[0,1]\times \{0\}$, with cardinality $\delta^{s-1}$. Consider also an AD-regular set $\Lambda$. By this we mean that each $\rho$-arc ($\rho>\delta$) centered at some $\theta\in\Lambda$ intersects $\sim (\rho/\delta)^s$ many points in $\Lambda$.  Finally, for each $p\in \cP$, let $\T_p$  be the collection of $\delta$-tubes passing through $p$, with one tube for each direction in $\Lambda$. Note that $\T=\cup_{p\in \cP}\T_p$ is a $(\delta,s,1-s,O(1),O(1))$-quasi-product set. An easy computation shows that for each $p\in\cP$ and each $1\le r\lesssim \delta^{-s}$ we have $\#\cP_r(\T_p)\sim \delta^{-1-s}r^{-\frac{s+1}{s}}$. The set $\cP_r(\T_p)$ lies inside an annulus of thickness $\sim r^{-1/s}$ centered at $p$.
		
		It is easy to see that the sets $(\cP_r(\T_p))_{p\in\cP}$ are pairwise disjoint if $r\gg \delta^{s(s-1)}$. We can show that they (or at least their subsets lying at height $\sim r^{-1/s}$) are essentially disjoint for each $r$. We claim that there is a subset $\cP'\subset \cup_{p\in\cP}\cP_r(\T_p)$ with size
		$$\#\cP'\approx (\#\cP)\delta^{-1-s}r^{-\frac{s+1}{s}}\sim \frac{\delta^{-2}}{r^{\frac{s+1}{s}}}$$
		so that each $q\in \cP'$ is in $\les 1$ many sets $\cP_r(\T_p)$. 
		
		Let $\cP'$ be those $p\in\cD_\delta$ that belong to $\sim M$ many $\cP_r(\T_p)$ (for some $M$) and such that 
		$$M\#\cP'\approx \frac{\delta^{-2}}{r^{\frac{s+1}{s}}}.$$
		The existence of such $M$ follows via pigeonholing.
		Since $\cP'\subset \cP_{rM}$, \eqref{fjiugutgut hu8uh8-8} implies that
		$$\#\cP'\les \frac{\delta^{-2}}{(rM)^{\frac{s+1}{s}}}.$$
		These imply $M\approx 1$, and the claim follows.
	\end{remark}
	\begin{remark}The dependence of $K_1,K_2$ in \eqref{fjiugutgut hu8uh8-8}, \eqref{fjiugutgut hu8uh8-88} is sharp for each $K_1K_2\lesssim r\le K_1\delta^{-s}$, when $K_1\lesssim K_2^{\frac{1-s}{s}}$. To see this, define 
		$\rho=\delta K_2^{\frac1s}$
		and $r'=\frac{r}{K_1K_2}$. Let $\T_\rho$ be $(\rho,s,1-s,1,1)$-quasi-product set with $\#\T_\rho\sim \rho^{-1}$ such that
		$$\#\cP_{r'}(\T_\rho)\approx \frac{\rho^{-2}}{(r')^\alpha}.$$
		In the previous remark we have proved such a choice is possible. Pack each $\tau\in\T_\rho$ with $K_1K_2\frac\rho\delta$ many $\delta$-tubes, call them $\T_\tau$, as follows. There are $\rho/\delta\gtrsim K_1(\rho/\delta)^{s}$ possible directions. We arbitrarily select $K_1(\rho/\delta)^{s}=K_1K_2$ many of them. For each selected direction we pick all $\delta$-tubes $T$ in that direction, lying inside  $\tau$. There are $\rho/\delta=K_2(\rho/\delta)^{1-s}$ many such $T$. Now let $\T=\cup_{\tau\in\T_\rho}\T_\tau$. Note that $\T$ is a $(\delta,s,1-s,K_1,K_2)$-quasi-product set $\#\T\sim K_1K_2\delta^{-1}$.
		
		Note that each $\delta$-square inside each $\rho$-square in  
		$\cP_{r'}(\T_\rho)$ is in $\cP_{r}(\T)$. Thus
		$$ \# \cP_{r}(\T)\approx (\frac\rho\delta)^2\#\cP_{r'}(\T_\rho)\approx (K_1K_2)^\alpha \frac{\delta^{-2}}{r^\alpha}\sim (K_1K_2)^{\alpha-1}\frac{\delta^{-1}\#\T}{r^\alpha}.$$
	\end{remark}	
	
	\medskip
	
	Given $\T$ and a shading $Y$ of $\T$ we write
	$$\cP_r(\T,Y)=\{p:\;\#\{T\in\T:\;p\in Y(T)\}\sim r\}.$$
	
	\begin{cor}
		\label{nononecase}
		Assume $s+d\not=1$. Let $\sigma=\min\{s+d,2-s-d\}$. Then for each $(\delta,s,d,K_1,K_2)$-quasi-product set $\T$, each shading $Y$ of $\T$ such that $Y(T)$ is a $(\delta,\sigma,K_3)$-set,  and each $r\ge 1$ we have 
		$$\#\cP_r(\T,Y)\les K_3^{\frac13(2+\frac{d}{s})}(K_1K_2)^{1+\frac{d}s} \frac{\delta^{-s-d}\#\T}{r^{2+\frac{d}{s}}}$$
		if $s>d$ and 
		$$\#\cP_r(\T,Y)\les K_3(K_1K_2)^2 \frac{\delta^{-s-d}\#\T}{r^{3}}$$
		if $s\le d$.
	\end{cor}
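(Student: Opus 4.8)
The plan is to derive this directly from Theorem~\ref{maininct}, imitating the proof of Corollary~\ref{pout98u8huy8ht-8h}. Write $\alpha=\min\{3,2+\frac ds\}$, so that $\alpha=2+\frac ds$ when $s>d$ and $\alpha=3$ when $s\le d$. First I would pass from $Y$ to the shading $Y'$ of $\T$ defined by $Y'(T)=\{p\in\cP_r(\T,Y):\;p\in Y(T)\}$. Since $Y'(T)\subseteq Y(T)$ for every $T$, and the $(\delta,\sigma)$-non-concentration of $Y(T)$ required by Theorem~\ref{maininct} is inherited by subsets, the pair $(\T,Y')$ is admissible for Theorem~\ref{maininct} with the same quasi-product parameters $K_1,K_2$ and with $K_3$ unchanged up to an absolute constant.

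Next I would record the two elementary identities linking $Y'$ with $\cP_r(\T,Y)$: by double counting,
$$\sum_{T\in\T}\#Y'(T)=\sum_{p\in\cP_r(\T,Y)}\#\{T\in\T:\;p\in Y(T)\}\sim r\,\#\cP_r(\T,Y),$$
and, since every square of $Y'(\T)$ lies in $\cP_r(\T,Y)$ by construction, $\#Y'(\T)\le\#\cP_r(\T,Y)$. Feeding these into the conclusion of Theorem~\ref{maininct} applied to $(\T,Y')$ gives
$$r\,\#\cP_r(\T,Y)\les K_3^{1/3}(K_1K_2)^{1-\frac1\alpha}(\delta^{-s-d}\#\T)^{1/\alpha}\,\#\cP_r(\T,Y)^{1-\frac1\alpha},$$
and, cancelling $\#\cP_r(\T,Y)^{1-1/\alpha}$ and then raising to the power $\alpha$,
$$\#\cP_r(\T,Y)\les K_3^{\alpha/3}(K_1K_2)^{\alpha-1}\,\frac{\delta^{-s-d}\#\T}{r^{\alpha}}.$$
Substituting $\alpha=2+\frac ds$ in the range $s>d$ (so $\frac\alpha3=\frac13(2+\frac ds)$ and $\alpha-1=1+\frac ds$) and $\alpha=3$ in the range $s\le d$ (so $\frac\alpha3=1$ and $\alpha-1=2$) produces the two displayed inequalities.

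This is a bona fide corollary rather than a new argument, and I do not expect a genuine obstacle. The only step meriting a word of care is the very first one: one must check that truncating the shading to $\cP_r(\T,Y)$ preserves the non-concentration hypothesis Theorem~\ref{maininct} imposes on the shading, and this is automatic because that hypothesis is hereditary under passing to subsets. Everything afterwards is the same bookkeeping used in Corollary~\ref{pout98u8huy8ht-8h}.
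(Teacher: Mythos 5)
Your proposal is correct and coincides with the paper's own proof: the same refined shading $Y'(T)=\cP_r(\T,Y)\cap Y(T)$, the same double-counting identity $\sum_T\#Y'(T)\sim r\,\#\cP_r(\T,Y)$, and the same algebraic rearrangement of the conclusion of Theorem~\ref{maininct} with $\alpha=\min\{3,2+\tfrac ds\}$. Your extra remark that the KT non-concentration hypothesis is hereditary under passing to subsets is the one point the paper leaves implicit, and it is right.
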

	\begin{proof}
		We apply Theorem \ref{maininct}  to $\T$ and the refined shading $Y'(T)=\{p\in\cP_r(\T,Y)\cap Y(T)\}$. We have $$\#\cP_r(\T,Y)r\sim \sum_{T\in\T}\#Y'(T)\les  K_3^{\frac13}(K_1K_2)^{1-\frac1\alpha} (\delta^{-s-d}\#\T)^{1/{\alpha}}\#\cP_r(\T,Y)^{1-\frac1\alpha},$$
		or
		$$\#\cP_r(\T,Y)\les K_3^{\frac\alpha3}(K_1K_2)^{\alpha-1}\frac{\delta^{-s-d}\#\T}{r^\alpha}.$$	
		Since $\alpha=\min\{3,2+\frac{d}{s}\}$, the result follows.
	\end{proof}
	\begin{remark}
		\label{rwells}Our method offers a different perspective on  inequalities proved in \cite{guthincidence} via the high-low method.
		More precisely, our argument can be easily adapted to prove the inequality
		$$\#\cP_r(\T)\les \frac{(\#\T)^2}{r^3},\;\;r\ge 1$$
		for well-spaced collections $\T$ consisting of $\sim \delta^{-1}$ many  $\delta^{1/2}$-separated $\delta$-tubes $T$. This is because the well-spaced property persists at smaller scales, as demanded by Proposition \ref{fkjgu8rtu9i6iy-}.

		As a  consequence of this, the same inequality will hold for collections $\T_W$ consisting of $\sim W^2$ many $W^{-1}$-separated $\delta$-tubes, if $1\le W\le \delta^{-1/2}$. Indeed, each tube  $T\in \T_W$ lies inside a unique $W^{-1}$-tube $T_W$. No two tubes $T$ lie inside the same $T_W$, so the collection $\T$ of these fat tubes has cardinality $\sim W^2$. Moreover,
		$$\#\cP_r(\T_W)\le \#\cP_r(\T)\les \frac{(\#\T)^2}{r^3}\sim \frac{(\#\T_W)^2}{r^3}.$$   	
		
	\end{remark}
	\section{Fourier decay of fractal measures}
	\label{sec:4}
	This section investigates three rather different approaches to Conjecture \ref{hfrhvouuigrtiogupioythu09}. We begin by describing the state of the art prior to our work.
	\subsection{The high-low method}
	\label{ss4.1}
	Conjecture \ref{hfrhvouuigrtiogupioythu09} was proved in the range $s\ge \frac23$, first in \cite{O2} for the parabola, then in \cite{Yi} for arbitrary curves. 
	
	The same method can be used to prove the estimate \eqref{kjrefut0=or0i569y0-6iu09} (that is not sharp) in the range $s< \frac23$. We begin by describing this approach. We only sketch a simplified high-level proof of the results in \cite{O2}, \cite{Yi}, that lines up with our forthcoming arguments. Consider the (higher order) additive $\delta$-energy of
	a finite set $S\subset\R^2$
	$$\E_{3,\delta}(S)=\#\{(s_1,\ldots,s_6)\in S^6:\;|s_1+s_2+s_3-s_4-s_5-s_6|\lesssim \delta\}.$$
	We use the following simpler variant of Proposition \ref{p o4i9i59ui=0g6-3}, that follows via an essentially identical argument.
	\begin{proposition}
		\label{pl rj iuguh8u-49}
		Let $\Gamma$ be a smooth curve.  Fix $\delta>0$, $s\in [0,1]$ and $A\gtrsim 1$.
		Assume the uniform estimate
		$$
		\E_{3,\delta}(S)\le A\delta^{-3s}
		$$
		holds for each $(\delta,s)$-KT set  $S\subset \Gamma$.
		Let $R=\delta^{-1}$. Then the estimate
		$$\int_{B_R}|\widehat{\mu}|^6\les AR^{2-3s}$$
		holds for each $\mu$ supported on $\Gamma$, that satisfies \eqref{e4 giobjytibuiuiu}.
	\end{proposition}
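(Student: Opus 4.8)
The plan is to run the classical transference from the discretised additive energy $\E_{3,\delta}$ to the $L^6$-decay of $\widehat{\mu}$; the only real work is the bookkeeping that turns an $s$-Frostman measure on $\Gamma$ into a Katz--Tao set of $\delta$-caps to which the hypothesis applies. First I would discretise the left-hand side. Set $\delta=R^{-1}$ and let $\psi=\eta*\tilde\eta$ with $\eta$ a real $L^1$-normalised bump at scale $\delta$, so that $\psi$ is a real even approximate identity at scale $\delta$ with $\widehat{\psi}=|\widehat{\eta}|^2\ge 0$ and $\widehat{\psi}\gtrsim 1$ on $B_R$ (modulating $\psi$ and summing boundedly many contributions absorbs the endpoint $|\xi|=R$). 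Applying Plancherel to $|\widehat{g}|^6=\widehat{g^{*3}}\,\overline{\widehat{g^{*3}}}$ with $g=\mu*\psi$, and using that $\psi^{*3}$ is $L^1$-normalised, supported in $B(0,3\delta)$, with $\|\psi^{*3}\|_\infty\lesssim\delta^{-2}$, one gets
\begin{equation}
\int_{B_R}|\widehat{\mu}|^6\lesssim\int_{\R^2}\bigl|\widehat{\mu*\psi}\bigr|^6=\bigl\|\mu^{*3}*\psi^{*3}\bigr\|_2^2\lesssim \delta^{-2}\sum_{Q\in\cD_\delta}\mu^{*3}(3Q)^2 .
\end{equation}
This is the only step where the relation $R=\delta^{-1}$ is used.

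Next I would pigeonhole $\mu$. By \eqref{e4 giobjytibuiuiu} with $r=\delta$, each $\delta$-cap of $\Gamma$ carries mass $\lesssim\delta^s$; decomposing $\mu$ according to the dyadic mass scale of the caps in its support and passing to an $\epsilon$-uniform subfamily via Lemma \ref{lem.unifsubset}, one reduces, at the cost of $\les 1$ factors, to $\mu$ equidistributed on a $\delta$-separated set $S\subset\Gamma$ of caps of common mass $\sim\lambda\lesssim\delta^s$. Then $\mu^{*3}(3Q)\approx\lambda^3\#\{(\theta_1,\theta_2,\theta_3)\in S^3:\theta_1+\theta_2+\theta_3\in 3Q\}$, and summing the squares over $Q\in\cD_\delta$ (binning the sums into $\delta$-cubes) yields
\begin{equation}
\int_{B_R}|\widehat{\mu}|^6\les \delta^{-2}\lambda^6\,\E_{3,\delta}(S).
\end{equation}
Moreover \eqref{e4 giobjytibuiuiu} together with $\mu(B(y,r))=\lambda\,\#(S\cap B(y,r))$ forces $\#(S\cap B(y,r))\lesssim r^s/\lambda$ for all $r\ge\delta$; that is, $S$ is a $(\delta,s,\delta^s/\lambda)$-KT set.

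If $\lambda\sim\delta^s$ this is an honest $(\delta,s)$-KT set, the hypothesis gives $\E_{3,\delta}(S)\le A\delta^{-3s}$, and the two displays combine to $\int_{B_R}|\widehat{\mu}|^6\les A\delta^{6s-2-3s}=A\delta^{3s-2}=AR^{2-3s}$, as required. In general $\lambda$ can be much smaller than $\delta^s$, so $\#S>\delta^{-s}$ and $S$ is only a non-sharp Katz--Tao set; here I would split $S$ into $\lesssim \delta^s/\lambda$ subsets $S_1,\dots,S_k$, each a $(\delta,s,O(1))$-KT set of cardinality $\lesssim\delta^{-s}$ (a one-dimensional branching argument, since $S\subset\Gamma$), and then bound the mixed $\E_{3,\delta}$ of a tuple $(S_{a_1},\dots,S_{a_6})$ by the geometric mean of the individual energies via Cauchy--Schwarz in the $\delta$-cube occupied by $\theta_1+\theta_2+\theta_3$. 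Since all $\E_{3,\delta}(S_i)\le A\delta^{-3s}$, this gives $\E_{3,\delta}(S)\lesssim k^6A\delta^{-3s}\le(\delta^s/\lambda)^6A\delta^{-3s}=A\lambda^{-6}\delta^{3s}$; feeding this into the second display, the powers of $\lambda$ cancel and one again obtains $\int_{B_R}|\widehat{\mu}|^6\les AR^{2-3s}$.

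The hard part is exactly this last manoeuvre. The hypothesis is stated only for sharp Katz--Tao sets (constant $\sim 1$), whereas a general $s$-Frostman $\mu$ produces a set $S$ of caps that can be far from sharp; the scale-covariant repackaging of the hypothesis --- a $(\delta,s,C)$-set of caps on $\Gamma$ decomposes into $\lesssim C$ sharp $(\delta,s)$-KT pieces, and the mixed energy is controlled by the worst individual one --- is precisely what makes the $\lambda$-dependence disappear and delivers the clean exponent $R^{2-3s}$. Everything else is the boilerplate $\ell^2$-to-energy dictionary: the smoothing in the first step, and the pigeonholing and binning in the second.
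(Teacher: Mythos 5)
Your proposal is correct and follows essentially the same route as the paper, which proves the trilinear analogue (Proposition \ref{p o4i9i59ui=0g6-3}) by the identical scheme: mollify at scale $1/R$ and use Plancherel to convert the $L^6$ norm on $B_R$ into a $\delta$-discretized energy count, pigeonhole the caps by dyadic mass, split the resulting non-sharp $(\delta,s,C)$-KT set into $\les C$ sharp $(\delta,s)$-KT pieces (Lemma 2.8 of \cite{demwangszem}), and recombine via Cauchy--Schwarz so that the mass parameter cancels. One small remark: the invocation of Lemma \ref{lem.unifsubset} is unnecessary (and passing to a proper subfamily would be illegitimate when proving an upper bound); the dyadic mass decomposition alone already produces caps of comparable mass, which is all your argument actually uses.
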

	
	The method in \cite{O2}, \cite{Yi} relies on the following incidence estimate. An earlier version of this estimate for neighborhoods of lines was proved in \cite{furen} (Theorem 5.2) using the high-low method. 
	\begin{theorem}[\cite{O2}, Theorem 6.13; \cite{Yi}, Theorem 5.1]
		\label{tOFR}
		Let $\cN_\de(\Gamma)$ be the $\delta$-neighborhood of $\Gamma$.
		Let $u\in[0,1]$	 and $t\in[0,2]$ be such that $t+u\le 2$, and let $C_1,C_2\ge 1$. Let $\cP\subset \cD_\delta$ be a $(\delta,t,C_1)$-KT set.
		
		For each $p\in\cP$ let $\cF(p)\subset\{q\in\cD_\delta:\;q\cap (p+\cN_\de(\Gamma))\not=\emptyset\}$ be a $(\delta,u,C_2)$-KT set. Then, writing $\cF=\cup_{p\in\cP}\cF(p)$ we have 
		$$\sum_{p\in\cP}\#\cF(p)\les \sqrt{\delta^{-1}C_1C_2\#\cF\#\cP}.$$ 	
	\end{theorem}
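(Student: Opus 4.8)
The approach I would take is to linearise the curve--incidence problem via the diffeomorphism adapted to a curve of nonzero curvature, and then run the high--low method, following \cite[Theorem 5.2]{furen} in the line case and its curved refinement due to \cite{O2,Yi}.

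\emph{Step 1: linearisation.} Writing $p=(p_1,p_2)$, the relation $q\in p+\cN_\de(\Gamma)$ reads $|q_2-p_2-\gamma(q_1-p_1)|\lesssim\de$. Since $\gamma\in C^3$ with $\gamma''$ bounded away from $0$, the two--parameter family of curves $\{q:\ q_2=p_2+\gamma(q_1-p_1)\}$ is a transversal pencil: for $p\neq p'$ two members meet in a single point, at angle $\sim|p_1-p_1'|$. Hence there is a bi-Lipschitz $C^2$ change of variables $\Phi$ on $[-1,1]^2$ with bounded distortion, and a bi-Lipschitz map $p\mapsto L_p$ into the space of lines, such that $q\in p+\cN_{c\de}(\Gamma)$ forces $\Phi(q)\in\cN_{C\de}(L_p)$ and conversely; for the parabola one may take $\Phi(q)=(q_1,q_2-q_1^2)$ and $L_p=\{y=-2p_1x+(p_1^2+p_2)\}$. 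Then $\Phi$ carries each $(\de,u,C_2)$-KT set $\cF(p)$ to a $(\de,u,O(C_2))$-KT set, and $p\mapsto L_p$ carries the $(\de,t,C_1)$-KT set $\cP$ to a $(\de,t,O(C_1))$-KT family $\T$ of $\de$-tubes (after harmlessly restricting to a sector of bounded slope). After a standard uniformisation and a decomposition of $\cP$ and of each $\cF(p)$ into $O(C_1)$, resp.\ $O(C_2)$, many pieces with KT constant $\sim 1$ — which, by Cauchy--Schwarz in the pieces exactly as in Step 2 of Proposition \ref{lkogjthuyuh09yu}, costs the factor $\sqrt{C_1C_2}$ — it suffices to show: if $\T$ is a $(\de,t,1)$-KT family of $\de$-tubes and each $Y(T)$ is a $(\de,u,1)$-KT shading incident to $T$, with $t+u\le 2$, then $\sum_{T}\#Y(T)\les\sqrt{\de^{-1}\,\#Y(\T)\,\#\T}$.

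\emph{Step 2: high--low.} Up to the $\de$-discretisation, $\sum_T\#Y(T)$ is a pairing $\langle f,g\rangle$ with $f=\sum_{T\in\T}\one_T$ and $g$ the $\de$-normalised counting function of $Y(\T)$. Fix an intermediate scale $\de<\rho<1$ and split $f=f_{\le\rho}+f_{>\rho}$ according to whether the Fourier support lies in $B(0,\rho^{-1})$ or outside. For the low term $\langle f_{\le\rho},g\rangle$, Cauchy--Schwarz and Plancherel bound it by $\|f_{\le\rho}\|_2\,\|g\|_2$, where $\|g\|_2^2\lesssim\de^{-2}\#Y(\T)$ and $\|f_{\le\rho}\|_2^2$ is controlled by the bounded overlap of the tubes at scale $\rho$ — a consequence of the $(\de,t)$-KT condition on the direction set of $\T$ together with the nonzero curvature of $\Gamma$ (equivalently, the decay of $\widehat{\one_{\cN_\de(\Gamma)}}$) — giving a bound that is a power of $\rho$ times $\#\T$. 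For the high term, a concentration of $f_{>\rho}$ at a point forces the tubes through that point to cluster inside a $\rho$-ball, whose number the $(\de,t,1)$-KT hypothesis on $\T$ caps by $(\rho/\de)^t$; combined with the $(\de,u,1)$-KT hypothesis on each $Y(T)$ this bounds $\langle f_{>\rho},g\rangle$ by a power of $\rho^{-1}$ times the relevant counts. Optimising $\rho$ to balance the two contributions, and invoking $t+u\le 2$ precisely at this point, produces the exponent $1/2$ and the claimed bound. (Alternatively, Step 2 can be replaced by a two-ends dichotomy on the linearised data: in the two-ends regime invoke Theorem \ref{tWW25}, and in the concentrated regime argue as in the high part above.)

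\emph{Main obstacle.} The crux is the high/concentrated part: one must show the curve-translates (equivalently, the linearised tubes) cannot cluster too much at frequency $\rho^{-1}$ while respecting only the \emph{KT} — rather than Frostman — hypotheses on $\cP$ and on the fibres $\cF(p)$, and one must extract from the nonzero curvature enough Fourier decay of $\one_{\cN_\de(\Gamma)}$ for the low term to register the curvature gain; without a transversality hypothesis (two nearly parallel translates) the whole argument collapses, as it does for arbitrary unions of lines. Choosing $\rho$ so that the low and high terms balance, and verifying that this choice yields \emph{exactly} the exponent $1/2$ under the single constraint $t+u\le 2$, is the delicate bookkeeping step.
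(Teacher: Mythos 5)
There is nothing in the paper to compare against here: Theorem \ref{tOFR} is not proved in this paper but imported verbatim from [O2, Theorem 6.13] and [Yi, Theorem 5.1], with the remark that the line-neighborhood precursor is [furen, Theorem 5.2] via the high--low method. You have correctly identified the high--low method as the engine, so the overall orientation of your sketch matches what is known. But as a proof the proposal has genuine gaps.

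First, Step 1 only works for the parabola. The identity $q_2-q_1^2=-2p_1q_1+(p_1^2+p_2)$ that rectifies all translates of $\PP^1$ simultaneously has no analogue for a general $C^3$ curve with nonvanishing curvature: the two-parameter family $\{y=p_2+\gamma(x-p_1)\}$ is not, in general, carried to a family of lines by any single bi-Lipschitz change of variables. This is precisely why the curved case required the separate treatment in [Yi] rather than a reduction to [furen, Theorem 5.2]; your argument as written establishes at most the parabola case. Second, the high--low step is asserted rather than executed, and the parts you do describe are not right: the low term in the high--low method is handled by passing the incidence count to the coarser scale $\rho$ (using the KT condition on the fibres $\cF(p)$), not by Cauchy--Schwarz against $\|g\|_2$; and your bound on $\|f_{\le\rho}\|_2$ invokes ``the $(\de,t)$-KT condition on the direction set of $\T$,'' which is not a hypothesis --- the hypothesis is that $\cP$ is a planar KT set, whose image under $p\mapsto L_p$ controls the tubes in the metric on the space of lines but gives no spread of the slopes alone. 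The fallback you offer in parentheses (two-ends dichotomy plus Theorem \ref{tWW25}) cannot substitute either: that theorem requires $\sigma=\min(t,2-t)$ for the shading and produces a Szemer\'edi--Trotter-shaped bound with exponent $2/3$, whereas the claim here is the $L^2$-shaped bound with exponent $1/2$ under the different constraint $t+u\le 2$. Finally, the constant $\sqrt{C_1C_2}$ does not follow from splitting into $O(C_1)$ and $O(C_2)$ unit-constant pieces and applying Cauchy--Schwarz (that route loses roughly $C_1^{1/2}C_2$); in the actual proofs the constants enter directly through the $L^2$ computation. In short: right method, but the reduction fails outside the parabola and the core estimate is left unproved.
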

	Let us first see the proof of \eqref{f[poi 0h0oi0-j9i9]} in the range $s\ge \frac23$.
	Consider a set $S$ as in Proposition \ref{pl rj iuguh8u-49}. 
	This is a $(\delta,s,1)$-KT set, in particular also a $(\delta,\frac23,\delta^{\frac23-s})$-KT set.
	
	Let $\cP$ be a minimal collection of squares in  $\cD_\delta$ containing the points $S+S$. We assume $\cP$ is a $(\delta,\frac43,\delta^{\frac43-2s})$-KT set. This is  easily checked to be the case if instead of $S+S$ we consider $S_1+S_2$, where $S_1,S_2$ are  subsets of $S$ separated by $\sim 1$. It is this part of the argument that needs $|\gamma''|\gtrsim 1$, since this guarantees $F(x,y)=(x+y,\gamma(x)+\gamma(y))$ is bi-Lipschitz on $S_1\times S_2$. Executing this argument rigorously  would require a bilinear to linear reduction similar to the one presented in the next subsection.

	Note that 
	$$\E_{3,\delta}(S)\sim \sum_{s\in \cP+S}(\#\{p\in\cP:\;s\cap (p+S)\not=\emptyset\})^2.$$
	Write $\cQ_r$ for the collection of those squares in $\cP+S$ that intersect $\sim r$ many sets $p+S$. Then the above formula shows that
	\begin{equation}
		\label{rjeu8ugihoiyth9iy0-}
		\E_{3,\delta}(S)\sim \sum_{r=1
			\atop{dyadic}}^{\#\cP}\#\cQ_rr^2.
	\end{equation}
	For each $p\in\cP$ define $\cF(p)=(p+S)\cap \cQ_r\subset p+\cN_\de(\Gamma)$. Since $p+S$ is a $(\delta,\frac23,\delta^{\frac23-s})$-KT set, so is $\cF(p)$. Note that 
	$$\cF=\bigcup_{p\in\cP}\cF(p)=\cQ_r.$$
	We apply Theorem \ref{tOFR} with $u=\frac23$, $t=\frac43$, $C_2=\delta^{\frac23-s}$, $C_1=\delta^{\frac43-2s}$ to get 
	$$r\#\cQ_r\sim \sum_{p\in\cP}\#\cF(p)\les \sqrt{\delta^{-1}\delta^{2-3s}\#\cQ_r\#\cP}.$$ 
	This gives 
	$$\#\cQ_r\les\delta^{1-5s}r^{-2}.$$Plugging this back into \eqref{rjeu8ugihoiyth9iy0-} leads to the bound
	$$\E_{3,\delta}(S)\les (\delta^{-1})^{5s-1}.$$
	This in turn implies  \eqref{f[poi 0h0oi0-j9i9]} for $s\ge \frac23$ via Proposition \ref{pl rj iuguh8u-49}.
	\\
	\\
	When $s\le \frac23$ we may apply Theorem \ref{tOFR} with $u=s$, $t=2s$, $C_1,C_2=1$ to get 
	$$r\#\cQ_r\sim \sum_{p\in\cP}\#\cF(p)\les \sqrt{\delta^{-1}\#\cQ_r\#\cP},$$
	or
	$$\#\cQ_r\les \delta^{-1-2s}.$$
	Plugging this into \eqref{rjeu8ugihoiyth9iy0-} leads to the bound
	\begin{equation}
		\label{huifhuryfy7y7gy7gy7}	
		\E_{3,\delta}(S)\les(\delta^{-1})^{1+2s}=(\delta^{-1})^{5s-1+\beta},\;\;\;\beta=2-3s.
	\end{equation}
	Using again Proposition \ref{pl rj iuguh8u-49} leads to the estimate for $s\le \frac23$
	\begin{equation}
		\label{kjrefut0=or0i569y0-6iu09}
		\int_{B_R}|\widehat{\mu}|^6\les R^{3-4s}.
	\end{equation}
	This estimate loses its strength as $s$ approaches $\frac12$. For example, at $s=\frac12$ it gives the upper bound $R$, while the result in \cite{demwangszem} proves the upper bound $R^{7/8}$.

	\subsection{Energy estimates for the parabola}
	We prove Theorem \ref{io jfpurei90 h9=hhytj uy uythytj  }, that we recall below.
	\begin{theorem}
		\label{io jfpurei90 h9=h}
		Assume $0<s\le \frac23$. Assume $\mu$ is supported on $\PP^1$ and satisfies  the Frostman condition
		\begin{equation}
			\label{e4}
			\mu(B(y,r))\lesssim r^s
		\end{equation}
		for each $y\in \R^2$ and each $r>0$. Then we have
		\begin{equation}
			\label{cjfgutgutugtuhu-h9u}
			\int_{B_R}|\widehat{\mu}|^6\les R^{2-\frac{5s}2}\|\mu\|.
		\end{equation}
	\end{theorem}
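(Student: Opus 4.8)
The plan is to reduce the $L^6$ estimate on $B_R$ (with $R=\delta^{-1}$) to a discretized $3$-energy bound for $(\delta,s)$-KT subsets of $\PP^1$, and then prove that energy bound by a trilinear incidence argument built on Theorem \ref{thm.rectangles1}. Concretely, I expect to prove that if $S\subset\PP^1$ is a $(\delta,s)$-KT set then
\begin{equation}
	\E_{3,\delta}(S)\les (\delta^{-s})^{7/2},
\end{equation}
which via the analogue of Proposition \ref{pl rj iuguh8u-49} (namely Proposition \ref{p o4i9i59ui=0g6-3}, referenced in the introduction) yields $\int_{B_R}|\widehat\mu|^6\les A R^{2-3s}$ with $A=R^{s/2}$, i.e.\ $R^{2-5s/2}$, as claimed; the $\|\mu\|$ factor appears from normalizing $\mu$ to a probability measure.

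First I would set up the standard transference: pigeonhole $\mu$ at scale $\delta$ to a union of $\delta$-squares on $\PP^1$ whose centers form (after uniformization, using Lemma \ref{lem.unifsubset} and Proposition \ref{prop.properties}) a $(\delta,s)$-KT set $S$, losing only $\delta^{-\e}$ factors; this is exactly the mechanism behind Proposition \ref{pl rj iuguh8u-49}. The key identity is that, writing $\cP$ for a minimal $\delta$-square cover of $S+S$ and $\cQ_r$ for the squares in $\cP+S$ meeting $\sim r$ of the translates $p+S$, one has $\E_{3,\delta}(S)\sim\sum_r \#\cQ_r\, r^2$ as in \eqref{rjeu8ugihoiyth9iy0-}. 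The novelty is the bound on $\#\cQ_r$: rather than the neighborhood-of-curve incidence Theorem \ref{tOFR}, I would exploit the parabola's symmetry to convert the relation ``$q$ is in $p+S$'' into an incidence between $\delta$-squares and $\delta$-tubes. The crucial point is that for the parabola, the map $(x,y)\mapsto(x+y,\ x^2+y^2)$ composed with the slope coordinate $x+y$ linearizes the addition: the set of $q=(a_1,a_2)$ with $q-s\in S+\mathcal N_\delta(\PP^1)$ for a fixed $q$ traces, as $s$ varies, a $\delta$-neighborhood of a \emph{line} in suitable coordinates (after the change of variables $(a_1,a_2)\mapsto(a_1,\ a_2-a_1^2/2)$ or similar), so that the fibers $Y'(p)=\{T: p\cap T\ne\emptyset\}$ become $(\delta,\sigma)$-KT sets of tubes with $\sigma=\min(2s,2-2s)$, and each $p\in\cP$ is a $\delta$-square whose associated point set is a rectangular $(\delta,2s)$-KT set — precisely the hypotheses of Theorem \ref{thm.rectangles1}. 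That theorem then gives $I(\T,\PP)\les\delta^{-2s/3}(\#\T)^{2/3}(\#\PP)^{1/3}$, and reading $\#\cQ_r\cdot r\sim I$ off this with $\#\T,\#\PP\lesssim\delta^{-2s}$ produces $\#\cQ_r\les \delta^{-4s/3}r^{-2/3}\cdot(\dots)$; I would optimize over $r$ (splitting the sum $\sum_r\#\cQ_r r^2$ at a threshold $r_0$, using the trivial $\#\cQ_r\le\#\cP\lesssim\delta^{-2s}$ for large $r$) to extract the exponent $7/2$.

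To make the change of variables rigorous — the parabola identity only linearizes addition of \emph{separated} points — I would run the argument trilinearly: decompose $[-1,1]$ into three well-separated intervals and study $\E_{3,\delta}$ restricted to $S_1\times S_2\times S_3$ with $\dist(S_i,S_j)\gtrsim 1$, where $F(x,y,z)=(x+y+z,\ x^2+y^2+z^2)$ and the relevant bi-Lipschitz/transversality estimates hold uniformly; the passage from this trilinear quantity back to the full $\E_{3,\delta}(S)$ is a broad–narrow decomposition in the spirit of \cite{BoGu}, where the narrow term (all six points clustered in one short interval) is handled by rescaling and induction on the scale, and the broad term is the trilinear piece just estimated. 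The main obstacle I anticipate is exactly this reduction: verifying that after the parabola change of variables the shadings $Y'(p)$ genuinely satisfy the $(\delta,\sigma)$-KT hypothesis of Theorem \ref{thm.rectangles1} with the correct constants (and that $\PP$ is a rectangular $(\delta,2s)$-KT set — the rectangular condition, not just the ball condition, is essential and is what the parabola's curvature buys us), and organizing the broad–narrow induction so the $\delta^{-\e}$ losses do not accumulate. Everything else is bookkeeping with the uniformization lemmas and Hölder.
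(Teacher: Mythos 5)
Your proposal is correct and is essentially the paper's own argument: a trilinear energy bound $\E_{3,\delta}(S_1,S_2,S_3)\les\delta^{-s}(\#S_1\#S_2\#S_3)^{5/6}$ proved by linearizing parabola addition into a square--tube incidence problem (your map $(x,y)\mapsto(x,x^2-y)$ applied to translates of $\PP^1$ is exactly the alternative reduction recorded in the remark following Theorem \ref{jsdhfuurt gi  topgi yhi}; the main text instead uses the circle-to-line transformation from \cite{O1}), with the rectangular $(\delta,2s)$-KT property of $\PP$ and the $(\delta,\sigma)$-KT property of the dual shading $Y'(p)$ feeding into Theorem \ref{thm.rectangles1} via Corollary \ref{cjfhughurtghuirtgui}, followed by a broad--narrow reduction back to the linear statement. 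The one inaccuracy is your claim that the $\|\mu\|$ factor ``appears from normalizing $\mu$ to a probability measure'' --- normalization destroys the Frostman constant; in the paper that factor comes from the $(\|\mu_1\|\|\mu_2\|\|\mu_3\|)^{5/6}\lesssim\|\mu\|$ bookkeeping in Proposition \ref{p o4i9i59ui=0g6-3}, and the broad--narrow step is run on the $L^6$ integral of $\widehat{\mu}$ (where parabolic rescaling is clean) rather than on the discrete energy itself.
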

	\begin{remark}
		When $s<\frac23$, the exponent in \eqref{cjfgutgutugtuhu-h9u} is smaller than the one in \eqref{kjrefut0=or0i569y0-6iu09}. The two exponents are equal when $s=2/3$.
	\end{remark}
	We will derive \eqref{cjfgutgutugtuhu-h9u} as a consequence of an estimate for the trilinear $\delta$-energy.
	Given finite sets $S_1,S_2,S_3\subset \R^2$ we define 
	$$\E_{3,\delta}(S_1,S_2,S_3)=\#\{(s_1,\ldots,t_3)\in (S_1\times S_2\times S_3)^2 :\;|s_1+s_2+s_3-t_1-t_2-t_3|=O(\delta)\}.$$
	\begin{theorem}
		\label{jsdhfuurt gi  topgi yhi}
		Let $0<s\le \frac23$.
		Consider three $(\delta,s)$-KT subsets  $S_1,S_2,S_3$ of $\PP^1$ lying above intervals $J_i\subset [-1,1]$. We assume $\dist(J_1,J_2)$, $\dist(J_2,J_3)$, $\dist(J_3,J_1)\sim 1$. Then
		$$\E_{3,\delta}(S_1,S_2,S_3)\les\delta^{-s}(\#S_1\#S_2\#S_3)^{5/6}.$$
	\end{theorem}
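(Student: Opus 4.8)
We plan to deduce the estimate from the incidence bound of Theorem \ref{thm.rectangles1}, by recasting the trilinear energy as an incidence count between a rectangular Katz--Tao set of squares and a family of parabolas which we straighten into tubes. First, writing $w$ over a $\delta$-net,
\[
\E_{3,\delta}(S_1,S_2,S_3)\sim\sum_w r(w)^2,\qquad r(w)=\#\{(s_1,s_2,s_3)\in S_1\times S_2\times S_3:\ s_1+s_2+s_3\in B(w,O(\delta))\}.
\]
Group $p=s_1+s_2$. Since $\dist(J_1,J_2)\sim1$, the map $F(\alpha,\beta)=(\alpha+\beta,\alpha^2+\beta^2)$ has $|\det DF|=2|\beta-\alpha|\sim1$ and $O(1)$ entries, hence is a bi-Lipschitz homeomorphism with constants $\sim1$ on $J_1\times J_2$ (injectivity uses $J_1\cap J_2=\emptyset$, stability of the two roots uses that the discriminant $(\alpha-\beta)^2\sim1$). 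Consequently $\cP:=\cN_\de(S_1+S_2)$, viewed as a collection of $\delta$-squares, satisfies $\#\cP\sim\#S_1\#S_2$ with each square arising from $O(1)$ pairs, and $r(w)\sim\#\{s_3\in S_3:\ w-s_3\in\cP\}$. Moreover a short box-counting argument (valid for boxes of every orientation) shows $S_1\times S_2$ is a rectangular $(\delta,2s,O(1))$-KT set from the one-dimensional KT property of $S_1,S_2$, so $\cP=F(S_1\times S_2)$ is a rectangular $(\delta,2s,O(1))$-KT set as well.

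For fixed $w$ the points $w-s_3$ lie on the reflected parabola $\Gamma_w=w-\PP^1$, a translate of the fixed downward parabola $-\PP^1$, so $r(w)\lesssim\#(\cP\cap\cN_\de(\Gamma_w))$. The shear $\Phi(u,y)=(u,y+u^2)$ sends $\Gamma_w$ to the line $\ell_w$ of slope $2w_1$ and intercept $w_2-w_1^2$; since $w=s_1+s_2+s_3$ lies in a bounded region these slopes are $O(1)$, so the $\ell_w$ are honest $\delta$-tubes, and $\Phi$, being a bi-Lipschitz shear with constant $O(1)$ on the relevant region, preserves cardinalities and the rectangular $(\delta,2s)$-KT property of $\cP$ up to constants. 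Thus $r(w)\lesssim\#(\Phi(\cP)\cap\cN_\de(\ell_w))$, and we have transported the problem to incidences between the $\delta$-tubes $\{\ell_w\}$ and the rectangular $(\delta,2s)$-KT set $\Phi(\cP)$.

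Now pigeonhole $\E_{3,\delta}(S_1,S_2,S_3)\les\lambda^2\#W_\lambda$ for a dyadic $\lambda$, where $W_\lambda=\{w:\ r(w)\sim\lambda\}$; note $\lambda\#W_\lambda\sim\sum_{w\in W_\lambda}r(w)\le\#S_1\#S_2\#S_3$. Apply Theorem \ref{thm.rectangles1} with $\Phi(\cP)$ as the rectangular KT set of squares and $\T=\{\ell_w:w\in W_\lambda\}$ as the tubes: since $\lambda\#W_\lambda\lesssim\sum_{w\in W_\lambda}\#(\Phi(\cP)\cap\cN_\de(\ell_w))=I(\T,\Phi(\cP))\les\delta^{-2s/3}(\#W_\lambda)^{2/3}(\#\cP)^{1/3}$, we obtain $\#W_\lambda\les\delta^{-2s}\#S_1\#S_2/\lambda^3$. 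Combining with $\lambda^2\#W_\lambda\le\lambda\#S_1\#S_2\#S_3$ and optimizing in $\lambda$ (the two bounds cross at $\lambda\sim\delta^{-s}(\#S_3)^{-1/2}$) gives $\E_{3,\delta}(S_1,S_2,S_3)\les\delta^{-s}\#S_1\#S_2(\#S_3)^{1/2}$. As $\E_{3,\delta}$ is symmetric in $S_1,S_2,S_3$, let the largest of the three sets play the role of $S_3$; then $\#S_1\#S_2(\#S_3)^{1/2}=\#S_1\#S_2\#S_3/(\#S_3)^{1/2}\le(\#S_1\#S_2\#S_3)^{5/6}$, because $\#S_1\#S_2\#S_3\le(\max_i\#S_i)^3$. (Degenerate ranges, e.g. $\#S_3$ small relative to $\delta^{-s}$, are handled directly from $\E_{3,\delta}(S_1,S_2,S_3)\le\max_w r(w)\cdot\sum_w r(w)$.)

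The main obstacle is verifying the hypothesis of Theorem \ref{thm.rectangles1} that for each $p\in\Phi(\cP)$ the tubes through $p$, namely $\{\ell_w:\ w\in W_\lambda,\ \ell_w\ni p\}$, form a $(\delta,\sigma)$-KT set with $\sigma=\min(2s,2-2s)$. Dually this is a non-concentration statement for the level set $W_\lambda$: since $\{w:\ \ell_w\ni p\}$ is a translate of $\PP^1$ in $w$-space, one needs that $W_\lambda$ carries $\lesssim(\rho/\delta)^\sigma$ points on any parabolic arc of length $\rho$. This is exactly the ``non-concentration for squares'' advertised in the introduction, and it is here that the pairwise separation $\dist(J_i,J_j)\sim1$ is used in full force: the three $S_i$ must be in sufficiently general position that the level sets of $r$ cannot cluster along a single parabola. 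I expect establishing this requires first passing to uniform subsets of $S_1,S_2,S_3$, then a suitable refinement of $W_\lambda$ (or an induction-on-scales bookkeeping in the spirit of the proof of Theorem \ref{maininct}), and that this is where the bulk of the work lies.
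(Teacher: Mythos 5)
Your reduction is essentially the one the paper itself records in the remark following the proof of Theorem \ref{jsdhfuurt gi  topgi yhi}: straighten the translates $w-\PP^1$ by a shear (the paper uses $\Psi(x,y)=(x,x^2-y)$), turn $\E_{3,\delta}$ into $\sum_T(\#\{p:\,p\cap T\neq\emptyset\})^2$, verify that $\cP$ is rectangular $(\delta,2s)$-KT, and invoke Theorem \ref{thm.rectangles1}; your dyadic pigeonholing in $\lambda$ and the geometric mean of the two bounds reproduces Corollary \ref{cjfhughurtghuirtgui}, and your symmetrization at the end is fine. (The paper's main proof instead passes through the circle family $C_{t_1,t_2,t_3}$ and the circle-to-line map $\cG$ of \cite{O1}, but the two routes are interchangeable.)

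The genuine gap is the step you yourself flag as ``the main obstacle'': the $(\delta,\sigma)$-KT property of $Y'(p)$. You are wrong that this requires a non-concentration statement for the level sets $W_\lambda$ along parabolas, uniformization, or induction on scales -- and indeed no such statement is available, since $W_\lambda$ is essentially arbitrary inside $S_1+S_2+S_3$. The correct observation, which is the whole point of the reduction, is that one should only shade a tube $\ell_w$ by those squares $p$ for which the incidence comes from an energy solution, i.e.\ $w\in s_1+s_2+S_3+O(\delta)$ where $p$ corresponds to $(s_1,s_2)$; this restricted shading still dominates $r(w)$ because $s_3$ is determined (up to $O(\delta)$) by $w$ and $(s_1,s_2)$. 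Dually, the tubes through a fixed $p$ are then parametrized by a translate of $S_3$, which is a $(\delta,s)$-KT set of size $\lesssim\#S_3$ by hypothesis, hence $(\delta,\sigma)$-KT since $s\le\sigma$ exactly when $s\le 2/3$; the bi-Lipschitz character of $w\mapsto\ell_w$ on the relevant region (here the separation of the $J_i$ enters) transfers the property to the tube family. So the ``bulk of the work'' you anticipate is a one-line parametrization argument. A secondary imprecision: a bi-Lipschitz map does not send rectangles to rectangles, so ``$\Phi$ preserves the rectangular KT property'' is not automatic; one needs the neighborhood-of-a-curve version of the counting bound (Proposition \ref{k jg  tgu hu-uh49-t09359056 h9876}, proved via Fubini in the product structure of $S_1\times S_2$, not by covering the tilted box with squares), which is exactly how Corollary \ref{idcj uhuireyygiytigy} is established.
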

	\smallskip
	
	\begin{remark}
		\label{chcurt ug8u-g06u9=67u0978}
		Theorem \ref{jsdhfuurt gi  topgi yhi} remains true if the separation $\sim 1$ is replaced with $\sim 1/K$, for some $K\ge 1$. The implicit constant in $\les$ will now also depend on $K$.
	\end{remark}
	
	We first show how  to interpret the energy bound as an incidence estimate \eqref{fki9hik09hi-h0}, and then use Theorem \ref{huufwf[gmiu9i96=y]} to prove it. 
	Given any $(t_1,t_2,t_3)\in S_1\times S_2\times S_3$ we write
	$$N(t_1,t_2,t_3)=\#\{(s_1,s_2,s_3)\in S_1\times S_2\times S_3:\;|s_1+s_2+s_3-t_1-t_2-t_3|=O(\delta)\}.$$
	Note that
	$$\E_{3,\delta}(S_1,S_2,S_3)= \sum_{(t_1,t_2,t_3)\in S_1\times S_2\times S_3}N(t_1,t_2,t_3).$$
	Theorem \ref{jsdhfuurt gi  topgi yhi} will follow once we prove 
	that 
	\begin{equation}
		\label{kjfjigioguitouiortu}
		\sum_{(t_1,t_2,t_3)\in S_1\times S_2\times S_3}N(t_1,t_2,t_3)\les \delta^{-s}(\#S_1\#S_2\#S_3)^{5/6}.
	\end{equation}
	Let us now recall the framework from \cite{O1}, with slight changes in our notation.
	All arguments will work for permutations of the indices $1,2,3$.
	
	Consider the collection $\SSS$ of squares in $\cD_\delta$ that cover the points 
	$$\{(3(x_1+x_2),\sqrt{3}(x_1-x_2)):\;(x_1,x_1^2)\in S_1,\;(x_2,x_2^2)\in S_2\}.$$
	Each $(t_1,t_2,t_3)\in S_1\times S_2\times S_3$ gives rise to the circle $C_{t_1,t_2,t_3}$ centered at $(2\sigma_1,0)$ with radius $\sqrt{6\sigma_2-2\sigma_1^2}$, where $(\sigma_1,\sigma_2)=t_1+t_2+t_3.$ Different triples may give rise to the same circle.

	We call $S_{t_1,t_2,t_3}(\delta)$ the $O(\delta)$-neighborhood of this circle.
	We identify $C_{t_1,t_2,t_3}(\delta)$ with $C_{t_1',t_2',t_3'}(\delta)$ if 
	$$|t_1'+t_2'+t_3'-t_1-t_2-t_3|=O(\delta).$$
	We denote by $\C$ the family of distinct $C_{t_1,t_2,t_3}(\delta)$.

	If $s_i=(x_i,x_i^2)$  then 
	\begin{equation}
		\label{edjrfug9u8-689y8650}
		|s_1+s_2+s_3-t_1-t_2-t_3|=O(\delta)
	\end{equation}
	implies that the $\delta$-square  $q\in\SSS$ containing $(3(x_1+x_2),\sqrt{3}(x_1-x_2))$ is incident to $C_{t_1,t_2,t_3}(\delta)$. 
	It follows that 
	$$\sum_{(t_1,t_2,t_3)\in S_1\times S_2\times S_3}N(t_1,t_2,t_3)\sim \sum_{C\in \C}\;(\#\{q\in\SSS:\;q\cap C\not=\emptyset\})^2.$$

	Due to symmetry, we may assume that $J_2$ lies at the left of $J_1$. Let $Y=\dist(J_1,J_2)\sqrt{3}$. Thus
	all squares in $\SSS$ lie inside $\{(x,y):\;y>Y\}$. We may also restrict attention to the arc of the circle $C_{t_1,t_2,t_3}^{+}=C_{t_1,t_2,t_3}\cap \{(x,y):\;y>Y\}$ and its $O(\delta)$-neighborhood $C_{t_1,t_2,t_3}^{+}(\delta)$.  
	Next, as shown in \cite{O1}, there is a map $$\cG:\{(x,y):\;y>0\}\to \D=\{(x,y):\;x^2+y^2<1\}$$ that sends each $C_{t_1,t_2,t_3}\cap\{(x,y):\;y>0\}$ to an open  line segment $l(\sigma_1,\sigma_2)$ connecting two points on $\cS^1$.
	While this is not relevant for our discussion, the endpoints of this segment have coordinates $(\frac{\xi^2-1}{\xi^2+1},-\frac{2\xi}{\xi^2+1})$, where $\xi=2\sigma_1\pm \sqrt{6\sigma_2-2\sigma_1^2}$ are the $x$-coordinates of the  two intersections of $C_{t_1,t_2,t_3}$ with the $x$-axis. We will consider the restriction of $\cG$
	$$\cG':\{(x,y):\;y>Y\}\to \D'=\{(x,y):\;x^2+y^2<\eta\},$$
	where  $\eta<1$. This map is bi-Lipschitz with constants $\sim 1$.
	
	Note that writing $t_i=(y_i,y_i^2)$ 
	we have$$6\sigma_2-2\sigma_1^2=2(y_1-y_2)^2+2(y_2-y_3)^2+2(y_3-y_1)^2\sim 1.$$
	Let us call $L(\sigma_1,\sigma_2)\in \cA(2,1)$ the line  which $l(\sigma_1,\sigma_2)$ is a part of. 
	
	Lemma 4.39 and Lemma 4.44 in \cite
	{O1} prove that the map
	$$(\sigma_1,\sigma_2)\mapsto L(\sigma_1,\sigma_2)$$
	is bi-Lipschitz on $\{(\sigma_1,\sigma_2):\;6\sigma_2-2\sigma_1^2\sim 1\}$.

	Since $\cG'$ is bi-Lipschitz, it  maps each $C_{t_1,t_2,t_3}^+(\delta)$ to an approximate $\delta$-tube, and each $\delta$-square in $\SSS$ to another (approximate) $\delta$-square. We call $\T$ this collection of tubes, and we call $\PP$ this collection of squares. It follows that
	\begin{equation}
		\label{fki9hik09hi-h0}
		\E_{3,\delta}(S_1,S_2,S_3)\sim \sum_{T\in\T}(\#\{p\in\PP:\;p\cap T\not=\emptyset\})^2.
	\end{equation}
	Note that both $\T$ and $\PP$ live inside the disk $\D'$.

	The collection $Y'(p)$ of all tubes incident to a square  $p\in \PP$ is a $(\delta,s)$-KT set with size
	\begin{equation}
		\label{ juryegu rtugy t8ug8tug-9i90}
		\#Y'(p)\lesssim\#S_3.
	\end{equation}
	Indeed, for any given $(3(x_1+x_2),\sqrt{3}(x_1-x_2))$, the incident circles $C_{t_1,t_2,t_3}$ are centered at $(2\sigma_1,0)$ and have radii $\sqrt{6\sigma_2-2\sigma_1^2}$, where $$(\sigma_1,\sigma_2)=(x_1,x_1^2)+(x_2,x_2^2)+(x_3,x_3^2)+O(\delta)$$ for some $(x_3,x_3^2)\in S_3$. The corresponding set of $(\sigma_1,\sigma_2)$ is thus a $(\delta,s)$-KT set, and we recall that the map $L$ is bi-Lipschitz.  
	\medskip

	The pair $(\PP,Y')$ satisfies two of the three requirements of Theorem \ref{tWW25}, with $t=2s$, in the dual form.
	First, $\PP$ can be seen to be a $(\delta,2s)$-KT set, with size
	\begin{equation}
		\label{ juryegu rtugy t8ug8tug-9i902}
		\#\PP\lesssim\#S_1\#S_2.
	\end{equation}
	Recall that $\sigma = \min \{2s,2-2s\}.$ each $Y'(p)$ is a $(\delta,\sigma)$-KT set since it is a $(\delta,s)$-KT set, and $s\le \sigma$ when $s\le \frac23.$ However, there is no guarantee that $Y'(s)$ is two-ends. Instead, we will show in Lemma \ref{2endsredu} that most of $Y'(p)$ is concentrated inside an arc of $\cS^1$, whose rescaled copy is two-ends. Because of the smallness of this arc,  Theorem \ref{tWW25} will need to be applied on subsets of squares and tubes localized inside certain rectangles in $[0,1]^2$. Due to rescaling, Theorem \ref{tWW25} will demand a stronger non-concentration property of $\PP$, that we call {\em rectangular} $(\delta,s)$-KT. We verify this property in Proposition \ref{k jg  tgu hu-uh49-t09359056 h9876} and
	Corollary \ref{idcj uhuireyygiytigy}, using in a crucial way the Cartesian product flavor of the set $\PP$.
	
	The following result will be applied in its dual form. We formulate it this way for extra clarity. 
	\begin{lemma}
		\label{2endsredu}	
		Let $T$ be a $\delta$-tube with a shading $Y(T)$ that is a $(\delta,s)$-KT set with cardinality $P\le \delta^{-s}$. Let $\epsilon>0$. 
		
		There is a segment $\tau$ of $T$ of length $L$ at least $(\delta^sP)^{\frac1{s-\epsilon^2}}$, that contains  $N\ge L^{\epsilon^2}P$ of the squares in $Y(T)$, call them $Y(\tau)$, and such that 
		\begin{equation}
			\label{pojfiruegioutg}
			\#(Y(\tau)\cap B(x,L(\delta/L)^{\epsilon}))\le (\delta/L)^{\epsilon^3}N,\;\text{for each }x.
		\end{equation}
	\end{lemma}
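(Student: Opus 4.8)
\textbf{Proof proposal for Lemma \ref{2endsredu}.}

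The plan is to extract the segment $\tau$ via a stopping-time / multiscale pigeonholing argument on the dyadic sub-segments of $T$, exploiting the interplay between the $(\delta,s)$-KT hypothesis and the target non-concentration scale. First I would set up the length scales: write $L = \delta^\gamma$ for a parameter $\gamma$ to be chosen, and consider the dyadic tree of sub-segments of $T$ of lengths $\delta, 2\delta, 4\delta, \ldots, 1$. Since $Y(T)$ is a $(\delta,s)$-KT set, any sub-segment $I$ of length $\ell$ contains at most $C(\ell/\delta)^s$ squares of $Y(T)$; in particular a length-$L$ segment contains at most $C(L/\delta)^s$ of them. The hypothesis $P \le \delta^{-s}$ together with $\#Y(T) = P$ means the squares are, on average over length-$L$ windows covering $T$, spread at density roughly $P L$ per window.

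The key step is a greedy descent. Starting from $T$ itself (length $1$, containing all $P$ squares), repeatedly ask: does there exist a sub-window of length $L(\delta/L)^\epsilon$ relative to the current segment that contains more than a $(\delta/L)^{\epsilon^3}$-fraction of the squares currently under consideration? If yes, pass to that sub-window and iterate; if no, stop. Each descent step multiplies the length by $(\text{current length})(\delta/L)^\epsilon$-type factors and must terminate after $O(1/\epsilon)$ or $O(\log(1/\delta))$ steps since the length shrinks geometrically and cannot go below $\delta$. Let $\tau$ be the terminal segment, of length $L'$, containing $N'$ squares. By construction $\tau$ satisfies the two-ends condition \eqref{pojfiruegioutg} at its own scale (that is exactly the stopping condition, with $L$ replaced by $L'$). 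The remaining bookkeeping is to track how much mass survives: each descent step loses at most a $(\delta/L)^{\epsilon^3}$ factor going the wrong way, but actually \emph{concentrates} mass, so $N' / (\text{length of }\tau)^{\text{something}}$ only increases; one quantifies this to get $N' \ge (L')^{\epsilon^2} P$ and $L' \ge (\delta^s P)^{1/(s-\epsilon^2)}$. For the lower bound on $L'$: the $(\delta,s)$-KT bound forces $N' \le C(L'/\delta)^s$, and since we never lose more than a fixed power of $\delta^{O(\epsilon)}$ of the original $P$ squares through the (at most $O(\epsilon^{-1})$ many) descent steps, $N' \gtrsim \delta^{O(\epsilon^2)} P$, whence $(L'/\delta)^s \gtrsim \delta^{O(\epsilon^2)}P$, i.e. $L' \gtrsim \delta (\delta^s P / \delta^{s})^{1/s} \cdot \delta^{O(\epsilon^2)/s}$, which after absorbing the $\epsilon$-powers into the exponent gives $L' \ge (\delta^s P)^{1/(s-\epsilon^2)}$ as claimed. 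One then renames $L := L'$, $N := N'$, $Y(\tau) := Y(T) \cap \tau$.

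The main obstacle I anticipate is the bookkeeping of the exponents through the iteration: one must verify that the geometric loss in length at each step, compounded over all steps, is still consistent with the clean bound $L \ge (\delta^s P)^{1/(s-\epsilon^2)}$ rather than something weaker like $(\delta^s P)^{1/s} \cdot \delta^{-O(\epsilon)}$ — the subtraction of $\epsilon^2$ in the exponent is precisely calibrated to absorb these losses, and getting the stopping condition's fraction $(\delta/L)^{\epsilon^3}$ and the window ratio $(\delta/L)^\epsilon$ to balance against the number of iterations requires care. A secondary subtlety is that the window scale $L(\delta/L)^\epsilon$ in \eqref{pojfiruegioutg} is defined relative to the \emph{final} $L$, not the running one, so one should run the descent using the running length and only at the end check that the terminal two-ends property, stated at scale $L'(\delta/L')^\epsilon$, is what \eqref{pojfiruegioutg} demands after the renaming — which it is, by construction. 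Everything else (the at-most-$(L/\delta)^s$ bound on squares in a length-$L$ interval, disjointness of the covering windows, the base case) is routine.
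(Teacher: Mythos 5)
Your proposal is correct and is essentially the paper's own argument: a greedy stopping-time descent where at each stage one either verifies the two-ends condition at the current scale or passes to a sub-window of length $L_{n}=L_{n-1}(\delta/L_{n-1})^{\epsilon}$ retaining at least a $(\delta/L_{n-1})^{\epsilon^3}$-fraction of the squares, with the terminal length bounded below via the KT condition. The only step you leave implicit is the exact telescoping that makes the calibration work — namely $L_n=\delta^{1-(1-\epsilon)^n}$ and $\prod_{i<n}(\delta/L_i)^{\epsilon^3}=\delta^{\epsilon^2(1-(1-\epsilon)^n)}=L_n^{\epsilon^2}$, which gives $N\ge L^{\epsilon^2}P$ with no extra losses and hence the clean exponent $1/(s-\epsilon^2)$.
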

	\begin{proof}
		We test \eqref{pojfiruegioutg} with $\tau=T$. If it is true, we stop. Otherwise we find a segment $\tau_1$ with length $L_1=\delta^{\epsilon}$ that contains a number $N_1$ of squares in $Y(T)$ satisfying
		$$N_1>\delta^{\epsilon^3}P.$$
		We next test whether $\tau_1$ satisfies \eqref{pojfiruegioutg}. If it does, we stop. Otherwise, we may find a segment $\tau_2$ of $\tau_1$ with length $L_2=L_1(\delta/L_1)^\epsilon$ that contains $N_2$ squares from $Y(T)$ satisfying
		$$N_2>(\delta/L_1)^{\epsilon^3}N_1.$$ 
		Assuming this process will continue for $n$ steps, we find a segment $\tau_n$ of $\tau_{n-1}$ with length $L_n=L_{n-1}(\delta/L_{n-1})^\epsilon$, containing $N_n$ squares such that
		$$N_n>(\delta/L_{n-1})^{\epsilon^3}N_{n-1}.$$
		This process must stop at some point, with the values $L_n$, $N_n$ . Note that $L_n=\delta^{1-(1-\epsilon)^n}$, and thus
		$$N_n>\delta^{\epsilon^3}(\delta/L_1)^{\epsilon^3}\cdot\ldots\cdot(\delta/L_{n-1})^{\epsilon^3}=\delta^{\epsilon^2(1-(1-\epsilon)^n)}P=L_n^{\epsilon^2}P.$$
		Since $Y(T)$ is a $(\delta,s)$-KT set, we must have
		$$N_n\le (L_n/\delta)^{s}.$$
		It follows that the terminal $L_n$ cannot be smaller than $(\delta^s P)^{\frac1{s-\epsilon^2}}$. We may take $L=L_n$ and $N=N_n$.
		
	\end{proof}
	
	Recall that we write $\cN_r(S)$ for the $r$-neighborhood of $S$.

	\begin{proposition}
		\label{k jg  tgu hu-uh49-t09359056 h9876}
		Let $\delta<r'\le r\lesssim 1$.
		
		Let $\Gamma=\{(t,\gamma(t)),\;t\in[a,b]\}$ be  a smooth curve with length $r$. We assume $\|\gamma'\|_\infty\lesssim 1$.
		
		Let $S\subset \R$ be a $(\delta,s)$-KT set. Then 
		$$\#((S\times S)\cap \cN_{O(r')}(\Gamma))\lesssim (\frac{rr'}{\delta^2})^s.$$ 
		The implicit constant is independent of $\delta,r,r'$ and $\gamma$.
	\end{proposition}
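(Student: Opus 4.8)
The plan is a direct double counting that uses only that $\Gamma$ is a Lipschitz graph: once the first coordinate $x$ of a point $(x,y)\in (S\times S)\cap\cN_{O(r')}(\Gamma)$ is fixed, the second coordinate $y$ is confined to an interval of length $O(r')$, so the $(\delta,s)$-KT property of $S$ bounds the number of admissible $y$ by $\lesssim (r'/\delta)^s$; and the number of admissible $x$ is at most $\lesssim (r/\delta)^s$ for the same reason. Multiplying the two bounds gives the claim.

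In detail, I would first bound the admissible first coordinates. Since $\Gamma=\{(t,\gamma(t)):t\in[a,b]\}$ is a graph of length $r$ with $\|\gamma'\|_\infty\lesssim 1$, its base interval $[a,b]$ satisfies $b-a\sim r$, and hence the projection of $\cN_{O(r')}(\Gamma)$ to the abscissa is contained in a single interval of length $b-a+O(r')\lesssim r$ (here $r'\le r$ is used). Thus, if $(x,y)\in (S\times S)\cap\cN_{O(r')}(\Gamma)$, then $x$ lies in a fixed interval of length $\sim r>\delta$, and the $(\delta,s)$-KT property of $S$ gives
\[
  \#\bigl\{x\in S:\ (x,y)\in\cN_{O(r')}(\Gamma)\text{ for some }y\in\R\bigr\}\lesssim (r/\delta)^s .
\]

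Next, I would fix such an $x$ and bound the admissible second coordinates. If $(x,y)\in\cN_{O(r')}(\Gamma)$ there is $t\in[a,b]$ with $|x-t|=O(r')$ and $|y-\gamma(t)|=O(r')$, whence
\[
  |y-\gamma(x)|\le |y-\gamma(t)|+|\gamma(t)-\gamma(x)|\le O(r')+\|\gamma'\|_\infty|x-t|=O(r').
\]
Therefore $y\in S\cap B(\gamma(x),O(r'))$, and the $(\delta,s)$-KT property of $S$ (applied at scale $\sim r'>\delta$) gives $\#\bigl(S\cap B(\gamma(x),O(r'))\bigr)\lesssim (r'/\delta)^s$, with implicit constant depending only on the constants in $\|\gamma'\|_\infty\lesssim 1$ and in the KT hypothesis on $S$, and in particular not on $\delta$, $r$, $r'$ or the particular curve $\gamma$. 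Summing over the $\lesssim (r/\delta)^s$ admissible $x$,
\[
  \#\bigl((S\times S)\cap\cN_{O(r')}(\Gamma)\bigr)\le\sum_{x}\#\bigl(S\cap B(\gamma(x),O(r'))\bigr)\lesssim (r/\delta)^s\,(r'/\delta)^s=\left(\frac{rr'}{\delta^2}\right)^s,
\]
which is the asserted bound.

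There is essentially no obstacle here; the one thing to watch is that one must \emph{not} instead cover $\cN_{O(r')}(\Gamma)$ by $\sim r/r'$ squares of side $\sim r'$ and estimate each by $(r'/\delta)^{2s}$ — that would lose a factor $(r/r')^{1-s}\ge 1$. The graph (Lipschitz) structure of $\Gamma$ is exactly what lets us localise $y$ to a single interval of length $O(r')$ rather than to a full $r'$-square, and this is what makes the two exponents of $s$ add up to the correct one. Note also that no curvature of $\gamma$ is used, only the slope bound; this is why the resulting estimate is uniform over rectangles of arbitrary orientation, as needed for the notion of rectangular $(\delta,u,C)$-KT set in the intended application.
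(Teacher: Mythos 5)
Your proof is correct and is essentially the paper's argument: both are a Fubini-type double count that uses the Lipschitz graph structure to confine the second coordinate to a single interval of length $O(r')$ for each admissible abscissa, and then applies the $(\delta,s)$-KT bound once at scale $r'$ and once at scale $r$ (the paper merely organizes the sum over $r'$-squares $I_q\times J_q$ rather than over individual $x\in S$). The only cosmetic point to tidy is that $\gamma(x)$ may be undefined for $x$ slightly outside $[a,b]$; replacing it by $\gamma(t)$ for the nearest $t\in[a,b]$ gives the same interval of length $O(r')$.
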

	\begin{proof}

		We cover $(S\times S)\cap \cN_{O(r')}(\Gamma)$ with pairwise disjoint  squares $q=I_q\times J_q$ with side length $O(r')$. Then
		$$\#((S\times S)\cap \cN_{O(r')}(\Gamma))=\sum_q\#(S\cap I_q)\#(S\cap J_q)\lesssim (\frac{r'}{\delta})^s\sum_q\#(S\cap I_q).$$
		Since $\gamma$ has bounded first derivative, an interval $I$ may coincide with $I_q$ for at most  $O(1)$ many squares $q$.  Since $\Gamma$ has length $r$, all these $I$ lie inside an interval of length $\lesssim r$. Using that $S$ is $(\delta,s)$-KT we find that 
		$$\sum_q\#(S\cap I_q)\lesssim (\frac{r}{\delta})^s.$$
		
	\end{proof}	
	Recall the definition of a rectangular KT-set in Definition \ref{def.rect}.
	\begin{corollary}
		\label{idcj uhuireyygiytigy}
		The set $\PP$ is a rectangular $(\delta,2s)$-KT set.
	\end{corollary}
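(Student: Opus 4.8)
The plan is to transfer the estimate, through the bi-Lipschitz map $\cG'$, to the genuine Cartesian product underlying $\SSS$, and then to invoke Proposition \ref{k jg  tgu hu-uh49-t09359056 h9876}. Write $A_i=\pi(S_i)\subset[-1,1]$ for the projections of $S_i$ to the abscissa. Since $x\mapsto(x,x^2)$ is bi-Lipschitz on $[-1,1]$ with constants $\sim 1$, each $A_i$ is a $(\delta,s)$-KT set with constant $\sim 1$. Let $F(x_1,x_2)=(3(x_1+x_2),\sqrt 3(x_1-x_2))$ be the linear isomorphism from the construction, so that $\SSS$ is a $\delta$-cover of $F(A_1\times A_2)$, and put $\Phi=\cG'\circ F$. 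Being the composition of $F$ with the restriction of the explicit map $\cG$ of \cite{O1} to a region on which it is non-degenerate, $\Phi$ is a $C^2$-diffeomorphism onto its image with $\|\Phi\|_{C^2},\|\Phi^{-1}\|_{C^2}\lesssim 1$; in particular both $\Phi$ and $\Phi^{-1}$ are bi-Lipschitz with constants $\sim 1$, and there is an absolute $c_0>0$ such that $\Phi^{-1}$ maps any line segment of length $\le c_0$ to a curve whose tangent direction varies by at most $1/100$. Because $\PP$ is, up to $O(1)$ overlaps, the collection of $\delta$-squares covering $\Phi(A_1\times A_2)$, and an $O(\delta)$-fattening (with $O(\delta)\le O(r')$) of a rectangle of dimensions $(r',r)$ is again such a rectangle, it suffices to show: for all $\delta\le r'\le r\lesssim 1$ and every rectangle $B$ of dimensions $(r',r)$,
\[
\#\big((A_1\times A_2)\cap\Phi^{-1}(B)\big)\lesssim\Big(\frac{\sqrt{rr'}}{\delta}\Big)^{2s}.
\]

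Next I would reduce $\Phi^{-1}(B)$ to thin neighborhoods of short, nearly straight curves. The rectangle $B$ lies in the $O(r')$-neighborhood of a segment $\ell$ of length $\le r$. Subdivide $\ell$ into $M\lesssim 1$ pieces $\ell_1,\dots,\ell_M$ of length $\le\min(r,c_0)$; here $M\lesssim 1$ uses crucially that all the relevant sets live in the bounded disk $\D'$, so that $r\lesssim 1$. Then $\Phi^{-1}(B)\subset\bigcup_{j}\cN_{O(r')}(\Gamma_j)$, where $\Gamma_j:=\Phi^{-1}(\ell_j)$ is a curve of length $L_j$ comparable to that of $\ell_j$ (so $L_j\le r$) whose tangent direction stays within $1/100$ of some fixed direction $w_j$. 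Since $w_j$ lies within $45^\circ$ of one of the two coordinate axes, after possibly interchanging the coordinates each $\Gamma_j$ is the graph of a function of slope $\lesssim 1$ over that axis.

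Finally I would apply Proposition \ref{k jg  tgu hu-uh49-t09359056 h9876} to each piece. Its proof uses only that the two factors of the product are $(\delta,s)$-KT sets and that the curve is a bounded-slope coordinate graph, so it gives (with the coordinates possibly interchanged, that is, applied to $A_1\times A_2$ or to $A_2\times A_1$)
\[
\#\big((A_1\times A_2)\cap\cN_{O(r')}(\Gamma_j)\big)\lesssim\Big(\frac{L_j\,r'}{\delta^{2}}\Big)^{s}\lesssim\Big(\frac{rr'}{\delta^{2}}\Big)^{s}=\Big(\frac{\sqrt{rr'}}{\delta}\Big)^{2s}.
\]
Summing over the $M\lesssim 1$ pieces yields the displayed bound, and hence, recalling Definition \ref{def.rect}, the corollary. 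The one genuinely nontrivial ingredient is the geometric control of $\Phi^{-1}(\ell)$ — that $O(1)$ subdivisions turn it into a union of bounded-slope coordinate graphs — which rests on the $C^2$-regularity, uniform in $\delta$, of the map $\cG$ from \cite{O1} together with the boundedness of $\D'$; everything else is the Cartesian-product count of Proposition \ref{k jg  tgu hu-uh49-t09359056 h9876}, which is precisely what produces the exponent $2s$ in place of $s$.
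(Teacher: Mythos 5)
Your proposal is correct and follows essentially the same route as the paper: pull the long axis of the rectangle back through the bi-Lipschitz map $\cG'\circ\cA$, cover the preimage by $O(1)$ many bounded-slope graphs $\Gamma$ of length $\lesssim r$, and invoke Proposition \ref{k jg  tgu hu-uh49-t09359056 h9876}. The only (inessential) difference is that you apply the proposition's proof directly to the product $A_1\times A_2$ of the two projections, whereas the paper applies the stated version to $S=S_1\cup S_2$; both yield the same bound.
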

	\begin{proof}
		Recall the map $\cG'$ and call
		$\cA(x_1,x_2)=(3(x_1+x_2),\sqrt{3}(x_1-x_2))$. Consider a box $B\subset \D'$ with dimensions $(r',r)$, $\delta\le r'\le r$ and central long axis given by a line segment $l$ of length $r$. Then $(\cG')^{-1}(l)$ is part of a  circle centered on the $x$-axis, lying inside the $y>Y$ half-space. Since $\cG'\circ\cA$ is bi-Lipschitz on $J_1\times J_2$,  we may cover 
		$(\cG'\circ \cA)^{-1}(l)$ with $O(1)$ many  $\Gamma$ with length $\lesssim r$. Note that
		$$\#(\PP\cap B)\sim \sum_{\Gamma}\#((S_1\times S_2)\cap \cN_{O(r')}(\Gamma)).$$
		The result follows from Proposition \ref{k jg  tgu hu-uh49-t09359056 h9876} applied to $S=S_1\cup S_2$.
		
	\end{proof}
	
	We now prove the most substantial result of this section, Theorem \ref{thm.rectangles1}, which may be of independent interest. We recall it below for convenience. 
	\begin{theorem}
		\label{huufwf[gmiu9i96=y]}\label{thm.rectangles}
		Assume $0\le s\le 1$.	
		Consider a collection $\T$ of $\delta$-tubes and a collection $\PP$ of $\delta$-squares in $[0,1]^2$.
		
		Assume that  $\PP$ is a rectangular $(\delta,2s)$-KT set for each $p\in\PP$. Assume that $Y'(p)=\{T\in\T:\;p\cap T\not=\emptyset\}$ is a  $(\delta,\sigma)$-KT set, $\sigma=\min(2s,2-2s)$.
		Write
		$$I(\T,\PP)=\sum_{p\in\PP}\#Y'(p).$$
		Then 
		\begin{equation}
			\label{ orjifu54it0-96-py=56-56}
			I(\T,\PP)\les \delta^{-\frac{2s}{3}}(\#\T)^{2/3}(\#\PP)^{1/3}.
		\end{equation}
	\end{theorem}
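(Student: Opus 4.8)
The plan is to pass to a dual formulation, substitute for the missing two–ends hypothesis on the fibres $Y'(p)$ the two–ends reduction of Lemma \ref{2endsredu}, rescale, and then invoke Theorem \ref{completeWW}; the role of the rectangular $(\delta,2s)$–KT hypothesis on $\PP$ will be exactly to keep the rescaled family of tubes a KT–set with a controlled constant. The cases $s\in\{0,1\}$ are trivial, so assume $0<s<1$. \emph{Step 1 (duality and normalisation).} After partitioning $\T$ into $O(1)$ subfamilies according to the slope of the tubes, and discarding tubes incident to no $p\in\PP$, we may assume every $T\in\T$ has slope in $[-1,1]$ and that $\T=\bigcup_{p\in\PP}Y'(p)$. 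Fix a point–line duality $D$ interchanging $\delta$–tubes and $\delta$–squares, preserving incidence, and distorting the $\delta$–scale by $O(1)$ on the bounded region involved. Put $\T^*=D(\PP)$ and $Y^*(D(p))=\{D(T):T\in Y'(p)\}$; then $Y^*$ is a shading of $\T^*$, $Y^*(\T^*)=D(\T)$ has $\#D(\T)=\#\T$ squares, and $I(\T,\PP)=\sum_{T^*\in\T^*}\#Y^*(T^*)$. Since an ordinary $(\delta,2s)$–KT set of squares dualises to a $(\delta,2s,O(1))$–KT set of tubes, $\T^*$ is $(\delta,2s,O(1))$–KT, and each $Y^*(D(p))$ is a $(\delta,\sigma)$–KT set of $\lesssim\delta^{-\sigma}$ squares situated along the tube $D(p)$. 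A routine dyadic pigeonholing lets us assume $\#Y'(p)\sim N$ for all $p$, so $I(\T,\PP)\sim N\#\PP$.

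\emph{Step 2 (two–ends reduction and rescaling).} Apply Lemma \ref{2endsredu} (with $\sigma$ in place of $s$) to each tube $D(p)\in\T^*$ carrying the shading $Y^*(D(p))$: we obtain a sub–segment $\tau_p\subset D(p)$ of some length $L_p\in[\delta,1]$, along which lie $N_p\ge L_p^{\epsilon^2}N\gtrsim\delta^{\epsilon^2}N$ of the squares, say $Y^*(\tau_p)\subset Y^*(D(p))$, such that the $\times L_p^{-1}$ rescaling of $(\tau_p,Y^*(\tau_p))$ is $(\epsilon,\epsilon^3)$–two–ends at scale $\delta/L_p$. Pigeonholing the dyadic values of $L_p$ and $N_p$ we may take $L_p\sim L$ and $N_p\sim N'\gtrsim\delta^{\epsilon^2}N$ for all $p$, keeping a $\gtrsim\delta^{O(\epsilon^2)}$–fraction of the incidences. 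Each $\tau_p$ is an $L\times\delta$ rectangle, hence meets $O(1)$ squares of an $L$–grid of the dual plane, at least one of which contains $\gtrsim N'$ of its squares; assign $p$ to such a square $Q=Q(p)$, set $\PP_Q=\{p:Q(p)=Q\}$ and $Y_Q(\tau_p)=Y^*(\tau_p)\cap Q$ (still $(\delta,\sigma,O(1))$–KT of size $\sim N'$, and still $(\epsilon,\epsilon^3)$–two–ends after a harmless adjustment of $\epsilon$). After a further pigeonholing — suppressed here — ensuring that on each $Q$ the relevant family rescales to an honest uniform collection, rescale $Q$ to $[0,1]^2$ by $\times L^{-1}$: the segments become a family $\widetilde\T^*_Q$ of $(\delta/L)$–tubes, and each $Y_Q(\tau_p)$ becomes a $(\delta/L,\sigma,O(1))$–KT, $(\epsilon,\epsilon^3)$–two–ends shading of size $\sim N'$ (Proposition \ref{prop.properties}(5) and Lemma \ref{2endsredu}).

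\emph{Step 3 (the bound on each $Q$, and assembly).} The decisive point is that $\widetilde\T^*_Q$ is a $(\delta/L,2s,O(L^{-s}))$–KT set: unwinding the rescaling and the duality, an $r$–ball in the parameter space of $\widetilde\T^*_Q$ pulls back to $\PP$ intersected with a rectangle of dimensions $\sim r\times Lr$ and arbitrary (boundedly slanted) orientation, so the rectangular $(\delta,2s)$–KT property gives $\#(\widetilde\T^*_Q\cap B(\cdot,r))\lesssim(\sqrt{r\cdot Lr}/\delta)^{2s}=L^{-s}(r/(\delta/L))^{2s}$. Now apply Theorem \ref{completeWW} at scale $\delta/L$ with $t=2s$, $K_1\sim L^{-s}$, $K_2\sim1$, and shading cardinality $\sim N'$; writing $P_Q=\#\bigcup_{p\in\PP_Q}Y_Q(\tau_p)$ and using $\sum_{p\in\PP_Q}\#Y_Q(\tau_p)\sim N'\#\PP_Q$, this reads
\begin{equation*}
(N')^{3/2}\,(\delta/L)^{s}\,\#\PP_Q\ \lesssim\ L^{-s}\,(\delta/L)^{-O(\epsilon)}\,P_Q,
\end{equation*}
and the product $L^{-s}\cdot(L/\delta)^{s}=\delta^{-s}$ is independent of $L$. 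Summing over the grid squares $Q$ (which overlap with multiplicity $O(1)$, so $\sum_Q\#\PP_Q\sim\#\PP$ and $\sum_Q P_Q\lesssim\#D(\T)=\#\T$) gives $(N')^{3/2}\#\PP\lesssim\delta^{-s-O(\epsilon)}\#\T$. Since $N\lesssim\delta^{-O(\epsilon^2)}N'$ and $I(\T,\PP)\sim N\#\PP$, multiplying by $\#\PP^{1/2}$ and taking cube roots yields, for every $\epsilon>0$, $I(\T,\PP)\lesssim_\epsilon\delta^{-2s/3-O(\epsilon)}\#\T^{2/3}\#\PP^{1/3}$, i.e.\ $I(\T,\PP)\les\delta^{-2s/3}\#\T^{2/3}\#\PP^{1/3}$.

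\emph{Main obstacle.} The conceptual crux is the KT estimate for the rescaled family $\widetilde\T^*_Q$: the duality–plus–rescaling bookkeeping shows that an isotropic ball in its parameter space corresponds to $\PP$ inside a \emph{thin rectangle of arbitrary orientation}, so the ordinary $(\delta,2s)$–KT property of $\PP$ is genuinely insufficient and the rectangular one is forced; one must then check that the extra constant $L^{-s}$ it produces is exactly cancelled by the $(\delta/L)^{t/2}$ weight in Theorem \ref{completeWW}, leaving no loss in $L$. The remaining technical burden — making everything uniform so that Lemma \ref{2endsredu} and Theorem \ref{completeWW} apply cleanly, in particular that the rescaled segments form a bona fide $(\delta/L)$–separated collection with a uniform two–ends shading — is routine but needs care. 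This step is the present analogue of Proposition \ref{fkjgu8rtu9i6iy-} in the proof of Theorem \ref{maininct}: in both cases it is the product/rectangular structure that makes the configuration behave well under rescaling.
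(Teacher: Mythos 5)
Your proposal is correct and follows essentially the same route as the paper's proof: the two-ends reduction via Lemma \ref{2endsredu}, pigeonholing of $L$ and $N$, localization to eccentricity-$L$ regions (your isotropic $L$-grid in the dual plane is the dual of the paper's $(L,1)$-rectangles $R$), the conversion of the rectangular $(\delta,2s)$-KT hypothesis into an ordinary $(\delta/L,2s,L^{-s})$-KT condition after rescaling, an application of Theorem \ref{completeWW}, and the cancellation $L^{-s}\cdot(L/\delta)^{s}=\delta^{-s}$. The one step you suppress that the paper treats explicitly (its Steps 4--5) is that up to $M\lesssim L^{-s}$ squares $p$ can dualize-and-rescale to the \emph{same} $(\delta/L)$-tube, so before invoking Theorem \ref{completeWW} (stated for a genuine set of distinct tubes, with a covering-number KT condition) one must pigeonhole this multiplicity and contract the incidence graph to the distinct segments, after which the factor $M$ cancels and yields exactly the constant $K_1\sim L^{-s}$ your multiset bookkeeping predicts.
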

	\begin{proof}
		Fix $\epsilon>0$.
		There will be various losses of the form $(1/\delta)^{O(\epsilon)}$ that will be hidden in the notation $\les$ and $\approx$. We use pigeonholing to enforce various  uniformity assumptions. These will only cost losses of order $\les 1$. 
		\\
		\\
		Step 1. (Parameters $N,L$) We restrict attention to all $p$ in some subset  $\PP_1=\PP$, that have the  parameter $\#Y'(p)$ in the same dyadic range, $\#Y'(p)\sim P$. 
		We choose $P$ so that the new number of incidences remains roughly the same as the old one  $$\#I(\T,\PP_1)\approx \#I(\T,\PP).$$
		
		We apply Lemma \ref{2endsredu} to each such  $p$ to produce an arc $\theta_p\subset\cS^1$ of length $L_p\ge \delta$ such that the collection - denoted by $\T(\theta_p)$ - of those tubes  $T\in Y'(p)$ with directions $d(T)$ inside $\theta_p$ is $(\epsilon,\epsilon^3)$-two-ends. What this means is that these directions, after being rescaled by $1/L_p$, form an $(\epsilon,\epsilon^3)$-two-ends subset of $\cS^1$ at scale  $\delta/L_p$.
		
		We may also assume that $L_p\sim L$, $\#Y'(\theta_p)\sim N$ for all $p\in \PP_1$, for some fixed dyadic parameters $L$ and $N\approx P$.
		The parameter $P$ will not be mentioned again, as it is replaced with $N$. Note that
		\begin{equation}
			\label{ jutgugtg9 yh =390h yioh0yi0h-}
			\#I(\T,\PP)\approx N\#\PP_1.
		\end{equation}
		We rename $Y'(p)$ to refer to the smaller collection $Y'(\theta_p)$, so 
		\begin{equation}
			\label{;lckjfvjvjiobiytpio}
			\#Y'(p)\sim N,\;\;\forall p\in\PP_1.
		\end{equation}
		\\
		\\    
		Step 2. (Rectangles $R$, segments $\gamma$)
		We fix a maximal $L$-separated set of directions $D_{L}\subset \cS^1$.
		For each direction in $ d \in D_{L}$, we tile/cover $[0,1]^2$ with $\sim 1/L$ many $(L,1)$-rectangles $R,$ with the long side pointing in direction $d.$ There are $\sim L^{-2}$ many such $R$, call them $\cR$. It is worth noting that the eccentricity of $R$ coincides with the length of an arc $\theta_p$. 
		
		Each  $T\in \T$ fits inside a unique $R$. We call  $\T_R$ the collection of these $T$, and note that 
		\begin{equation}
			\label{cjuuig8 rtui90 yti h9yi 0968}
			\sum_{R\in\cR}\#\T_R=\#\T.
		\end{equation}
		Simple geometry shows that if $T,T'\in\cT_R$ have nonempty intersection, it must be (roughly speaking) a segment with dimensions $(\delta,k\delta/L)$, for some $k\in\N$. It thus makes sense to tile $R$ with a collection $\Gamma_{R,all}$ of segments $\gamma$ with dimensions $(\delta,\delta/L)$, having the same orientation and eccentricity as $R$. Each $T\in\T$ may be thought of as the union of such $\gamma$.
		\\
		\\
		Step 3. (The squares $\PP_R$) For $R\in\cR$, we call $\PP_R$ the collection of those $p\in \PP_1$ that lie inside $R$, and whose arc $\theta_p$ contains the direction $d(R)$  of $R$.
		
		Thus, the sets $(\PP_R)_{R\in\cR}$ are pairwise disjoint, which implies that
		\begin{equation}
			\label{cjuuig8 rtui90 yti h9yi 09682}
			\sum_{R\in\cR}\#\PP_R=\#\PP_1.
		\end{equation}
		We note that for each $p\in\PP_R$,   $Y'(p)$ coincides with those $T\in\T_R$ such that $p\cap T\not=\emptyset$.
		This is because the eccentricity and orientation of $R$ match the length $L$ and location of the arc $\theta_p$. This will allow us to perform the incidence count separately for each $R$.
		\\
		\\
		Step 4. (Parameter $M$)
		Each $p\in \PP_R$ lies inside a unique $\gamma\subset R$. It follows that 
		$$\#I(\T,\PP)\approx \sum_{R}\sum_{M}\sum_{\gamma\in\Gamma_{ R,all}\atop{\#(\PP_R\cap \gamma)\sim M}}\sum_{p\in\PP_R\cap \gamma}\#Y'(p).$$
		We pick $M$ such that
		$$\#I(\T,\PP)\approx \sum_{R}\sum_{\gamma\in \Gamma_R}\sum_{p\in\PP_R\cap \gamma}\#Y'(p).$$
		where $$\Gamma_R=\{\gamma\in \Gamma_ {R,all}:\;\#(\PP_R\cap \gamma)\sim M\}.$$

		Write
		$\Gamma=\cup_R\Gamma_R.$ Note that due to \eqref{;lckjfvjvjiobiytpio}, \eqref{cjuuig8 rtui90 yti h9yi 09682} and the choice of $M$ we have
		\begin{equation}
			\label{cjuuig8 rtui90 yti h9yi 09683}
			\#\Gamma\approx \frac{\#\PP_1}{M}.
		\end{equation}
		\\
		\\
		Step 5. (Contracting the incidence graph)
		An earlier observation implies that for  each $p\in\PP_R\cap \gamma$ with $\gamma\in\Gamma_R$
		$$Y'(p)=\{T\in\T_R:\;\gamma\subset T\}.$$
		This shows that $Y'(p)$ takes the same value when $p\in\PP_R\cap \gamma$. Thus, we may recast the incidence graph between $\T$ and $\PP_1$ as a new graph  with vertex set $\T\sqcup\Gamma$, and an edge between $\gamma$ and $T$ if and only if $\gamma\subset T$. Note that
		$$\#I(\T,\PP_1)\approx M^{-1}\#I(\T,\Gamma).$$ 
		\\
		\\
		Step 6. (Rescaling $R$) Let $\bar{\delta}=\frac{\delta}{L}$. Fix  $R$. 
		We rescale it by $(\frac{1}{L},1)$ to get $[0,1]^2$. Then each $T\in\T_R$ becomes a $\bar{\delta}$-tube, and we call this collection $\bar{\T}_R$. Also, each $\gamma\in\Gamma_R$ becomes a $\bar{\delta}$-square, and we call this collection $\bar{\PP}_R$. Then $\#\bar{\PP}_R=\#\Gamma_R$ and $\#\bar{\T}_R=\#\T_R$. We note that \ $\bar{\delta}\le 1$. Thus, using \eqref{cjuuig8 rtui90 yti h9yi 0968} and \eqref{cjuuig8 rtui90 yti h9yi 09683} we find
		\begin{equation}
			\label{jtgi5iyi670uoi97i0-67iu0-i}
			\#\PP_1\sum_{R\in\cR}\#\bar{\T}_R\approx M\#\T\sum_{R\in\cR}\#\bar{\PP}_R.
		\end{equation}
		It will be important to realize that our rescaling magnifies angles between tubes by a factor of $1/L$.
		\\
		\\
		Step 7. (Preservation of spacing properties  via rescaling) The following properties can be easily seen to be true for each $R$.
		
		First, $\bar{\PP}_R$ is a $(\bar{\delta},2s,K_1)$-KT set with $K_1\sim \frac{1}{ML^{s}}$. Indeed, after rescaling, the number of $\bar{p}\in\bar{\PP}_R$ lying inside an $r$-ball with $r\ge \bar{\delta}$ is dominated by $1/M$ times the number of $p\in\PP$ that lie inside a rectangle with dimensions $(rL,r)$. Since $\PP$ was assumed to be rectangular $(\delta,2s)$-KT, this number is at most $$\frac1M(\frac{r\sqrt{L}}{\delta})^{2s}=\frac{1}{ML^s}(\frac{r}{\bar{\delta}})^{2s}.$$

		Second, since each $Y'(p)$ was assumed to be $ (\delta,\sigma)$-KT,  for each $\bar{p}\in\bar{\PP}_R$ the (dual) shading $\bar{Y}'(\bar{p})=\{\bar{T}\in\bar{\T}_R:\;\bar{p}\cap \bar{T}\not=\emptyset\}$ is  $(\bar{\delta},\sigma,K_2)$-KT set, with $K_2\sim 1$. Moreover,
		$\#\bar{Y}'(\bar{p})\sim N$.
		\\
		\\
		Step 8. (Two-ends preservation via rescaling) We prove that $\bar{Y}'(\bar{p})$ is $(\epsilon,\epsilon^3)$-two-ends with respect to the scale $\bar{\delta}$.
		Call $\gamma$ the square that gets mapped to $\bar{p}$. Recall that $\theta_p=\theta_{p'}$ and  $Y'(p)=Y'(p')$ when $p,p'\in \PP_R\cap \gamma$. Call $\theta$ the corresponding arc $\theta_p$.
		Call $\Theta$ the rescaled arc, with length $\sim 1$.
		
		We consider an arbitrary segment of $\Theta$ of length $\bar{\delta}^{\epsilon}$ and prove that it contains - call this number $U$ - at most a $\bar{\delta}^{\epsilon^3}$-fraction of $\bar{Y}'(\bar{p})$. Via rescaling, this number coincides with the number - call it $V$ - of $T\in\T$ with directions inside some segment of $\theta'\subset \theta$
		of length $\sim \bar{\delta}^{\epsilon}L=(\delta/L)^\epsilon L$. Our construction of $Y'(\theta)$ is aligned with \eqref{pojfiruegioutg}, thus 
		$$V \le (\delta/L)^{\epsilon^3}\#Y'(\theta)\sim(\delta/L)^{\epsilon^3}N= \bar{\delta}^{\epsilon^3}N.$$ 
		This implies the desired estimate 
		$$U=V\le \bar{\delta}^{\epsilon^3}N\sim  \bar{\delta}^{\epsilon^3}\#\bar{Y}'(\bar{p}). $$	
		
		Step 9.  We apply Theorem \ref{completeWW} to the pair $(\bar{\PP}_R,\bar{Y}')$, using the point-line duality
		$$\#\bar{\PP}_RN^{3/2}\sim\sum_{\bar{p}\in\bar{\PP}_R}\#\bar{Y}'(\bar{p})N^{1/2}\les \bar{\delta}^{-s}\#\bar{\T}_RK_1K_2^{1/2}\sim \bar{\delta}^{-s}\#\bar{\T}_R\frac{1}{ML^{s}}.$$	
		When combined with	\eqref{jtgi5iyi670uoi97i0-67iu0-i} and summation in $R$ we get
		\begin{equation}
			\label{jfrtgupgi-46iy9i4-u}
			\frac{\#\PP_1}{\#\T}N^{3/2}\les (\frac{L}{\delta})^sL^{-s}=\delta^{-s}.
		\end{equation}
		Finally, recalling \eqref{ jutgugtg9 yh =390h yioh0yi0h-} we get the desired inequality
		$$I(\T,\PP)\approx N\#\PP_1\les (\delta^{-s}\frac{\#\T}{\#\PP_1})^{2/3}\#\PP_1=\delta^{-2s/3}(\#\T)^{2/3}(\#\PP_1)^{1/3}.$$
	\end{proof}	
	\begin{corollary}
		\label{cjfhughurtghuirtgui}
		Assume $0\le s\le 1$. Let	$\sigma=\min(2s,2-2s)$.
		Consider a collection $\T$ of $\delta$-tubes and a collection $\PP$ of $\delta$-squares.
		
		Assume that  $\PP$ is a rectangular $(\delta,2s)$-KT set for each $p\in\PP$. Assume that $Y'(p)=\{T\in\T:\;p\cap T\not=\emptyset\}$ is a  $(\delta,\sigma)$-KT set.
		Then
		$$\sum_{T\in \T}(\#\{p\in\PP:\;p\cap T\not=\emptyset\})^2\les \delta^{-s}\#\PP\;(\max_{p\in\PP}\#Y'(p))^{1/2}.$$
	\end{corollary}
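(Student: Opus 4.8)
The plan is to obtain this as a quick consequence of Theorem~\ref{huufwf[gmiu9i96=y]}, via dyadic pigeonholing in the tube multiplicity and interpolation against the trivial incidence bound. Throughout write $Y(T)=\{p\in\PP:\;p\cap T\neq\emptyset\}$, so the left-hand side is $\sum_{T\in\T}(\#Y(T))^2$, and set $N_0=\max_{p\in\PP}\#Y'(p)$. We may assume $\T$ and $\PP$ are nonempty, so $N_0\ge 1$.

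First I would decompose $\T=\bigcup_\lambda\T_\lambda$, where $\lambda$ ranges over the $O(\log\delta^{-1})$ dyadic values with $1\le\lambda\lesssim\delta^{-1}$ (a $\delta$-tube meets $\lesssim\delta^{-1}$ squares of $\cD_\delta$) and $\T_\lambda=\{T\in\T:\;\#Y(T)\sim\lambda\}$. It then suffices to bound $\lambda^2\#\T_\lambda$ for each fixed $\lambda$. Since $\PP$ is unchanged and each $\{T\in\T_\lambda:\;p\cap T\neq\emptyset\}$ is a subset of $Y'(p)$, hence still a $(\delta,\sigma)$-KT set, the pair $(\T_\lambda,\PP)$ satisfies the hypotheses of Theorem~\ref{huufwf[gmiu9i96=y]}. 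Double counting gives $I(\T_\lambda,\PP):=\sum_{p\in\PP}\#\{T\in\T_\lambda:\;p\cap T\neq\emptyset\}=\sum_{T\in\T_\lambda}\#Y(T)\sim\lambda\#\T_\lambda$, so Theorem~\ref{huufwf[gmiu9i96=y]} yields $\lambda\#\T_\lambda\les\delta^{-2s/3}(\#\T_\lambda)^{2/3}(\#\PP)^{1/3}$, i.e.\ $\#\T_\lambda\les\delta^{-2s}\lambda^{-3}\#\PP$, equivalently $\lambda^2\#\T_\lambda\les\delta^{-2s}\lambda^{-1}\#\PP$.

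The second ingredient is the trivial bound $I(\T_\lambda,\PP)\le\sum_{p\in\PP}\#Y'(p)\le N_0\#\PP$, which gives $\lambda^2\#\T_\lambda\sim\lambda\,I(\T_\lambda,\PP)\le\lambda N_0\#\PP$. Combining the two estimates with the elementary inequality $\min(a,b)\le\sqrt{ab}$ (applied to $a=\delta^{-2s}\lambda^{-1}\#\PP$ and $b=\lambda N_0\#\PP$) gives $\lambda^2\#\T_\lambda\les\delta^{-s}N_0^{1/2}\#\PP$, uniformly in $\lambda$. Summing over the $O(\log\delta^{-1})$ dyadic values of $\lambda$, which is harmless under $\les$, completes the proof.

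There is no serious obstacle here: the only points requiring a word of care are that the dyadic subcollections $\T_\lambda$ retain the KT hypotheses of Theorem~\ref{huufwf[gmiu9i96=y]} (immediate, as subfamilies of KT families are KT) and that the number of relevant scales is logarithmic and hence absorbed into $\les$. All the genuine difficulty already sits inside Theorem~\ref{huufwf[gmiu9i96=y]}.
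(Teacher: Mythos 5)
Your proposal is correct and follows essentially the same route as the paper: the paper also pigeonholes $\T$ by dyadic multiplicity $2^n$, applies Theorem \ref{huufwf[gmiu9i96=y]} to each piece to get $2^{3n}\#\T_n\les\delta^{-2s}\#\PP$, pairs this with the trivial bound $2^n\#\T_n\le\#\PP\max_p\#Y'(p)$, and takes the geometric mean — which is exactly your $\min(a,b)\le\sqrt{ab}$ step. Nothing to add.
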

	\begin{proof}For $1\le 2^n\lesssim \delta^{-2s}$,  let $\T_n$ be the collection of $T\in\T$ for which $\#\{p\in\PP:\;p\cap T\not=\emptyset\}\sim 2^n$. We need to prove that 
		\begin{equation}
			\label{weejregu ug9ih9i590}
			2^{2n}\#\T_n\les \delta^{-s}\#\PP\;(\max_{p\in\PP}\#Y'(p))^{1/2}.
		\end{equation}
		We apply Theorem \ref
		{huufwf[gmiu9i96=y]} to the pair $(\T_n,\PP)$ and find 
		$$2^n\#\T_n\sim I(\T_n,\PP)\les \delta^{-2s/3}(\#\T_n)^{2/3}(\#\PP)^{1/3}, $$
		or
		$$2^{3n}\#\T_n\les \delta^{-2s}\#\PP.$$
		On the other hand,
		$$2^n\#\T_n\sim I(\T_n,\PP)\le \#\PP\;\max_{p\in\PP}\#Y'(p).$$
		We take the geometric average to get \eqref{weejregu ug9ih9i590}.
	\end{proof}
	We are now ready to prove Theorem \ref{jsdhfuurt gi  topgi yhi}. 
	\begin{proof}Since $s\le \frac23$ it follows that $\sigma=\min(2s,2-2s)\ge s$, so $Y'(p)$ is also  $(\delta,\sigma)$-KT.
		Corollary \ref{cjfhughurtghuirtgui} together with 
		\eqref{ juryegu rtugy t8ug8tug-9i90} and \eqref{ juryegu rtugy t8ug8tug-9i902} show that 
		$$\sum_{T\in\T}(\#\{p\in\PP:\;p\cap T\not=\emptyset\})^2\les \delta^{-s}\#S_1\#S_2(\# S_3)^{1/2}.$$
		Due to \eqref{fki9hik09hi-h0}, we thus have
		$$\E_{3,\delta}(S_1,S_2,S_3)\les \delta^{-s}\#S_1\#S_2(\# S_3)^{1/2}.$$
		All our arguments work for permutations of the three sets. Averaging the three estimates leads to the desired inequality
		$$\E_{3,\delta}(S_1,S_2,S_3)\les \delta^{-s}(\#S_1\#S_2\# S_3)^{5/6}.$$

	\end{proof}
	
	\begin{remark}At its heart, the proof of Theorem \ref{jsdhfuurt gi  topgi yhi} relied on a seemingly magical reduction of energy estimates to counting incidences between squares and tubes. There is yet another such reduction for the parabola, observed in \cite{O2} and \cite{SolSz}. We explain it below. 
		
		Let $S_1,S_2,S_3$ be as in Theorem \ref{jsdhfuurt gi  topgi yhi}. Let $\cP$ be a minimal collection of $\delta$-squares covering $S_1+S_2$.
		Note that
		\begin{equation}
			\label{djcuirheureugutgurt8uh98u98yu}
			\E_{3,\delta}(S_1,S_2,S_3)\sim \sum_{s\in \cP+S_3}(\#\{p\in\cP:\;p\cap (s-S_3)\not=\emptyset\})^2.
		\end{equation}
		The bi-Lipschitz function
		$$\Psi(x,y)=(x,x^2-y)$$
		maps translates of $\PP^1$ to lines. When applying this transformation, the collection of $\delta$-neighborhoods of translates of the parabola $s+\PP^1$, $s\in\cP+S_3=S_1+S_2+S_3+O(\delta)$, becomes a set $\T$ of $\delta$-tubes,  and $\cP$ becomes a set $\PP$ of $\delta$-squares (roughly speaking). We recast \eqref{djcuirheureugutgurt8uh98u98yu} as saying
		$$\E_{3,\delta}(S_1,S_2,S_3)\lesssim\sum_{T\in\T}\;(\#\{p\in\PP:\;p\cap T\not=\emptyset\})^2.$$
		The map $(x_1,x_2)\to (x_1+x_2,x_1^2+x_2^2)$ is bi-Lipschitz on $J_1\times J_2$, if $\dist(J_1,J_2)\sim 1$. Using this and a variant of Proposition \ref{k jg  tgu hu-uh49-t09359056 h9876} and Corollary \ref{idcj uhuireyygiytigy}, it can be shown  that $\PP$ is rectangular $(\delta,2s)$-KT. Moreover, $\#\PP\lesssim\#S_1\#S_2$.
		
		We can also see that $Y'(p)=\{T\in\T:\;p\cap T\not=\emptyset\}$ is $(\delta,s)$-KT and $\#Y'(p)\lesssim \#S_3$, for each $p\in\PP$. Indeed, assume $p=\Psi(s_1+s_2)+O(\delta)$, $T=\Psi(s_1'+s_2'+s_3'+\PP^1)+O(\delta)$. Then $p\cap T\not=\emptyset$ is equivalent to 
		$$s_1+s_2-(s_1'+s_2'+s_3')\in \cN_{O(\delta)}(\PP^1).$$
		Note that $s_3'$ determines $s_1',s_2'$.
		
		The estimate
		$$\E_{3,\delta}(S_1,S_2,S_3)\les \delta^{-s}(\#S_1\#S_2\#S_3)^{5/6}$$
		is now a consequence of Corollary \ref{cjfhughurtghuirtgui} and its symmetric versions. 
	\end{remark}

	We next show how trilinear energy estimates imply trilinear estimates for measures.  Given a measure $\mu$ supported on $\PP^1$, we will use the same notation $\mu$ for its pushforward (a measure on [-1,1]) via the projection map. We write
	$$\widehat{\mu}(x_1,x_2)=\int_{-1}^1e(\xi x_1+\xi^2 x_2)d\mu(\xi).$$
	\begin{proposition}
		\label{p o4i9i59ui=0g6-3}Let $s>0$.
		Let $J_1,J_2,J_3\subset [-1,1]$ be intervals separated by $\sim 1$. Fix $\delta>0$. Let $S_i$ be $(\delta,s)$-KT sets of points in $\{(\xi,\xi^2):\;\xi\in J_i\}$. 
		Assume the uniform estimate
		over all such collections    \begin{equation}
			\label{jfupuggu0956u0-04hui}
			\E_{3,\delta}(S_1,S_2,S_3)\les \delta^{-s} (\#S_1\#S_2\#S_3)^{5/6}.
		\end{equation}
		Let $R=\delta^{-1}$. Then the estimate
		$$\int_{B_R}|\widehat{\mu_1}|^2|\widehat{\mu_2}|^2|\widehat{\mu_3}|^2\les R^{2-\frac{5s}2}(\|\mu_1\|\|\mu_2\|\|\mu_3\|)^{5/6}$$
		holds for each $\mu_i$ supported on $\{(\xi,\xi^2):\;\xi\in J_i\}$, that satisfy \eqref{e4}.
	\end{proposition}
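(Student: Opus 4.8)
The plan is to transfer the estimate from the physical side $B_R$ to a bound for the continuous trilinear energy of the $\mu_i$ on the frequency side, and then discretise at scale $\delta=R^{-1}$ so as to feed the output into the hypothesis \eqref{jfupuggu0956u0-04hui}. First I would dominate the indicator $\mathbf 1_{B_R}$ by a nonnegative Schwartz bump $\psi_R(x)=\psi(x/R)$ with $\psi\gtrsim 1$ on the unit ball and $\widehat\psi$ Schwartz, so that $\widehat{\psi_R}(\eta)=R^{2}\widehat\psi(R\eta)$ is concentrated at scale $R^{-1}$ with rapidly decaying tails. Writing $|\widehat{\mu_i}(x)|^{2}=\iint e\big((s_i-t_i)\cdot x\big)\,d\mu_i(s_i)\,d\mu_i(t_i)$ — identifying $\mu_i$ with its lift to the parabola — expanding the sixfold product and integrating against $\psi_R$ gives, for every $N$,
\[
\int_{B_R}\prod_{i=1}^{3}|\widehat{\mu_i}|^{2}\ \lesssim_{N}\ R^{2}\sum_{k\ge 0}2^{-kN}\mathcal{E}_k,\qquad
\mathcal{E}_k:=\Big(\textstyle\prod_{i}\mu_i\times\mu_i\Big)\Big\{\,\big|\!\sum_{i}(s_i-t_i)\big|\lesssim\delta_k\,\Big\},\quad \delta_k:=2^{k}\delta .
\]
Thus it suffices to prove the uniform bound $\mathcal{E}_k\les\delta_k^{5s/2}(\|\mu_1\|\|\mu_2\|\|\mu_3\|)^{5/6}$ (the terms with $\delta_k\ge 1$ being trivially negligible thanks to the rapid decay); the $k$-sum is then geometric and collapses to $R^{2}\delta^{5s/2}\prod_i\|\mu_i\|^{5/6}=R^{2-5s/2}\prod_i\|\mu_i\|^{5/6}$.

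To bound $\mathcal{E}_k$, fix $k$, partition each arc into $\delta_k$-caps, and observe that with $\nu:=\mu_1*\mu_2*\mu_3$ one has $\mathcal{E}_k\sim\sum_Q\nu(Q)^2$ over $\delta_k$-squares $Q$. I would pigeonhole the $\delta_k$-caps of each $\mu_i$ by the dyadic size $\mu_i(\mathrm{cap})\sim\lambda_i$; by \eqref{e4} one has $\lambda_i\lesssim\delta_k^{s}$, and only $O(\log^3\tfrac1\delta)$ triples $(\lambda_1,\lambda_2,\lambda_3)$ are relevant, so after a Cauchy--Schwarz it is enough to bound one diagonal term $\sum_Q\nu^{\lambda}(Q)^2$ with $\nu^{\lambda}=\mu_1^{\lambda_1}*\mu_2^{\lambda_2}*\mu_3^{\lambda_3}$. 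Since $\mu_i^{\lambda_i}$ charges each of its caps with comparable mass, writing $S_i^{\lambda_i}$ for the charged, $\delta_k$-separated caps (a subset of the parabola arc over $J_i$) one gets $\sum_Q\nu^{\lambda}(Q)^2\sim(\lambda_1\lambda_2\lambda_3)^2\,\E_{3,\delta_k}(S_1^{\lambda_1},S_2^{\lambda_2},S_3^{\lambda_3})$, where $S_i^{\lambda_i}$ is a $(\delta_k,s,C_i)$-KT set with $C_i\sim\delta_k^{s}/\lambda_i\ge 1$ and $\#S_i^{\lambda_i}\le\|\mu_i\|/\lambda_i$.

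The crux is that \eqref{jfupuggu0956u0-04hui} applies verbatim only for $C_i\sim 1$, i.e.\ for the saturated class $\lambda_i\sim\delta_k^s$; in fact it genuinely \emph{fails} for $(\delta_k,s,C)$-KT sets with $C\gg 1$ (for a $\delta_k$-net of an arc one has $C\sim\delta_k^{s-1}$ and the energy is $\sim\delta_k^{-4}$, far above $\delta_k^{-s-5/2}$ when $s\le\tfrac23$), so one cannot simply insert the charged caps into the hypothesis. Instead I would first decompose each $S_i^{\lambda_i}$ into $\les C_i$ genuine $(\delta_k,s)$-KT subsets of the arc over $J_i$ (the standard decomposition used in Step~2 of the proof of Proposition~\ref{prop.product}), expand $\E_{3,\delta_k}$ in the resulting sub-pieces, and bound each contribution by \eqref{jfupuggu0956u0-04hui} followed by Cauchy--Schwarz in $Q$ and Hölder in the indices; this produces
\[
\E_{3,\delta_k}(S_1^{\lambda_1},S_2^{\lambda_2},S_3^{\lambda_3})\ \les\ (C_1C_2C_3)^{7/6}\,\delta_k^{-s}\big(\#S_1^{\lambda_1}\#S_2^{\lambda_2}\#S_3^{\lambda_3}\big)^{5/6}.
\]
Substituting $\#S_i^{\lambda_i}\le\|\mu_i\|/\lambda_i$ and $\lambda_i=\delta_k^{s}/C_i$, the $(C_1C_2C_3)^{7/6}$ loss is exactly cancelled by the factor $(\lambda_1\lambda_2\lambda_3)^{2-5/6}=(\lambda_1\lambda_2\lambda_3)^{7/6}$ coming from the two copies of each $\mu_i$ together with the $(\#S_i^{\lambda_i})^{5/6}$ terms (the exponent $7/6=2-\tfrac56$), so each class contributes $\les\delta_k^{5s/2}\prod_i\|\mu_i\|^{5/6}$; summing over the $O(\log^3\tfrac1\delta)$ classes (absorbed by $\les$) yields $\mathcal{E}_k\les\delta_k^{5s/2}\prod_i\|\mu_i\|^{5/6}$, which closes the argument by the first paragraph.

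I expect the last step to be the only genuine difficulty: the passage from $(\delta,s)$-KT sets of constant $\sim1$ to $(\delta,s,C)$-KT sets is exactly where the hypothesis is not directly usable, and the reconciliation — decompose into unit-constant pieces while exploiting that a large Katz--Tao constant forces the corresponding mass class to be correspondingly light, so that the polynomial loss in the $C_i$ is absorbed by the quadratic smallness of the $\lambda_i$ — is precisely where the exponents $2$, $5/6$ and $5s/2$ have to be matched, and where $s\le\tfrac23$ is (comfortably) used. The separation $\dist(J_i,J_j)\sim1$ plays no role other than keeping all the sub-triples admissible inputs for \eqref{jfupuggu0956u0-04hui}, so the whole proof is insensitive to replacing $\sim1$ by $\sim 1/K$ at the cost of $K$-dependent constants.
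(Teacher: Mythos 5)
Your argument is essentially the paper's. The core step --- pigeonholing the $\delta$-caps of each $\mu_i$ by mass $\lambda_i$, observing that a mass class is a $(\delta,s,C_i)$-KT set with $C_i\sim\delta^s/\lambda_i$, splitting it into $\lessapprox C_i$ unit-constant $(\delta,s)$-KT sets (Lemma 2.8 of \cite{demwangszem}, exactly what the paper invokes), applying \eqref{jfupuggu0956u0-04hui} to each sub-triple, and using Cauchy--Schwarz plus H\"older so that the $(C_1C_2C_3)^{7/6}$ loss cancels exactly against $(\lambda_1\lambda_2\lambda_3)^{7/6}$ --- is the paper's proof with the same exponent bookkeeping. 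The one divergence is the localization to $B_R$: you majorize $\mathbf 1_{B_R}$ by a Schwartz bump on the physical side, so $\widehat{\psi_R}$ has tails and you must bound the continuous energy $\mathcal E_k$ at every dyadic scale $\delta_k=2^k\delta$, which means invoking the hypothesis \eqref{jfupuggu0956u0-04hui} at scales coarser than the single fixed $\delta$ appearing in the statement. Strictly speaking this proves the proposition under a (mildly) stronger hypothesis; it is harmless in the intended application since Theorem \ref{jsdhfuurt gi  topgi yhi} holds at all scales, and it is also easily repaired without strengthening anything, e.g.\ by covering $\{|\sum_i(s_i-t_i)|\lesssim\delta_k\}$ by $O(2^{2k})$ translates of the $O(\delta)$-ball and using $\int\nu(x)\nu(x+v)\,dx\le\int\nu^2$ to get $\mathcal E_k\lesssim 2^{2k}\mathcal E_0$. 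The paper sidesteps the issue entirely by mollifying on the measure side with $\eta_R$ supported in $B(0,1/R)$ and $\widehat{\eta_R}\ge \mathbf 1_{B_R}$, so the convolutions localize exactly to scale $1/R$ and only $\E_{3,1/R}$ ever appears. (A small aside: contrary to your closing remark, nothing in this proposition uses $s\le 2/3$; the cancellation of exponents is independent of $s$.)
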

	
	\begin{proof}
		Partition $[-1,1]$ into intervals $I$ of length $1/R$. For a non-negative integer $l$, let $\cI_i(l)$ be those intervals $I_i$ inside $J_i$ such that $\mu_i(I_i)\sim 2^{-l}(1/R)^s$. Letting $S_i(l)$ be the points $(c_i,c_i^2)$, with $c_i$ the center of $I_i\in\cI_i(l)$, we find that
		$S_i(l)$ is a $(\delta,s,2^{l})$-KT set with $\# S_i(l)\lesssim \|\mu_i\|2^lR^s$.
		
		We use the fact that each $(\delta,s,C)$-KT set can be partitioned into $\les C$ many $(\delta,s)$-KT sets. See Lemma 2.8 in \cite{demwangszem}. This allows us  to partition each $\cI_i(l)$ into collections $\cI_{i,k}(l)$, $k\les 2^l$, whose centers form  $(\delta,s)$-KT sets $S_{i,k}(l)$. We write $\mu_{I_i}$ for the restriction of $\mu$ to $I$.
		
		Write
		$$\mu_{i}=\sum_{l\ge 0}\sum_{k\les 2^l}\sum_{I_i\in\cI_{i,k}(l)}\mu_{I_i}.$$
		There are $O(\log R)$ many values of $l$ contributing to the summation.
		Let $\eta_R$ be smooth, such that $\widehat{\eta_R}\ge 1_R$ and ${\eta_R}$ is supported inside $B(0,\frac1{R})$. Then using Cauchy--Schwarz we find 
		\begin{align*}
			\int_{B_R}|\widehat{\mu_1}|^2|\widehat{\mu_2}|^2|\widehat{\mu_3}|^2&\les\sum_{l_1,l_2,l_3}2^{l_1+l_2+l_3}\sum_{k_i\les 2^{l_i}}\int_{\R^2}\prod_{i=1}^3|\sum_{I_i\in\cI_{i,k_i}(l_i)}\widehat{\mu_{I_i}*\eta_R}|^2\\&=\sum_{l_1,l_2,l_3}2^{l_1+l_2+l_3}\sum_{k_i\les 2^{l_i}}\int_{\R^2}\prod_{i=1}^3\sum_{I_i,I_i'\in\cI_{i,k_i}(l_i)}\widehat{\mu_{I_i}*\eta_R}\;\overline{\widehat{\mu_{I_i'}*\eta_R}}.  
		\end{align*}
		We estimate one term in the summation using Plancherel's formula,
		$$\int_{\R^2}\widehat{\mu_{I_1}*\eta_R}\;\widehat{\mu_{I_2}*\eta_R}\;\widehat{\mu_{I_3}*\eta_R}\;\overline{\widehat{\mu_{I_1'}*\eta_R}}\;\overline{\widehat{\mu_{I_2'}*\eta_R}}\;\overline{\widehat{\mu_{I_3'}*\eta_R}}=$$
		$$\int_{\R^2}(\mu_{I_1}*\eta_R)*(\mu_{I_2}*\eta_R)*(\mu_{I_3}*\eta_R)\times (\mu_{I_1'}*\eta_R)*(\mu_{I_2'}*\eta_R)*(\mu_{I_3'}*\eta_R).$$
		If $I_i$ is centered at $c_i$, then $(\mu_{I_1}*\eta_R)*(\mu_{I_2}*\eta_R)*(\mu_{I_3}*\eta_R)$ is supported on $(c_1,c_1^2)+(c_2,c_2^2)+(c_3,c_3^2)+O(1/R)$. Also, $\|\mu_{I_i}*\eta_R\|_\infty\lesssim R^{2-s}2^{-l_i}$. 
		
		Letting $S_{i,k_i}(l_i)$ be the points $(c_i,c_i^2)$ with $c_i$ the centers of $I_i\in\cI_{i,k_i}(l_i)$ we find that
		$$\int_{B_R}|\widehat{\mu_1}|^2|\widehat{\mu_2}|^2|\widehat{\mu_3}|^2\les  R^{-2}R^{4-6s}\sum_{l_1,l_2,l_3}2^{-l_1-l_2-l_3}\sum_{k_1,k_2,k_3}\E_{3,1/R}(S_{1,k_1}(l_1),S_{2,k_2}(l_2),S_{3,k_3}(l_3)).
		$$
		Combining  our hypothesis \eqref{jfupuggu0956u0-04hui} with Cauchy--Schwarz and the fact that
		$$\sum_{k_i}\#S_{i,k_i}(l_i)=\#S_{i}(l_i)\lesssim 2^{l_i}R^s\|\mu_i\|,$$we find 
		\begin{align*}
			\int_{B_R}|\widehat{\mu_1}|^2|\widehat{\mu_2}|^2|\widehat{\mu_3}|^2&\les \delta^{-s}R^{-2}R^{4-6s}\sum_{l_1,l_2,l_3}2^{-l_1-l_2-l_3}\sum_{k_1,k_2,k_3}(\#S_{1,k_1}(l_1)\#S_{2,k_2}(l_2)\#S_{3,k_3}(l_3))^{5/6}\\&\les\delta^{-s}R^{-2}R^{4-6s}\sum_{l_1,l_2,l_3}2^{-\frac{5(l_1+l_2+l_3)}6}(\sum_{k_1,k_2,k_3}\#S_{1,k_1}(l_1)\#S_{2,k_2}(l_2)\#S_{3,k_3}(l_3))^{5/6}\\&\lesssim \delta^{-s}R^{-2}R^{4-6s}\sum_{l_1,l_2,l_3}R^{5s/2}(\|\mu_1\|\|\mu_2\|\|\mu_3\|)^{5/6}\\& \les R^{2-\frac{5s}2}(\|\mu_1\|\|\mu_2\|\|\mu_3\|)^{5/6}.
		\end{align*}

	\end{proof}

	We next present a trilinear to linear reduction, also known as a broad-narrow argument. Such arguments go back to \cite{BoGu}.
	\begin{proposition}
		\label{io ejfrefiugu-9 5ih905uih9}
		Let $s>0$. 
		Assume the uniform estimate
		$$\int_{B_R}|\widehat{\mu_1}|^2|\widehat{\mu_2}|^2|\widehat{\mu_3}|^2\le T_{K,\epsilon} R^{2-\frac{5s}2+\epsilon}(\|\mu_1\|\|\mu_2\|\|\mu_3\|)^{5/6}$$
		holds for each $ R\ge K^2\ge 1$, each $\epsilon>0$, each $\mu_i$ supported on $\{(\xi,\xi^2):\;\xi\in J_i\}$ and satisfying \eqref{e4}, for each intervals $J_i\subset [-1,1]$ separated by $\ge 1/K$.
		
		Then the estimate
		$$\int_{B_R}|\widehat{\mu}|^6\les R^{2-\frac{5s}2}\|\mu\|$$
		holds for  $R\ge 1$ and each  $\mu$ supported on $\PP^1$ that satisfies \eqref{e4}.
	\end{proposition}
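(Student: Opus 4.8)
The plan is to run a broad--narrow argument in the spirit of \cite{BoGu}: the trilinear hypothesis disposes of the broad part, while parabolic rescaling combined with an induction on the scale $R$ handles the narrow part. Fix $\upsilon>0$; it suffices to prove $\int_{B_R}|\widehat{\mu}|^6\lesssim_\upsilon R^{2-\frac{5s}2+\upsilon}\|\mu\|$. The case $R\lesssim 1$ follows from $\int_{B_R}|\widehat{\mu}|^6\le |B_R|\|\mu\|^6\lesssim R^2\|\mu\|$ and $\|\mu\|\lesssim 1$ (which is \eqref{e4} with $r=2$), so assume $R$ large and fix $K=K(\upsilon)$, to be chosen at the end. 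Partition $[-1,1]$ into $\sim K$ equal intervals and write $\mu=\sum_k\mu_k$, with $\mu_k$ the restriction of $\mu$ to the cap over the $k$-th interval; each $\mu_k$ still satisfies \eqref{e4}. Call $x\in B_R$ \emph{broad} if there are three indices $k_1,k_2,k_3$ with pairwise $\gtrsim 1/K$-separated caps and $|\widehat{\mu_{k_i}}(x)|\ge (CK)^{-1}\max_j|\widehat{\mu_j}(x)|$ for $i=1,2,3$, and \emph{narrow} otherwise. An elementary combinatorial fact (if no three of the caps are pairwise separated, the ``heavy'' ones lie in $O(1)$ adjacent caps) gives $|\widehat{\mu}(x)|\lesssim\max_k|\widehat{\mu_k}(x)|$ at every narrow $x$.

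On the broad set one has the pointwise bound $|\widehat{\mu}(x)|^6\lesssim_K\sum|\widehat{\mu_{k_1}}\widehat{\mu_{k_2}}\widehat{\mu_{k_3}}|^2(x)$, the sum over the $O(K^3)$ triples of pairwise $\gtrsim 1/K$-separated caps. Integrating over $B_R$, applying the trilinear hypothesis with $\epsilon=\upsilon$ (valid because $R\ge K^2$), and then H\"older's inequality in the triple (using $\sum_k\|\mu_k\|^{5/6}\le K^{1/6}\|\mu\|^{5/6}$ and $\|\mu\|\lesssim 1$) gives
$$\int_{\mathrm{broad}}|\widehat{\mu}|^6\lesssim_{K,\upsilon}R^{2-\frac{5s}2+\upsilon}\|\mu\|,$$
already of the required form.

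On the narrow set $|\widehat{\mu}|^6\lesssim\sum_k|\widehat{\mu_k}|^6$, so $\int_{\mathrm{narrow}}|\widehat{\mu}|^6\lesssim\sum_k\int_{B_R}|\widehat{\mu_k}|^6$. Each $\mu_k$ lives on a cap of length $\ell\sim 1/K$ with $\|\mu_k\|\lesssim\ell^s$ by \eqref{e4}; the parabolic rescaling that takes this cap to a unit arc turns $\mu_k$ into a measure $\tilde\mu_k$ on the unit arc still satisfying \eqref{e4} (same constant, for $K$ large), with $\|\tilde\mu_k\|=\|\mu_k\|$, and turns $B_R$ into an $R\ell\times R\ell^2$ box. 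The naive reduction of $\int_{B_R}|\widehat{\mu_k}|^6$ to a quantity at a smaller ball does not self-improve, since the Jacobian of the rescaling overwhelms the gain from the smaller ball; one therefore iterates the broad--narrow dichotomy, passing at each narrow stage to a finer cap and rescaling, until the caps have size $\sim R^{-1/2}$. At that scale $\tilde\mu_k$ is supported on an $R^{-1/2}\times R^{-1}$ slab, and a direct C\'ordoba-type $L^4$-to-$L^6$ estimate---using \eqref{e4} through the $\delta$-neighbourhood of $\PP^1+\PP^1$---delivers the \emph{sharp} bound $\int_{B_R}|\widehat{\mu_k}|^6\lesssim R^{2-\frac{5s}2}\|\mu_k\|$. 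Summing these against $\sum_k\|\mu_k\|=\|\mu\|$ and absorbing the accumulated constant (there are only $\lesssim_K\log R$ scales, so it is $\le R^{\upsilon}$ once $K$ is large) closes the estimate.

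The crux is this narrow analysis: one must organise the iterated dichotomy---in particular, choose at which intermediate scale to invoke the trilinear hypothesis---so that every broad contribution produced after several narrow rescalings still comes out $\lesssim_{K,\upsilon}R^{2-\frac{5s}2+\upsilon}\|\mu\|$ despite the growing Jacobian of the composed parabolic maps. The two facts that make this work are the Frostman decay $\|\mu_k\|\lesssim\ell^s$ (with $\ell$ the cap length) of the mass carried at each scale and the sharpness of the bottom-scale ($R^{-1/2}$) estimate.
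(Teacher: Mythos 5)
Your broad part and the overall broad--narrow architecture match the paper's. The problem is in the narrow part, and it begins with a miscalculation. You assert that the one-step reduction ``does not self-improve, since the Jacobian of the rescaling overwhelms the gain from the smaller ball,'' and on that basis you replace it with a full iterated dichotomy down to caps of width $R^{-1/2}$. But the rescaling is \emph{exactly critical} for the exponent $2-\frac{5s}{2}$: with $I=[a-\frac{1}{2K},a+\frac{1}{2K}]$ and $\nu^I(A):=K^s\mu_I(\frac{A}{2K}+a)$ (which again satisfies \eqref{e4} and has $\|\nu^I\|=K^s\mu(I)$), the change of variables gives $|\widehat{\mu_I}(x_1,x_2)|=K^{-s}|\widehat{\nu^I}(\frac{x_1+2ax_2}{2K},\frac{x_2}{4K^2})|$, the image of $B_R$ is covered by $\sim K$ balls of radius $R/K^2$, and hence
$$\int_{B_R}|\widehat{\mu_I}|^6\lesssim K^{4-6s}\,C_{R/K^2}\,\|\nu^I\|=K^{4-5s}\,C_{R/K^2}\,\mu(I),$$
where $C_{R'}$ denotes the best constant in $\int_{B_{R'}}|\widehat{\nu}|^6\le C_{R'}\|\nu\|$ over admissible $\nu$. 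Summing over $I$ and noting $K^{4-5s}=(K^2)^{2-\frac{5s}{2}}$, the narrow term reproduces the inductive hypothesis $C_{R'}\le D(R')^{2-\frac{5s}{2}+\epsilon}$ with coefficient $AK^{-2\epsilon}$, which is $\le 1$ once $K=K(\epsilon)$ is chosen large enough; iterating $O_K(\log R)$ times, each step adding the broad contribution $B_{K,\epsilon}R^{2-\frac{5s}{2}+\epsilon}\|\mu\|$, closes the argument. This is exactly the paper's proof: the step you discard as unworkable is the one that works, and the $R^{\epsilon}$ slack in the target (hidden in $\les$) is precisely what absorbs the absolute constant per step.

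Your substitute route is not unsalvageable --- the base case at cap scale $R^{-1/2}$ is in fact fine, and you do not even need a C\'ordoba $L^4$ argument there, since $\int_{B_R}|\widehat{\mu_\theta}|^6\le\|\mu_\theta\|^4\int_{B_R}|\widehat{\mu_\theta}|^2\lesssim R^{-2s}\cdot R^{2-s}\|\mu_\theta\|=R^{2-3s}\|\mu_\theta\|\le R^{2-\frac{5s}{2}}\|\mu_\theta\|$. But the part you yourself label ``the crux'' --- verifying that every broad contribution generated at depth $j$ of the iteration, after undoing the composed parabolic rescalings, is still $\lesssim_{K,\upsilon}R^{2-\frac{5s}{2}+\upsilon}\|\mu\|$, and that the $O(\log R)$ depths sum acceptably --- is asserted rather than carried out. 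That bookkeeping is the entire content of the narrow analysis (it is the same criticality computation as above, performed at every depth), so as written the proposal has a genuine gap exactly where the work lies.
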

	\begin{proof}
		Call $C_R$ the smallest constant such that
		$$\int_{B_R}|\widehat{\mu}|^6\le C_R\|\mu\|.$$
		Fix $K\ge 1$, to be chosen later. Partition $[-1,1]$ into intervals $I\in\cI_k$ of length $1/K$. For $I,I'\in\cI_K$ we write $I\not\sim I'$ if $I$ is not adjacent to $I'$.
		Then for each $x$
		$$|\widehat{\mu}(x)|^6\le 100\max_{I\in\cI_K}|\widehat{\mu_I}(x)|^6+K^{100}\max_{I_1\not\sim I_2\not\sim I_3}|\widehat{\mu_{I_1}}(x)|^2|\widehat{\mu_{I_2}}(x)|^2|\widehat{\mu_{I_3}}(x)|^2.$$
		Let $R\ge K^2$. Integration shows that
		$$\int_{B_R}|\widehat{\mu}|^6\le 100\int_{B_R}\sum_{I\in\cI_K}|\widehat{\mu_I}|^6+K^{200}\max_{I_1\not\sim I_2\not\sim I_3}\int_{B_R}|\widehat{\mu_{I_1}}|^2|\widehat{\mu_{I_2}}|^2|\widehat{\mu_{I_3}}|^2.$$
		We first observe that since $\|\mu\|\lesssim 1$
		$$\max_{I_1\not\sim I_2\not\sim I_3}\int_{B_R}|\widehat{\mu_{I_1}}|^2|\widehat{\mu_{I_2}}|^2|\widehat{\mu_{I_3}}|^2\le T_{K,\epsilon}R^{2-\frac{5s}2+\epsilon}(\|\mu_{I_1}\|\|\mu_{I_2}\|\|\mu_{I_3}\|)^{5/6}\lesssim T_{K,\epsilon}R^{2-\frac{5s}2+\epsilon}\|\mu\|.$$
		We next analyse the first term. Given $I=[a-\frac1{2K},a+\frac1{2K}]$, let  $\nu^I$ be the measure on $[-1,1]$ given by
		$$\nu^I(A)=K^s\mu_I(\frac{A}{2K}+a).$$
		Note that $\nu^I$ satisfies \eqref{e4}, and $\|\nu^I\|=K^s\mu(I)$.
		Moreover, via the change of variables $\eta=2K(\xi-a)$ we find 
		\begin{align*}  
			|\widehat{\mu_I}(x_1,x_2)|=&|\int_{[a-\frac1{2K},a+\frac1{2K}]}e(\xi x_1+\xi^2 x_2)d\mu(\xi)|\\=&\frac1{K^s}|\int_{[-1,1]}e((a+\frac{\eta}{2K})x_1+(a+\frac{\eta}{2K})^2x_2)d\nu^I(\eta)|\\=&\frac1{K^s}|\widehat{\nu^I}(\frac{x_1+2a x_2}{2K},\frac{x_2}{4K^2})|.
		\end{align*}
		The image of $B_R$ under the map $(x_1,x_2)\mapsto(\frac{x_1+2a x_2}{2K},\frac{x_2}{4K^2})$ may be covered with $\sim K$ many balls $B_{R/K^2}$ with radius $R/K^2$.
		It follows that 
		$$\int_{B_R}|\widehat{\mu_I}|^6\lesssim K^{4-6s}\max_{B_{R/K^2}}\int_{B_{R/K^2}}|\widehat{\nu^I}|^6,$$
		Our hypothesis implies that
		$$\int_{B_{R/K^2}}|\widehat{\nu^I}|^6\le C(R/K^2)K^{s}\mu(I).$$
		It follows that 
		$$\int_{B_R}\sum_{I\in\cI_K}|\widehat{\mu_I}|^6\lesssim C(R/K)K^{4-5s}\|\mu\|.$$
		Putting things together we get for each $\epsilon>0$
		$$C_R\le AC(R/K^2)K^{4-5s}+B_{K,\epsilon}R^{2-\frac{5s}2+\epsilon}.$$
		Here $A\ge 1$ is an absolute constant, independent of $K,R,\epsilon$.
		Writing $D_R=C_R/R^{2-\frac{5s}2+\epsilon}$ we find that 
		$$D_R\le AK^{-2\epsilon}D_{R/K^2}+B_{K,\epsilon}.$$
		We fix $\epsilon$ as small as small as we wish and let $K=A^{1/2\epsilon}$. We then iterate this inequality $n=[\log R/2\log K]$ many times. Since $R/K^{2n}\sim 1$, we find
		$D_R\lesssim_\epsilon \log R.$
		Since $\epsilon$ is arbitrary, we conclude that $C_R\les R^{2-\frac{5s}{2}}$.
		
	\end{proof}

	We close this subsection by observing  that Theorem \ref{io jfpurei90 h9=h} is an immediate consequence of Theorem \ref{jsdhfuurt gi  topgi yhi} (see also Remark \ref{chcurt ug8u-g06u9=67u0978}), Proposition \ref{p o4i9i59ui=0g6-3} and Proposition \ref{io ejfrefiugu-9 5ih905uih9}.

	\subsection{The decoupling method}
	We prove Theorem \ref{jki regu rthu8 yuh 60-u4j}, that we recall below.
	\begin{theorem}
		
		Let $\frac12\le s\le 1$. Assume $\mu$ is supported on a curve $\Gamma$ with nonzero curvature, and satisfies \eqref{e4 giobjytibuiuiu}.
		Then
		\begin{equation}
			\label{4ui37854yijvout9gi5 hi60-ji}
			\|\widehat{\mu}\|^6_{L^6(B_R)}\les R^{1-s+\beta},\;\;\;\beta=\frac{1-s^2}{2s+1}.
		\end{equation}
		If $\mu$ is AD regular then
		\begin{equation}
			\label{ejfiorugtugur8huhfryygyt}
			\|\widehat{\mu}\|^6_{L^6(B_R)}\les R^{1-s+\beta-c_\mu},\;\;\;\beta=\frac{1-s}{2},
		\end{equation}
		when $c_\mu>0$ depends on $\mu$.
	\end{theorem}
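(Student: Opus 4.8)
Following the scheme of \cite{demwangszem}, the plan is to reduce the $L^6$ estimate to a count of heavy $\delta$-squares for a family of $\delta$-tubes carrying \emph{quasi-product} structure, and then to feed in the new incidence bound, Corollary \ref{pout98u8huy8ht-8h}, in its $s>d$ form — this being the case \emph{not} treated in \cite{demwangszem}, which used $s\le d$. I would set $\delta=R^{-1/2}$ and perform a wave-packet decomposition of $\widehat\mu$ adapted to $\Gamma$: decompose $\Gamma$ into $\delta$-arcs $\theta$, write $\widehat\mu=\sum_\theta\widehat{\mu_\theta}$, and model $|\widehat{\mu_\theta}|$ on $B_R$ by $a_\theta\sum_{T\in\cT_\theta}\mathbf 1_T$, where $\cT_\theta$ is a family of $\delta$-tubes (rescaled $R^{1/2}\times R$ planks) pointing in direction $\theta$; after pigeonholing one may take all $a_\theta$ comparable to a single $a$ and all $\#\cT_\theta$ comparable. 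The Frostman hypothesis \eqref{e4 giobjytibuiuiu} translates into the two halves of a quasi-product condition on $\cT:=\bigcup_\theta\cT_\theta$: the direction set $\{\theta\}$ is a $(\delta,s)$-KT set (an arc of $\Gamma$ over a length-$r$ interval carries $\mu$-mass $\lesssim r^s$, hence contains $\lesssim(r/\delta)^s$ active sub-arcs), and, after pigeonholing, each fiber $\cT_\theta$ is a $(\delta,1-s)$-KT set, forced by the spatial distribution of $\mu_\theta$. Thus $\cT$ is a $(\delta,s,1-s,K_1,K_2)$-quasi-product set with $d:=1-s$ and $s+d=1$; since $s\ge\tfrac12$ we have $s\ge d$, the first regime of Corollary \ref{pout98u8huy8ht-8h}.

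Next I would apply $\ell^2$-decoupling for $\Gamma$ (Bourgain--Demeter): combined with the wave-packet model this bounds $\int_{B_R}|\widehat\mu|^6$ by a constant multiple of $a^6R^2\delta^2\sum_{r\ \mathrm{dyadic}}r^3\,\#\cP_r(\cT)$, where $\cP_r(\cT)$ is the set of $\delta$-squares met by $\sim r$ tubes of $\cT$ — precisely the heavy-square quantity estimated by Corollary \ref{pout98u8huy8ht-8h}. Inserting $\#\cP_r(\cT)\les(K_1K_2)^{1/s}\,\delta^{-1}\#\cT\,r^{-(s+1)/s}$, using also the trivial bounds $\#\cP_r(\cT)\le\delta^{-2}$ and $r\#\cP_r(\cT)\le\delta^{-1}\#\cT$, and summing the dyadic series (split at the crossover scale), one is left with a two-parameter optimization over the amplitude $a$ and the total $\#\cT$, which are linked by the $L^2$ normalization $\|\widehat\mu\|_{L^2(B_R)}^2\lesssim R^{2-s}\|\mu\|$ (equivalently $a^2\delta\#\cT\lesssim R^{-s}\|\mu\|$) and the a priori bounds $a\lesssim\delta^s$, $\#\cT\le\delta^{-1-s}$, with the $K_1,K_2$-dependence rendered harmless by the preliminary pigeonholing. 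Carrying out this optimization, and absorbing the $R^\epsilon$ losses from decoupling and pigeonholing into $\les$, should produce exactly $\|\widehat\mu\|_{L^6(B_R)}^6\les R^{1-s+\beta}$ with $\beta=\frac{1-s^2}{2s+1}$.

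For the AD-regular case, regularity upgrades every KT condition above to a genuine Frostman condition: the direction set is an honest $(\delta,s)$-set, each fiber an honest $(\delta,1-s)$-set, $\#\cT\approx\delta^{-1}$, leaving no slack in the $(a,\#\cT)$-tradeoff; this already pins the optimization to its best value $\beta=\frac{1-s}{2}$. Moreover these regularity properties persist under passage to every intermediate scale (Proposition \ref{prop.properties}, together with the hereditary behaviour of quasi-product sets in Proposition \ref{fkjgu8rtu9i6iy-}), which I would exploit in a bootstrapping / induction-on-scales step in the spirit of \cite{orreg}: running the estimate at a suitably chosen intermediate scale and feeding the output back in at scale $R$ should gain a fixed power $R^{-c_\mu}$, with $c_\mu>0$ depending only on the regularity constant $C_\mu$, yielding \eqref{ejfiorugtugur8huhfryygyt}.

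The main obstacle is the honest execution of the first paragraph: one must organise the wave-packet data so that the $L^6$ norm is \emph{genuinely} dominated by $\sum_r r^3\#\cP_r(\cT)$ \emph{and}, simultaneously, so that $\cT$ is literally a quasi-product set with controlled $K_1,K_2$. The product structure — as opposed to a mere $(\delta,t,1)$-KT bound — is indispensable here; this is the point emphasised after Theorem \ref{maininct}, and the failure of such incidence bounds for general KT families (the train-track example) shows it cannot be dispensed with. Tracking the amplitude/cardinality bookkeeping precisely enough to land on the exact exponent $\frac{1-s^2}{2s+1}$, and then making the AD-regular multiscale gain quantitative, are the two places where real care is needed; the incidence input itself is supplied entirely by Corollary \ref{pout98u8huy8ht-8h}.
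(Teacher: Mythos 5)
You have assembled the right ingredients --- wave packets, decoupling, and the $s>d$ case of Corollary \ref{pout98u8huy8ht-8h} --- but the central inequality you rely on, $\int_{B_R}|\widehat\mu|^6\lesssim a^6R^2\delta^2\sum_r r^3\,\#\cP_r(\cT)$, is not something decoupling delivers, and the paper's proof does not proceed this way. Decoupling into the $\delta$-caps $\theta$ must be performed on a ball of radius $\delta^{-2}=R$, so it produces a global bound carrying no heavy-square information (this is the paper's ``first estimate'', which uses only the total tube count). To localize to the $R^{1/2}$-squares $q$ where the heavy-square count lives, one can only decouple into the intermediate caps $\tau$ of angular width $R^{-1/4}$, and within each $\tau$ the $P'$ contributing caps $\theta$ must be handled without cancellation; this is the bush estimate, Proposition \ref{p5}, and it yields a per-square contribution of order $(r')^3(P')^2\lambda^6R^{-7/2}$ rather than the full square-root-cancellation bound $r^3$ that you assume. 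The extra factor $P^2$ is not a technicality: because of it, the localized (``second'') estimate is useless on its own when $M\ll R^{s/2}$, and the theorem is obtained only by taking a weighted geometric mean of the two estimates with weight $\alpha=\frac{2s}{2s+1}$, chosen precisely so that the constraints \eqref{e6} and \eqref{e7} --- which bound $MD$ and $MP$ but not $D$ and $P$ separately --- can be applied. Your single-route optimization over $(a,\#\cT)$ has no analogue of the parameter $P$ (the count of active $\theta$ per $\tau$) and cannot reproduce the exponent $\frac{1-s^2}{2s+1}$.

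Two further points. Your claim that the $K_1,K_2$-dependence is ``rendered harmless by the preliminary pigeonholing'' is wrong: in the actual argument $K_1\sim R^{s/2}/M$ and $K_2\sim MR^{(s-5)/2}/\lambda^2$, so the factor $(K_1K_2)^{1/s}$ entering Corollary \ref{pout98u8huy8ht-8h} is an essential part of the numerology, not a loss to be absorbed. And in the AD-regular case the gain $R^{-c_\mu}$ does not come from an induction on scales: it falls out of the second estimate directly once $M,D\sim R^{s/2}$ and $P\sim R^{s/4}$ (with the role of $c_\mu$ explained in Remark 9.3 of \cite{demwangszem}). Your proposed bootstrapping is a different and unsubstantiated mechanism.
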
   
	
	\begin{remark}
		Here is a comparison with the previous two methods. Our new exponent  in \eqref{4ui37854yijvout9gi5 hi60-ji} is smaller than the one in  \eqref{kjrefut0=or0i569y0-6iu09} precisely when $s<\frac{1+\sqrt{21}}{10}=0.558...$.
		
		In the AD-regular case, the exponent $1-s+\beta$ in  \eqref{ejfiorugtugur8huhfryygyt}  is smaller than the one in  \eqref{kjrefut0=or0i569y0-6iu09} precisely when $s<\frac{3}5=0.6$.
		
		The exponent in \eqref{4ui37854yijvout9gi5 hi60-ji} is always bigger than the one in \eqref{cjfgutgutugtuhu-h9u} for the parabola.
		
		In the AD-regular case, the exponent $1-s+\beta$ in  \eqref{ejfiorugtugur8huhfryygyt}  is bigger than the one in \eqref{cjfgutgutugtuhu-h9u} for the parabola, with equality when $s=1/2$.
	\end{remark}
	\smallskip
	
	Let us now see the proof of the theorem. We recall the framework in  \cite{demwangszem}.
	
	Fix a smooth $\psi:\R^2\to \R$ satisfying $0\le \psi\le 1_{B(0,1)}$. Write $\psi_\delta(\xi)=\delta^{-2}\psi(\frac{\xi}{\delta})$ and $\mu_\delta=\mu*\psi_\delta$. 
	Here is  some equivalent numerology. Let $\beta>0$. The estimate
	$$\|\widehat{\mu}\|^6_{L^6(B_R)}\les R^{1-s+\beta}$$
	will be recasted as either
	$$\|\widehat{\mu_{1/R}}\|_{L^6(\R^2)}^6\les R^{1-s+\beta}$$
	or
	\begin{equation}
		\label{jcfvtg8gu8u096849}
		\|F\|_{L^6}^6\les R^{5s-11+\beta},
	\end{equation}
	using a rescaled function defined below. 
	\medskip

	Since $\mu_{1/R}$ is supported on the $1/R$-neighborhood of $\Gamma$,
	we partition this neighborhood into essentially rectangular regions $\theta$ with dimensions roughly $R^{-1/2}$ and $1/R$. We also partition the $1/R^{1/2}$-neighborhood into  essentially rectangular regions $\tau$ with dimensions roughly $R^{-1/4}$ and $R^{-1/2}$. Each $\theta$ sits inside some $\tau$.
	\smallskip
	
	Let $D,M,P$ be dyadic parameters, with $D$ and $P$  integers. We decompose $\mu$
	$$\mu=\sum_{D,M,P}\mu_{D,M,P},$$
	with each $\mu_{D,M,P}$ satisfying the following properties:
	\\
	\\
	(P1) for each $\theta$, either $\mu_{D,M,P}(\theta)=\mu(\theta)\sim MR^{-s}$, or  $\mu_{D,M,P}(\theta)=0$. We call $\theta$ {\em active} if it falls into the first category.
	\\
	\\
	(P2) each $\tau$ contains either $\sim P$ or no active $\theta$. We call $\tau$ {\em active} if it fits into the first category.
	\\
	\\
	(P3) there are $\sim D$ active $\theta $, or equivalently, there are $\sim D/P$ active $\tau $.
	\medskip
	
	We note that these properties together with \eqref{e4} force the following inequalities
	\begin{equation}
		\label{e5}
		M\lesssim R^{s/2}
	\end{equation}
	\begin{equation}
		\label{e6}
		DM\lesssim R^s
	\end{equation}
	\begin{equation}
		\label{e7}
		MP\lesssim R^{3s/4}.
	\end{equation}
	\smallskip

	For the rest of the argument we fix $D,M,P$ and let $F=R^{s-2}\widehat{\mu_{D,M,P}*\psi_{1/R}}$ be the $L^\infty$ upper normalized version, $\|F\|_\infty\lesssim 1$. We now recall the relevant estimates from \cite{demwangszem}. Invoking the triangle inequality, it suffices to prove that
	\begin{equation}
		\label{e3}
		\|F\|_{L^6}^6\les R^{\frac{9s^2+s-1}{2s+1}-9}=R^{5s-11+\beta},\;\;\;\beta=\frac{1-s^2}{2s+1}.
	\end{equation}
	Write $\mu_\theta$ for the restriction of $\mu$ to an active  $\theta$.
	Let $F_\theta=R^{s-2}\widehat{\mu_\theta*\psi_{1/R}}$ be the Fourier restriction of $F$ to $\theta$, so that
	$F=\sum_\theta F_\theta$.
	Then
	$$F_\theta=\sum_{T\in\cT_\theta}\langle F_\theta,W_T\rangle W_T+O(R^{-100}).$$
	Each $T$ is a rectangle (referred to as tube) with dimensions $R^{1/2}$ and $R$, with the long side pointing in the direction normal to $\theta$. The term $O(R^{-100})$ is negligible, and can be dismissed. Its role is to ensure that all $T$ sit inside, say,  $[-R,R]^2$. The wave packet $W_T$ is $L^2$ normalized, has spectrum inside (a slight enlargement of) $\theta$, and is essentially concentrated spatially in (a slight enlargement of) $T$. Thus, with $\chi_T$ being a smooth approximation of $1_T$, we have
	$$|W_T|\lesssim R^{-3/4}\chi_T.$$

	Given the dyadic parameter $\lambda$ we write
	$$\cT_{\lambda,\theta}=\{T\in\cT_\theta:\;|\langle F_\theta,W_T\rangle|\sim \lambda\}.$$
	We have
	\begin{equation}
		\label{e15}
		\lambda\le \lambda_{max}\sim MR^{-\frac54}.
	\end{equation}
	\eqref{e3} boils down to proving
	\begin{equation}
		\label{e3gtprgptphyphyt==}
		\|\sum_{T\in \cup_\theta\cT_{\lambda,\theta}}\langle F_\theta,W_T\rangle W_T\|_{L^6}^6\les R^{\frac{9s^2+s-1}{2s+1}-9}.
	\end{equation}
	For each rectangle $B$ with dimensions $\Delta$ and $R$, and with orientation identical to that of the tubes in $\cT_\theta$, we have  the estimate
	\begin{equation}
		\label{e12}
		\#\{T\in\cT_{\lambda,\theta}:\; T\subset B\}\lesssim (\frac{\Delta}{\sqrt{R}})^{1-s}\frac{MR^{\frac{s-5}{2}}}{\lambda^2}.
	\end{equation}
	In particular,
	\begin{equation}
		\label{e11}
		\#\cup_{\theta}\cT_{\lambda,\theta}\lesssim \frac{MD}{\lambda^2R^2}.
	\end{equation}
	Each arc of length $r\lesssim 1$ on $\Gamma$ intersects at most
	\begin{equation}
		\label{rifururgurtgrthi90hi90}
		\lesssim (r\sqrt{R})^s\frac{R^{s/2}}{M}\end{equation}
	many $\theta$.
	
	We derive two estimates.
	\\
	\\
	\textbf{First estimate:} We first use decoupling (\cite{BD}) into caps $\theta$ (combined with property (P3)), then \eqref{e11} to get
	\begin{align*}
		\|\sum_{\theta\text{ active}}\sum_{T\in\cT_{\lambda,\theta}}\langle F_\theta,W_T\rangle W_T\|_6^6&\les D^2\sum_{\theta\text{ active}}\|\sum_{T\in\cT_{\lambda,\theta}}\langle F_\theta,W_T\rangle W_T\|_6^6\\&\les D^2R^{-9/2}\lambda^6\sum_{\theta\text{ active}}\|\sum_{T\in\cT_{\lambda,\theta}}\chi_T\|_6^6\\&\les D^2R^{-9/2}\lambda^6R^{3/2}\#(\cup_{\theta}\cT_{\lambda,\theta})\\&\les \frac{MD^3\lambda^4}{R^{5}}.
	\end{align*}Using  \eqref{e15} we conclude with
	\begin{equation}
		\label{oirpofiigtigihoyt0ho0}
		\|\sum_{\theta\text{ active}}\sum_{T\in\cT_{\lambda,\theta}}\langle F_\theta,W_T\rangle W_T\|_6^6\les\frac{M^5D^3}{R^{10}}.\end{equation}
	\textbf{Second estimate:}
	\begin{defn}Consider a collection consisting of $\sim r'/P'$ many caps $\tau$, each of which contains $\sim P'$ many $\theta$. Call $\cD$ the collection of all these $\theta$, and note that $\#\cD\sim r'$.  	
		Let $\cB\subset \cup_{\theta\in\cD}\cT_{\lambda,\theta}$ be a collection of tubes $T$  intersecting a fixed  $\sqrt{R}$-square $q$. Note that there are $O(1)$ many possible $T$ for each $\theta\in\cD$.
		
		We call $\cB$ a bush with data $(P',r',q)$.
	\end{defn}	
	We recall the following bush estimate, that uses decoupling into the larger caps $\tau$.

	\begin{prop}
		\label{p5}
		Let $\cB$ be a bush with data $(P',r',q)$.  Then
		\begin{equation}
			\label{dokoregotrgo[p]oh}
			\|\sum_{T\in \cB}\langle F_\theta,W_T\rangle W_T\|_{L^6(q)}^6\les R^{-7/2}(r')^3(P')^2\lambda^6.\end{equation}
	\end{prop}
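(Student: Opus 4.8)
The plan is to decouple on $L^{6}(q)$ into the fat caps $\tau$, exploiting that $q$ is a ball of radius $R^{1/2}$, which is exactly the scale dual to the thickness $R^{-1/2}$ of the caps $\tau$. For each $\theta$ put $\cB_\theta=\cB\cap\cT_{\lambda,\theta}$; since $\cB$ is a bush through $q$ each $\cB_\theta$ has $O(1)$ elements, and $\cB$ is the disjoint union of the $\cB_\theta$ over $\theta\in\cD$. Set $g_\theta=\sum_{T\in\cB_\theta}\langle F_\theta,W_T\rangle W_T$, and for an active cap $\tau$ (one containing $\sim P'$ of the $\theta\in\cD$) set $g_\tau=\sum_{\theta\in\cD,\ \theta\subset\tau}g_\theta$, so that $\sum_{T\in\cB}\langle F_\theta,W_T\rangle W_T=\sum_\tau g_\tau$ over the $\sim r'/P'$ active $\tau$, and $\widehat{g_\tau}$ is supported in (a slight enlargement of) $\tau$. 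Since $\Gamma$ has nonzero curvature, $\ell^{2}L^{6}$ decoupling for $\Gamma$ at scale $R^{-1/2}$, which operates at the spatial scale $R^{1/2}$ (i.e.\ on a weight $w_q$ adapted to the ball $q$), combined with Hölder's inequality and the bound on the number of active $\tau$, gives
\[
\Big\|\sum_{T\in\cB}\langle F_\theta,W_T\rangle W_T\Big\|_{L^{6}(q)}^{6}\les\Big(\sum_\tau\|g_\tau\|_{L^{6}(w_q)}^{2}\Big)^{3}\le\Big(\frac{r'}{P'}\Big)^{2}\sum_\tau\|g_\tau\|_{L^{6}(w_q)}^{6},
\]
the $R^{\epsilon}$ loss from decoupling being absorbed in $\les$.

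Next I would estimate each $\|g_\tau\|_{L^{6}(w_q)}$ by interpolating $L^{\infty}$ and $L^{2}$ bounds. Each of the $\sim P'$ caps $\theta\subset\tau$ carries $O(1)$ tubes $T\in\cB_\theta$, each with $|\langle F_\theta,W_T\rangle|\sim\lambda$ and $|W_T|\lesssim R^{-3/4}\chi_T$, and all of them pass through $q$; hence pointwise $|g_\tau|\lesssim\lambda R^{-3/4}\sum_{\theta,T}\chi_T\lesssim\lambda R^{-3/4}P'$. On the other hand the caps $\theta\subset\tau$ are $\sim R^{-1/2}$-separated, which at the scale $R^{1/2}$ of $q$ makes the pieces $g_\theta$ almost orthogonal in $L^{2}(w_q)$ — provided $w_q$ is chosen with Fourier support in a ball of radius $\sim R^{-1/2}$, so that the fattened caps have bounded overlap and Plancherel applies — whence $\|g_\tau\|_{L^{2}(w_q)}^{2}\lesssim\sum_{\theta\subset\tau}\|g_\theta\|_{L^{2}(w_q)}^{2}$. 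Using $|W_T|^{2}\lesssim R^{-3/2}\chi_T$ and $\int\chi_T w_q\lesssim|q|\sim R$ for $T$ through $q$, and that $\cB_\theta$ has $O(1)$ elements, one gets $\|g_\theta\|_{L^{2}(w_q)}^{2}\lesssim\lambda^{2}R^{-3/2}\cdot R=\lambda^{2}R^{-1/2}$, hence $\|g_\tau\|_{L^{2}(w_q)}^{2}\lesssim P'\lambda^{2}R^{-1/2}$. Interpolation now yields
\[
\|g_\tau\|_{L^{6}(w_q)}^{6}\le\|g_\tau\|_{L^{\infty}}^{4}\,\|g_\tau\|_{L^{2}(w_q)}^{2}\lesssim(\lambda R^{-3/4}P')^{4}(P'\lambda^{2}R^{-1/2})=\lambda^{6}R^{-7/2}(P')^{5}.
\]

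Inserting this into the decoupling inequality and using once more that there are $\sim r'/P'$ active $\tau$ gives
\[
\Big\|\sum_{T\in\cB}\langle F_\theta,W_T\rangle W_T\Big\|_{L^{6}(q)}^{6}\les\Big(\frac{r'}{P'}\Big)^{2}\cdot\frac{r'}{P'}\cdot\lambda^{6}R^{-7/2}(P')^{5}=R^{-7/2}(r')^{3}(P')^{2}\lambda^{6},
\]
which is the assertion of Proposition \ref{p5}. The step I expect to be the main obstacle is the $L^{2}$ almost-orthogonality of the $g_\theta$ over $w_q$: this is exactly where one uses the matching of the radius $R^{1/2}$ of $q$ with the dual $R^{-1/2}$ of the cap thickness (and the $R^{-1/2}$-separation of the $\theta$'s inside a given $\tau$), and it requires choosing $w_q$ with enough care that Plancherel delivers genuine almost-orthogonality. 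By contrast the $L^{\infty}$ bound is robust — the pointwise count $\sum_{\theta,T}\chi_T\lesssim P'$ holds irrespective of the fine geometry of the bush — and the decoupling input is a black box once the scales are identified.
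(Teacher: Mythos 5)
Your argument is correct and follows exactly the route the paper indicates (the proposition is only recalled from \cite{demwangszem} with the hint that it ``uses decoupling into the larger caps $\tau$''): $\ell^2 L^6$ decoupling into the $\tau$'s at the matching spatial scale $R^{1/2}$ of $q$, Hölder over the $\sim r'/P'$ active $\tau$, and then an $L^\infty$--$L^2$ interpolation on each $\tau$-piece using the pointwise bound $\lesssim \lambda R^{-3/4}P'$ and local orthogonality of the $\sim P'$ caps $\theta\subset\tau$. The numerology $(r'/P')^3\cdot(P')^5\lambda^6 R^{-7/2}=(r')^3(P')^2\lambda^6R^{-7/2}$ checks out, and the points you flag as delicate (choice of $w_q$ with Fourier support in $B(0,R^{-1/2})$ for the almost-orthogonality) are handled in the standard way.
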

	Now, for each dyadic $r\ge 1$, let
	$$\cQ_r=\{q:\;q\text { intersects }\sim r\text{ tubes in } \cup_\theta\cT_{\lambda,\theta}\}.$$
	To estimate $|\cQ_r|$, we note that due to \eqref{e12} and \eqref{rifururgurtgrthi90hi90}, the rescaled copies of the tubes $T\in \cup_\theta\cT_{\lambda,\theta}$ satisfy the hypotheses of Corollary \ref{pout98u8huy8ht-8h} with $\delta=R^{-1/2}$, $K_1\sim \frac{R^{s/2}}{M}$ and $K_2\sim \frac{MR^{\frac{s-5}{2}}}{\lambda^2}.$ Combining this  with \eqref{e11} shows that
	\begin{equation}
		\label{dokoregotrgo[p]ohhpoihopyih}
		\#\cQ_r\les \frac{R^{\frac12}}{r^{\frac{s+1}{s}}}(\frac{R^{s-\frac52}}{\lambda^2})^{\frac1s}\frac{MD}{\lambda^2R^2}.
	\end{equation}
	For each $q\in\cQ_r$, the collection $\cB(q)=\{T\in \cup_\theta\cT_{\lambda,\theta}:\;q\cap T\not=\emptyset\}$ can be partitioned into $\lessapprox 1$ many bushes with data $(P',r',q)$ satisfying $P'\le P$, $r'\le r$. Thus, combining \eqref{dokoregotrgo[p]oh} with \eqref{dokoregotrgo[p]ohhpoihopyih} we find
	$$\|\sum_{T\in \cup_\theta\cT_{\lambda,\theta}}\langle F_\theta,W_T\rangle W_T\|_{L^6(\cup_{q\in\cQ_r q})}^6\les R^{-7/2}P^2\lambda^6R^{\frac12}(\frac{R^{s-\frac52}}{\lambda^2})^{\frac1s}\frac{MD}{\lambda^2R^2}r^{2-\frac1s}$$$$=R^{-4-\frac5{2s}}MDP^2\lambda^{4-\frac2s}r^{2-\frac1s} .$$
	Using that $\lambda\lesssim M R^{-\frac54}$ and $r\lesssim D$ and since there are $\lessapprox 1$ dyadic values of $r$, we get our second estimate
	\begin{equation}
		\label{lkporkforigoitpgoitopgi[tpo]}
		\|\sum_{T\in \cup_\theta\cT_{\lambda,\theta}}\langle F_\theta,W_T\rangle W_T\|_{L^6}^6\les R^{-9}M^{5-\frac2s}D^{3-\frac1s}P^2.
	\end{equation}
	
	If $\mu$ is AD-regular then $M,\;D\sim R^{s/2}$, $P\sim R^{s/4}$. In this case we get
	$$
	\|\sum_{T\in \cup_\theta\cT_{\lambda,\theta}}\langle F_\theta,W_T\rangle W_T\|_{L^6}^6\les R^{-10-\frac12+\frac{9s}{2}}=R^{5s-11+\beta-c_\mu},\;\;\;\beta=\frac{1-s}{2}
	$$
	and thus we get \eqref{jcfvtg8gu8u096849} with $\beta=\frac{1-s}{2}-c_\mu$. See Remark 9.3 in \cite{demwangszem} for the role of $c_\mu$.

	But when $M\ll R^{s/2}$, \eqref{lkporkforigoitpgoitopgi[tpo]} by itself gives no useful upper bound. Instead, we take the weighted geometric average  $\eqref{oirpofiigtigihoyt0ho0}^{1-\alpha} \eqref{lkporkforigoitpgoitopgi[tpo]}^{\alpha}$. When choosing $\alpha$, the goal is to end up with an exponent for $M$ that is larger then the sum of the exponents of $D$ and $P$, since \eqref{e6} and \eqref{e7} only give upper bounds for $MD$ and $MP$, rather than for $D$ and $P$. This leads to the restriction $\alpha\le \frac{2s}{2s+1}$. Computations show that $\alpha= \frac{2s}{2s+1}$ leads to the best estimate, namely, 
	$$\|\sum_{T\in \cup_\theta\cT_{\lambda,\theta}}\langle F_\theta,W_T\rangle W_T\|_{L^6}^6\les (DM)^{\frac{1+6s}{2s+1}}(PM)^{\frac{4s}{2s+1}}R^{-9-\frac1{2s+1}}\les R^{\frac{9s^2+s-1}{2s+1}-9}.$$
	The same estimate for $\|F\|_{L^6}^6$ follows by summing over all relevant parameters.

	\section{Sum-product problems}
	\subsection{Projection results}
	For the proof of Theorem \ref{thm.dis}, we require a discretised projection theorem. Let $\pi_x$ be the map $(a,b) \mapsto a + xb.$ In this subsection $\sigma = \min\{s,2-s\}.$
	\begin{theorem}\label{thm.projdis}
		Let $\e >0,$ $0 < s < 2$ and let $s_2\ge s_1> 0$ be such that $s_1 + s_2 = s.$ Let $K_1,K_2,K_3 \geq 0.$ 
		
		\indent Let $\cP \subset [0,1]^2$ be a $(\de,s_1,s_2,K_1,K_2)$-quasi-product set. Let $E \subset [0,2]$ be a $\delta$-separated $(\delta,\sigma,K_3)$-KT set. There is $F\subset E$ with $\#F\ge  (1-\delta^{O(\e)})\#E$ such that for each  $x \in F$ we have 
		\begin{equation}\label{eq.projresult}
			\cn{\pi_x(\cQ)} \gtrsim  K_1^{-1}K_2^{-1}K_3^{-1/2}\de^{O(\e)}\delta^{s/2}\sqrt{\#E}\#\cQ \text{ for all } \cQ \subset \cP \text{ with } \#\cQ > \de^\e \#\cP.
		\end{equation}
		\begin{proof}
			It will suffice to prove this result for a single $x \in E.$ Suppose we are indeed in possession of such a result. Set $E_1 := E,$ and let $x_1 \in E_1$ be such that \eqref{eq.projresult} holds. Set $E_2 := E_1 \setminus \{x_1\},$ noting that $E_2$ (indeed any subset) will retain its $(\de,\sigma,K_3)$-structure. At stage $n,$ after repeatedly applying our result, we will still be able to to obtain \eqref{eq.projresult}, provided that $\#E_n \gtrsim \dep \# E.$ This will cease to be true some iterations after stage $N$: when 
			\begin{equation}
				\# E_N = \# E - N \sim \dep \#E. 
			\end{equation}
			In particular, 
			\begin{equation}
				N = (1-\dep) \#E .
			\end{equation}
			So select $F := \{x_1,\ldots,x_N\}.$
			
			We proceed to now prove the result for a solitary $x \in E.$
			Let $M$ be such that for all $x \in E$ there is   $Q_x \subset \cP$ with $\#Q_x > \de^\e\#\cP$ and
			\begin{equation}\label{eq.contrad}
				\cn{\pi_x(Q_x)} \le M\sqrt{\#E}\#\cQ_x\;.
			\end{equation}
			We need to prove that 
			\begin{equation}
				\label{eq.cont2hhhh}
				M\gtrsim K_1^{-1}K_2^{-1}K_3^{-1/2}\de^{O(\e)}\delta^{s/2}.
			\end{equation}
			Let $\cT_x$ be the tubes $\pi_x^{-1}(I) \cap [0,1]^2$ for all $I \in \cD_\de(\pi_x(\cQ_x)), x \in E.$ Set $\cT := \cup_{x \in E}\cT_x.$ In particular, from \eqref{eq.contrad}, we know that
			\begin{align}
				\#\cT  &\le M\sqrt{\#E}\sum_{x \in E}\#\cQ_x \\
				&\le M\#E^{3/2}\#\cP. \label{eq.cont2}
			\end{align}
			Since
			\begin{equation}
				\sum_{p \in \cP} \#\{x \in E: p \in \cQ_x\} = \sum_{x \in E} \#\cQ_x > \de^\e\#\cP\#E, 
			\end{equation}
			we may find $\cQ \subset \cP$ with $\#\cQ > \de^\e\#\cP/2$ so that for all $q \in \cQ,$ we have
			\begin{equation}
				\# \{x \in E : q \in \cQ_x\}  \gtrsim \dep \# E.
			\end{equation}
			
			For each $q \in \cQ$ consider the shading $Y(q)$ consisting of the $\gtrsim \dep \# E$ tubes in $\cT$ which intersect $q.$ Since $E$ is a $(\delta,\sigma,K_3)$-set, so is each $Y(q), q \in \cQ.$ The set $\cQ$ remains a $(\de,s_1,s_2,K_1,K_2)$-quasi-product set, so we apply Proposition \ref{thm.product} in its dual form (with the family $\cQ$ playing the role of tubes) to obtain
			\begin{align}
				\# \cT  &\gtrsim \dep K_1^{-1}K_2^{-1}K_3^{-1/2}\#E^{1/2}\delta^{s/2}\sum_{q \in Q} \# Y(q)\\
				&\gtrsim \dep K_1^{-1}K_2^{-1}K_3^{-1/2}\#E^{3/2}\delta^{s/2}\#\cP.
			\end{align}
			This together with \eqref{eq.cont2} proves \eqref{eq.cont2hhhh}.
		\end{proof}
	\end{theorem}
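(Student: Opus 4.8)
The plan is to obtain this estimate as the projective dual of the incidence bound of Theorem~\ref{thm.product}, after first reducing the statement to producing a single good direction $x$. I would begin by noting that it suffices to prove, for every $\de$-separated $(\de,\sigma,K_3)$-KT set $E$, that \emph{some} $x\in E$ satisfies \eqref{eq.projresult}; the set $F$ is then built greedily. Put $E_1:=E$, pick $x_1\in E_1$ with the one-point property, set $E_2:=E_1\setminus\{x_1\}$, and iterate. Every subset of $E$ is again $(\de,\sigma,K_3)$-KT, and as long as at most a $\dep$-fraction has been removed we still have $\sqrt{\#E_n}\gtrsim\dep\sqrt{\#E}$, so the one-point statement keeps applying at stage $n$ with the $\sqrt{\#E_n}$ in its conclusion comparable to $\sqrt{\#E}$. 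Stopping once $\#E_n\sim\dep\#E$ gives $F=\{x_1,\dots,x_N\}$ with $N=(1-\dep)\#E$.

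For the one-point statement I would argue by contraposition: suppose $M$ is such that for \emph{every} $x\in E$ there is $\cQ_x\subset\cP$ with $\#\cQ_x>\de^\e\#\cP$ and $\cn{\pi_x(\cQ_x)}\le M\sqrt{\#E}\,\#\cQ_x$, and aim to force $M\gtrsim K_1^{-1}K_2^{-1}K_3^{-1/2}\dep\de^{s/2}$. For each $x$ let $\cT_x$ consist of the $\de$-tubes $\pi_x^{-1}(I)\cap[0,1]^2$ with $I\in\cD_\de(\pi_x(\cQ_x))$, and set $\cT:=\bigcup_{x\in E}\cT_x$. Since $\pi_x$ has gradient $(1,x)$, a tube in $\cT_x$ points in a direction that is a bi-Lipschitz function of $x\in[0,2]$. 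Summing the hypothesis over $x$ and using $\#\cQ_x\le\#\cP$ gives the upper bound $\#\cT\le M\#E^{3/2}\#\cP$.

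For the matching lower bound I would pigeonhole in $\cP$: from $\sum_{p\in\cP}\#\{x\in E:p\in\cQ_x\}=\sum_{x\in E}\#\cQ_x>\de^\e\#\cP\#E$ one extracts $\cQ\subset\cP$ with $\#\cQ\gtrsim\dep\#\cP$ such that every $q\in\cQ$ lies in $\gtrsim\dep\#E$ of the sets $\cQ_x$. Put $Y(q):=\{T\in\cT:T\cap q\ne\emptyset\}$; then $\#Y(q)\gtrsim\dep\#E$, since each $x$ with $q\in\cQ_x$ supplies a distinct tube through $q$, and because the directions of these tubes form a bi-Lipschitz reparametrisation of a subset of the $(\de,\sigma,K_3)$-KT set $E$, $Y(q)$ is a $(\de,\sigma,O(K_3))$-KT family of tubes. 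Since $\cQ\subset\cP$ it is still a $(\de,s_1,s_2,K_1,K_2)$-quasi-product set, and $\alpha=\min\{3,2+s_2/s_1\}=3$ because $s_1\le s_2$. Applying Theorem~\ref{thm.product} in its dual form, with $\cQ$ in the role of the tube family and $\cT$ in the role of the shaded squares, gives
\[
\dep\,\#\cQ\,\#E\ \le\ \sum_{q\in\cQ}\#Y(q)\ \les\ K_3^{1/3}(K_1K_2)^{2/3}\bigl(\de^{-s}\#\cQ\bigr)^{1/3}(\#\cT)^{2/3}.
\]
Rearranging for $\#\cT$ and inserting $\#\cQ\gtrsim\dep\#\cP$ yields $\#\cT\gtrsim K_1^{-1}K_2^{-1}K_3^{-1/2}\dep\de^{s/2}\#E^{3/2}\#\cP$; comparing with $\#\cT\le M\#E^{3/2}\#\cP$ forces the claimed lower bound on $M$, and hence, taking $M$ a small constant multiple of this threshold, produces an $x$ for which \eqref{eq.projresult} holds.

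The step I expect to require the most care is the dualisation in the third paragraph: one must check that passing from the linear maps $\pi_x$ to the associated pencils of fibre-tubes genuinely converts the quasi-product structure of $\cP$ into that of a quasi-product \emph{tube} family, and that the shading $Y(q)$ inherits the Katz--Tao exponent $\sigma$ from $E$ through the bi-Lipschitz direction parametrisation --- these are precisely what make Theorem~\ref{thm.product} applicable with $\alpha=3$. The pigeonholing and the greedy extraction are routine.
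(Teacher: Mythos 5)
Your argument is correct and follows essentially the same route as the paper's proof: the greedy extraction of $F$ from the one-point statement, the contrapositive setup with the threshold $M$, the double count $\#\cT\le M\#E^{3/2}\#\cP$, the pigeonholing to a dense $\cQ$, and the dual application of Theorem~\ref{thm.product} with $\alpha=3$. The extra care you take in checking that $\alpha=3$ (from $s_1\le s_2$) and that $Y(q)$ inherits the $(\de,\sigma,K_3)$-KT structure from $E$ via the bi-Lipschitz direction parametrisation matches what the paper uses implicitly.
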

	
	\begin{theorem}\label{thm.proj}
		Let $\e > 0.$ Let $0 < s < 2$ and let $s_2\ge s_1 \geq 0$ be such that $s_1 + s_2 = s.$ Let $0 < t \leq \sigma.$ 
		
		\indent Let $\cP \subset [0,1]^2$ be a $(\de,s_1,s_2,\de^{-\e},\de^{-\e})$-quasi-product set with $\#\cP > \delta^{-s+\e}.$  Let $E \subset [0,2]$ be a $(\delta,t,\delta^{-\e})$-set. There exists $F \subset E$ with $\#F > \delta^{O(\e)}\#E$ so that for all $x\in F$ we have
		\begin{equation}
			\label{jututgutgtugutpu}
			\cont{(s+t)/2}(\pi_x(\cP)) \gtrsim  \delta^{O(\e)}.
		\end{equation}
		\begin{proof}
			By Lemma \ref{lem.unifsubset}, we may replace $E$ and $\cP$ by  $\gtrsim \de^\e$-fractions which are $\e$-uniform. Assuming that $E$ is $\e$-uniform, we will find $F\subset E$ with $\#F\gtrsim \#E$ such that \eqref{jututgutgtugutpu} holds for each $x\in F$.
			
			By Proposition \ref{prop.properties}, for each $\delta \leq \rho \leq 1$, $\cP(\rho)$ is a $(\rho,s,\de^{-O(\e)})$-set, and $E(\rho)$ is a $(\rho, t, \de^{-O(\e)})$-set. In particular, $E(\rho)$ is a $(\rho,t,K_3)$-KT set with $K_3=\#E(\rho)\rho^t\delta^{-O(\e)}$. 
			
			For the scales $$\delta, 2\delta, \ldots, 2^{k}\de$$ where $k$ is selected so that $2^{k}\de = \delta^{O(\e)}$, for each $0\le j\le k$ we select $F_j \subset E$ with $\#F_j > (1-(2^j\de)^{\e})\#E$ by Theorem \ref{thm.projdis} so that \eqref{eq.projresult} holds at scale $\rho_j=2^{j}\de$, see \eqref{jhyufyyrefyg98gu98}. Now set 
			\begin{equation}
				F := \bigcap_{j}F_j.
			\end{equation}
			We have
			\begin{equation}
				\label{jvjfvujhfudvhugui}
				\#F > (1-\sum_{j}(2^{j}\de)^\e)\#E \gtrsim\#E.
			\end{equation}
			\indent 
			Also, for each $x\in F$, \eqref{eq.projresult} reads (since $\#\cP(\rho_j)\gtrsim \rho_j^{-s+O(\e)}$ and $K_3=\#E(\rho)\rho^t\delta^{-O(\e)}$)
			\begin{equation}
				N_{\rho_j}(\pi_x(\cQ^*))\gtrsim \delta^{O(\e)}\rho_j^{s/2}\rho_j^{-t/2}\rho_j^{-s}=\delta^{O(\e)}\rho_j^{-\frac{s+t}{2}},\;\forall\cQ^*\subset \cP(\rho) \text{ with } \#\cQ^*>\rho_j^\e\#\cP(\rho).
			\end{equation}
			Due to the uniformity of $\cP$, this is essentially the same as
			\begin{equation}
				\label{jhyufyyrefyg98gu98}
				N_{\rho_j}(\pi_x(\cQ))\gtrsim \delta^{O(\e)}\rho_j^{s/2}\rho_j^{-t/2}\rho_j^{-s}=\delta^{O(\e)}\rho_j^{-\frac{s+t}{2}},\;\forall\cQ\subset \cP \text{ with } \#\cQ\gtrsim \rho_j^\e\#\cP.
			\end{equation}
			Fix an arbitrary $x \in F$. In order to verify \eqref{jututgutgtugutpu}, we let $\cU$ be a cover of $\pi_x(\cP)$ by disjoint $\de$-adic intervals of length at least $\de$ such that
			$$\cont{(s+t)/2}(\pi_x(\cP))\sim \sum_{U\in\cU}\diam(U)^{\frac{s+t}{2}}.$$
			By pigeonholing, there is some $\rho = 2^j\de$, $j\ge 0$, such that
			\begin{equation}
				S :=  \bigcup_{U \in \cU: \diam(U) = \rho}\pi_x(\cP) \cap U,
			\end{equation} 
			satisfies \begin{equation}\label{eq.densepre}
				\#\cQ \gtrsim (\log\frac1\delta)^{-1}\#\cP,\end{equation}
			where
			$\cQ=\pi_x^{-1}(S)\cap \cP$.

			If $\rho > \de^{O(\e)},$ then $\cU$ contains at least one interval of length $> \delta^{O(\e)}$ and so
			\begin{equation}
				\cont{(s+t)/2}(\pi_x(\cP)) > \delta^{O(\e)}.
			\end{equation}
			If not, then $\rho=\rho_j$ for some $0\le j\le k$. Using \eqref{jhyufyyrefyg98gu98} and  \eqref{eq.densepre}
			\begin{equation}
				\#\{U\in\cU:\;\diam(U)=\rho\}= \cns{\rho}{S} \gtrsim \delta^{O(\e)}\rho^{-(s+t)/2},
			\end{equation}
			and so $\cont{(s+t)/2}(\pi_x(\cP)) \gtrsim\delta^{O(\e)}.$
		\end{proof}
	\end{theorem}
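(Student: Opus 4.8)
The plan is to deduce the Hausdorff-content statement \eqref{jututgutgtugutpu} from the single-scale projection estimate of Theorem \ref{thm.projdis} by running that estimate at every dyadic scale and pigeonholing. First I would use Lemma \ref{lem.unifsubset} to replace $E$ and $\cP$ by $\delta^{O(\e)}$-fractions that are $\e$-uniform; this costs only admissible losses and preserves, up to adjusting the constants, the $(\delta,t,\delta^{-\e})$-set hypothesis on $E$ and the $(\delta,s_1,s_2,\delta^{-\e},\delta^{-\e})$-quasi-product structure of $\cP$. Then, by Proposition \ref{prop.properties} (together with Lemmas 2.8--2.9 of \cite{demwangszem} and the uniformity just arranged), for every scale $\rho=2^j\delta$ with $\delta\le\rho\le 1$ the coarsening $\cP(\rho)$ is still a $(\rho,s_1,s_2,\delta^{-O(\e)},\delta^{-O(\e)})$-quasi-product set with $\#\cP(\rho)\gtrsim\rho^{-s+O(\e)}$, while $E(\rho)$ is a $(\rho,t,K_3)$-KT set with $K_3\sim\#E(\rho)\,\rho^{t}\,\delta^{-O(\e)}$.

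The core step is to invoke Theorem \ref{thm.projdis} at each of the scales $\rho_j=2^j\delta$ for $0\le j\le k$, where $k$ is chosen so that $2^k\delta=\delta^{O(\e)}$. At scale $\rho_j$ this yields a set $F_j\subset E$ with $\#F_j>(1-\rho_j^{\e})\#E$ such that for every $x\in F_j$ and every dense $\cQ^*\subset\cP(\rho_j)$ one has $N_{\rho_j}(\pi_x(\cQ^*))\gtrsim K_1^{-1}K_2^{-1}K_3^{-1/2}\delta^{O(\e)}\rho_j^{s/2}\sqrt{\#E(\rho_j)}\,\#\cQ^*$. Substituting $K_1=K_2=\delta^{-O(\e)}$, the value of $K_3$ above, and $\#\cP(\rho_j)\gtrsim\rho_j^{-s+O(\e)}$, the $\rho_j^{s/2}\sqrt{\#E(\rho_j)}\,K_3^{-1/2}$ combination telescopes and the bound collapses to $N_{\rho_j}(\pi_x(\cQ^*))\gtrsim\delta^{O(\e)}\rho_j^{-(s+t)/2}$; by the $\e$-uniformity of $\cP$ this passes from dense subsets of $\cP(\rho_j)$ to dense subsets $\cQ$ of the original $\cP$. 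Setting $F:=\bigcap_{0\le j\le k}F_j$, and noting that the $\rho_j$ are geometric and all at most $\delta^{O(\e)}$, the sum $\sum_j\rho_j^{\e}$ is small, so $\#F\gtrsim\#E\ge\delta^{O(\e)}\#E$ as required.

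It then remains to check \eqref{jututgutgtugutpu} for a fixed $x\in F$. Take a $\delta$-adic cover $\cU$ of $\pi_x(\cP)$ realizing $\cont{(s+t)/2}(\pi_x(\cP))$. Pigeonholing on the common diameter $\rho$ of the cover elements, I would choose $\rho$ so that, writing $S$ for the union of the $\rho$-pieces meeting $\pi_x(\cP)$ and $\cQ:=\pi_x^{-1}(S)\cap\cP$, one has $\#\cQ\gtrsim(\log\frac1\delta)^{-1}\#\cP$. If $\rho>\delta^{O(\e)}$ then a single element of $\cU$ already contributes $(\delta^{O(\e)})^{(s+t)/2}=\delta^{O(\e)}$ to the content. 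Otherwise $\rho=\rho_j$ for some $j\le k$, and applying the single-scale bound of the previous paragraph to the dense set $\cQ$ gives $\#\{U\in\cU:\diam(U)=\rho\}=N_\rho(S)\gtrsim\delta^{O(\e)}\rho^{-(s+t)/2}$, so these elements contribute $\gtrsim\delta^{O(\e)}$ to the content sum. Either way $\cont{(s+t)/2}(\pi_x(\cP))\gtrsim\delta^{O(\e)}$.

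The hard part will be the bookkeeping that makes Theorem \ref{thm.projdis} applicable uniformly across all scales: confirming that $\cP(\rho)$ genuinely inherits the quasi-product structure with constants $\delta^{-O(\e)}$, that the choice $K_3\sim\#E(\rho)\rho^{t}\delta^{-O(\e)}$ is exactly what forces the factor $\rho^{s/2}\sqrt{\#E(\rho)}\,K_3^{-1/2}$ to reduce to the clean power $\rho^{-(s+t)/2}$, and that $\e$-uniformity legitimately converts ``dense in $\cP(\rho)$'' into ``dense in $\cP$''. Each of these is routine given the material in Section~2, but they are where the care has to go, and the constraint $t\le\sigma$ enters precisely to keep the hypotheses of Theorem \ref{thm.projdis} satisfied at the coarse scales.
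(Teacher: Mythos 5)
Your proposal is correct and follows essentially the same route as the paper's proof: uniformize $E$ and $\cP$, apply Theorem \ref{thm.projdis} at each dyadic scale $\rho_j=2^j\delta$ up to $\delta^{O(\e)}$ with $K_3\sim\#E(\rho_j)\rho_j^t\delta^{-O(\e)}$ so the bound collapses to $\rho_j^{-(s+t)/2}$, intersect the resulting sets $F_j$, and finish by pigeonholing an optimal cover of $\pi_x(\cP)$ on a single diameter and splitting into the cases $\rho>\delta^{O(\e)}$ and $\rho=\rho_j$. The bookkeeping points you flag are exactly the ones the paper handles (via Proposition \ref{prop.properties} and uniformity), so nothing further is needed.
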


	\subsection{Proof of Theorem \ref{thm.dis}}
	\subsubsection{Pre-processing} Let $\delta, \epsilon >0$ be small parameters, which we may take smaller without being explicit. Let $A \subset [1/2,1]$ be a $(\delta,s,\delta^{-\e})$-set. This hypothesis will be implicit in all our results in this section. Some of the results will need $A$ to also be uniform, but that will be made explicit.  Let $\pi:A\times A \rightarrow \R$ be the map $(x,y) \mapsto y/x.$ Write $\cD_0 := \cD_\delta(A/A).$ Since
	\begin{equation}
		\sum_{r \text{ dyadic}}\sum_{I \in \cD_0\atop{ \#\pi^{-1}(I) \sim r}} \#\pi^{-1}(I) = \#A^2,
	\end{equation}
	we may find some dyadic $r_0$ such that for 
	\begin{equation}
		\label{juhuyuyyg89urtg8rt8g8rt8gh-}
		\cD:= \{I \in \cD: \#\pi^{-1}(I) \sim r_0\}
	\end{equation}
	we have
	\begin{equation}\label{juhuyuyyg89urtg8rt8g8rt8gh-2}
		\sum_{I \in \cD} \#\pi^{-1}(I) \gtrapprox \#A^2.
	\end{equation}
	In particular, $r_0 \approx \#A^2/\#\cD.$ 
	
	Let $Q_0$ be an $\e$-uniform collection of centers of intervals in $\cD$. Trivially, $\cH^0_{[\delta,\infty)}(Q_0) \ge 1$. Write
	\begin{equation}\label{eq.defoft}
		t := \sup\{0 \leq v \leq 43s/34 :\cH^v_{[\delta,\infty)}(Q_0) \ge 1\}.
	\end{equation}
	In particular, since $\cH^t_{[\delta,\infty)}(Q_0) \ge 1$ we have
	\begin{equation}\label{eq.wearefinsihedd}
		\cns \rho {Q_0} \ge \rho^{-t} \text{ for all } \de \le  \rho \le 1.
	\end{equation}
	This guarantees that $Q_0$ is a $(\de,t)$-set. The rationale behind the choice of $43/34$ is explained in   \eqref{hyreyfruf8ruef8utg98urt8g}. We derive two estimates.
	\subsubsection{First estimate}
	We apply Theorem \ref{thm.proj} to $\cP=A \times A$ and $E=Q_0.$ This tells us that there exists $Q \subset Q_0$ with 
	\begin{equation}
		\label{juhuyuyyg89urtg8rt8g8rt8gh-3}
		\#Q \gtrsim\delta^\e\#Q_0
	\end{equation}
	so that 
	\begin{equation}
		\cont{s+t/2}(A-mA) \gtrsim \delta^{O(\e)} \text{ for all } m \in Q.
	\end{equation} 
	For later use we also note that due to \eqref{juhuyuyyg89urtg8rt8g8rt8gh-}
	, \eqref{juhuyuyyg89urtg8rt8g8rt8gh-2} and 
	\eqref{juhuyuyyg89urtg8rt8g8rt8gh-3} we have
	\begin{equation}
		\label{eq.densearcs}
		\sum_{I \in Q} \#\pi^{-1}(I) \gtrsim \delta^{O(\e)} \#A^2.
	\end{equation}
	For each $m \in Q$ by Lemma \ref{lem.disfrost}  we may find  $\Gamma_m \subset A-mA$ which is a $(\de,s+t/2,\delta^{-O(\e)})$-set. By dropping to a subset  we may assume that  all $\#\Gamma_m$ have the same value, call it $\Gamma$.  Putting these together we see that the set of lines
	\begin{equation}
		\cL:= \{y = mx + c\}_{m \in Q, c \in \Gamma_m}
	\end{equation}
	forms a $$(\delta,t,s+t/2,\#Q\delta^{t-O(\e)},\Gamma\delta^{s+\frac{t}2-O(\e)})-\text{quasi-product set}$$ with $\#\cL=\Gamma\#Q.$ Let $\cT$ be the $O(\de)$-neighbourhoods of the lines in $\cL.$ Thus
	\begin{equation}
		\label{jrhryf7ry78yg7ytg7yt7g8yt7gy}
		\#\cT=\Gamma\#Q.
	\end{equation}

	We now define a shading on $\cT$. Recall that for each $m \in \cD$ the $O(\delta)$-neighbourhood of the line $y = mx$ intersects $\sim r_0$ many $\de$-balls (centered at points) in $A \times A.$ Let $l \in \cL$ and find $a,b \in A$ so that $l$ may be represented by the equation $y = m(x-a) + b.$ Consider the $\de$-separated points
	\begin{eqnarray}
		(a_1,b_1),\ldots ,(a_{r_0},b_{r_0}) \in A \times A 
	\end{eqnarray}
	so that $|b_j/a_j - m| < O(\delta).$ Then for each $P_j=(a_j + a, b_j + b)$  we have
	\begin{equation}
		b_j + b = b_j(a_j + a - a)/a_j + b + O(\delta),
	\end{equation}
	and so the $O(\delta)$-neighbourhood of $l$ contains $\sim r_0$ points $P_j$ from $(A+A) \times (A+A).$ Letting $T$ be the $O(\de)$-neighbourhood of $l$ we define $Y(T)$ to be these $r_0$ points. Thus 
	\begin{equation}
		\label{uhryufreyfryf78ryf78yr}
		\#Y(T) \gtrsim \dep \#A^2/\#Q.
	\end{equation}
	By Lemma \ref{lem.productset}, $Y(T)$ is a $(\delta,s,\delta^{s-O(\e)}\#A)$-KT set. To apply Theorem \ref{thm.product}, we require that $s \leq \min\{3t/2 + s, 2 - 3t/2 -s \}.$ If $3t/2 + s$ is the minimum, then since $t \ge 0$ this is automatically satisfied. Otherwise, we require that 
	\begin{equation}
		s \leq 1 - 3t/4.
	\end{equation}
	Since we assumed that $t \leq 43s/34$, the above inequality holds true when $0 < s \leq 136/265.$	
	
	We apply Theorem \ref{thm.product} to the collection $\cT$ with shading $Y$. We first note that $t\le s+\frac{t}2$, since we operate under the stronger assumption $t\le 43s/34$. Thus, the $\alpha$ in Theorem \ref{thm.product} equals 3 in our case. We have 
	$$\sum_{T\in\cT}\#Y(T)\lesssim  \dem (K_1K_2)^{2/3}K_3^{1/3}(\delta^{-s-\frac{3t}2}\#\cT)^{1/{3}}\# Y(\cT)^{2/3}.$$
	Combining this with \eqref{uhryufreyfryf78ryf78yr} we find 
	$$\frac{(\#A)^2}{\#Q}\#\cT\les \dem(\#Q\Gamma\delta^{s+\frac{3t}{2}})^{2/3}(\#A\delta^s)^{1/3}(\delta^{-s-\frac{3t}2}\#\cT)^{1/{3}}\# Y(\cT)^{2/3}.$$
	When combined with \eqref{jrhryf7ry78yg7ytg7yt7g8yt7gy} this is equivalent with 
	$$\#Q^{-3/2}\de^{-3t/4-s}(\#A)^{5/2}\lesssim \dem\# Y(\cT) \leq \dem \cn {(A+A)\times (A+A)}.$$
	A simple rearrangement tells us that
	$$\cn{A+A}^8 \#Q^6 \gtrsim \delta^{O(\e)}\delta^{-4s-3t}(\#A)^{10}.$$
	Using that $\#A\gtrsim \delta^{-s+O(\e)}$ we find our first estimate\begin{equation}\label{eq.firstestimate}
		\cn{A+A}^8 \#Q^6 \gtrsim \delta^{O(\e)}\delta^{-14s-3t}.
	\end{equation}
	Recall that $Q$ is a $(\delta,t,\delta^{-O(\e)})$-set, with size possibly much larger than $\delta^{-t}$. If the size of $Q$ was $\approx \delta^{-t}$ then \eqref{eq.firstestimate} would imply that
	$$\cn{A+A}^8 \cn{A/A}^3 \gtrsim \delta^{O(\e)}\delta^{-14s},$$
	and thus the very satisfactory 
	$$\max(\cn{A+A},\cn{A/A})\gtrsim \delta^{O(\e)}\delta^{-14s/11}.$$
	\subsubsection{Second estimate}
	
	\begin{theorem} 
		\label{chfuv hu f irtg9irt9h9yh}
		Let $s\le \frac23$. We assume $A$ is $\epsilon$-uniform. Then
		\begin{equation}
			\cont{5s/2}((A+A) \times Q) \gtrsim \delta^{O(\e)}.
		\end{equation}
		\begin{proof}
			For $a,b \in A$ define $l_{a,b}$ to be the line given by the equation
			\begin{equation}
				y = (x-b)/a.
			\end{equation}
			Write 
			\begin{equation}
				\cL_0:= \{l_{a,b} : a,b \in A\}.
			\end{equation}
			Let $\cT_0$ be the $O(\delta)$-neighbourhoods of the lines in $\cL_0.$ Let $T \in \cT_0$ have axial line $l_{a,b}.$ The tube $T$ contains the points
			\begin{equation}
				\{(b+c,c/a): c \in A: c/a \in Q\}\subset (A+A)\times Q.
			\end{equation}
			This set has cardinality the same as that of 
			\begin{equation}
				A_a := \{c \in A : c/a \in Q\}.
			\end{equation}
			We wish that each $T \in \cT_0$ contains  $ \approx \#A$ points. This will be true after a refinement. By \eqref{eq.densearcs} we have
			\begin{equation}
				\sum_{a \in A} \#A_a = \sum_{a,c \in A} 1_{c/a \in Q} \gtrapprox \#A^2.
			\end{equation}
			Therefore for some $A_0 \subset A$ with $\#A_0 \gtrapprox \#A$ we have that $\#A_a \gtrapprox \#A$ for all $a \in A_0.$ Replacing $A_0$ with a $\gtrsim \delta^{\e}$-fraction we may assume that $A_0$ is $\e$-uniform. 
			
			Write
			\begin{equation}
				\cL := \{l_{a,b} : (a,b) \in A_0 \times A\},
			\end{equation}
			and let $\cT$ be the $O(\delta)$-neighbourhoods of the lines in $\cL.$ For a tube $T$ with axial line $l_{a,b}$ define  
			\begin{equation}
				Y(T) := \{(b+c,c/a)_\de : c \in A_a\}\subset (A+A)\times Q.
			\end{equation}
			We aim to apply Theorem \ref{thm.product} to the $\rho$ thickening of the collection $\cT$ - call this $\cT(\rho)$ -  equipped with the shading $Y_\rho$.

			By replacing $Y(T)$ with a $\gtrsim \delta^{\e}$-fraction we may assume that $Y(T)$ is $\epsilon$-uniform.  Since $Y(\cT)\subset (A+A)\times Q$, it will suffice to prove 
			\begin{equation}
				\cont{5s/2}(Y(\cT)) \gtrsim \delta^{O(\e)}.
			\end{equation}
			
			Let $\cU$ be a cover of $Y(\cT)$ with disjoint dyadic squares of side-length at least $\delta.$ By dyadic pigeonholing, for each $T \in \cT$ there is a $j(T)$ so that 
			\begin{equation}
				\#(Y(T) \cap \bigcup_{U \in \cU: \diam(U) = 2^{-j(T)}}U) \gtrapprox \#Y(T) \gtrsim \delta^{O(\e)}\#A.
			\end{equation}
			Again, we may dyadic-pigeonhole some $j$ and a subset $\cT_1 \subset \cT$ with $\#\cT_1 \gtrapprox \#\cT$ so that for all $T \in \cT_1$ we have $j(T) = j.$ Fix $\rho = 2^{-j}.$

			\indent Since $A$ and $A_0$ are  $\epsilon$-uniform, by Proposition \ref{prop.properties},  both $A_\rho$ and $(A_0)_\rho$ are $(\rho,s,\delta^{-O(\e)})$-sets with sizes 
			\begin{equation}
				\label{hcyufrueg9rtugrt-9}
				\cns\rho A, \cns{\rho}{A_0}\approx N\gtrsim \de^{O(\e)}\rho^{-s}.
			\end{equation}
			Therefore $\cT(\rho)$ is a $$(\rho,s,s,\dem N_\rho(A_0)\rho^s, \dem N_{\rho}(A)\rho^s)\text{-quasi-product set}$$ with $$\#\cT(\rho)=N_\rho(A_0)N_{\rho}(A).$$ We may define a  shading $Y_\rho$ on each $S \in \cT(\rho)$ by taking the union of the $U \in \cU$ with $\diam(U) = \rho$ which intersect one of the $T \subset S.$ We have $\#Y_\rho(S) \gtrsim \delta^{O(\e)}N_\rho(A)$. Moreover, $Y_\rho(S)$ is a $(\rho,s,K_3)$-KT set with $K_3\lesssim \dem N_\rho(A)\rho^s$. 
			
			Now apply Theorem \ref{thm.product} to $\cT_\rho$ and the shading $Y_\rho$. Since $s\le 2/3$ we have that $s\le \min(2s,2-2s)$. Also, the parameter $\alpha$ equals 3 in this case. Thus 
			$$\sum_{S\in\cT(\rho)} \#Y_\rho(S)\lesssim \dem K_3^{\frac13}(K_1K_2)^{2/3} (\rho^{-2s}\#\cT(\rho))^{1/{3}}(\# Y_\rho(\cT(\rho)))^{\frac23}.$$
			Using the parameter $N$ from \eqref{hcyufrueg9rtugrt-9}, this may be rewritten as
			$$N^3\lesssim \dem (N\rho^s)^{1/3}(N^2\rho^{2s})^{2/3}(\rho^{-2s}N^2)^{1/3}(\# Y_\rho(\cT(\rho)))^{\frac23},$$
			or $$\# Y_\rho(\cT(\rho))\gtrsim \de^{O(\e)}\frac{N}{\rho^{3s/2}}.$$
			Using our lower bound on $N$ leads to 
			$\# Y_\rho(\cT(\rho))\gtrsim \de^{O(\e)}\rho^{-5s/2}$. Note that by definition $Y_\rho(\cT(\rho))\subset \{U\in\cU:\;\diam(U)=\rho\}$, and thus
			$$\sum_{U\in\cU}\diam(U)^{5s/2}\ges \delta^{O(\e)}.$$ 
			Since $\cU$ was an arbitrary cover, it follows  that
			\begin{equation}
				\cont{5s/2}(Y(\cT)) \gtrsim \delta^{O(\e)},
			\end{equation}
			as required.
		\end{proof}
	\end{theorem}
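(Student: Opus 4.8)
The plan is to realize a subset of $(A+A)\times Q$ as the image $Y(\cT)$ of a shading of a \emph{quasi-product} family of $\de$-tubes, and then invoke Theorem~\ref{thm.product} at a single pigeonholed scale to bound its Hausdorff content from below. For $a,b\in A$ let $l_{a,b}$ be the line $y=(x-b)/a$, let $\cL_0=\{l_{a,b}:a,b\in A\}$, and let $\cT_0$ be the family of $O(\de)$-neighbourhoods of these lines. The point that makes everything work is that, up to $O(\de)$-errors, the tube around $l_{a,b}$ contains the points $\{(b+c,\,c/a):c\in A,\ c/a\in Q\}\subset (A+A)\times Q$, a set of the same cardinality as $A_a:=\{c\in A:c/a\in Q\}$. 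Double counting gives $\sum_{a\in A}\#A_a=\#\{(a,c)\in A\times A:c/a\in Q\}\gtrapprox \#A^2$, which is precisely the dense-arcs bound \eqref{eq.densearcs} recycled from the first estimate; hence there is $A_0\subset A$ with $\#A_0\gtrapprox \#A$ and $\#A_a\gtrapprox \#A$ for every $a\in A_0$, and after passing to a $\gtrsim\de^{\e}$-fraction we may take $A_0$ to be $\e$-uniform. From now on I work with $\cL=\{l_{a,b}:(a,b)\in A_0\times A\}$, its $O(\de)$-neighbourhoods $\cT$, and the shading $Y(T):=\{(b+c,c/a)_\de:c\in A_a\}$ for $T$ with axial line $l_{a,b}$; then $\#Y(T)\gtrapprox \#A$, $Y(\cT)\subset (A+A)\times Q$, and $Y(T)$ may be assumed $\e$-uniform after a further $\gtrsim\de^\e$-refinement. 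Since $Y(\cT)\subset (A+A)\times Q$, it suffices to prove $\cont{5s/2}(Y(\cT))\gtrsim\de^{O(\e)}$.

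To estimate the content, fix an arbitrary cover $\cU$ of $Y(\cT)$ by pairwise disjoint dyadic squares of side at least $\de$. Two rounds of dyadic pigeonholing then isolate a single scale $\rho=2^{-j}$ and a subfamily $\cT_1\subset\cT$ with $\#\cT_1\gtrapprox\#\cT$ such that, for each $T\in\cT_1$, a $\gtrapprox 1$ fraction of $Y(T)$ lies in the squares of $\cU$ of side $\rho$. Thickening everything to scale $\rho$ turns $\cT_1$ into a family $\cT(\rho)$ of $\rho$-tubes carrying the induced $\rho$-shading $Y_\rho$. The crucial structural fact is that the slope of $l_{a,b}$ depends only on $a\in A_0$ and its transversal position only on $b\in A$, so $\cT(\rho)$ is a $(\rho,s,s,\dem\cns\rho{A_0}\rho^s,\dem\cns\rho A\rho^s)$-quasi-product set; here I use the $\e$-uniformity of $A$ and $A_0$ and Proposition~\ref{prop.properties} to get $\cns\rho A\approx\cns\rho{A_0}\approx N\gtrsim\de^{O(\e)}\rho^{-s}$, hence $\#\cT(\rho)\approx N^2$, while each $Y_\rho(S)$ has cardinality $\gtrsim\de^{O(\e)}N$ and is a $(\rho,s,K_3)$-KT set with $K_3\lesssim\dem N\rho^s$.

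Finally I apply Theorem~\ref{thm.product} to $(\cT(\rho),Y_\rho)$: since $s\le 2/3$ one has $s\le\sigma=\min(2s,2-2s)$, so the KT hypothesis on the shading is in force, and $\alpha=3$ because the two dimension parameters here coincide. The resulting inequality
$$\sum_{S}\#Y_\rho(S)\les\dem K_3^{1/3}(K_1K_2)^{2/3}\big(\rho^{-2s}\#\cT(\rho)\big)^{1/3}\#Y_\rho(\cT(\rho))^{2/3},$$
together with $K_1K_2\sim(N\rho^s)^2$, $K_3\sim N\rho^s$, $\#\cT(\rho)\sim N^2$, and the lower bound $\sum_S\#Y_\rho(S)\gtrsim\de^{O(\e)}N^3$, collapses to $\#Y_\rho(\cT(\rho))\gtrsim\de^{O(\e)}N\rho^{-3s/2}$; then $N\gtrsim\de^{O(\e)}\rho^{-s}$ upgrades this to $\#Y_\rho(\cT(\rho))\gtrsim\de^{O(\e)}\rho^{-5s/2}$. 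Since $Y_\rho(\cT(\rho))$ consists of side-$\rho$ squares of $\cU$, this yields $\sum_{U\in\cU}\diam(U)^{5s/2}\ges\de^{O(\e)}$, and taking the infimum over $\cU$ finishes the proof. The step I expect to be the main obstacle is establishing the quasi-product structure of $\cT(\rho)$ with the advertised parameters — i.e.\ verifying that the directions $\{1/a:a\in A_0\}$ and the transversal offsets genuinely inherit clean KT bounds at scale $\rho$ from the $\e$-uniformity, and that $Y_\rho$ inherits a usable $(\rho,s,K_3)$-KT bound — because the whole scheme closes only when $K_1\sim K_2\sim K_3\sim N\rho^s$, so that the $K$-dependence of the $\alpha=3$ incidence bound combines with $\#\cT(\rho)\sim N^2$ and the lower bound on $N$ to leave exactly the power $\rho^{-5s/2}$.
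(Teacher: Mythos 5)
Your proposal is correct and follows essentially the same route as the paper's proof: the same family of lines $l_{a,b}$, the same refinement to $A_0$ via \eqref{eq.densearcs}, the same pigeonholing of an arbitrary cover $\cU$ to a single scale $\rho$, the same identification of $\cT(\rho)$ as a quasi-product set with $K_1\sim K_2\sim K_3\sim N\rho^s$ (up to $\dem$ factors), and the same application of Theorem~\ref{thm.product} with $\alpha=3$ collapsing to $\#Y_\rho(\cT(\rho))\gtrsim\de^{O(\e)}N\rho^{-3s/2}\gtrsim\de^{O(\e)}\rho^{-5s/2}$. The step you flag as the main obstacle is handled in the paper exactly as you anticipate, via the $\e$-uniformity of $A$ and $A_0$ together with Proposition~\ref{prop.properties}.
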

	\begin{corollary}
		\label{ huhv yug8ytg7yg7yg7yt8gy7gy}Let $s\le \frac23$. Assume the strict inequality $t < 43s/34.$ Assume $A$ is $\e$-uniform. 
		Let $u$ be such that 
		\begin{equation}\label{eq.upperbox}
			\cns{\rho}{A+A} < \rho^{-u} \text{ for all } \delta \leq \rho \leq \delta^{O(\e)}.
		\end{equation}
		Then \begin{equation}
			\label{c nh fdvhuivju rtgtig}
			\cont{5s/2 - u}(Q) \gtrsim \de^{O(\e)}.
		\end{equation}
		\begin{proof}
			Due to the maximality of $t$ in \eqref{eq.defoft}, 
			for any $\tau > t$ we may find a cover $\cU$ of $Q$ (recall $Q$ is a subset of $Q_0$) by dyadic intervals of length at least $\delta$ so that
			\begin{equation}
				\label{iojfurh fuygytgytg8yrt8g9987}
				\sum_{U \in \cU} \diam(U)^\tau < 1.
			\end{equation}
			For each $U \in \cU$, by \eqref{eq.upperbox} we may cover $A+A$ by a collection $\cI_U$ consisting of 
			$N_U<\diam(U)^{-u}$ many intervals $I$ of length $\rho=\diam(U).$ This leads to a cover  of $(A+A) \times Q$ consisting of all squares in $V\in \cV:=\cup_{U\in\cU}\cup_{I\in\cI_U}I\times U$. Due to \eqref{iojfurh fuygytgytg8yrt8g9987} we have
			\begin{equation}
				\sum_{V\in \cV} \diam (V)^{\tau+u} = \sum_{U \in \cU} \diam(U)^{\tau +u}\diam(U)^{-u} < 1. 
			\end{equation}
			By Theorem \ref{chfuv hu f irtg9irt9h9yh} we therefore have $\tau + u \geq 5s/2,$ and so $\tau \geq 5s/2 - u.$ Since $\tau > t$ is arbitrary, it follows that $t \geq 5s/2 - u$ and by the definition of $t$ we must have
			$$\cont{5s/2 - u}(Q_0) \ge 1.$$
			This in turn implies \eqref{c nh fdvhuivju rtgtig}.
		\end{proof}
		
	\end{corollary}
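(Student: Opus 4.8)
The aim is to peel off the $A+A$ coordinate from the two–dimensional content bound $\cont{5s/2}((A+A)\times Q)\gtrsim\delta^{O(\e)}$ supplied by Theorem \ref{chfuv hu f irtg9irt9h9yh}, using the upper bound \eqref{eq.upperbox} on the covering numbers of $A+A$, and then to recognise the resulting content of $Q$ through the maximality defining $t$ in \eqref{eq.defoft}. First I would fix an exponent $\tau$ with $t<\tau\le 43s/34$, which is possible precisely because $t<43s/34$. By the definition of $t$ we have $\cont{\tau}(Q_0)<1$, so there is a cover $\cU$ of $Q_0$, and hence of $Q\subset Q_0$, by $\delta$-adic intervals of length $\ge\delta$ with $\sum_{U\in\cU}\diam(U)^{\tau}<1$. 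For each $U\in\cU$ I would then cover $A+A$: when $\diam(U)\le\delta^{O(\e)}$ this is done by \eqref{eq.upperbox} with fewer than $\diam(U)^{-u}$ intervals of length $\diam(U)$, and when $\diam(U)$ is larger the trivial $O(1)$ bound suffices (and is still $\lesssim\diam(U)^{-u}$ in the relevant range). Taking products, the squares $\{I\times U\}$ cover $(A+A)\times Q$ and satisfy $\sum\diam(I\times U)^{\tau+u}\lesssim\sum_{U\in\cU}\diam(U)^{-u}\diam(U)^{\tau+u}=\sum_{U\in\cU}\diam(U)^{\tau}<1$.

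The next step is to play this cover against Theorem \ref{chfuv hu f irtg9irt9h9yh}. Since $\cont{a}$ is nonincreasing in $a$ on sets of diameter $O(1)$, a cover witnessing $\cont{\tau+u}((A+A)\times Q)\lesssim 1$ also controls $\cont{a}$ for every $a\le\tau+u$; confronting this with $\cont{5s/2}((A+A)\times Q)\gtrsim\delta^{O(\e)}$, and using that $\tau$ may be taken arbitrarily close to $t$ from above, forces $t+u\ge 5s/2$, i.e. $5s/2-u\le t$. Then, by the definition of $t$ together with monotonicity of the content in its exponent (and \eqref{eq.wearefinsihedd}), $\cont{5s/2-u}(Q_0)\gtrsim 1$. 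Finally I would transfer this from $Q_0$ to $Q$: since $Q\subset Q_0$ with a fraction $\#Q\gtrsim\delta^{\e}\#Q_0$ and $Q_0$ is $\e$-uniform, a fraction $\gtrsim\delta^{\e}$ of $Q_0$ retains, up to $\delta^{O(\e)}$ losses, the same content at every scale, whence $\cont{5s/2-u}(Q)\gtrsim\delta^{O(\e)}$, which is \eqref{c nh fdvhuivju rtgtig}.

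The step I expect to be the main obstacle is the comparison of contents that yields the exponent inequality $t+u\ge 5s/2$: one must argue carefully that a cheap cover of $(A+A)\times Q$ at exponent $\tau+u$ is genuinely incompatible with the lower bound of Theorem \ref{chfuv hu f irtg9irt9h9yh} when $\tau+u<5s/2$, which is where the $\delta^{O(\e)}$ slack and the freedom to shrink $\tau\downarrow t$ must be used with the right bookkeeping. The two remaining technical nuisances are the treatment of covering intervals $U$ with $\diam(U)>\delta^{O(\e)}$, where \eqref{eq.upperbox} is unavailable and the trivial covering bound must be invoked instead, and the final passage from $Q_0$ to the subset $Q$, which leans on the $\e$-uniformity of $Q_0$.
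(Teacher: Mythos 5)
Your proposal is correct and follows essentially the same route as the paper: take $\tau>t$, use the maximality of $t$ in \eqref{eq.defoft} to get a cover of $Q$ with $\sum_U\diam(U)^\tau<1$, fatten it to a product cover of $(A+A)\times Q$ at exponent $\tau+u$ via \eqref{eq.upperbox}, confront Theorem \ref{chfuv hu f irtg9irt9h9yh} to force $\tau+u\ge 5s/2$, and let $\tau\downarrow t$. The step you single out as the main obstacle (turning the clash between the cheap cover at exponent $\tau+u$ and the lower bound $\cont{5s/2}((A+A)\times Q)\gtrsim\de^{O(\e)}$ into the exact inequality $\tau+u\ge 5s/2$) is exactly the step the paper also asserts in one line without further bookkeeping, and your explicit treatment of covering intervals of length exceeding $\de^{O(\e)}$ via the trivial covering bound is a detail the paper suppresses.
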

	We are now ready to prove 
	Theorem \ref{thm.dis}, which we recall below.
	\begin{theorem}
		Let $0 < s \leq 136/265.$ There exists $C,\delta_0, \epsilon> 0$ so that the following holds for all $0 < \delta < \delta_0.$
		
		\indent Let $A \subset [1/2,1]$ be a $(\delta, s, \de^{-\e})$-set. Then one of the following must hold.
		\begin{equation}\label{eq.a+a}
			\cns{\rho}{A+A} \ge \rho^{-43s/34} \text{ for some } \delta \le \rho \lesssim  \delta^{C\e},
		\end{equation}
		\begin{equation}\label{eq.haus}
			\cn{A/A} > \delta^{-43s/34+C\e}.
		\end{equation}
	\end{theorem}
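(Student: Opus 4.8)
The plan is to glue together the two ingredients already assembled in the pre-processing: the \emph{first estimate} \eqref{eq.firstestimate}, i.e.\ $\cn{A+A}^{8}\#Q^{6}\gtrsim \de^{O(\e)}\de^{-14s-3t}$, and the \emph{second estimate} of Corollary~\ref{ huhv yug8ytg7yg7yg7yt8gy7gy}, whose proof in fact delivers the structural inequality $t\ge \frac{5s}{2}-u$ whenever $\cns{\rho}{A+A}<\rho^{-u}$ for all $\de\le\rho\le\de^{O(\e)}$. First I would use Lemma~\ref{lem.unifsubset} to assume that $A$ is $\e$-uniform: passing to a $\gtrsim\de^{\e}$-fraction only weakens \eqref{eq.a+a} and strengthens \eqref{eq.haus}, since $\cns{\rho}{\cdot}$ is monotone under inclusion, so all results requiring uniformity of $A$ become applicable. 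I would also record that $Q\subset Q_{0}$ consists of centres of intervals of $\cD\subset\cD_{\de}(A/A)$, so that $\cn{A/A}\ge\#\cD\ge\#Q$; thus it suffices to bound $\#Q$ from below.

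Next I distinguish cases by the value of $t$ from \eqref{eq.defoft}. If $t=\frac{43s}{34}$, then $\cont{43s/34}(Q_{0})\ge1$, so Lemma~\ref{lem.disfrost} yields a $(\de,\frac{43s}{34})$-set inside $Q_{0}$ of cardinality $\gtrsim\de^{-43s/34}$, hence $\cn{A/A}\ge\#\cD\ge\#Q_{0}\gtrsim\de^{-43s/34}>\de^{-43s/34+C\e}$ for $\de$ small, which is \eqref{eq.haus}. Now suppose $t<\frac{43s}{34}$. If \eqref{eq.a+a} holds there is nothing to prove, so assume it fails, i.e.\ $\cns{\rho}{A+A}<\rho^{-43s/34}$ for all $\de\le\rho\lesssim\de^{C\e}$. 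Then Corollary~\ref{ huhv yug8ytg7yg7yg7yt8gy7gy} applies with $u=\frac{43s}{34}$ and gives $t\ge\frac{5s}{2}-\frac{43s}{34}=\frac{21s}{17}$. Feeding $t\ge\frac{21s}{17}$ and $\cn{A+A}<\de^{-43s/34}$ into \eqref{eq.firstestimate},
\[
\#Q^{6}\gtrsim \de^{O(\e)}\de^{-14s-3t}\cn{A+A}^{-8}>\de^{O(\e)}\de^{-14s-63s/17+172s/17}=\de^{O(\e)}\de^{-129s/17},
\]
so $\#Q>\de^{O(\e)}\de^{-43s/34}$ because $\frac{129}{102}=\frac{43}{34}$. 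Hence $\cn{A/A}\ge\#Q>\de^{-43s/34+C\e}$ once $\de$ is small and $C$ exceeds the implicit constants, which is \eqref{eq.haus}.

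Finally, on the numerology and the genuine difficulty. The exponent $\frac{43}{34}$ is the largest threshold for which the scheme closes: with a general threshold $\gamma$ in place of $\frac{43}{34}$, the second estimate forces $t\ge(\frac{5}{2}-\gamma)s$ and then \eqref{eq.firstestimate} forces $\#Q\gtrsim\de^{-(\frac{43}{12}-\frac{11\gamma}{6})s}$, which already matches the target $\de^{-\gamma s}$ exactly when $\frac{43}{12}-\frac{11\gamma}{6}\ge\gamma$, i.e.\ $\gamma\le\frac{43}{34}$; correspondingly the ceiling $s\le\frac{136}{265}$ is the condition $s\le\min\{\frac{3t}{2}+s,\,2-\frac{3t}{2}-s\}$ needed to invoke Theorem~\ref{thm.product} in proving \eqref{eq.firstestimate}, evaluated at $t=\frac{43s}{34}$. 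The only genuinely delicate point in carrying this out is the bookkeeping of the $\de^{\pm O(\e)}$ factors: one must fix the single constant $C$, and take $\e$ correspondingly small, so that the failure of \eqref{eq.a+a} over $\de\le\rho\lesssim\de^{C\e}$ supplies the hypothesis of Corollary~\ref{ huhv yug8ytg7yg7yg7yt8gy7gy} over the range its proof requires, while all losses accumulated in \eqref{eq.firstestimate} and in the applications of Lemma~\ref{lem.disfrost} are absorbed into $C\e$; since only finitely many absolute constants are involved and $\e$ is at our disposal, a compatible choice exists. All the substantive geometry — the projection estimate underlying \eqref{eq.firstestimate} and the incidence bound Theorem~\ref{chfuv hu f irtg9irt9h9yh} underlying Corollary~\ref{ huhv yug8ytg7yg7yg7yt8gy7gy}, both built on Theorem~\ref{thm.product} — is already in place, so the remaining work is bookkeeping rather than a new obstruction.
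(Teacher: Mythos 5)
Your argument is correct and follows essentially the same route as the paper: pass to a uniform subset, dispose of the case $t=43s/34$ via \eqref{eq.wearefinsihedd} (your detour through Lemma \ref{lem.disfrost} is equivalent), and otherwise combine the failure of \eqref{eq.a+a} with Corollary \ref{ huhv yug8ytg7yg7yg7yt8gy7gy} to get $t\ge \frac{5s}{2}-\frac{43s}{34}$ and feed this into \eqref{eq.firstestimate}; your arithmetic ($129/102=43/34$) and your account of where $43/34$ and $136/265$ come from both check out.
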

	\begin{proof} 
		We may certainly assume $A$ is $\e$-uniform.
		We may assume that
		$$\cns{\rho}{A+A} < \rho^{-43s/34} \text{ for each } \delta \le \rho \lesssim  \delta^{C\e},$$
		otherwise \eqref{eq.a+a} holds.
		
		We may also assume that $t<43s/34$, as otherwise $t=43s/34$, and this together with \eqref{eq.wearefinsihedd} for $\rho=\delta$
		\begin{equation}
			\cn{A/A} \geq \cn{Q_0} \geq \de^{-43t/34}
		\end{equation} 
		would imply \eqref{eq.haus}.

		Note that $s<2/3$. Then using the conclusion \eqref{c nh fdvhuivju rtgtig} of Corollary \ref{ huhv yug8ytg7yg7yg7yt8gy7gy} and the maximality of $t$ it follows that $t\ge \frac{5s}{2}-\frac{43s}{34}$.
		
		We recall \eqref{eq.firstestimate} 
		\begin{equation}
			\label{h gdygyuryeryuefyuryyg}
			\cn{A+A}^8 \# Q ^6 \gtrsim \delta^{O(\e)}\delta^{-14s-3t},
		\end{equation}
		which when combined with $N_\delta(A+A)\le \delta^{-43s/34}$ leads to 
		$$\#Q^6\gtrsim \delta^{O(\e)}\delta^{-14s-3t+\frac{43\times 8s}{34}}.$$Since $t\ge \frac{5s}{2}-\frac{43s}{34}$ this proves that
		$\cn{A/A}\ge \#Q \gtrsim \delta^{O(\e)}\delta^{-43s/34}.$ 
		The exponent $p=43/34$ was chosen for the following reason. It is the smallest that makes the inequality \eqref{h gdygyuryeryuefyuryyg} true, which after applying $\log_{1/\delta}$ reads
		\begin{equation}
			\label{hyreyfruf8ruef8utg98urt8g}
			8ps+6ps\ge 14s+3(\frac52s-ps).
		\end{equation}
	\end{proof}
	\subsection{Proof of Theorem \ref{thm.sumprod}}
	We first recall what we are about to prove.
	\begin{theorem}
		Let $0 < s \leq 136/265.$  For all $A \subset \R$ with $\dimh A = s$ we have
		\begin{equation}
			\max\{\ubox(A+A),\lbox(A/A)\} \geq 43s/34.
		\end{equation}
	\end{theorem}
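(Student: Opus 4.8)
The plan is to deduce Theorem \ref{thm.sumprod} from the discretized statement Theorem \ref{thm.dis} by the usual passage from Hausdorff dimension to discretized $(\delta,s)$-sets: one produces such sets inside $A$ at every scale via Lemma \ref{lem.disfrost}, applies Theorem \ref{thm.dis}, and lets the discretization exponent increase to $s$.

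First I would reduce to the normalization $A\subset[1/2,1]$ of Theorem \ref{thm.dis}. Fix a target $v\in(0,s)$; it suffices to prove $\max\{\ubox(A+A),\lbox(A/A)\}\ge 43v/34$, as the theorem then follows on letting $v\uparrow s$. Since $\ubox$ and $\lbox$ cannot increase under passing to a subset and are unchanged by bijective affine maps, while $A/A$ is moreover scale invariant, by countable stability of Hausdorff dimension I may pick a bounded dyadic piece $A\cap(\pm[2^{k},2^{k+1}])$ of dimension $>v$ and, composing with the affine map sending that interval onto $[1/2,1]$, obtain $A'\subset[1/2,1]$ with $\dimh A'>v$ whose sum- and ratio-sets are affine copies of subsets of $A+A$ and $A/A$; so it is enough to bound $\max\{\ubox(A'+A'),\lbox(A'/A')\}$ from below by $43v/34$. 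Renaming $A'$ as $A$, I now have $A\subset[1/2,1]$ with $v<\dimh A\le s\le 136/265$, and I argue by contradiction: assume $\ubox(A+A)<43v/34$ and $\lbox(A/A)<43v/34$. The first assumption yields $\rho_1>0$ with $\cns{\rho}{A+A}<\rho^{-43v/34}$ for all $0<\rho\le\rho_1$; the second yields a sequence $\delta_j\downarrow 0$ along which $\cns{\delta_j}{A/A}<\delta_j^{-43v/34}$.

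Next I would fix an intermediate exponent $w\in(v,\dimh A)$, so that $\kappa:=\cH^{w}_{\infty}(A)>0$ and hence $\hcont{w}{\delta}(A)\ge\kappa$ for every $\delta>0$; by Lemma \ref{lem.disfrost}, for each small $\delta$ there is then a $\delta$-separated $(\delta,w)$-set $A_\delta\subset A$, which is a $(\delta,w,\delta^{-\e})$-set once $\delta$ is small. Applying Theorem \ref{thm.dis} at exponent $w$ produces a constant $C$, and since its losses are of the form $\delta^{O(\e)}$ with absolute implied constants the parameter $\e$ may be taken arbitrarily small with $C$ unchanged; I would choose $\e$ with $C\e\le\tfrac{43}{34}(w-v)$. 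Feeding $A_{\delta_j}$ into Theorem \ref{thm.dis} for $j$ large (so $\delta_j$ is below the threshold and $\delta_j^{C\e}<\rho_1$), one of the two alternatives holds. In the first, $\cns{\rho}{A+A}\ge\cns{\rho}{A_{\delta_j}+A_{\delta_j}}>\rho^{-43w/34}>\rho^{-43v/34}$ for some $\delta_j<\rho<\delta_j^{C\e}<\rho_1$ (using $A_{\delta_j}+A_{\delta_j}\subset A+A$, $w>v$, $\rho<1$), contradicting $\cns{\rho}{A+A}<\rho^{-43v/34}$. In the second, $\cns{\delta_j}{A/A}\ge\cns{\delta_j}{A_{\delta_j}/A_{\delta_j}}>\delta_j^{-43w/34+C\e}\ge\delta_j^{-43v/34}$, where the last inequality uses $-\tfrac{43w}{34}+C\e\le-\tfrac{43v}{34}$ and $\delta_j<1$, contradicting the choice of $\delta_j$. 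Either way we reach a contradiction, proving the claim and hence the theorem.

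This is essentially a bookkeeping argument, so there is no serious obstacle; the one point demanding care is the handling of the $\delta^{O(\e)}$ losses in Theorem \ref{thm.dis} — its first alternative controls scales $\rho$ only up to $\delta^{C\e}$ rather than up to $1$, and its second loses a factor $\delta^{C\e}$ in the exponent. This is exactly why I introduce the extra exponent $w$ strictly between $v$ and $\dimh A$: it opens a cushion of size $\tfrac{43}{34}(w-v)>0$ that absorbs the whole $C\e$ deficit once $\e$ is small enough. Beyond that, everything is the standard dictionary relating Hausdorff content at scale $\delta$, $(\delta,w)$-sets, and $\delta$-covering numbers.
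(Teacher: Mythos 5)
Your proposal is correct and takes essentially the same route as the paper: discretize $A$ at each scale via Lemma \ref{lem.disfrost}, feed the resulting $(\delta,\cdot)$-sets into Theorem \ref{thm.dis}, and match its two alternatives to lower bounds for $\ubox(A+A)$ and $\lbox(A/A)$ respectively. If anything, your write-up is more careful than the paper's own two-line argument, since you explicitly handle the normalization to $[1/2,1]$ (via a dyadic piece and a dilation, which preserves the ratio set) and the possible vanishing of $\cH^s_\infty(A)$ by working with an intermediate exponent $w<\dimh A$ that also absorbs the $\delta^{O(\e)}$ losses.
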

	\begin{proof}
		Let $\e > 0.$ There exists $n_0$ so that for all $n \geq n_0$, by Lemma \ref{lem.disfrost} we may find $A_n \subset A$ which is a $(2^{-n},s, C)$-set where $C \sim    1/\cH^{s}_\infty(A).$ If for all but finitely many $n$ we have $\cns{2^{-n}}{A_n/A_n} \gtrsim 2^{n(43s/34-O(\e))}$ then clearly 
		\begin{equation}
			\lbox(A/A) \geq 43s/34 - O(\epsilon).
		\end{equation}
		Otherwise, by Theorem \ref{thm.dis} for infinitely many $n$ we may find a scale $2^{-n} \leq \rho_n \leq 2^{-O(\e)n}$ with
		\begin{equation}
			\cns{\rho_n}{A_n + A_n} \geq \rho_n^{-43s/11}.
		\end{equation}
		Clearly then $\ubox(A+A) \geq 43s/34.$ Since $\e$ was arbitrary the result follows. 
	\end{proof}
	\subsection{Proof of Theorem \ref{thm.fewsums}}\label{sect.fewsums}
	We restate Theorem \ref{thm.fewsums} with the assumption on $\#A$ dropped.
	\begin{prop}\label{prop.fewsum}
		Let $0 \leq s \leq 2/3.$ There exists $C,\delta_0, \epsilon> 0$ so that the following holds for all $0 < \delta < \delta_0.$ 
		
		Let $A \subset [1/2,1]$ be a $\delta$-separated 
		$(\delta,s,\de^{-\e})$-set. If $s \leq 1/2,$ then
		\begin{equation}
			N_\delta(A+A)^6N_\delta(A/A)> \de^{C\e}\# A^8.
		\end{equation}
		If $s>1/2,$ then
		\begin{equation}
			N_\delta(A+A)^6N_\delta(A/A) > \de^{C\e}\de^{-2}\# A^4.
		\end{equation}
	\end{prop}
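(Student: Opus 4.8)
\medskip

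\noindent\textbf{Proof plan.} The plan is to run a $\delta$-discretised version of the Elekes--Ruzsa few-sums argument \cite{elekesruzsafewsums}, using the quasi-product incidence bound of Theorem \ref{thm.product} in the role played there by the Szemer\'edi--Trotter theorem. As in the proofs of Theorems \ref{thm.dis} and \ref{chfuv hu f irtg9irt9h9yh}, I would first pass to an $\e$-uniform subset of $A$ of comparable size (Lemma \ref{lem.unifsubset}) and then pigeonhole dyadically throughout: over the $O(\log\tfrac1\delta)$ scales $\rho\in[\delta,1]$, over the multiplicative representation levels $r(m):=\#\{a\in A:\ ma\in A\}$ for $m\in A/A$, and over the sizes of the shadings built below; each such reduction costs only a factor $\dep$. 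Write $\sigma:=\min\{2s,2-2s\}$; since $s\le 2/3$ one has $s\le\sigma$, and the case split $s\le\tfrac12$ versus $s>\tfrac12$ in the statement is precisely the split between the two branches of $\sigma$.

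The first step is the elementary lower bound for the $\delta$-additive energy $E_\delta(A):=\#\{(a_1,a_2,a_3,a_4)\in A^4:\ |a_1+a_2-a_3-a_4|\le\delta\}\gtrsim \#A^4/N_\delta(A+A)$, which follows from Cauchy--Schwarz after covering $A+A$ by $\delta$-intervals. I would then bound $E_\delta(A)$ from above as an incidence count: parametrising an approximate quadruple by the ratios $u=a_1/a_3$, $v=a_2/a_4\in A/A$ and $\rho=a_3/a_4\in A/A$, the relation $a_1+a_2=a_3+a_4+O(\delta)$ forces $(u,v)$ to lie $O(\delta)$-close to the line $L_\rho:\ v=\rho(1-u)+1$, while $(u,v)$ and $\rho$ determine the quadruple up to the choice of $a_4$ (with $a_3=\rho a_4\in A$). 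Hence
\[
\frac{\#A^4}{N_\delta(A+A)}\ \lesssim\ E_\delta(A)\ \lesssim\ \sum_{\rho\in A/A} r(\rho)\cdot N_\delta\big((A/A\times A/A)\cap\cN_\delta(L_\rho)\big),
\]
and the right-hand side is a weighted incidence count between the \emph{product} point set $A/A\times A/A$ and the pencil of ``ratio lines'' $\{L_\rho\}_{\rho\in A/A}$, with weights $r(\rho)$ satisfying $\sum_\rho r(\rho)=\#A^2$.

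Dualising the plane turns the product set $A/A\times A/A$ into a quasi-product set $\cT'$ of $\delta$-tubes (direction set and fibre sets both bi-Lipschitz copies of $A/A$), turns each $L_\rho$ into a point $p_\rho$, and turns $N_\delta((A/A\times A/A)\cap\cN_\delta(L_\rho))$ into $\#\{T\in\cT':\ p_\rho\in T\}$. After dyadically fixing $r(\rho)\sim W$, restricting to the corresponding points, and using point-line duality, the whole matter reduces to an upper bound for $\sum_{T\in\cT'}\#Y(T)$, where $Y(T)$ is the shading by those $p_\rho$ with $r(\rho)\sim W$ lying on $T$; this is exactly the shape of Theorem \ref{thm.product}, with $d=s$ (so $\alpha=3$) and with the required $(\delta,\sigma)$-KT non-concentration of each $Y(T)$ inherited from the structure of $A/A$. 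The difficulty, and the reason this is not a transcription of \cite{elekesruzsafewsums}, is that $A/A$ --- hence the direction set and fibres of $\cT'$ and the shadings $Y(T)$ --- is in general \emph{not} Katz--Tao with constants $\sim 1$ at scale $\delta$, so a direct application of Theorem \ref{thm.product} carries losses $K_1,K_2,K_3$ that are too large and recovers only a bound of Elekes strength. To extract the extra powers of $N_\delta(A+A)$ I would instead run the count at a pigeonholed \emph{intermediate} scale $\rho\in[\delta,1]$, chosen using the $\e$-uniform branching of $A$ so that the $\times\rho^{-1}$-rescalings of $\cT'$ and of the shadings become honest $(\rho,s,s,K_1,K_2)$-quasi-product sets with $K_1,K_2\approx 1$ carrying near-maximal $(\rho,\sigma)$-KT shadings (the hereditary behaviour making this legitimate is Proposition \ref{fkjgu8rtu9i6iy-}), apply Theorem \ref{thm.product} at scale $\rho$, and transfer the output back to scale $\delta$ by the Hausdorff-content and covering-number bookkeeping used in Corollary \ref{ huhv yug8ytg7yg7yg7yt8gy7gy}.

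Feeding $N_\delta(A+A)$ in at each scale and summing the $O(\log\tfrac1\delta)$ dyadic contributions should then yield
\[
N_\delta(A+A)^6N_\delta(A/A)\ \gtrsim\ \dep\,\#A^8\ \ (s\le\tfrac12),\qquad N_\delta(A+A)^6N_\delta(A/A)\ \gtrsim\ \dep\,\delta^{-2}\#A^4\ \ (s>\tfrac12),
\]
with the case split coming from $\sigma=\min\{2s,2-2s\}$; Theorem \ref{thm.fewsums}, including \eqref{eq.fewsumssection1}, then follows by substituting $N_\delta(A+A)\le\delta^{-c_0}\#A$ and simplifying $\min\{\#A^2,\delta^{-2}\#A^{-2}\}$ according to whether $s\le\tfrac12$ or $s>\tfrac12$. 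I expect the main obstacle to be exactly this intermediate-scale transfer: one must pick $\rho$ so that the direction set, the fibres, \emph{and} every shading rescale simultaneously to near-maximal sets, must verify that the rescaled shadings retain their $(\rho,\sigma)$-KT property, and must dovetail the scale-$\rho$ incidence information with the additive information living at scale $\delta$. It is this detour, unavoidable because the relevant sets fail to be Katz--Tao at scale $\delta$, that degrades the sum-set exponent from the value $4$ of \cite{elekesruzsafewsums} to the value $6$ obtained here.
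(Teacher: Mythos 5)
There is a structural gap that the intermediate-scale device cannot repair: in your chain $\#A^4/N_\delta(A+A)\lesssim E_\delta(A)\lesssim[\text{incidences between }(A/A)\times(A/A)\text{ and }\{L_\rho\}_{\rho\in A/A}]$, the sum set enters exactly once, through the Cauchy--Schwarz lower bound on the energy, while the target inequality carries $N_\delta(A+A)^6$. Your incidence configuration is built entirely out of $A/A$ (both the dual quasi-product tube family and the point set $\{p_\rho\}$), so its output can only involve $N_\delta(A/A)$, $\#A$, $\delta$ and KT constants of $A/A$; no amount of rescaling to a better scale $\rho$ can manufacture the five missing powers of $N_\delta(A+A)$. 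Concretely, even granting a Szemer\'edi--Trotter-strength bound for your weighted count (pigeonhole $r(\rho)\sim W\le\#A$, so there are $\le\#A^2/W$ lines), one gets $E_\delta(A)\lessapprox W^{2/3}N_\delta(A/A)^{4/3}\#A^{2/3}\le \#A^{4/3}N_\delta(A/A)^{4/3}$, hence $N_\delta(A+A)^{3}N_\delta(A/A)^{4}\gtrapprox\#A^{8}$ --- i.e.\ $N_\delta(A/A)\gtrapprox\#A^{5/4}$ when the sum set is small. This is exactly the Elekes-strength conclusion you set out to beat, and it is the \emph{ceiling} of your configuration, not a consequence of bad KT constants. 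Your setup is the Elekes sum--product configuration in ratio coordinates; it is not a discretisation of the Elekes--Ruzsa few-sums argument.

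The paper's proof puts the sum set into the incidence configuration itself. It first pigeonholes a popular subset $S$ of $A+A$ (so $\#S\gtrapprox\#A$ and $S$ is a $(\delta,s,L)$-KT set with $L\approx CK$, $K=N_\delta(A+A)/\#A$), forms the quasi-product point set $\cP=(A\cup S)\times(A\cup S)$ and the tubes with slopes in a $\delta$-separated subset $D$ of $A/A$, and counts \emph{collinear triples} $(e,f)\in A^2$, $(e+a,f+b),(e+c,f+d)\in S^2$ with $b/a\approx d/c$. The lower bound $\gtrapprox\#A^6/\#D$ comes from Cauchy--Schwarz over the direction set; the upper bound comes from the dual form of Corollary \ref{nononecase}, $\#\cT_r(\cP)\lessapprox L^4\delta^{-2s}\#\cP/r^3$, summed against $r^3$. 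The six powers of $N_\delta(A+A)$ arise as $L^4\#S^2\approx K^6\#A^2$: four from the KT constants of $S$ inside the quasi-product set and two from $\#\cP\approx\#S^2$. The intermediate scale (Lemma \ref{lem.indscale}, transferred back by Lemma \ref{lem.sumsetcontrol} and the branching lower bound) is used only for the one thing you correctly identified --- ensuring $A/A$ is a genuine $(\rho,\sigma)$-KT set so that the dual shadings satisfy the non-concentration hypothesis --- and not to generate sum-set powers. To fix your argument you would need to abandon the energy/ratio-line reduction and rebuild the configuration with $A+A$ as the point set and a triple count in place of an incidence count.
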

	This will follow from a similar statement with an additional separation condition on $A/A.$ Set $\sigma := \min\{2s,2-2s\}$, which will be used throughout this section.
	\begin{theorem}\label{thm.fewsumweak}
		Let $0 < s < 1.$ Let $A \subset [1/2,1]$ be a $\delta$-separated
		$(\delta,s,C)$-KT set. Suppose further that $A/A$ is a $(\de,\sigma)$-KT set. Then
		\begin{equation}
			N_\delta(A+A)^6N_\delta(A/A) \gtrapprox C^{-4}\de^{2s}\#A^{10}.
		\end{equation}
	\end{theorem}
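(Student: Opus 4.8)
The plan is to run an Elekes-type incidence argument, using Theorem \ref{thm.product} (in the balanced case $s=d$, where the exponent is $\alpha=3$) as a substitute for the Szemer\'edi--Trotter bound. For $a,b\in A$ let $\ell_{a,b}$ be the line $y=(x-b)/a$ and let $\cT=\{T_{a,b}\}$ consist of the $O(\delta)$-neighbourhoods of these lines. Since $A$ is $\delta$-separated in $[1/2,1]$, distinct pairs $(a,b)$ give distinct $\delta$-tubes, so $\#\cT\sim\#A^2$. The map $a\mapsto 1/a$ is bi-Lipschitz on $[1/2,1]$, so the direction set $\{1/a:a\in A\}$ of $\cT$ is a $(\delta,s,O(C))$-KT set; for a fixed direction $1/a$ the tubes $\{T_{a,b}:b\in A\}$ have $\delta$-separated intercepts $b\in A$ and hence form a $(\delta,s,O(C))$-KT set. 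Thus $\cT$ is a $(\delta,s,s,O(C),O(C))$-quasi-product set. As the shading take $Y(T_{a,b}):=\{(b+c,c/a)_\delta:c\in A\}$; since $((b+c)-b)/a=c/a$ these squares lie on $\ell_{a,b}$, since $c\mapsto c/a$ is bi-Lipschitz they are $\sim\#A$ in number, and they all lie in $(A+A)\times(A/A)$. Consequently
\[
\sum_{T\in\cT}\#Y(T)\sim\#A^3,\qquad \#Y(\cT)\le N_\delta(A+A)\,N_\delta(A/A).
\]

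The point that makes Theorem \ref{thm.product} applicable is that each $Y(T_{a,b})$ is a $(\delta,\sigma,O(1))$-KT set, $\sigma=\min(2s,2-2s)$, and here the hypothesis that $A/A$ is a $(\delta,\sigma)$-KT set enters crucially (note that for $s>\tfrac23$ this does not follow from the $(\delta,s)$-KT property of $Y(T_{a,b})$). Indeed, for any ball $B=B(x_0,r)$ with $r\ge\delta$, the points of $Y(T_{a,b})\cap B$ have second coordinates $c/a$ in an interval of length $2r$, hence $c$ in an interval of length $2ar\le 2r$; since $c/a\in A/a\subseteq A/A$ and $c\mapsto c/a$ is injective, the $(\delta,\sigma)$-KT property of $A/A$ forces $\#(Y(T_{a,b})\cap B)\lesssim(r/\delta)^\sigma$. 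Applying Theorem \ref{thm.product} with $s=d$, $K_1,K_2=O(C)$ and $K_3=O(1)$ gives
\[
\#A^3\;\lessapprox\;C^{4/3}\bigl(\delta^{-2s}\#A^2\bigr)^{1/3}\bigl(N_\delta(A+A)\,N_\delta(A/A)\bigr)^{2/3},
\]
that is, $N_\delta(A+A)\,N_\delta(A/A)\gtrapprox C^{-2}\delta^{s}\#A^{7/2}$. This is a symmetric, Szemer\'edi--Trotter-strength bound, and by itself it is not enough: the target inequality charges $N_\delta(A+A)$ with the exponent $6$.

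To obtain the asymmetric estimate I would follow the `few sums' strategy of Elekes--Ruzsa, exploiting that $\cT$ decomposes into only $\#A$ parallel families of $\#A$ tubes each and that a small value of $N_\delta(A+A)$ forces additive concentration in $A$. In the discretised setting the clean Elekes--Ruzsa bookkeeping with rich lines and parallel classes does not transfer directly, because Theorem \ref{thm.product} is an $L^1$-type estimate with no such built-in refinement; instead one dyadically pigeonholes an intermediate scale $\delta\le\rho\le1$ carrying the heavy part of the configuration, drops to a dense sub-configuration that still has quasi-product structure at scale $\rho$ (using Proposition \ref{prop.properties} together with the hereditary Proposition \ref{fkjgu8rtu9i6iy-}), re-runs Theorem \ref{thm.product} at scale $\rho$, and finally recombines the $\rho$-scale incidence inequality with the elementary Cauchy--Schwarz relation between $N_\delta(A+A)$ and its $\rho$-scale version, summing over the $O(\log(1/\delta))$ scales. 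I expect this recombination step to be the main obstacle, and it is exactly the extra loss incurred by passing through an intermediate scale that forces the exponent of $N_\delta(A+A)$ to be $6$ rather than the $4$ of the discrete Elekes--Ruzsa estimate.
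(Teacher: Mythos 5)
There is a genuine gap. The first half of your argument is fine as far as it goes: the Elekes-type configuration with tubes $T_{a,b}$ and shading $Y(T_{a,b})\subset (A+A)\times(A/A)$, together with the observation that the hypothesis on $A/A$ supplies the $(\delta,\sigma,O(1))$-KT property of the shading, correctly yields $N_\delta(A+A)\,N_\delta(A/A)\gtrapprox C^{-2}\delta^{s}\#A^{7/2}$. But, as you yourself note, this symmetric bound does not imply the stated inequality (for instance when $N_\delta(A+A)\sim\#A$ the target demands $N_\delta(A/A)\gtrapprox\delta^{2s}\#A^4$, far beyond $\delta^s\#A^{5/2}$). The entire content of the theorem is the asymmetric exponent $6$ on the sumset, and for that you offer only a plan whose central step you explicitly describe as an unresolved obstacle. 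A proposal that identifies its own key step as an open problem is not a proof.

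Moreover, the direction you sketch --- pigeonholing an intermediate scale $\rho$ and re-running Theorem \ref{thm.product} at that scale inside this theorem --- is not where the exponent $6$ comes from. In the paper, the intermediate-scale machinery (Lemmas \ref{lem.sumsetcontrol} and \ref{lem.indscale}) is used only to deduce Proposition \ref{prop.fewsum} \emph{from} Theorem \ref{thm.fewsumweak}; the theorem itself is proved at the single scale $\delta$ by a Solymosi/Elekes--Ruzsa collinear-triples count. Concretely: one extracts a popular piece $S$ of $A+A$ carrying $\gtrapprox\#A^2$ of the representations, shows via the uniformization and the estimate $N_r(S)\gtrapprox\delta^{\e}N_r(A)$ that $S$ is a $(\delta,s,L)$-KT set with $L\approx CK$ where $K=N_\delta(A+A)/\#A$ is the doubling constant, and forms the \emph{point} set $\cP=(A\cup S)^2$, a quasi-product set with constants $\approx CK$. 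The dual form of Corollary \ref{nononecase} (with the slope set $D\subset A/A$ being $(\delta,\sigma)$-KT --- this is where the extra hypothesis enters) bounds the number of collinear triples in $\cP$ along directions of $A/A$ by $\lessapprox L^4\delta^{-2s}\#S^2\lesssim C^4K^6\delta^{-2s}\#A^2$, while Cauchy--Schwarz over the $\#D=N_\delta(A/A)$ slopes gives the lower bound $\gtrapprox\#A^6/N_\delta(A/A)$. The exponent $6$ on $K$ arises because the incidence theorem raises the KT-constant $L\approx CK$ of the sumset piece to the fourth power and $\#S^2$ contributes two more powers of $K$; this is the mechanism missing from your proposal.
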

	With Theorem \ref{thm.fewsumweak} in hand, we move to the proof of Proposition \ref{prop.fewsum}. We first show that for uniform sets, additive structure is retained at courser scales. 
	\begin{lemma}\label{lem.sumsetcontrol}
		Assume $A$ is $\e$-uniform. For all $\de < \rho < 1$ we have
		\begin{equation}
			\frac{\cns{\rho}{A+A}}{\cns{\rho}{A}} \lesssim_\e \frac{\cn{A+A}}{\# A}.
		\end{equation}
		\begin{proof}
			Due to the uniformity of $A$, each $I\in \cD_{\rho/2}(A)$ contains $M\sim_\e \#A/N_{\rho/2}(A)\sim \#A/N_\rho(A)$ points in $A$. Consider the collection of all distinct intervals of the form $I_1+I_2$ ($I_i\in\cD_{\rho/2}(A)$). Each point appears in at most three intervals. They have length $\rho$, cover $A+A$, and each interval intersects $A+A$. The number of these intervals is thus $\ge N_\rho(A+A)$. And each interval $I_1+I_2$ contains at least $\sim M$ many $\delta$-separated points in $A+A$, for example $a+a'$ for fixed $a\in I_1$ and $a'\in I_2\cap A$. It follows that          
			\begin{equation}
				\frac{\cn{A+A}}{\# A} \gtrsim \frac{\cns{\rho}{A+A}M}{\# A} \sim \frac{\cns{\rho}{A+A}}{\cns{\rho}{A}}.
			\end{equation}
		\end{proof}
	\end{lemma}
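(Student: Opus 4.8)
The plan is to prove the multi-scale comparison by a direct covering/counting argument at the intermediate scale $\rho$, exploiting only $\e$-uniformity of $A$. The key observation is that $\e$-uniformity forces every $\rho/2$-adic cube meeting $A$ to carry the \emph{same} (up to $C_\e$) number $M$ of points of $A$, with $M\sim_\e \#A/N_{\rho/2}(A)\sim_\e \#A/N_\rho(A)$ (the last step since $N_{\rho/2}(A)\sim N_\rho(A)$ up to an absolute constant). I would state this as the first step.

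\textbf{Step 1 (local count).} Fix $\rho$ with $\de<\rho<1$ and record $M\sim_\e \#A/N_\rho(A)$ for the number of points of $A$ in any $I\in\cD_{\rho/2}(A)$; this is exactly the content of the uniformity lemma quoted earlier in the excerpt.

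\textbf{Step 2 (building many separated points in $A+A$).} Consider the family of intervals $I_1+I_2$ with $I_1,I_2\in\cD_{\rho/2}(A)$. These have length $\rho$, they cover $A+A$, and each of them meets $A+A$. Since each real number lies in at most three such sums (the abscissa sum of two half-open $\rho/2$-dyadic intervals is determined up to a bounded ambiguity), the number of \emph{distinct} intervals $I_1+I_2$ is $\gtrsim N_\rho(A+A)$. Now inside a single interval $I_1+I_2$, fixing $a\in I_1\cap A$ and letting $a'$ range over $I_2\cap A$ produces $\gtrsim M$ points $a+a'$ of $A+A$ that are $\delta$-separated (they are $\delta$-separated because $A$ is $\delta$-separated and they differ only in the $a'$ coordinate). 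Hence
\begin{equation}
\cn{A+A}\gtrsim_\e N_\rho(A+A)\,M\sim_\e N_\rho(A+A)\,\frac{\#A}{N_\rho(A)},
\end{equation}
and dividing by $\#A$ gives the claim $\dfrac{\cns{\rho}{A+A}}{\cns{\rho}{A}}\lesssim_\e \dfrac{\cn{A+A}}{\#A}$.

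\textbf{Main obstacle.} The only delicate point is making precise that the $\gtrsim M$ points produced inside each interval $I_1+I_2$ are genuinely $\delta$-separated and that distinct intervals $I_1+I_2$ contribute essentially disjoint collections of such points (so that summing over the $\gtrsim N_\rho(A+A)$ distinct intervals does not overcount by more than a $C_\e$-factor). The separation within one interval is immediate from $\delta$-separation of $A$; for the overcounting across intervals, since we only want a lower bound on $\cn{A+A}$, it suffices to choose one representative interval per distinct value of $I_1+I_2$ and one representative $a\in I_1$ per interval, which is exactly what the argument does — so no genuine difficulty remains, and this is already the argument written in the displayed proof in the excerpt.
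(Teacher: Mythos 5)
Your proposal is correct and follows essentially the same route as the paper: record the uniform local count $M\sim_\e \#A/N_\rho(A)$ for each $I\in\cD_{\rho/2}(A)$, use the bounded-overlap family of sums $I_1+I_2$ to cover $A+A$ at scale $\rho$, and extract $\gtrsim M$ many $\delta$-separated elements $a+a'$ inside each sum interval. The ``main obstacle'' you flag is handled exactly as the paper handles it, via the at-most-three-fold overlap of the intervals $I_1+I_2$, so there is no material difference between the two arguments.
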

	Let $\e_0=\e^{2\e^{-1}}\ll \e$.
	Next, we show that for any exponent $0< t \leq 1,$ and for any finite $\e_0$-uniform $\de$-separated $B \subset \R$ one may always find a courser scale $\rho$ so that $B$ will be a $(\rho,t)$-KT set, with `large' branching thereafter. 
	\begin{lemma}\label{lem.indscale} Assume $B\subset\R$ is $\e_0$-uniform and let $0< t \leq 1$.
		There exists $\delta \le  \rho \le 1$ so that
		\begin{enumerate}
			\item $B$ is a $(\rho,t)$-KT set.
			\item for each $I \in \cD_{\rho}(B)$ we have  $\cn{B \cap I} \gtrsim (\rho/\delta)^{t - O(\e)}.$
		\end{enumerate}
		\begin{proof} 
			We apply the multi-scale decomposition \cite[Theorem 2.13]{demwangszem} to find an integer $L,$ a partition 
			\begin{equation}
				\de := \rho_1 < \rho_2 < \ldots < \rho_{L+1} = 1,
			\end{equation} 
			and a sequence 
			\begin{equation}
				1 \geq t_1 > t_2 > \ldots > t_L \geq 0,
			\end{equation}
			so that  we have
			\begin{equation}
				\label{lckj fdbhgbhugb}
				(\frac{\rho_{j+1}}{\rho_{j}})^{t_j- 3\e}\le \frac{\cns{\rho_{j}}{B}}{\cns{\rho_{j+1}}{B}} \leq (\frac{\rho_{j+1}}{\rho_{j}})^{t_j + 3\e},
			\end{equation}
			and for all $I \in \cD_{\rho_j}(B)$ we have
			\begin{equation}\label{eq.frostmanpieces}
				B \cap I \text{ rescaled by $ \rho_j^{-1}$ is a }(\rho_{j-1}/\rho_j, t_j, O((\rho_j/\rho_{j-1})^{3\e}))\text{-set}.
			\end{equation}
			If $t_L > t - 10\e,$ then multiplying the lower bounds in \eqref{lckj fdbhgbhugb} we find
			$\cn {B} \geq \de^{-t+O(\e)},$
			giving us the statement with $\rho = 1.$ 
			If $t_1\le t-10\e$, we may take $\rho=\delta$, by multiplying the upper  bounds in \eqref{lckj fdbhgbhugb}.
			In all other cases
			let $j$ be the smallest so that $t_j \leq t-10\e$ and select $\rho := \rho_j.$ Note that $j\ge 2$.

			We first verify condition (2). For $I \in \cD_\rho(B),$ we have, using our choice of $j,$ and \eqref{eq.frostmanpieces}, 
			\begin{align}
				\cn{B \cap I} &\sim \frac{\cn{B}}{\cns{\rho}{B}}\\
				&= \prod_{1\le i\le  j-1} \frac{\cns{\rho_i}{B}}{\cns{\rho_{i+1}}{B}} \\
				& \geq \prod_{1\le  i\le j-1}	(\frac{\rho_{i+1}}{\rho_{i}})^{t_i - 3\e} \\
				& > \prod_{1\le i\le j-1}	(\frac{\rho_{i+1}}{\rho_{i}})^{t - O(\e)} \\
				&= (\frac\rho\de)^{t-O(\e)}.
			\end{align}
			We next verify (1). Let $r > \rho$ and select $j \leq l \leq L$ so that
			$
			\rho_{l} \leq r \leq \rho_{l+1}. 
			$
			By similar reasoning, for each interval $J$ of length $r$
			\begin{align*}
				\cns{\rho}{B \cap J} &=\#\{I\in\cD_\rho(B):\;I\subset J\}\\&=\prod_{i=j}^{l-1}\frac{N_{\rho_i}(B)}{N_{\rho_{i+1}(B)}}\#\{I\in\cD_{\rho_l}(B):\;I\subset J\}\\&\lesssim
				\prod_{i=j}^{l-1}\frac{N_{\rho_i}(B)}{N_{\rho_{i+1}(B)}}
				(\frac{r}{\rho_{l+1}})^{t-7\e}\frac{N_{\rho_l}(B)}{N_{\rho_{l+1}}(B)}(\frac{\rho_{l+1}}{\rho_l})^{3\e}, \text{ by }\eqref{eq.frostmanpieces}
				\\&\lesssim\prod_{i=j}^{l}(\frac{\rho_{i+1}}{\rho_i})^{t-7\e}(\frac{r}{\rho_{l+1}})^{t-7\e}(\frac{\rho_{l+1}}{\rho_l})^{3\e}\\&\le\prod_{i=j}^{l}(\frac{\rho_{i+1}}{\rho_i})^{t}(\frac{r}{\rho_{l+1}})^{t}\\
				&=  (\frac r\rho)^{t}.
			\end{align*}
			Therefore $B$ is a $(\rho,t)$-set.
		\end{proof}
	\end{lemma}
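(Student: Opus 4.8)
The plan is to run $B$ through a branching (multi-scale) decomposition and to read off $\rho$ as the scale at which the local branching exponent first drops below a value slightly smaller than $t$.

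First I would apply the multi-scale decomposition of \cite[Theorem 2.13]{demwangszem} to the $\e_0$-uniform set $B$. This produces an integer $L$, a chain $\de=\rho_1<\rho_2<\dots<\rho_{L+1}=1$, and a strictly decreasing sequence $1\ge t_1>t_2>\dots>t_L\ge 0$, so that the covering ratios obey $(\rho_{i+1}/\rho_i)^{t_i-3\e}\le \cns{\rho_i}{B}/\cns{\rho_{i+1}}{B}\le(\rho_{i+1}/\rho_i)^{t_i+3\e}$ for every $i$, and so that for each $I\in\cD_{\rho_i}(B)$ the rescaled cell $\rho_i^{-1}(B\cap I)$ is a $(\rho_{i-1}/\rho_i,\,t_i,\,O((\rho_i/\rho_{i-1})^{3\e}))$-set. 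The second family of facts says that inside each $\rho_i$-cell the points are Frostman with exponent $t_i$, while the first says the global branching between consecutive scales is governed by the same $t_i$.

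Next I would run a trichotomy according to where the threshold $t-10\e$ falls among $t_1,\dots,t_L$. If $t_L>t-10\e$ then every $t_i$ exceeds $t-10\e$; multiplying the lower branching bounds over all $i$ and using $\cns{1}{B}\sim 1$ gives $\#B\ge\de^{-t+O(\e)}$, and $\rho=1$ works (condition (1) is trivial since $\diam B=O(1)$, and condition (2) is exactly the bound just obtained). If $t_1\le t-10\e$ then every $t_i\le t-10\e<t$, and $\rho=\de$ works: condition (2) is vacuous, and condition (1) is the telescoping estimate below applied with $\rho=\rho_1$. Otherwise let $j$ be the least index with $t_j\le t-10\e$; then $2\le j\le L$, and I claim $\rho:=\rho_j$ does the job. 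For condition (2), $\e_0$-uniformity gives $\cn{B\cap I}\sim\#B/\cns{\rho_j}{B}=\prod_{i<j}\cns{\rho_i}{B}/\cns{\rho_{i+1}}{B}$ for every $I\in\cD_{\rho_j}(B)$, and since $t_i>t-10\e$ for $i<j$ (minimality of $j$) the lower branching bounds make this product at least $(\rho_j/\de)^{t-O(\e)}$. For condition (1), given $r>\rho_j$ I would pick $l\ge j$ with $\rho_l\le r\le\rho_{l+1}$ and, for an interval $J$ of length $r$, count the $\rho_j$-cells of $B$ meeting $J$ by telescoping through the scales $\rho_j,\dots,\rho_l$: the intermediate ratios are bounded by the branching upper bounds and the top level by the Frostman structure of a $t_{l+1}$-cell, so each factor contributes a power with exponent $t_i+O(\e)$ and the partial top factor an exponent $t_{l+1}+O(\e)$; since $t_i\le t-10\e$ for all $i\ge j$, every $O(\e)$ surplus is absorbed and the product is $\le(r/\rho_j)^t$, i.e.\ $B$ is a $(\rho_j,t)$-KT set.

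I expect the verification of condition (1) to be the main obstacle, since it is where the scale-by-scale branching control has to be merged with the per-cell Frostman (rather than Katz--Tao) structure, and one must check that the $10\e$ gap between $t$ and each relevant $t_i$ dominates all the accumulated $O(\e)$ losses — those from the tolerance in the multi-scale decomposition and those from uniformity — which is exactly why $\e_0=\e^{2\e^{-1}}$ is chosen so much smaller than $\e$.
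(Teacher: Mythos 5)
Your proposal is correct and follows essentially the same route as the paper: the same multi-scale decomposition from \cite[Theorem 2.13]{demwangszem}, the same trichotomy on where $t-10\e$ falls in the decreasing sequence $t_1>\dots>t_L$, and the same telescoping of the branching bounds (lower bounds for condition (2) over $i<j$, upper bounds plus the per-cell Frostman structure for condition (1) over $i\ge j$). The only remark worth making is that your closing worry is well-placed but already resolved by the monotonicity of the $t_i$: since they strictly decrease, every $t_i$ with $i\ge j$ sits below $t-10\e$, so the $10\e$ margin uniformly absorbs the $3\e$-tolerances at each scale, exactly as in the paper.
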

	
	With these lemmata, and Theorem \ref{thm.fewsumweak} in hand, we can now  easily complete the proof of Proposition \ref{prop.fewsum}
	
	\begin{proof}[Proof of Proposition \ref{prop.fewsum}]
		We replace $A$ and $A/A$ with $\e_0$-uniform subsets retaining a $\delta^\e$-fraction. 
		Apply Lemma \ref{lem.indscale} to $B=A/A$ with exponent $t=\sigma$ to acquire a scale $\de \leq \rho \leq 1.$ By Proposition \ref{prop.properties}, $A$ is a $(\rho,s, \de^{-O(\e)}\cns{\rho}{A} \rho^s)$-KT set. Apply Theorem \ref{thm.fewsumweak} at this scale $\rho,$ where we find that
		\begin{equation}
			\cns{\rho}{A+A}^6\cns{\rho}{A/A} \gtrsim \de^{O(\e)}\rho^{-2s}\cns{\rho}{A}^6.
		\end{equation}
		Using item (2) from Lemma \ref{lem.indscale}, and the uniformity of $A/A,$ we find that
		\begin{equation}
			\cns{\rho}{A+A}^6\cn{A/A} \gtrsim \de^{O(\e)}\rho^{-2s}(\rho/\de)^{\sigma}\cns{\rho}{A}^6.
		\end{equation}
		By Lemma \ref{lem.sumsetcontrol}, we have
		\begin{equation}
			\cns{\de}{A+A}^6\cn{A/A} \gtrsim \de^{O(\e)}\rho^{-2s}(\rho/\de)^{\sigma}\# A^6.
		\end{equation}
		If $s \leq 1/2,$ then we have
		\begin{equation}
			\cns{\de}{A+A}^6\cn{A/A} \gtrsim \de^{O(\e)}\de^{-2s}\# A^6.
		\end{equation}
		Otherwise, we are led to
		\begin{align}
			\cns{\de}{A+A}^6\cn{A/A} \gtrsim \de^{O(\e)}\rho^{2-4s}\de^{2s-2}\# A^6 \gtrsim \de^{O(\e)}\de^{2s-2}\# A^6.
		\end{align}
	\end{proof}
	
	We dedicate the rest of this section to proving Theorem \ref{thm.fewsumweak}. Assume $A$ is a $\delta$-separated
	$(\delta,s,C)$-KT set. We may  replace $A$ by an $\e$-uniform subset retaining a $\gtrsim \de^\e$-fraction.
	
	Let $\pi:\R^2 \rightarrow \R$ be the map $(x,y) \mapsto x+y.$ Since 
	\begin{equation}
		\sum_{I \in \cD_\de(A+A)} \# (\pi^{-1}(I) \cap A^2) = \# A^2,
	\end{equation} 
	by dyadic-pigeonholing, we may find a dyadic integer $N,$ and a subset $\cD \subset \cD_\de(A+A),$ so that 
	for all $I \in \cD$ we have 
	\begin{equation}
		\#(\pi^{-1}(I) \cap A^2) \sim N,
	\end{equation}
	and 
	\begin{equation}
		\sum_{I \in \cD} \# (\pi^{-1}(I) \cap A^2) \approx \# A^2.
	\end{equation}
	Since $A$ is $\delta$-separated it follows that $N\lesssim \#A$.
	Let $S_0$ be a maximal $\delta$-separated set in  $(\cup_{ I\in \cD}I) \cap (A+A),$ and let $S \subset S_0$ be $\e$-uniform with $\# S \gtrsim \de^{\e}\#S_0.$ In particular,
	\begin{equation}\label{eq.large}
		\sum_{I \in \cD_\de(S)}\# (\pi^{-1}(I) \cap A^2) \gtrapprox \de^\e\# A^2.
	\end{equation}  
	Since $N\lesssim \#A$, we must have $\#S\ges \delta^{\e}\#A$.
	Set $$G := (\bigcup_{I \in \cD_\de(S)}\pi^{-1}(I)) \cap A^2 \subset A \times A,$$
	which by \eqref{eq.large} satisfies 
	\begin{equation}\label{eq.large2}
		\# G \gtrapprox \de^{\e}\# A^2.
	\end{equation}
	Let us see how this implies that 
	\begin{equation}
		\label{d jcfvhuyt7y0gy098gu8}
		N_r(S)\ges \delta^{\e}N_r(A), \;\;\forall r\ge \delta.
	\end{equation}
	Fix $r$ and let $T=\#A/N_r(A)$, so that $\#(A\cap I)\sim T$ for each $J\in\cD_r(A)$. Then 
	$$\#G\lesssim T\sum_{a\in A}\#\{J\in \cD_r(A):\;\exists a_J\in J\text{ so that }(a,a_J)\in G\}.$$
	It follows that there is $a\in A$ and $\gtrsim \frac{\#G}{T\#A}$ many intervals $J\in \cD_r(A)$ such that $(a,a_J)\in G$. This means that $a+a_J\in S$, and since they are $r$-separated, it follows that $N_r(S)\gtrsim \frac{\# G}{T\#A}$. Now \eqref{d jcfvhuyt7y0gy098gu8} follows from \eqref{eq.large2}.

	Let $K > 1$ be the doubling factor of $A$: 
	\begin{equation}
		K := \frac {\cov{A+A}} {\# A}.
	\end{equation}
	
	Recall that $A$ is a $(\delta,s,C)$-KT set.
	\begin{lemma}
		The set $S$ is a  $(\delta,s,L)$-KT set with $L\approx CK\delta^{-\epsilon}$.
		\begin{proof}
			Let $r \ge \de$ and $I \in \cD_r(S).$ By first using the uniformity of $S$, then \eqref{d jcfvhuyt7y0gy098gu8} and finally the fact that $A$ is $(\delta,s,C)$-KT,  we have
			\begin{equation}
				\#(S\cap I)  \sim \frac{\#S}{\cns{r}{S}} \lessapprox  \delta^{-\epsilon}\frac{K\# A}{\cns{r}{A}} \leq L(r/\de)^s.
			\end{equation}
		\end{proof}
	\end{lemma}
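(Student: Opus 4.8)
The plan is a short direct estimate combining three facts already established: the $\epsilon$-uniformity of $S$, the lower bound $\cns{r}{S}\gtrsim\delta^{\epsilon}\cns{r}{A}$ recorded in \eqref{d jcfvhuyt7y0gy098gu8}, and the Katz--Tao hypothesis on $A$.

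First I would fix $x$ and $r\ge\delta$, and observe that the interval $B(x,r)$ is covered by $O(1)$ members of $\cD_r(S)$, so it suffices to bound $\#(S\cap I)$ for a single $I\in\cD_r(S)$. By the $\epsilon$-uniformity of $S$ — concretely, the consequence recorded in the preliminaries that any two of the numbers $\#(S\cap I')$, $I'\in\cD_r(S)$, agree up to a factor $C_\epsilon$ — summing over $\cD_r(S)$ gives $\#(S\cap I)\lesssim_\epsilon\#S/\cns{r}{S}$. Since $S$ is a $\delta$-separated subset of $A+A$ we have $\#S\lesssim\cn{A+A}=K\#A$, and feeding in \eqref{d jcfvhuyt7y0gy098gu8} yields
\[
\#(S\cap I)\;\lesssim\;\delta^{-O(\epsilon)}\,\frac{K\#A}{\cns{r}{A}}.
\]

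To close, I would use that $A$ is a $(\delta,s,C)$-KT set: covering $A$ by $\cns{r}{A}$ balls of radius $r$, each of which meets $A$ in at most $C(r/\delta)^s$ points, gives $\#A\le C(r/\delta)^s\cns{r}{A}$, i.e.\ $\#A/\cns{r}{A}\le C(r/\delta)^s$. Inserting this in the previous display gives $\#(S\cap I)\lesssim CK\delta^{-O(\epsilon)}(r/\delta)^s$, which is exactly the claimed $(\delta,s,L)$-KT property with $L\approx CK\delta^{-\epsilon}$. There is no real obstacle here; the only point needing care is that \eqref{d jcfvhuyt7y0gy098gu8} and the uniformity comparison get applied at an arbitrary scale $r$, not merely at the dyadic scales $2^{-jT}$ entering the definition of $\epsilon$-uniformity, but this is precisely the content of the comparability lemma quoted earlier.
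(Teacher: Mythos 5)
Your argument is correct and is essentially the paper's own proof: uniformity of $S$ gives $\#(S\cap I)\lesssim_\epsilon \#S/\cns{r}{S}$, then $\#S\lesssim K\#A$ together with \eqref{d jcfvhuyt7y0gy098gu8} and the Katz--Tao bound $\#A/\cns{r}{A}\le C(r/\delta)^s$ close the estimate. Your extra remark about applying the uniformity comparison at arbitrary (not just dyadic) scales is exactly the role of the quoted comparability lemma, so there is nothing to add.
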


	Now set $\cP := (A \cup S) \times (A \cup S),$ which is a $(\de,s,s,L,L)$-quasi-product set, with $\#\cP\les \delta^{-O(\e)}\#S^2$.  Let $\cT$ be the collection of $\de$-tubes with the slope  in a maximal $\delta$-separated set $D$ of $A/A$, and which contain at least one point in $\cP.$ Let $r \geq 1,$ and call $\cT_r(\cP)$ the collection of those $T\in\cT$ containing $\sim r$ many $p\in\cP$. We apply Corollary  \ref{nononecase} in its dual form to obtain
	\begin{equation}\label{eq.tuberesult}
		\#\cT_{r}(\cP) \lessapprox L^4 \delta^{-2s}\#\cP /r^3\les \delta^{-O(\e)} L^4 \delta^{-2s}\#S^2 /r^3.
	\end{equation}

	We wish to count the number of triples $(p,q,s) \in \cP$ which all lie in some $T \in \cT.$
	The number of triples that lie in a single $T \in \cT_r(\cP)$ is  $\sim r^3.$ By \eqref{eq.tuberesult} the number of triples which lie in some $T\in \cT_r(\cP)$ is at most
	\begin{eqnarray}
		\les \delta^{-O(\e)} L^4\de^{-2s}\#S^2.
	\end{eqnarray} 
	Summing over each dyadic $r$ tells us the total number of such triples is at most
	\begin{equation}\label{eq.triplesupper}
		\les\de^{-O(\e)}L^4\de^{-2s}\#S^2.
	\end{equation} 
	\indent We now wish to find a lower bound for the number of distinct triples of the form
	\begin{eqnarray}
		(e,f) \in A\times A, (e+a,f+b), (e+c,f+d) \in S\times S,
	\end{eqnarray}
	with $|b/a - d/c| \leq \delta.$ That is, we may find $T \in \cT$ so that the triple above lies in $T.$ For $(e,f) \in A \times A$ define
	\begin{eqnarray}
		G_{e,f}:= \{(a,b) \in A \times A: (e+a,f+b) \in S^2\}.
	\end{eqnarray}
	Since, trivially
	\begin{equation}
		\#G_{e,f} \sim \sum_{q \in D}\#\{(a,b) \in G_{e,f} : |b/a - q| \leq \de\},
	\end{equation}
	by Cauchy--Schwarz we have
	\begin{equation}
		\#\{(a,b,c,d) \in (G_{e,f})^2: |b/a - d/c| \lesssim \de\} \gtrsim (\#G_{e,f})^2/\#D.
	\end{equation}
	Therefore for each $(e,f) \in A\times A$ we may find $\gtrsim (\#G_{e,f})^2/\#D$ pairs in $\cP$ which along with $(e,f)$ form a triple as above. We have therefore found at least
	\begin{equation}\label{eq.triples2}
		\gtrsim (\#D)^{-1}\sum_{e,f \in A}(\#G_{e,f})^2
	\end{equation}
	such triples.
	Since
	\begin{equation}
		\sum_{(e,f) \in A^2} \# G_{e,f} = \#\{(e,f,a,b) \in A^4: (e+a,f+b) \in S^2\} = \# G^2,
	\end{equation}
	by Cauchy--Schwarz and \eqref{eq.large2} we have
	\begin{equation}
		\sum_{(e,f) \in A^2} (\# G_{e,f})^2 \geq \#G^4/\# A^2 \gtrapprox \de^{O(\e)}\# A^6.
	\end{equation}
	By \eqref{eq.triples2}, we have found at least
	\begin{equation}
		\gtrapprox \de^{O(\e)}\# A ^6/ \#D 
	\end{equation}
	such triples. Combining this with \eqref{eq.triplesupper}, we have
	\begin{equation}
		C^4K^6 \delta^{-2s}\# A ^2  \gtrapprox L^4 \de^{-2s}\#S ^2 \gtrapprox \de^{O(\e)}\# A ^6/ \#D .
	\end{equation}
	Using the fact that $K$ was chosen so that 
	\begin{equation}
		\cn{A+A}/\#A = K,
	\end{equation}
	we arrive at 
	\begin{equation}
		\cn{A+A}^6\cn{A/A} \gtrapprox C^{-4}\de^{O(\e)}\de^{2s}\#A^{10}.
	\end{equation}
	Since $\e >0$ was arbitrary, the result follows. 
	
	\bibliographystyle{alpha}
	\bibliography{references}
\end{document}